%% file: Rep_Heis_.tex
\documentclass[a4paper, oneside, 11pt]{amsart}
\usepackage{import}

\title[The category of representations of the Heisenberg conformal net]{The crossed braided tensor category of twisted and untwisted representations of the Heisenberg conformal net}
\author{Adrià Marín-Salvador}
\date{}

\import{.}{Commands.tex}
\import{.}{Preamble.tex}

\begin{document}

\begin{abstract}
We show that the category of twisted/untwisted representations of the Heisenberg conformal net $\Heis$ is a continuous Tambara-Yamagami category for the group $\R$. We compute the associators and the $\mathbb{Z}/2$-crossed braiding. By taking a $\Z/2$-equivariantization, we obtain an explicit computation of the braided continuous tensor category of representations of the fixed-points conformal net $\Heis^{\Z/2}$. This provides the first explicit computation of a category of representations of a conformal net containing irreducible representations whose tensor product is a direct integral of irreducible representations. 
\end{abstract}

\maketitle

\tableofcontents
\newpage

\section{Introduction}
\addtocontents{toc}{\protect\setcounter{tocdepth}{-1}}

\subsection*{Conformal Field Theory} Conformal Field Theory (CFT), the study of Quantum Field Theories which are invariant under conformal transformations, has been described both as complex analysis in the quantum domain \cite{Polyakov_2012} and as a generalization of group theory \cite{MR1002038}. There exist two main formalizations of two dimensional unitary chiral CFT: conformal nets and unitary Vertex Operator Algebras (VOAs). On the one hand, conformal nets encode the vacuum Hilbert space of states of the theory, together with the local algebras of bounded operators associated to open regions of spacetime \cite{bmt88, bsm90, bgl93, gf93, w95}. On the other hand, VOAs axiomatize the algebras of unbounded local operators supported on points, together with their operator product expansions \cite{MR843307,MR996026,MR3119224}. 

The theories of conformal nets and VOAs are conjectured to be equivalent. In \cite{MR3796433} (see also \cite{MR4483020}) the authors construct, for any unitary VOA $V$ satisfying certain technical conditions, an associated conformal net $\A_V$. Conversely, given a conformal net $\A$, \cite{henriques2025conformalnetassociatedunitary} constructs an associated unitary VOA $V_\A$. These two constructions are conjectured to produce a bijection between the classes of conformal nets and unitary VOAs.

Given a conformal net $\A$, its category of representations $\Rep(\A)$ is a balanced tensor category \cite{frs, lon89, gf93, Gui21, balanceRepA}, and the conformal net is called \emph{rational} if $\Rep(\A)$ is a rigid, finite, semisimple category. In this situation, $\Rep(\A)$ is a modular tensor category \cite{klm01}. Similarly, a VOA $V$ is called \emph{rational} if its category of representations $\Rep(V)$ is a finite semisimple category, and it is furthermore \emph{strongly rational} if it is in addition self-dual, of CFT type, and $C_2$-cofinite. The category of representations of a strongly rational VOA is a modular tensor category \cite{MR1225125, MR1327541, MR1383584, MR1344848,  MR1344849, MR2151865, MR2468370}. 
The breakthrough papers \cite{Gui21, Gui_2026} prove that, for a large class of strongly rational unitary VOAs, their modular tensor categories of representations are equivalent to the underlying semisimple tensor categories of representations of their associated conformal nets through the construction in~\cite{MR3796433}. Whilst the formal properties of braided tensor categories of representations are easier to establish for conformal nets, the fusion rules of representations of particular examples are much more accessible using VOA techniques. 

Away from rationality (which in terms of the associated VOAs means away from $C_2$-cofiniteness), categories of representations of conformal nets and their associated unitary VOAs are expected to behave quite differently. Indeed, when $V$ is a general unitary VOA, the category $\Rep(V)$ is not even expected to admit a tensor structure. On the contrary, the category of representations of a conformal net is always a tensor category.

We propose the following analogy. Given a real Lie group $G$, one can consider the category $\Rep(G)$ of unitary representations of $G$, which is a tensor category. Denoting by $K$ the maximal compact subgroup of $G$, and by $\mathfrak{g}$ the Lie algebra of $G$, there also exists the category $(\mathfrak{g}, K)\text{-Mod}$ of Harish-Chandra modules over $G$. The category $(\mathfrak{g}, K)\text{-Mod}$ is not in general a tensor category. If $G$ is compact, then $(\mathfrak{g}, K)\text{-Mod}=\Rep(G)$ is a tensor category, but for non-compact Lie groups there exist Harish-Chandra modules which are not unitarizable, and unitary representations of $G$ which do not differentiate to Harish-Chandra modules. We think of representations of conformal nets as behaving as unitary representations of Lie groups, representations of VOAs as resembling Harish-Chandra modules, and strong rationality in CFT as mimicking compactness~of~Lie~groups.

\subsection*{Continuous tensor categories} Categories of unitary representations of non-compact Lie groups have two remarkable features: their space of (simple) objects has a natural topology, the Fell topology, and one can take direct integrals of unitary representations. Categories of representations of conformal nets also admit a Fell topology and allow for direct integrals of objects. However, asking merely for such a topology on the space of objects is too weak of a requirement, as one would also want to make precise the idea that fusion rules, associators and unitors are continuous. Tensor categories with a topological space of simple objects and direct integrals are a shadow of a more general structure, that of a continuous tensor category, which also includes the continuity of the extra tensor data. 
\begin{definition*}(\cite{AM2025})
    A \emph{continuous tensor category} is a (coherently associative) algebra object in the bicategory of $\mathrm{C}^*$-algebras, $\mathrm{C}^*$-correspondences, and unitary intertwiners.
\end{definition*}
We think of a $\mathrm{C}^*$-algebra $A$ as topologizing its underlying linear category of representations $\Rep(A)$, by endowing its space of objects with the natural Fell topology and providing the notion of direct integrals of objects. These two structures are no longer well defined if one only has access to $\Rep(A)$, as opposed to $A$. Given another $\mathrm{C}^*$-algebra $B$, we think of continuous functors between $\Rep(A)$ and $\Rep(B)$ as those given by tensoring with a $\mathrm{C}^*$-correspondence. We can further define braided and balanced continuous tensor categories in the obvious way, and see them as topologized versions of their underlying linear braided or balanced tensor categories. Given a locally compact group $G$, the category of unitary representations of $G$ is canonically a continuous tensor category, see \cite[Ex. 3.15]{AM2025}. We conjecture that categories of representations of conformal nets are also continuous tensor~categories.
\begin{mainconjecture}\label{conjectureintro}
    The category $\Rep(\A)$ of representations of a conformal net $\A$ is a balanced continuous tensor category. 
\end{mainconjecture}
Given a conformal net $\A$, one can construct a $\mathrm{C}^*$-algebra $\mathrm{C}^*(\A)$ such that representations of $\A$ are in bijection with a certain class of representations of $\mathrm{C}^*(\A)$ \cite{MR1147469, MR1199171, klm01}. We note that this does not (even partially) solve our conjecture, as we require representations of $\A$ to be in bijection with all representations of the associated $\mathrm{C}^*$-algebra, not only a subclass.

If $\A$ is rational, then $\Rep(\A)$ is a unitary modular tensor category by \cite{klm01}, and hence our conjecture holds.
\begin{maintheorem}
    The category $\Rep(\Heis)$ of representations of the Heisenberg conformal net is a balanced continuous tensor category. The category $\Rep(\Heis^{\Z/2})$ of representations of the $\Z/2$-fixed points of the Heisenberg conformal net is a $\Z/2$-equivariantization of a  $\Z/2$-crossed balanced continuous tensor category.
\end{maintheorem}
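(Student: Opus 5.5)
The plan is to reduce everything to explicit continuous algebraic data and then realize it as an algebra object in the bicategory of $\mathrm{C}^*$-algebras and correspondences. \textbf{First} I will classify irreducible representations. Untwisted irreducibles of $\Heis$ are the charged sectors $\pi_\lambda$, $\lambda\in\R$, obtained by composing the vacuum with the Bogoliubov automorphisms $\phi(f)\mapsto \phi(f)+\lambda\int f$. There is a unique irreducible $\Z/2$-twisted (Ramond-type) soliton representation $\tau$ for the automorphism $\phi\mapsto-\phi$. \textbf{Second} I will compute the fusion rules using Connes fusion over interval algebras:
\[
\pi_\lambda\boxtimes\pi_\mu\cong\pi_{\lambda+\mu},\qquad \pi_\lambda\boxtimes\tau\cong\tau\cong\tau\boxtimes\pi_\lambda,\qquad \tau\boxtimes\tau\cong\int^\oplus_{\R}\pi_\lambda\,d\lambda .
\]
The first two follow because fusing with an automorphism-type sector just twists the action, and any charge can be absorbed into $\tau$ by a unitary implementer. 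The last follows by decomposing $\tau\boxtimes\tau$ under the untwisted net, identifying the $U(1)$-current zero-mode spectrum with Lebesgue measure. This is the Tambara--Yamagami pattern with $\R$ in place of a finite group.

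\textbf{Third} I will package this as a continuous $\Z/2$-graded tensor category. The even part is the algebra $C_0(\R)$, viewed as the continuous tensor category of $\Rep(\R)$ or unitary representations of $\widehat{\R}\cong\R$. The odd part is $\C$, representing $\tau$. The tensor product is given by correspondences:
\begin{itemize}
\item $C_0(\R)\otimes C_0(\R)\to C_0(\R)$ via the group law;
\item $C_0(\R)\otimes\C\to\C$ and $\C\otimes C_0(\R)\to\C$ via evaluation at the character $0$, i.e.\ the counit, matching $\pi_\lambda\boxtimes\tau\cong\tau$;
\item $\C\otimes\C\to C_0(\R)$ given by the correspondence $L^2(\R)$, which encodes the direct integral.
\end{itemize}
I will then compute the associators explicitly. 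Following Tambara--Yamagami, they should be trivial except for $\tau\boxtimes\pi_\lambda\boxtimes\tau$, where a bicharacter $\chi(\lambda,\mu)=e^{2\pi i\lambda\mu}$ (up to normalization) appears, and for $\tau\boxtimes\tau\boxtimes\tau$, where the associator is a Fourier transform on $L^2(\R)$. I will extract these by tracking the explicit unitary implementers in the Fock space constructions. I then check the pentagon identities, which reduce to Fourier analytic identities, and verify that all data are unitary intertwiners of $\mathrm{C}^*$-correspondences, which gives continuity.

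\textbf{Fourth} I will construct the $\Z/2$-crossed braiding and the twist. The twist is $e^{2\pi i\lambda^2/2}$ on $\pi_\lambda$ and $e^{2\pi i/16}$ on $\tau$. The braidings are phases on the even part, plus a Gauss-sum/Fourier-type operator on $\tau\boxtimes\tau$. I will verify the crossed hexagon axioms. Finally I will take $\Z/2$-equivariantization, using that $\Rep(\Heis^{\Z/2})\simeq(\Rep^{\Z/2}(\Heis))^{\Z/2}$ from general orbifold theory (Müger/Kirillov) adapted to this setting. This requires checking that every representation of $\Heis^{\Z/2}$ induces to a twisted or untwisted representation, including the non-rational, direct-integral case.

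The main obstacle is the $\tau\boxtimes\tau$ computation and its associator. Connes fusion here produces a direct integral, not a direct sum, so one must identify the Plancherel measure and the precise Fourier-type associator with correct normalizations. Rigorously justifying the equivariantization equivalence without finiteness will also be delicate. For $\tau\boxtimes\tau$ I would first try to realize it concretely as $L^2(\R)\otimes\mathcal{F}$ via the CCR representation of the zero modes.
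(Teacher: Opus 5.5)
Your outline has the right shape, and your worry about equivariantization is settled by \cite[Thm.~4.19]{fixedpoints}. That result gives $\Rep(\A^G)\simeq(\Rep^G(\A))^G$ as balanced $\mathrm{W}^*$-tensor categories for any finite $G$, with no rationality assumption. However, the steps that actually carry the theorem are left open.

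\emph{Identifying the $\mathrm{W}^*$-category.} Classifying irreducibles does not identify $\Rep(\Heis)$ with $\Rep(C_0(\R))$. You need every representation to be a measurable direct integral of the $\pi_\lambda$. The paper gets this by restricting to $\widetilde{L\R}\cong\R\times\widetilde{V}$. It then uses Weiner's theorem \cite{weiner} to land in positive-energy representations, and the uniqueness of the positive-energy irreducible of a Heisenberg group. Local normality is proved through direct integrals. The same argument with $\widetilde{L^\sigma\R}$ gives $\Rep^\sigma(\Heis)\simeq\Hilb$.

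\emph{Naturality on direct integrals.} The fusion isomorphisms must be natural on all direct integrals, not just on irreducibles. The paper builds them from DHR endomorphisms for countably-valued $p\in C_\sqcup(T)_0$, then extends by density and strong continuity.

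\emph{The associators.} The real gap is your Step~3. Computing $\tau\boxtimes\tau$ and extracting every associator (above all the $(\tau,\tau,\tau)$ one) from Fock-space implementers is exactly your self-identified main obstacle, and you give no route through it. The missing idea is that none of this needs computing. The paper only proves $\upsilon\otimes\nu\cong\upsilon\times\nu$ and $\upsilon\otimes\tau\cong L^2(\upsilon)\cdot\tau\cong\tau\otimes\upsilon$. The $\Z/2$-grading forces $\Hom(\tau\boxtimes\tau,\tau)=0$, and $\tau\boxtimes\tau\neq0$ by \cite{fredenhagen}. The classification \cite[Thm.~4.18]{AM2025} then says any such $\mathrm{W}^*$-tensor category is some $\TYcal(\R,\chi,\xi)$, the image under $\mathfrak{F}$ of a unique continuous one. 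That is where continuity comes from. Your plan to check that the data are intertwiners of correspondences is not automatic, since $\mathfrak{F}$ is not known to be full. The pentagons need no checking, because they hold in $\Rep(\Heis)$.

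\emph{Braiding and balance.} Similarly, in Step~4, writing down some crossed braiding and twist and verifying the axioms does not show that the net's own path-continuation braiding and $e^{-2\pi iL_0}$ balance are continuous. The paper instead transports the net's crossed balanced structure from \cite{GcrossedbraidedRep} and checks that $\sigma$ acts by $\Sigma$. It then proves that on $\TYcal(\R,\chi,\xi)$ the crossed braiding is unique up to equivalence and there are exactly two crossed balances, all given by continuous formulas. Note also that $\beta_{\tau,\tau}$ is a Gaussian multiplication operator, not a Fourier-type one.

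\emph{The value of $\theta_\tau$.} Your specific value for $\theta_\tau$ is unsupported. The VOA input (lowest weights $1/16$ and $9/16$) only fixes $\theta_\tau^2$. This pins down $(\chi_-,+1)$ but leaves the sign of $\theta_\tau$ conjectural. The present statement needs neither.
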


The precise description of these continuous tensor categories is the content of Theorems \ref{thm: CharacterizationRepHeis} and \ref{thm: RepFixedPointsMainTheorem}. An $\R$-family of representations of the Heisenberg conformal net was constructed in \cite{bmt88}, and their fusion rules were shown to agree with addition on $\R$. Further evidence towards Conjecture \ref{conjectureintro} is provided by Virasoro conformal nets at central charge $c\geq 1$. It has been shown that irreducible representations of these conformal nets are indexed by positive real numbers \cite{MR2031030, MR3627412}. The tensor categories of representations of Virasoro conformal nets have been conjectured to be equivalent to those of the modular doubles of certain quantum groups of $\mathfrak{sl}(2,\R)$ \cite{MR1877816}. The associators for a full tensor subcategory of representations of these modular doubles have appeared conjecturally in \cite{MR3197004,liu2025turaevviroinvariantmodulardouble}, exhibiting continuous~features.

\subsection*{Twisted modules} In order to study representations of $\Heis^{\Z/2}$, we need the notion of twisted representations of $\Heis$. Let $\A$ be a conformal net being acted on by a discrete group $G$. Every element $g\in G$ defines a category $\Rep^g(\A)$ of $g$-twisted representations of $\A$ (see Definition \ref{def: TwistedRep}), so that $\Rep^e(\A)~=~\Rep(\A)$, where $e\in G$ is the unit. The following is a generalization of \cite{muger05}, which produces a $G$-crossed braided structure on $\Rep^G(\A)$.
\begin{theorem*}(\cite{GcrossedbraidedRep})
   Let $G$ be a discrete group acting faithfully on a conformal net $\A$. Then, the category $\Rep^G(\A):=\bigoplus_{g\in G}\Rep^g(\A)$ is canonically a $G$-crossed balanced tensor~category.
\end{theorem*}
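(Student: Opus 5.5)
The construction follows Müger's argument for finite groups acting on rational nets, and I check that nothing there uses finiteness of $G$, finite index, or rationality. Fix an interval $I_0\subset S^1$ and realize each $g$-twisted representation $(H,\pi)$ on the vacuum space as a $g$-localized endomorphism $\rho$ of the local algebras. Concretely, $\rho$ is localized in $I_0$, acts as the identity on intervals to the left of $I_0$, and acts as $\alpha_g$ on intervals to the right, with respect to a chosen cut point $p_\infty\in S^1\setminus I_0$. The full subcategory $\Delta^G(\A)$ of such endomorphisms, graded by $g$, is a strict tensor category under composition. Its morphisms are intertwiners in the quasilocal algebra, so the tensor product is defined without any finiteness assumption. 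I will prove that $\Delta^G(\A)\simeq\Rep^G(\A)$ as $G$-graded categories, using Haag duality (or essential duality) for $\A$ and the standard transportability argument for each graded piece.

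Next I define the $G$-action. For $h\in G$, set $\gamma_h(\rho):=\alpha_h\circ\rho\circ\alpha_h^{-1}$ on objects and $\gamma_h(T):=\alpha_h(T)$ on morphisms. A direct check shows this sends $g$-localized endomorphisms to $hgh^{-1}$-localized ones, and that $\gamma$ is a strict action by tensor autoequivalences. Faithfulness of the action is used here: it guarantees that the grading is well defined, since a $g$-localized endomorphism cannot also be $g'$-localized for $g\neq g'$ (otherwise $\alpha_g=\alpha_{g'}$ on a half-line algebra, and hence globally by the Reeh--Schlieder property).

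The crossed braiding is built with the usual transport trick. Given $\rho$ that is $g$-twisted and $\sigma$ that is arbitrary, choose unitary charge transporters $u,v$ moving $\rho$ and $\sigma$ to endomorphisms $\tilde\rho,\tilde\sigma$ localized in disjoint intervals, with $\tilde\rho$ to the left of $\tilde\sigma$ relative to $p_\infty$. Then define
\[
c_{\rho,\sigma}:=\gamma_g(v)^*\,\tilde\rho(v)^{*}\cdots
\]
in the usual form, namely $c_{\rho,\sigma}=(\gamma_g(v^*)\otimes u^*)\circ(\text{commutation of }\tilde\rho,\tilde\sigma)\circ(u\otimes v)$. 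This is a morphism $\rho\otimes\sigma\to\gamma_g(\sigma)\otimes\rho$. The twist by $\alpha_g$ appears because $\tilde\rho$ acts as $\alpha_g$ on the region containing $\tilde\sigma$.

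It remains to check three things:
\begin{itemize}
\item independence of the choices, which follows from locality via the standard ``moving intervals'' argument on the connected space of configurations;
\item naturality;
\item the crossed hexagon axioms and compatibility with $\gamma$, which reduce to identities among the transporters, exactly as in Müger's proof.
\end{itemize}
For the balancing I use the twist $\theta_\rho$ coming from the rotation $2\pi$ in the covering of the Möbius group on each twisted sector. A $g$-twisted representation carries a representation of the universal cover of $\mathrm{Diff}(S^1)$ that is twisted by $g$. I then verify the ribbon-type identity $\theta_{\rho\otimes\sigma}=c\circ c\circ(\theta_\rho\otimes\theta_\sigma)$ by a spin–statistics-type argument.

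The main obstacle is the balancing step in the non-rational, possibly non-finite-index setting. Without conjugates or finite index, the Guido–Longo spin–statistics proof cannot be quoted directly. I would first try the approach of the cited balance construction for untwisted $\Rep(\A)$ and extend it gradewise, implementing the rotation geometrically through covariance of each twisted sector. The difficult part is showing that the twisted covariance cocycles are compatible with the crossed braiding.
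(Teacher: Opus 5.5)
Your construction of the crossed braided structure is fine in outline. It is Müger's $G$-localized-endomorphism construction, which the paper recalls for an arbitrary discrete $G$ and an arbitrary conformal net, and the cited work identifies it with the structure on $\Rep^G(\A)$ through the equivalence $\mathfrak{E}$. For your equivalence, Haag duality on $S^1$ is enough: a twisted representation also carries an action of $\A(I_0^c)$, with $\mathrm{p}\in I_0^c$, which you can unitarily identify with the vacuum one.

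The genuine gap is the crossed balance. That is exactly what the statement adds to Müger, and your proposal leaves it as a plan. Three ingredients are missing.

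(i) You need on \emph{every} $g$-twisted representation an honest unitary representation of $\widetilde{\Mob}$ determined by the net itself, including reducible representations and direct integrals. Otherwise $e^{-2\pi iL_0}$ is only defined up to a phase on each object, and it is not natural. The way to get it is from diffeomorphism covariance:
\begin{itemize}
\item an implementer of a diffeomorphism supported in $I$ commutes with $\A(I^c)$, so by Haag duality it lies in $\A(I)$ up to a phase;
\item $\pi_I$ then transports it to $H$, and these local implementers glue to a projective action of $\widetilde{\Diff^+}(S^1)$;
\item its restriction to $\widetilde{\Mob}$ lifts uniquely to an honest representation.
\end{itemize}
Naturality in $H$ is then automatic, because intertwiners commute with the local implementers.

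(ii) You must show that $e^{-2\pi iL_0}$ intertwines $\pi$ with $\pi\circ g^{-1}$. That is, it is a morphism $H\to T_g(H)$, not an endomorphism of $H$, because a full turn drags every interval once across $\mathrm{p}$. You also need $T_k(\theta_H)=\theta_{T_k(H)}$, which requires the $G$-action to commute with the diffeomorphism implementers.

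(iii) Most importantly, you must prove the crossed twist identity \eqref{eq: Xtwist}. It cannot come from extending the untwisted balance ``gradewise'', since it relates objects of degrees $g$, $h$ and $gh$ and involves the action $T$.

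Finite index and conjugates are not the real obstruction. The Guido--Longo theorem identifies $\theta$ with a statistics phase, which is not needed here; the twist identity is purely geometric. In your language you need two things:
\begin{itemize}
\item the canonical covariance of $\rho\otimes\sigma$ must be built from those of $\rho$ and $\sigma$ (the analogue of $z_{\rho\sigma}=\rho(z_\sigma)z_\rho$); this holds for the Diff-induced covariance, not for arbitrary sector-by-sector choices;
\item a computation showing that, at the full turn, $\rho$ applied to the covariance unitary of $\sigma$ differs from it by transporting $\sigma$ once around, through the localization region of $\rho$ and across $\mathrm{p}$; this produces $\beta_{T_g(Y),X}\circ\beta_{X,Y}$ together with the conjugations by $\gamma$.
\end{itemize}
The cited construction carries this out in the Connes-fusion model of Section 2.3, extended to twisted representations. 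There the crossed braiding is path continuation and $\theta_H=e^{-2\pi iL_0}:H\to T_g(H)$ comes from the honest $\widetilde{\Mob}$-action. As it stands, your proposal establishes a $G$-crossed braided tensor category but not the $G$-crossed balance.
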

Therefore, the $G$-equivariantization $(\Rep^G(\A))^G$ of $\Rep^G(\A)$ is canonically a balanced tensor category.
\begin{theorem*}(\cite{fixedpoints})
 Let $G$ be a finite group acting faithfully on a conformal net $\A$. Then, there is an equivalence of balanced tensor categories 
 \begin{equation}\label{eq: FinxedPointsIntro}
 \Rep(\A^G)\cong (\Rep^G(\A))^G.  
 \end{equation}
\end{theorem*}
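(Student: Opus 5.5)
The equivalence $\Rep(\A^G)\cong(\Rep^G(\A))^G$ should be a \emph{restriction/induction} correspondence. Since $\A^G\subset\A$ is a finite-index inclusion with $G$ finite, I would use the Longo--Rehren / $\alpha$-induction machinery.

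First, define a functor $F\colon(\Rep^G(\A))^G\to\Rep(\A^G)$. An object is a $G$-graded representation $\pi=\bigoplus_g\pi_g$ with $\pi_g\in\Rep^g(\A)$, together with equivariant structure maps $u_h\colon h_*\pi\to\pi$. On the Hilbert space of $\pi$, the $g$-twisted actions of the local algebras $\A(I)$ restrict, on the fixed-point subalgebras $\A(I)^G$, to untwisted actions: the twist is by $g$, and $g$ acts trivially on $\A^G$. The next step is to check that these restricted actions glue into a representation of $\A^G$. Ordinary locality holds on $\A^G$, and the interval-dependent ambiguity in the twisted actions is invisible on fixed points, so this gluing should go through. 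However, I would not simply take the restriction of $\pi$; I would take the invariant part. The maps $u_h$ give a $G$-action commuting with $\A^G$, and I set $F(\pi,u)$ to be the $G$-invariant subspace. Tensor compatibility follows because Connes fusion over $\A(I)$ restricted to fixed points matches fusion over $\A(I)^G$ on invariant vectors. Here I would use the finite-index conditional expectation $E=\frac{1}{|G|}\sum_g g$ to identify the relative tensor products.

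Second, to prove essential surjectivity and full faithfulness, I would build an inverse via induction. Given $\rho\in\Rep(\A^G)$, take the $\alpha$-induction of $\rho$ (or the tensor product with $\A$ as a Q-system $\A^G\subset\A$ in $\Rep(\A^G)$). This gives a $G$-twisted representation of $\A$ carrying a canonical $G$-equivariant structure. The grading arises from the monodromy of $\rho$ with the Q-system, decomposed according to $G$ acting by the dual of the group. I would then check that the two composites are naturally isomorphic to the identity. For one direction this is Frobenius reciprocity for the Q-system together with the fact that the Q-system of $\A$ over $\A^G$ is the regular $\mathrm{Fun}(G)$-type algebra $\bigoplus_{\sigma}\dim(\sigma)\,\sigma$. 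For the other, invariant parts of induced objects recover $\rho$, since $\A^G$ is the fixed points of $G$ acting on $\A$.

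Third, I would match braiding and balancing. The $G$-crossed braiding on $\Rep^G(\A)$ is built from the same half-plane transport used for the DHR braiding of $\A^G$. Restricting to invariants, the $g$-crossing twist becomes trivial, leaving the equivariantized braiding. The twist $e^{2\pi i L_0}$ should match because both are rotation by $2\pi$, with the twisted sector rotation incorporating $g$.

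\textbf{The main obstacle} is essential surjectivity, that is, showing every representation of $\A^G$ arises this way. The difficulty is that representations of $\A^G$ need not be localizable a priori, and that the induced object decomposes correctly into twisted sectors. I would try first to use the Müger-type result that the category of $\A^G$-representations containing the regular algebra $\mathrm{Rep}(G)$ as a symmetric subcategory is recovered from its de-equivariantization. The de-equivariantization of $\Rep(\A^G)$ by $\mathrm{Rep}(G)$ is modules over the Q-system $\A$, and these are identified with twisted representations of $\A$. This step is where finiteness of $G$ and index $|G|$ are used essentially.
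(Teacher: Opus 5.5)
Your functor $F$ is the paper's $\mathfrak{R}$. The paper takes this theorem from \cite{fixedpoints}, where the fixed-point functor is upgraded to a balanced equivalence for an \emph{arbitrary} conformal net. Your outlines for monoidality, braiding and balance point in the right direction.

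The gap is in the inverse functor, i.e.\ essential surjectivity, which carries essentially all of the content. You hand it off to three inputs:
\begin{enumerate}
\item that $\alpha$-induction (or tensoring with the Q-system $\theta=\bigoplus_\sigma\dim(\sigma)\,\sigma$) produces $G$-twisted representations;
\item a Müger-type de-equivariantization theorem;
\item the identification of $\theta$-modules in $\Rep(\A^G)$ with $\Rep^G(\A)$.
\end{enumerate}
None of these is available at the generality of the statement. Müger's machinery is built for rigid categories of finite-dimensional DHR sectors, and the paper explicitly credits \cite{muger05} only for rational $\A$. Here $\Rep(\A^G)$ need be neither rigid nor semisimple: for $\Heis^{\Z/2}$ it contains $\int^\oplus_{\R_{>0}}H_x\,\mathrm{d}\lambda(x)$, which has no irreducible subobjects. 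So ``decompose the induced object into twisted sectors according to the monodromy of $\rho$ with $\theta$'' is not yet an argument. Moreover, $\alpha^{\pm}_\rho$ is a soliton, not a twisted representation; at best it splits into pieces of degree ranging over a conjugacy class, and that splitting is the step you have not supplied. Finally, identifying $\theta$-modules (essentially the $(\A^G\subset\A)$-solitons of the paper's outlook) with $\Rep^G(\A)$ is essentially the theorem itself. In the paper's framework it is obtained as a \emph{consequence} of \cite{fixedpoints} combined with \cite{Gui_2025}, not used as an input.

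What you need is an explicit construction that works for every $K\in\Rep(\A^G)$, with no appeal to irreducible decompositions. One way:
\begin{itemize}
\item Fix an interval $I$ and induce $K$ along the finite-index inclusion $\A(I)^G\subset\A(I)$, using $E=\frac{1}{|G|}\sum_g g$.
\item The induced space $H$ then carries a $G$-action with $H^G\cong K$, together with compatible actions of $\A(L)$ for all $L\supset I$.
\item For $J$ disjoint from $I$, the two extensions to $\A(J)$ obtained by going around either side of the circle agree on $\A(J)^G$ but not on $\A(J)$. The discrepancy comes from $G$-invariant elements of the two-interval algebra that are not in $\A(I)^G\vee\A(J)^G$.
\item You must show that the two extensions differ by the $G$-action weighted by grading projections $P_g$ in the commutant, with $u_hP_gu_h^*=P_{hgh^{-1}}$. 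Then each $P_gH$ is a genuine $g$-twisted representation in the sense of Definition~\ref{def: TwistedRep}.
\end{itemize}

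Two smaller remarks. First, localizability is not the obstacle: every nonzero representation on a separable Hilbert space is localizable, because the local algebras are type $\mathrm{III}$ factors. Second, matching the balances requires checking that the $\widetilde{\Mob}$-action built from $\A$ on $H$ restricts on $H^G$ to the one built from $\A^G$. This rests on the local implementers of diffeomorphisms lying in $\A(I)^G$, not merely on both twists being ``rotation by $2\pi$''.
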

The equivalence between the underlying braided tensor categories of \eqref{eq: FinxedPointsIntro} was proved, for $\A$ rational, in \cite{muger05}. In the current paper, we prove that the category $\Rep^{\Z/2}(\Heis)$ of twisted and untwisted representations of $\Heis$ is a continuous Tambara-Yamagami tensor category for the group $\R$, defined as follows. 

\subsection*{Continuous Tambara-Yamagami categories} Given a finite group $G$, the category $\text{Vec}\, G$ of $G$-graded, finite-dimensional vector spaces admits a canonical tensor structure induced by convolution on $G$. Taking $G$ to be abelian, Tambara and Yamagami produced in \cite{TY} tensor categories containing $\text{Vec}\,G$ plus an extra simple object $\tau$ satisfying $\C_g\otimes\tau = \tau\otimes \C_g=\tau$ and  $\tau\otimes\tau = \oplus_{g\in G}\mathbb{C}_g$, where $\mathbb{C}_g$ denotes the one-dimensional vector space in degree $g\in G$. Tensor categories with these fusion rules were shown to be classified by a symmetric nondegenerate bicharacter on $G$ and a sign $\xi\in\{\pm 1\}$. The bicharacter $\chi$ (which is only considered up to the action of $\Aut(G)$) controls the $\text{Vec}\,G-\text{Vec}\,G$-bimodule structure of the category $\text{Vec}\cdot \tau$ generated by $\tau$, and $\xi$ determines the sign of the $(\tau,\tau,\tau)$-associator.

If $G$ is a locally compact group, it also produces a continuous tensor category $\Hilb\, G$ whose simple objects are indexed by $G$ \cite[Ex. 3.11]{AM2025}. If $G$ is abelian, $\chi: G\times G\to U(1)$ is a continuous symmetric nondegenerate bicharacter and $\xi\in \{\pm1\}$ is a sign, we can also produce a continuous tensor category $\TYcal(G, \chi, \xi)$ containing $\Hilb\,G$ plus an extra simple object $\tau$, whose fusion rules are such that $\tau\otimes\tau$ is a direct integral of the $G$-indexed simple objects \cite[Prop. 4.3]{AM2025}. We call continuous tensor categories with these fusion rules \emph{continuous Tambara-Yamagami tensor categories} for the group $G$ (see \cite[Def.~4.2]{AM2025}).

\begin{theorem*}[\cite{AM2025}]
    Continuous Tambara-Yamagami tensor categories for $G$ are classified by pairs $(\chi, \xi)$, for $\chi:G\times G\to U(1)$ a continuous symmetric nondegenerate bicharacter (up to automorphisms of $G$) and $\xi\in\{\pm1\}$ a sign, under the assignment $(\chi,\xi)\mapsto \TYcal(G, \chi,\xi)$.
\end{theorem*}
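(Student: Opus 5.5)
This is the classification theorem of \cite{AM2025}, and I would prove it by adapting Tambara--Yamagami's original argument to the setting of $\mathrm{C}^*$-algebras and $\mathrm{C}^*$-correspondences. There are two halves: existence, and uniqueness up to the stated equivalence.

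\emph{Existence.} Given $(\chi,\xi)$, model the underlying bicategorical object as the $\mathrm{C}^*$-algebra $C_0(G)\oplus\C$. The first summand topologizes $\Hilb\,G$; the second is the point $\tau$. The tensor product is given by $\mathrm{C}^*$-correspondences on the various blocks:
\begin{itemize}
\item On $C_0(G)\otimes C_0(G)\to C_0(G)$, use the correspondence induced by the multiplication map $G\times G\to G$ (convolution).
\item The mixed products $C_0(G)\otimes\C\to\C$ and $\C\otimes C_0(G)\to\C$ are given by the evaluation/trivial module $L^2$-type correspondence, i.e.\ $\C_g\otimes\tau=\tau$.
\item The product $\C\otimes\C\to C_0(G)$ is $L^2(G)$ viewed as a $C_0(G)$-module, i.e.\ $\tau\otimes\tau=\int^\oplus_G \C_g\,dg$.
\end{itemize}
The associators are then written exactly as in \cite{TY}:
\begin{itemize}
\item trivial on $(g,h,k)$;
\item multiplication by $\chi(g,h)$ on the $(g,\tau,h)$ and $(\tau,g,\tau)$ blocks;
\item on the $(\tau,\tau,\tau)$ block, the unitary $\xi\cdot\mathcal F_\chi$ on $L^2(G)$, where $\mathcal F_\chi f(h)=\int_G \chi(g,h)^{-1}f(g)\,dg$ is the Fourier transform defined by $\chi$.
\end{itemize}
Nondegeneracy and continuity of $\chi$ make $\chi$ identify $G$ with $\widehat G$ topologically, so Plancherel (with suitably normalized Haar measure) makes $\mathcal F_\chi$ unitary. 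The pentagon identities then reduce to the finite TY identities, now read pointwise almost everywhere together with $\mathcal F_\chi^2=$ the inversion operator. Symmetry of $\chi$ is used exactly where it is in \cite{TY}.

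\emph{Uniqueness.} Start from a continuous TY category $\mathcal C$. First, the object $\mathrm{C}^*$-algebra must be Morita equivalent to $C_0(G)\oplus\C$, since the simples and their topology are prescribed. Second, the restriction to $\Hilb\,G$ carries a continuous $3$-cocycle. This cocycle is trivialized, because the fusion with $\tau$ forces the associator on $\Hilb\,G$ to be a coboundary; this is the same argument as in TY, now carried out with measurable/continuous cochains. After that normalization, the remaining associators are functions
\[
\chi_1(g,h),\quad \chi_2(g,h)\colon G\times G\to U(1)
\]
on the $(g,\tau,h)$ and $(\tau,g,\tau)$ blocks, together with a unitary operator $T$ on $L^2(G)$ on the $(\tau,\tau,\tau)$ block.

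Continuity of these functions comes from their being unitary intertwiners of correspondences over $C_0(G\times G)$. Such an intertwiner is a multiplier, i.e.\ an element of $C_b(G\times G,U(1))$. The pentagons involving one $\tau$ make $\chi_1$ and $\chi_2$ bicharacters. Gauge choices then give $\chi_1=\chi_2^{\pm}=\chi$, and the pentagons force $\chi$ to be symmetric.

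The pentagons involving two or three $\tau$'s determine $T$:
\begin{itemize}
\item The $(\tau,g,\tau,h)$-type pentagons show that $T$ intertwines multiplication operators with translations twisted by $\chi$. Hence $T$ is a scalar multiple of $\mathcal F_\chi$.
\item The existence of a unitary of this form forces $\chi$ to be nondegenerate: otherwise no such unitary intertwiner exists.
\item The $(\tau,\tau,\tau,\tau)$ pentagon gives $T^2=$ inversion, so the scalar is a sign $\xi$.
\end{itemize}
Finally, one checks that monoidal equivalences are exactly the automorphisms $\phi$ of $G$ acting by $\chi\mapsto\chi\circ(\phi\times\phi)$, with $\xi$ invariant. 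The sign $\xi$ is detected as the Frobenius--Schur-type invariant of $\tau$.

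\emph{Expected obstacle.} The main difficulty is the $(\tau,\tau,\tau)$ step: showing that an arbitrary unitary intertwiner of $C_0(G)$-correspondences satisfying the pentagons must be $\xi\,\mathcal F_\chi$. This is a Stone--von Neumann-type uniqueness argument. It also requires controlling measurable versus continuous cochains when trivializing the $\Hilb\,G$ cocycle. For that step I would first try to use the automatic continuity of multipliers of $C_0$-algebras.
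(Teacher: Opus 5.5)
Your route works directly with $\mathrm{C}^*$-correspondences: you keep the fixed algebra $C_0(G)\oplus\C$ and correspondence $\TYcal_G$, gauge-normalize the associators as in \cite{TY}, and pin down the $(\tau,\tau,\tau)$-block by a Stone--von Neumann argument. That differs from what the paper relies on. The paper imports the result from \cite{AM2025}, where it is packaged with the classification of Tambara--Yamagami $\mathrm{W}^*$-tensor categories (Definition \ref{def: WTY}, Theorem \ref{thm: TYClassificationThm}). The tools visible there are:
\begin{itemize}
\item associators represented by $L^\infty$ functions via naturality;
\item almost-everywhere identities upgraded by Ramsay's theorem \cite{ramsay} and automatic continuity of measurable homomorphisms \cite{sasvari} (compare the proof of Proposition \ref{prop: braidingsTY});
\item $\tau\otimes\tau\cong L^2(G)$ derived from $\Hom(\tau\otimes\tau,\tau)=0$ rather than assumed.
\end{itemize}
That $\mathrm{W}^*$-version is what the paper actually uses for $\Rep^{\Z/2}(\Heis)$; your route would give only the continuous statement. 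Your existence half agrees with Definition \ref{def: TYcal(G,chi,xi)}.

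The uniqueness half, however, has a false step at its analytic core. Unitary intertwiners are continuous multipliers only on blocks with at most one $\tau$, where the module is $C_0(G^k)$ itself. So the measurable-versus-continuous worry you raise for the $\Hilb\,G$ $3$-cocycle does not arise. On blocks with two $\tau$'s the module is $C_0(G)\otimes L^2(G)$, and an intertwiner is only a strictly continuous map $G\to B(L^2(G))$.
\begin{itemize}
\item For $(\tau,g,\tau)$ it takes values in $U(L^\infty(G))$; any constant $1\otimes M_u$ is admissible. So your $\chi_2(g,x)$ is only measurable in $x$, and its pentagon identities hold only almost everywhere.
\item For $(g,\tau,\tau)$ and $(\tau,\tau,g)$ it has the form $g\mapsto M_{a(g)}\lambda_{\pm g}$, with $\lambda_g$ the translations. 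Normalizing it means solving the measurable translation-cocycle equation $a(g)(x)=u(x+g)\overline{u(x)}$ with $u\in U(L^\infty(G))$, which is the gauge freedom on the $(\tau,\tau)$-block.
\end{itemize}
This is repairable: use Fubini for the cocycle, ergodicity of translations to identify $\chi_2$ almost everywhere with the continuous $\chi_1$, or Ramsay plus automatic continuity as in the paper. But it is exactly the step your sketch asserts away.

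The $(\tau,\tau,\tau)$ step is also mis-stated. A pentagon of type $(\tau,g,\tau,h)$ contains no copy of $\alpha_{\tau,\tau,\tau}$. The intertwining relations $T\lambda_gT^*=M_{\chi(g,\cdot)^{\pm1}}$, and those with translations and multiplications exchanged, come from the pentagons with three $\tau$'s and one group element.

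Your order of argument is also circular: $\mathcal{F}_\chi$ is a unitary only once $\chi$ is known to be nondegenerate, so you must extract nondegeneracy from these relations first. For instance:
\begin{itemize}
\item $T$ makes the regular representation unitarily equivalent to $g\mapsto M_{\chi(g,\cdot)}=\int^\oplus_G j_\chi(x)\,\mathrm{d}\mu(x)$.
\item Multiplicity-freeness then forces $j_\chi$ to be injective.
\item Equivalence of measure classes forces $j_\chi(G)$ to be conull in $\widehat{G}$, hence open (Steinhaus), hence all of $\widehat{G}$.
\item The open mapping theorem makes $j_\chi$ a homeomorphism.
\end{itemize}
Only then does irreducibility of translations together with $L^\infty(G)$ give $T\in U(1)\cdot\mathcal{F}_\chi$, with the four-$\tau$ pentagon fixing the scalar to a sign.

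Finally, your Morita step is unnecessary, since the algebra and the tensor correspondence are fixed by definition. It is also false as a general principle: the spectrum of a $\mathrm{C}^*$-algebra does not determine its Morita class.
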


 We show in Proposition \ref{prop: braidingsTY} that $\TYcal(G, \chi, \xi)$ admits a unique $\Z/2$-crossed braiding, and that the induced braiding on the subcategory $\Hilb\,G$ is given by the map $C_0(G\times G)\to C_0(G\times G)$, $f(g,h)\mapsto \chi(g,h)f(g,h)$. 
 
 \subsection*{Main results} There are, up to automorphisms of $\R$, two continuous symmetric nondegenerate bicharacters on $\R$, namely
\[
\begin{array}{cccc}
\chi_+:&\R\times\R&\to &U(1)  \\
     &(x,y)&\mapsto &e^{ixy} 
\end{array}
\hspace{2cm}\text{and}
\hspace{2cm}
\begin{array}{cccc}
\chi_-:&\R\times\R&\to &U(1)  \\
     &(x,y)&\mapsto &e^{-ixy} .
\end{array}
\]

\begin{reptheorem}{thm: MainThmCrossed}\label{theoremintroTY}
The $\Z/2$-crossed braided tensor category $\Rep^{\mathbb{Z}/2}(\Heis)$ of twisted and untwisted representations of the Heisenberg conformal net is equivalent to the continuous Tambara-Yamagami tensor category
\[
\TYcal(\R, \chi_-, +1)
\]
 with its unique $\Z/2$-crossed braiding.
\end{reptheorem}

As a corollary, we obtain that $\Rep(\Heis)$ is equivalent to $\Hilb\,\R$, with the braiding induced by the map $C_0(\R\times \R)\to C_0(\R\times \R)$ given by $f(x,y)\mapsto e^{-ixy}f(x,y)$. In Theorem \ref{thm: CharacterizationRepHeis}, we also show that the continuous balance is induced by the map $C_0(\R)\to C_0(\R)$ given by $f(x)\mapsto e^{-ix^2}f(x)$. The latter result was well-known to experts but only partially available in the literature \cite{bmt88, fredenhagen, MR4677578}. We obtain that Conjecture \ref{conjectureintro} holds for $\A = \Heis.$

Combining the equivalence \eqref{eq: FinxedPointsIntro} with Theorem \ref{thm: MainThmCrossed}, we obtain the following result. \vspace{-.1cm}

\begin{reptheorem}{thm: RepFixedPointsMainTheorem}\label{maintheoremintro}
The braided tensor category $\Rep(\Heis^{\mathbb{Z}/2})$ of representations of the conformal net $\Heis^{\mathbb{Z}/2}$ is equivalent to the $\Z/2$-equivariantization 
\[
\TYcal(\R, \chi_-, +1)^{\Z/2}
\]
of the continuous Tambara-Yamagami tensor category $\TYcal(\R, \chi_-, +1)$ with its unique $\Z/2$-crossed braiding.
\end{reptheorem}

In Section \ref{sec: GettingSignsRight}, we show that there are two possible balances on $\TYcal(\R, \chi_-, +1)^{\Z/2}$, and we explicitly compute them. Although we cannot determine which of the two corresponds to the balance on $\Rep(\Heis^{\Z/2})$ (see Conjecture \ref{Conjecture: WhichCrossedBalance}), both are continuous. We expect the theory of crossed products of $\mathrm{C}^*$-algebras and $\mathrm{C}^*$-correspondences (\cite{echterhoff}) to provide a theory of equivariantizations of continuous categories by finite groups, which, used on Theorem \ref{thm: RepFixedPointsMainTheorem}, should prove that Conjecture \ref{conjectureintro} holds for the conformal net~$\Heis^{\Z/2}$.

The continuous tensor category $\TYcal(\R, \chi_-, +1)^{\Z/2}$ has irreducible objects parametrized by the half-line with a double origin, plus an extra two irreducible representations, which we denote by $\tau_+$ and $\tau_-$. Let us denote by $H_x$ the irreducible representation of $\Heis^{\Z/2}$ corresponding to a point $x\in\R_{>0}$, and by $H^\tau_\pm$ the irreducible representations corresponding to $\tau_\pm$. Denoting by $\lambda$ the restriction of the Lebesgue measure to $\R_{>0}$, it holds that all four fusion products $H^\tau_\pm\otimes H^\tau_{\pm}$ in $\Rep(\Heis^{\Z/2})$ are given by the direct integral
\[
\int^\oplus_{\R_{>0}} H_x \text{d}\lambda(x).
\]

\subsection*{Outlook}
We expect that, if $G$ is a (not necessarily finite) compact group acting on a conformal net $\A$, a version of the $G$-twisted representations of $\A$ also controls representations of $\A^G$ (cf. the equivalence \eqref{eq: FinxedPointsIntro}). We say that an $(\A^G\subset \A)$-\emph{soliton} is a representation of $\A^G$ with compatible actions of all the algebras $\A(I)$ whenever $I\subset S^1$ is away from a fixed point $\mathrm{p}\in S^1$. Combining the results in \cite{Gui_2025, fixedpoints}, it holds that, if $G$ is finite, the category $\text{Sol}_{\A^G\subset \A}$ of $(\A^G\subset \A)$-solitons is equivalent to $\Rep^G(\A)$. We expect $\text{Sol}_{\A^G\subset \A}$ to have the structure of a continuous $G$-crossed balanced tensor category. In particular, an object of $\text{Sol}_{\A^G\subset \A}$ is a direct integral of $g$-twisted representations, for $g\in G$. Taking a continuous equivariantization, we expect an equivalence of balanced tensor categories (even if $G$ is not finite)
\begin{equation}\label{eq: Lastintro}\Rep(\A^G)\cong\big(\text{Sol}_{\A^G\subset \A}\big)^G.\end{equation}

Applied to $O(n)$ acting on $\Heis^{\otimes n}$, or to subgroups of $\Aut(G)$ acting on loop group conformal nets associated to a compact, simple Lie group $G$ (these are associated to affine VOAs),~we expect the equivalence \eqref{eq: Lastintro} to produce many more examples of non-rational conformal~nets whose categories of representations can be described using the language of continuous tensor~categories.

\section*{Acknowledgements}

I am grateful to André Henriques for constant conversations, and to Sven Möller for making me realize there is a unique $\Z/2$-crossed braiding on a finite Tambara-Yamagami tensor category. I thank Josep Fontana McNally for proofreading this paper. This work has been funded by the EPSRC grant
EP/W524311/1.

For the purpose of Open Access, the author has applied a CC BY public copyright license
to any Author Accepted Manuscript version arising from this submission.

\addtocontents{toc}{\protect\setcounter{tocdepth}{5}}

\section{Preliminaries}
In this paper, all Hilbert spaces and $\mathrm{C}^*$-algebras are assumed to be separable, and all measure spaces, standard measure spaces.

\subsection{Continuous Tambara-Yamagami tensor categories}
In \cite{AM2025}, we introduced and classified continuous Tambara-Yamagami tensor categories and Tambara-Yamagami $\mathrm{W}^*$-tensor categories. In order to make this paper self-contained, we recall the main definitions and results here. We first introduce continuous tensor categories.

Let $A$ be a $\mathrm{C}^*$-algebra. A \emph{(right) Hilbert $A$-module} consists of an algebraic (right) $A$-module $\mathcal{E}$ together with a $\mathbb{C}$- and $A$-sesquilinear function $\langle -,-\rangle:\mathcal{E}\times \mathcal{E}\to A$ satisfying $\langle \xi, \eta\rangle = \langle \eta, \xi\rangle^*$, that $\langle \xi,\xi \rangle\in A$ is a positive element for all $\xi, \eta\in\mathcal{E}$ and that $(\mathcal{E}, ||\langle-, -\rangle||^{1/2})$ is complete. Given Hilbert $A$-modules $\mathcal{E}$ and $\mathcal{D}$, a $\mathbb{C}$-linear map $\mathcal{E}\to \mathcal{D}$ is \emph{adjointable} if there exists an adjoint $u^*: \mathcal{D}\to \mathcal{E}$ with respect to the $A$-valued inner product. Given another $\mathrm{C}^*$-algebra $B$, a \emph{$B - A$-correspondence} is a Hilbert $A$-module $\mathcal{E}$ with a nondegenerate left $*$-action of $B$ by adjointable operators. Note that a $\C-A$-correspondence is simply a Hilbert $A$-module and an $A-\C$-correspondence is a nondegenerate representation of $A$ on a Hilbert space. An \emph{intertwiner} between $A-B$-correspondences is a $B$-linear adjointable operator. If $C$ is another $\mathrm{C}^*$-algebra, and $\mathcal{E}$ and $\mathcal{D}$ are $C-B$- and $B-A$-correspondences respectively, there exists a relative tensor product construction producing a new $C-A$-correspondence 
\(
\mathcal{E}\otimes_B\mathcal{D}
\), see \cite[II.7.4.4]{OAbook}. We write $\text{Corr}(B, A)$ for the set of $B-A$-correspondences.

\begin{definition}(\cite[Def. 3.2]{AM2025})
    The bicategory $\MorC$ of continuous categories is the bicategory with objects $\mathrm{C}^*$-algebras, 1-morphisms $\Hom_{\MorC}(A,B) = \text{Corr}(B,A)$ and 2-~morphisms intertwiners. Composition of correspondences is given by the relative tensor product.
\end{definition}

Given a continuous category $A\in\MorC$, we think of $A$ as topologizing its underlying linear category $\Rep(A)$ of nondegenerate representations. Indeed, the set of (irreducible) representations of $A$ has a topology, the Fell topology, and one can take direct integrals of representations of $A$. If one only has access to $\Rep(A)$ as opposed to $A$, these two notions are no longer well-defined. Therefore we think of the assignment $A\mapsto \Rep(A)$ as forgetting the topology of a continuous category and recovering its underlying linear category. This assignment can be upgraded to a forgetful bifunctor $\mathfrak{F}$ from $\MorC$ into the bicategory of linear categories \cite[Prop. 3.6]{AM2025}

The maximal tensor product of $\mathrm{C}^*$-algebras makes $\MorC$ into a monoidal bicategory \cite[Prop. 3.7]{AM2025}, and leads us to the following definition.

\begin{definition}(\cite[Def. 3.8]{AM2025})
    A \emph{continuous tensor category} is a (coherently associative) algebra object in the monoidal bicategory $\MorC$ all whose structure data are unitaries.
\end{definition}

Therefore, a continuous tensor category consists of the following data: 
\begin{enumerate}
    \item a $\mathrm{C}^*$-algebra $A\in \MorC$;
    \item an $A-A\otimes A$-correspondence $\Tcal$ that encodes the tensor product;
    \item an $A-\C$-correspondence $\1$, that is, an $A$-representation $\1$ that encodes the unit;
    \item a unitary intertwiner $\alpha: \Tcal\otimes_{A\otimes A}\big(\Tcal\otimes A\big)\xrightarrow{\cong} \Tcal\otimes_{A\otimes A}\big(A\otimes \Tcal\big)$ that encodes the associator;
    \item left and right unitors given by unitary intertwiners $\lambda: \Tcal\otimes_{A\otimes A}(A\otimes \1)\xrightarrow{\cong} A$ and $\rho: \Tcal\otimes_{A\otimes A}(\1\otimes A)\xrightarrow{\cong} A$,
\end{enumerate}
satisfying a list of coherences.

\begin{example}
    Let $G$ be a locally compact group. The $\mathrm{C}^*$-algebra $C_0(G)$ of continuous functions on $G$ vanishing at infinity has a canonical structure of a continuous tensor category. Indeed, the group operation on $G$ induces a left action of $C_0(G)$ on the trivial Hilbert $C_0(G)\otimes C_0(G) = C_0(G\times G)$-module $C_0(G\times G)$. The unit is given by the 1-dimensional $C_0(G)$-representation given by multiplication after evaluating on the identity, and the associator can be chosen to be trivial. We denote this continuous tensor category by $\Hilb\, G$.
\end{example}

The underlying linear category $\Rep(A)$ of a continuous category $A\in\MorC$ is naturally a $\mathrm{W}^*$-category, and the underlying linear tensor category of a continuous tensor category is a $\mathrm{W}^*$-tensor category, defined as follows. A $*$-category is a $\C$-linear category equipped with a dagger structure $*:\Hom(X,Y)\to \Hom(Y,X)$ which is $\C$-anti-linear and satisfies $f^{**} = f$ and $(f\circ g)^* = g^*\circ f^*.$ A $*$-functor between $*$-categories is a linear functor compatible with the $*$-structure. We only consider idempotent and direct sum complete $*$-categories.

A $\mathrm{W}^*$\emph{-category} is a $*$-category $\Ccal$ such that $\End_\Ccal(X)$ is a von Neumann algebra for all $X\in \Ccal.$ A $*$-functor of $\mathrm{W}^*$-categories is \emph{normal} if it induces normal maps between von Neumann algebras of endomorphisms.

\begin{definition}
A $\mathrm{W}^*$\emph{-tensor category} is a $\mathrm{W}^*$-category $T$ with a monoidal structure whose tensor functor
\[
\otimes:T\times T\to T
\]
is a bilinear functor normal in each variable, and whose associators and unitors are unitary. 
\end{definition}

The bifunctor $\mathfrak{F}: \MorC\to \mathrm{W}^*\text{Cat}$ from the bicategory of continuous categories into the bicategory of $\mathrm{W}^*$-categories extends to a bifunctor  \[
\mathfrak{F}: \text{ContTensCat}\to \text{$\mathrm{W}^*$TensCat}
\] from the bicategory $\text{ContTensCat}$ of continuous tensor categories into the bicategory $\text{$\mathrm{W}^*$TensCat}$ of $\mathrm{W}^*$-tensor categories \cite[Prop. 3.10]{AM2025}. We do not know if this bifunctor is full.

\begin{example}\label{ex HilbW*Tensor}
    Given $G$ a locally compact group, with operation $m:G\times G\to G$, we continue denoting by $\Hilb\, G$ the image of the continuous tensor category $\Hilb\,G$ under the bifunctor $\mathfrak{F}$. The $\mathrm{W}^*$-tensor category $\Hilb\, G$ can be described as follows. The simple objects of $\Rep(C_0(G))$ are parametrized by points in $G$. Given $g\in G$, the corresponding irreducible representation of $C_0(G)$ has underlying Hilbert space $\mathbb{C}$ and a function $f\in C_0(G)$ acts by multiplication by $f(g)$. We denote such a representation by $\delta_g\in \Rep(C_0(G))$. More generally, given a locally compact, Hausdorff space $T$, a continuous map $p:T\to G$ and a measure $\upsilon$ on $T$, we obtain an object $\int^\oplus_{t\in T}\delta_{p(t)}\mathrm{d}\upsilon(t)\in \Rep(C_0(G))$ as follows. The underlying Hilbert space of $\int^\oplus_{t\in T}\delta_{p(t)}\mathrm{d}\upsilon(t)$ is $L^2(\upsilon):=L^2(T, \upsilon)$ and the $C_0(G)$-action is given by pointwise multiplication after pulling back along $p$. The full subcategory of $\Rep(C_0(G))$ on objects of this form is equivalent to $\Rep(C_0(G))$, see for example \cite[Prop.~4.4]{AM2025}.

The $\mathrm{W}^*$-tensor structure of $\Hilb\,G$ can be described as follows. Given objects $\int^\oplus_{t\in T}\delta_{p(t)}\mathrm{d}\upsilon(t)$ and $\int^\oplus_{s\in S}\delta_{n(s)}\mathrm{d}\nu(s)$ in $\Rep(C_0(G))$ defined as above, the tensor product $\int^\oplus_{t\in T}\delta_{p(t)}\mathrm{d}\upsilon(t)\otimes \int^\oplus_{s\in S}\delta_{n(s)}\mathrm{d}\nu(s)$ in $\Rep(C_0(G))$ is defined to be
\begin{equation}\label{eq: TensorHilbR}
\int^\oplus_{t\in T}\delta_{p(t)}\mathrm{d}\upsilon(t)\otimes \int^\oplus_{s\in S}\delta_{n(s)}\mathrm{d}\nu(s)  := \int^\oplus_{(t, s)\in T\times S}\delta_{m\big(p(t), n(s)\big)}\mathrm{d}(\upsilon\times\nu)(t, s).
\end{equation}
The associator is that of $\Hilb$ and the unit is $\delta_e$ for $e\in G$ the unit of $G$.
\end{example}

\begin{remark}
    If $G$ is a locally compact group, we write $\delta_g\in\Rep(C_0(G))$ for the simple representation indexed by $g\in G$, as described in Example \ref{ex HilbW*Tensor}. In addition, whenever we write $\int^\oplus_{t\in T}\delta_{p(t)}\mathrm{d}\upsilon(t)\in \Rep(C_0(G))$ we mean that $T$ is a locally compact, Hausdorff space, $p$ is a continuous map, $\upsilon$ is a measure on $T$, and the object $\int^\oplus_{t\in T}\delta_{p(t)}\mathrm{d}\upsilon(t)\in \Rep(C_0(G))$ is defined as in Example \ref{ex HilbW*Tensor}. In order to avoid cluttering the notation, we sometimes write $\upsilon$ or $(p,\upsilon)$ in place of $\int^\oplus_{t\in T}\delta_{p(t)}\mathrm{d}\upsilon(t)$ when the rest of the data is clear from the context. In line with this notation, we may also write Equation \eqref{eq: TensorHilbR} as $\upsilon\otimes\nu = \upsilon\times\nu$. If we want to recall the maps $p$ and $n$, we write $(p, \upsilon)\otimes (n, \nu) = (p+n, \upsilon\times\nu)$.
\end{remark}

We now introduce continuous Tambara-Yamagami tensor categories. In \cite{TY}, Tambara and Yamagami introduced a class of fusion categories with a unique non-invertible simple object. Given a finite abelian group $G$ (whose group operation we write additively), one can construct the finite semisimple category whose simple objects are given by elements of $G$ plus an extra simple object $\tau$, and one can consider fusion rules satisfying
\[
g\otimes h \cong  g+h\hspace{2cm} g\otimes \tau \cong\tau\cong \tau\otimes g \hspace{2cm}\tau\otimes\tau \cong \bigoplus\limits_{g\in G} g,
\]
for $g,h\in G$. Tambara and Yamagami proved that these fusion rules admit compatible associators and classified fusion categories with these fusion rules. Let $\alpha$ be a compatible set of associators for the Tambara-Yamagami fusion rules above. Then, the subcategory $\mathcal{C}_G$ spanned by $\{g\}_{g\in G}$ is a tensor subcategory with a bimodule $\text{Vec}\cdot \tau$. This implies that the associator on $\mathcal{C}_G$ can be trivialized and that the tensor category $\mathcal{C}_G$ is equivalent to $\text{Vec}\,G$, the category of $G$-graded, finite dimensional vector spaces with tensor product given by convolution. Fix $g,h\in G$ and choose isomorphisms $\phi_g: g\otimes\tau\cong \tau$ and $\phi_h: \tau\otimes h\cong \tau$. Then, the composition
\[
\tau \xrightarrow{\phi_h^{-1}} \tau\otimes h \xrightarrow{\phi^{-1}_g\otimes\id} (g\otimes \tau)\otimes h\xrightarrow{\alpha_{g,\tau, h}}g\otimes (\tau\otimes h) \xrightarrow{\id\otimes \phi_h} g\otimes \tau \xrightarrow{\phi_g} \tau
\]
is independent of $\phi_g$ and $\phi_h$ and is a multiple $\chi(g,h)\cdot\id_{\tau}$ of the identity $\id_\tau$. The pentagon identities imply that $(g,h)\in G\times G\mapsto \chi(g,h)\in\mathbb{C}^\times$ defines a symmetric nondegenerate bicharacter on $G$. Actually, the bicharacter $\chi$, together with a sign $\xi\in\{\pm1\}$ controlling the sign of the $(\tau,\tau,\tau)$-associator, completely determine the tensor category, see \cite[Thm.~3.2]{TY}. The bicharacter is taken up to the action of $\Aut(G)$. In addition, the fusion category $\TYcal(G, \chi, \xi)$ associated to such a bicharacter $\chi$ and a sign $\xi$ is explicitly given in \cite[Def. 3.1]{TY}.

Given a locally compact abelian group $G$, in \cite[Sec. 4.1]{AM2025} we described, on the continuous category $C_0(G)\oplus \mathbb{C}$, a tensor $\mathrm{C}^*$-correspondence $\TYcal_G$ mimicking the Tambara-Yamagami fusion rules. Here, instead of describing $\TYcal_G$, we describe its image $\mathfrak{F}(\TYcal_G): \Rep\Big(\big(C_0(G)\oplus\mathbb{C}\big)\otimes \big(C_0(G)\oplus\C\big)\Big)\to \Rep\big(C_0(G)\oplus\C\big)$ under the bifunctor $\mathfrak{F}$. There is a canonical equivalence of $\mathrm{W}^*$-categories  $\Rep\big(C_0(G)\oplus\C\big)\cong \Rep(C_0(G))\oplus\Rep(\C)$, and we may denote by $\tau$ the canonical simple object of $\Rep(\C) = \Hilb$. Similarly, we have $\Rep\Big(\big(C_0(G)\oplus\mathbb{C}\big)\otimes \big(C_0(G)\oplus\C\big)\Big)\cong \Rep\big(C_0(G)\otimes C_0(G)\big)\oplus\Rep(C_0(G)\otimes\C)\oplus \Rep\big(\C\otimes C_0(G)\big)\oplus \Rep(\C)$. 

The $\mathrm{C}^*$-tensor correspondence $\TYcal_G\in \text{Corr}\Big(\big(C_0(G)\oplus\mathbb{C}\big)\otimes \big(C_0(G)\oplus\C\big), C_0(G)\oplus\C\Big)$ is such that its image under $\mathfrak{F}$ induces the $\mathrm{W}^*$-tensor product 
\[\hspace{-.5cm}\mathfrak{F}(\TYcal_G): \Rep\big(C_0(G)\otimes C_0(G)\big)\oplus\Rep\big(C_0(G)\otimes\C\big)\oplus \Rep\big(\C\otimes C_0(G)\big)\oplus \Hilb\cdot \tau\to \Rep\big(C_0(G)\big)\oplus\Hilb\cdot \tau\]
restricting to the tensor product $\Rep(C_0(G)\otimes C_0(G))\to \Rep(C_0(G))$ of $\Hilb\,G$, together with
\[
\upsilon\otimes\tau \cong\tau\otimes\upsilon\cong L^2 (\upsilon)\cdot \tau \in \Rep(\C) \hspace{2cm} \tau\otimes\tau\cong L^2(G)\in \Rep(C_0(G)),
\]
for all $\int^\oplus_{t\in T}\delta_{p(t)}\mathrm{d}\upsilon(t)\in \Rep(C_0(G))$, where $L^2(G)\in \Rep(C_0(G))$ denotes the regular representation of $C_0(G)$. 
\begin{definition}(\cite[Def. 4.2]{AM2025})
    A \emph{Tambara-Yamagami continuous tensor category} for an abelian group $G$ is a continuous tensor category whose underlying continuous category is $C_0(G)\oplus\C$ and whose tensor $\mathrm{C}^*$-correspondence is $\TYcal_G$.
\end{definition}

We next introduce the classification of Tambara-Yamagami continuous tensor categories for $G$. Actually, in Section \ref{sec: RepZ/2AsTY} we will also need a characterization of $\mathrm{W}^*$-tensor categories that arise as the image under $\mathfrak{F}$ of continuous Tambara-Yamagami tensor categories. We introduced the following definition in \cite[Def. 4.6]{AM2025}, and proved that it is a characterization of $\mathrm{W}^*$-tensor categories arising under $\mathfrak{F}$ from continuous Tambara-Yamagami tensor categories in \cite[Thm. 4.18]{AM2025}.

\begin{definition}\label{def: WTY}
A \emph{Tambara-Yamagami $\mathrm{W}^*$-tensor category for $G$} is a $\mathrm{W}^*-$tensor category whose underlying $\mathrm{W}^*$-category is $\Rep(C_0(G))\oplus\Hilb\cdot\tau$ and is such that it
\begin{enumerate}
\item admits natural isomorphisms
\[
\upsilon\otimes\nu \overset{}{\cong}  \upsilon\times \nu\hspace{1cm}
\upsilon\otimes \tau \overset{}{\cong} L^2(\upsilon)\cdot\tau\overset{}{\cong}  \tau\otimes \upsilon,
\]
for all $\int^\oplus_{t\in T}\delta_{p(t)}\mathrm{d}\upsilon(t),\int^\oplus_{s\in S}\delta_{n(s)}\mathrm{d}\nu(s) \in\Rep(C_0(G))$;
\item satisfies that $\tau\otimes\tau\neq 0$ and $\Hom_{\Rep(C_0(G))\oplus \Hilb\cdot \tau}(\tau\otimes \tau, \tau) = 0$.
\end{enumerate}
\end{definition}

Note that it holds by definition that the image under $\mathfrak{F}$ of any continuous Tambara-Yamagami tensor category is a Tambara-Yamagami $\mathrm{W}^*$-tensor category, but in a Tambara-Yamagami $\mathrm{W}^*$-tensor category we do not require that $\tau\otimes \tau\cong L^2(G)$. However, the constraint that the object $\tau\otimes\tau$ admits no morphisms to $\tau$ is enough to argue that any Tambara-Yamagami $\mathrm{W}^*$-tensor category arises  as the image under $\mathfrak{F}$ of a continuous Tambara-Yamagami tensor category, see Theorem \ref{thm: TYClassificationThm}.

Both continuous Tambara-Yamagami tensor categories and Tambara-Yamagami $\mathrm{W}^*$-tensor categories are classified by a choice of a continuous symmetric bicharacter $G\times G\to U(1)$ which is nondegenerate in the sense that it induces an isomorphism $G\cong \hat{G}$ from $G$ into its Pontryagin dual, and a sign $\xi\in\{\pm1\}$. The role of $\chi$ and $\xi$ in building the associators is the same as in the case of finite Tambara-Yamagami tensor categories. Following the finite case \cite{TY}, we describe next the Tambara-Yamagami $\mathrm{W}^*$-tensor category associated to such a pair $(\chi, \xi)$. Let $\chi: G\times G\to U(1)$ be a continuous symmetric nondegenerate bicharacter and let $\xi\in \{\pm1\}$ be a sign. We write $\mu$ for the Haar measure on $G$. Note that the object $\int^\oplus_{g\in G}\delta_g\mathrm{d}\mu(g)\in \Rep(C_0(G))$ is the regular representation $L^2(G)=L^2(G, \mu)\in \Rep(C_0(G))$. We denote by $\hat{G}$ the Pontryagin dual of $G$, by $\hat{\mu}$ its Haar measure, and by $\mathcal{F}: L^2(\hat{G})\to L^2(G)$ the unitary Fourier~transform. We write $j_\chi: G\xrightarrow{\cong}\hat{G}$ for the isomorphism $j_\chi(y)(x) = \chi(x,y).$ Then, there is a unique positive real number $c_\chi\in \R_{>0}$ such that $(j_\chi)_*\mu = c_\chi\cdot \hat{\mu}$, and we define $J_\chi: L^2(G)\to L^2(\hat{G})$ as the unitary $J_\chi(f)(\eta) = c_\chi^{-1/2}f(j_\chi^{-1}(\eta))$ for all $f\in L^2(G).$

We use additive notation for the operation on $G$. Given a Hilbert space $H\in \Hilb$ and a representation $K\in \Rep(C_0(G))$, we write $H\cdot K\in \Rep(C_0(G))$ for the representation $K$ with multiplicity $H$. Note that the underlying Hilbert space of $H\cdot K$ is the Hilbert space tensor product $H\otimes K$. 

\begin{definition}\label{def: TYcal(G,chi,xi)}
The Tambara-Yamagami $\mathrm{W}^*-$tensor category $\mathcal{TY}(G, \chi, \xi)$ is the following $\mathrm{W}^*$-tensor category.
\begin{enumerate}
\item The underlying $\mathrm{W}^*$-category is $\Rep(C_0(G)\oplus \C)\cong \Rep(C_0(G))\oplus \Hilb\cdot \tau$;
\item The tensor functor is given by
\[
\upsilon\otimes \nu = \upsilon\times \nu,\hspace{1cm} \upsilon\otimes \tau= L^2(\upsilon)\cdot \tau,\hspace*{1cm} \tau\otimes \upsilon = L^2(\upsilon)\cdot\tau, \hspace{1cm} \tau\otimes \tau = \mu, 
\]
for any objects $\int^\oplus_{t\in T}\delta_{p(t)}\mathrm{d}\upsilon(t),\, \int^\oplus_{s\in S}\delta_{n(s)}\mathrm{d}\nu(s)\in\Rep(C_0(G))$;
\item The associators are as follows, where we use objects $\int^\oplus_{t\in T}\delta_{p(t)}\mathrm{d}\upsilon(t)$, $\int^\oplus_{s\in S}\delta_{n(s)}\mathrm{d}\nu(s)$, and $\int^\oplus_{r\in R}\delta_{z(r)}\mathrm{d}\eta(r)$ of $\Rep(C_0(G))$, as well as $L^2(G) = \int^\oplus_{g\in G}\delta_g\mathrm{d}\mu(g)\in \Rep(C_0(G))$,
\[
\begin{array}{cccc}
\alpha_{\upsilon, \nu, \eta}:& \upsilon\times\nu\times\eta&\xrightarrow{\id}& \upsilon\times\nu\times \eta\vspace{.3cm}\\
\alpha_{\tau, \upsilon,\nu} = \alpha_{\upsilon, \nu, \tau} :& L^2(\upsilon\times \nu)\cdot \tau& \xrightarrow{\id} &L^2(\upsilon\times\nu )\cdot \tau\vspace{.3cm}\\
\alpha_{\upsilon,\tau,\nu}  :& L^2(\upsilon\times \nu)\cdot \tau& \to &L^2(\upsilon\times\nu )\cdot \tau\\ 
&f(t,s)&\mapsto& \chi\big(p(t), n(s)\big)f(t,s)\vspace{.3cm}\\
\alpha_{\upsilon,\tau,\tau}:&L^2( \upsilon)\cdot \mu& \to &\upsilon \times\mu\\
&f(t, x)&\mapsto& f\big(t, p(t)+x\big)\vspace{.3cm}\\
\alpha_{\tau,\tau, \upsilon}:& \mu\times\upsilon& \to &L^2(\upsilon)\cdot \mu\\
&f(x,t)&\mapsto& f\big(t,-p(t)+x\big)\vspace{.3cm}\\
\alpha_{\tau,\upsilon,\tau}:& L^2(\upsilon)\cdot \mu &\to & L^2(\upsilon)\cdot \mu\\
&f(t,x)&\mapsto &\chi\big(p(t), x\big)\cdot f(t,x)
\end{array}
\]
and 
\[	
\alpha_{\tau,\tau,\tau} : L^2(G,\mu)\cdot \tau\xrightarrow{J_\chi}L^2(\hat{G},\hat{\mu})\cdot \tau\xrightarrow{\xi\cdot\mathcal{F}}L^2(G,\mu)\cdot \tau.
\]
\end{enumerate}
\end{definition}

Such a $\mathrm{W}^*$-tensor category $\TYcal(G, \chi, \xi)$ is indeed a Tambara-Yamagami $\mathrm{W}^*$-tensor category \cite[Cor. 4.5]{AM2025}, meaning that the pentagon equations hold. We can now state the classification result of continuous Tambara-Yamagami and Tambara-Yamagami $\mathrm{W}^*$-tensor~categories.

\begin{theorem}(\cite[Thm. 4.18]{AM2025})\label{thm: TYClassificationThm} Let $G$ be a locally compact abelian group. There are bijections
\[\hspace{-.8cm}\begin{tikzcd}[column sep=small]
    \begin{array}{c}\nicefrac{\left\{\begin{array}{l}(\chi, \xi)\ |\ \text{$\chi\phantom{,}$: $G\times G\to U(1)$ a continuous}\\     \text{symmetric nondegenerate bicharacter} \\ \text{and $\xi\in\{\pm1\}$}   \end{array}\right\}}{\text{Aut}(G)}\end{array}
    &&
    \begin{array}{c} \nicefrac{\left\{\begin{array}{l}\text{Tambara-Yamagami }\mathrm{W}^*\text{-}\\     \text{tensor categories for $G$}   \end{array}\right\}}{\mathrm{W}^*-\otimes\ \text{equiv.}} \end{array}
    \\
    \begin{array}{c} \nicefrac{\left\{\begin{array}{l}\text{continuous Tambara-Yamagami}\\     \text{tensor categories for $G$}   \end{array}\right\}}{\text{continuous $\otimes$ equiv.}} \end{array}
	\arrow["{\TYcal(G, -,-)}", from=1-1, to=1-3]
	\arrow["{\TYcal(G, -, -)}"', from=1-1, to=2-1]
	\arrow["\mathfrak{F}"', from=2-1, to=1-3]
\end{tikzcd}\]
\end{theorem}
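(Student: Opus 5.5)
The plan is to establish the three arrows separately and then check that the triangle commutes. Well-definedness of the top arrow is \cite[Cor.~4.5]{AM2025}: $\TYcal(G,\chi,\xi)$ as in Definition \ref{def: TYcal(G,chi,xi)} satisfies the pentagon and unit axioms and is a Tambara-Yamagami $\mathrm{W}^*$-tensor category. To see that it only depends on the $\Aut(G)$-orbit of $(\chi,\xi)$, I will transport structure along $\Rep(C_0(G))\oplus\Hilb\cdot\tau$ using $\phi\in\Aut(G)$, pushing forward the maps $p$ and rescaling by the modulus of $\phi$ on $L^2(G)$; this carries $\chi$ to $\chi\circ(\phi\times\phi)$ and fixes $\xi$. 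For the left arrow I use the $\mathrm{C}^*$-level construction \cite[Prop.~4.3]{AM2025}: the associator components in Definition \ref{def: TYcal(G,chi,xi)} are multiplication by continuous functions $\chi(p(t),n(s))$, translations, and a Fourier transform. All of these come from unitary intertwiners of $\mathrm{C}^*$-correspondences over $C_0(G)\oplus\C$, and $\mathfrak{F}$ sends this continuous category to the $\mathrm{W}^*$ one. So the triangle commutes by construction.

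Next I show that the top arrow is injective and surjective. For injectivity, take a $\mathrm{W}^*$-tensor equivalence $F$ between $\TYcal(G,\chi,\xi)$ and $\TYcal(G,\chi',\xi')$. It must send $\tau$ to $\tau$ and preserve the simple objects $\delta_g$, so it induces a homeomorphism of $G$. Monoidality forces this homeomorphism to be a group automorphism $\phi$. As in \cite{TY}, the scalar $\chi(g,h)$ extracted from $\alpha_{g,\tau,h}$ via arbitrary $\phi_g,\phi_h$ is an invariant, so $\chi'=\chi\circ(\phi\times\phi)$. The sign $\xi$ is extracted from $\alpha_{\tau,\tau,\tau}$ once the gauge freedom on $\tau\otimes\tau\cong\mu$ has been absorbed, so $\xi'=\xi$.

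Surjectivity is the main step. Given a Tambara-Yamagami $\mathrm{W}^*$-tensor category $\Ccal$ in the sense of Definition \ref{def: WTY}, I proceed in three stages.
\begin{enumerate}
\item Trivialize the associator on $\Rep(C_0(G))$. Restricted to simple objects $\delta_g$ it is a measurable $U(1)$-valued $3$-cocycle, made natural in direct integrals. The existence of the bimodule $\Hilb\cdot\tau$ forces it to be a coboundary, as in the finite case.
\item Define $\chi(g,h)$ from $\alpha_{g,\tau,h}$ and use the pentagons with two $\tau$'s to show it is a symmetric bicharacter. Continuity should follow from measurability and the bicharacter property, since measurable homomorphisms are continuous.
\item Identify $\tau\otimes\tau$. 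The condition $\Hom(\tau\otimes\tau,\tau)=0$ puts $\tau\otimes\tau$ in $\Rep(C_0(G))$. Naturality of $\upsilon\otimes\tau\cong L^2(\upsilon)\cdot\tau$, combined with the associators $\alpha_{\upsilon,\tau,\tau}$ and $\alpha_{\tau,\tau,\upsilon}$, makes $\tau\otimes\tau$ translation-covariant, i.e.\ a covariant representation of $C_0(G)\rtimes G$. By Stone--von Neumann it is $H\cdot L^2(G)$ for some Hilbert space $H$.
\end{enumerate}
The $(\tau,\tau,\tau)$ pentagon then forces $H\cong\C$ and nondegeneracy of $\chi$, since $\alpha_{\tau,\tau,\tau}$ must intertwine the $\chi$-twisted actions, which realizes the Fourier transform. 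It also pins $\alpha_{\tau,\tau,\tau}$ down to $\xi\cdot\mathcal{F}\circ J_\chi$ after regauging. This last stage, measurable gauge fixing together with the Stone--von Neumann identification, is where I expect the real difficulty; I will attempt it first using the imprimitivity theorem.

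Finally, bijectivity of the left arrow follows formally from the commutativity of the triangle. The left arrow is surjective because every continuous Tambara-Yamagami category has underlying correspondence $\TYcal_G$, and its associator unitaries are intertwiners determined by their images under $\mathfrak{F}$. The same argument as for the top arrow recovers $(\chi,\xi)$. Injectivity is inherited from the top arrow, since $\mathfrak{F}$ of equivalent continuous categories are equivalent.
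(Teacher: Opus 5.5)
Your overall architecture (top arrow, left arrow, commuting triangle, then injectivity and surjectivity of the top arrow) is sensible. However, the mechanism you give for multiplicity one in stage (3) does not work.

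Suppose you have reached $\tau\otimes\tau\cong H\cdot L^2(G)$ and gauge-fixed $\alpha_{\upsilon,\tau,\tau}$, $\alpha_{\tau,\tau,\upsilon}$ and $\alpha_{\tau,\upsilon,\tau}$. The pentagons with one object of $\Rep(C_0(G))$ and three copies of $\tau$ say that $\alpha_{\tau,\tau,\tau}$ exchanges translations with multiplications by the characters $\chi(g,-)$, all tensored with $\id_H$. By irreducibility of the Schr\"odinger representation on $L^2(G)$, this only gives $\alpha_{\tau,\tau,\tau}=w\otimes(\mathcal{F}\circ J_\chi)$ with $w\in U(H)$ arbitrary, which is compatible with any $\dim H$.

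What forces $H\cong\C$ is the $(\tau,\tau,\tau,\tau)$ pentagon. By naturality, $\alpha_{\tau\otimes\tau,\tau,\tau}$, $\alpha_{\tau,\tau\otimes\tau,\tau}$ and $\alpha_{\tau,\tau,\tau\otimes\tau}$ act as the identity on multiplicity spaces. Now track the two copies of $H$ in $(\tau\otimes\tau)\otimes(\tau\otimes\tau)\cong (H\otimes H)\cdot L^2(G\times G)$: one path of the pentagon interchanges them, while the other applies $w$ twice to the same copy. This gives an identity of the form $\mathrm{flip}_{H,H}\otimes A=(w^2\otimes 1)\otimes B$, which is impossible unless $\dim H=1$; after that, $w=\pm1$ is your $\xi$. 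All of this is already visible for $G=0$.

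Nondegeneracy of $\chi$ also needs its own argument. For example, the intertwining shows that $j_\chi$ is injective and that $(j_\chi)_*$ of Haar measure is equivalent to Haar measure on $\hat G$. Hence $j_\chi(G)$ is a conull subgroup, so it is open and therefore all of $\hat G$, and the open mapping theorem finishes.

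Two further steps are asserted rather than proved.

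First, in a $\mathrm{W}^*$-category there are no nonzero morphisms between $\delta_g$ and diffuse objects such as $L^2(G)$. So naturality does not tie the data on simple objects to the $L^\infty$ data on direct integrals. This affects your pointwise $3$-cocycle, the values $\chi(g,h)$ read off from $\alpha_{\delta_g,\tau,\delta_h}$, and the point map of an equivalence. The pointwise data carries no measurability at all. A $\mathrm{W}^*$-equivalence by itself need not induce a homeomorphism of $G$, or even a Borel map, since one can permute the atomic part of $\Rep(C_0(G))$ arbitrarily. The fix has three parts:
\begin{enumerate}
\item extract a measurable bicharacter from $\alpha_{L^2(G),\tau,L^2(G)}\in L^\infty(G\times G)$;
\item upgrade it to a continuous one via Ramsay's theorem and automatic continuity, as in the proof of Proposition \ref{prop: braidingsTY};
\item transfer it to the points through mixed pentagons involving $\delta_g$, $\tau$ and $L^2(G)$.
\end{enumerate}
In the injectivity step, continuity of $\phi$ can instead be recovered from $\chi'\circ(\phi\times\phi)=\chi$ by Pontryagin duality. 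Invariance of $\xi$ also needs the $(\tau,\upsilon,\tau)$ compatibility, not only $\alpha_{\tau,\tau,\tau}$.

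Second, for surjectivity of the left arrow, faithfulness of $\mathfrak{F}$ on intertwiners gives uniqueness of $\mathrm{C}^*$-lifts of your gauge transformations, not their existence, and $\mathfrak{F}$ is not known to be full. You have to redo the gauge fixing at the $\mathrm{C}^*$-level. At each step, check that the gauge transformation is a genuine intertwiner of correspondences, that is, continuous rather than merely measurable wherever a Hilbert $C_0(G)$-module is involved.

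On the other hand, the step you flagged as hardest is easy. Since $\delta_g\otimes(\tau\otimes\tau)\cong\tau\otimes\tau$ for every $g$, the measure class of $\tau\otimes\tau$ is quasi-invariant under all translations, hence the Haar class. Its multiplicity function is translation invariant, hence constant. So $\tau\otimes\tau\cong H\cdot L^2(G)$ follows without any covariant representation or imprimitivity theorem.
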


    Theorem \ref{thm: TYClassificationThm} implies that, given a Tambara-Yamagami $\mathrm{W}^*$-tensor category $\mathcal{C}$, there exists a unique (up to equivalence) continuous tensor category whose image under $\mathfrak{F}$ is equivalent to $\mathcal{C}$. For this reason, in the rest of the paper we only work with $\mathrm{W}^*$-tensor categories as opposed to with continuous tensor categories. However, when we characterize certain $\mathrm{W}^*$-tensor categories as being Tambara-Yamagami $\mathrm{W}^*$-tensor categories, the result above ensures that we are also characterizing them as continuous tensor categories.

\subsection{$G$-crossed balanced categories}

Let $G$ be a discrete group and let $\mathcal{C}$ be a $\mathrm{W}^*$-tensor category (admitting all countable orthogonal direct sums). In this section, we recall the notion of a $G$-crossed balanced $\mathrm{W}^*$-tensor structure on $\mathcal{C}$ and of its equivariantization $\mathcal{C}^G$. We refer the reader to \cite[Sec. 2.2]{GcrossedbraidedRep} and \cite[Sec. 2.2]{fixedpoints} for a more detailed introduction.

We write $\underline{G}$ for the tensor category with objects $G$ and only identity morphisms. We denote by $\underline{\text{Aut}}_\otimes(\mathcal{C})$ the tensor category of $\mathrm{W}^*$-tensor automorphisms of $\mathcal{C}$ and unitary $\mathrm{W}^*$-tensor natural transformations. An \emph{action} of $G$ on $\mathcal{C}$ is a tensor functor
\(
T:\underline{G}\to \underline{\text{Aut}_\otimes(\mathcal{C})}.
\)
The data of $T$ provides, in particular, $\mathrm{W}^*$-functors $T_g: \mathcal{C}\to \mathcal{C}$ with unitary tensor structures $s_g: T_g(-)\otimes T_g(-)\xrightarrow{\cong} T_g(-\otimes -)$, and unitary natural isomorphisms $\eta_{g,h}:T_g\circ T_h \xrightarrow{\cong} T_{gh}$ for all $g, h\in G$.

\begin{definition}\label{def: G-X-braidedWTensorCat}
    A \textit{$G$-crossed braided $\mathrm{W}^*$-tensor category} consists of
    \begin{enumerate}
        \item a $\mathrm{W}^*$-tensor category $\mathcal{C}$;
        \item an action $T: \underline{G}\to \underline{\text{Aut}}_\otimes(\mathcal{C})$ of $G$ on $\mathcal{C}$;
        \item a grading $\mathcal{C} = \bigoplus\limits_{g\in G}\mathcal{C}_g$ compatible with the tensor product in the sense that $-\otimes-$ provides functors 
        \[
        \Ccal_g\times\Ccal_h\to \Ccal_{gh}
        \]
        for all $g,h\in G$;
        \item for every $g\in G$, $X\in \mathcal{C}_g$ and $Y\in \mathcal{C}$, a unitary isomorphism
        \[
        \beta_{X,Y} : X\otimes Y\to T_g(Y)\otimes X
        \]
        natural in $X$ and $Y$.
    \end{enumerate}
    This data is required to satisfy
    \begin{enumerate}
        \item $T_g(\mathcal{C}_h) \subset \Ccal_{ghg^{-1}}$ for all $g,h\in G$;
        \item the following diagram commutes 
\[\begin{tikzcd}
	{T_g(X)\otimes T_g(Y)} && {T_{ghg^{-1}}T_g(Y)\otimes T_g(X)} \\
	{T_g(X\otimes Y)} && {T_g(T_h(Y)\otimes X)}
	\arrow["{\beta_{T_g(X),T_g(Y)}}", from=1-1, to=1-3]
	\arrow["\cong", from=1-1, to=2-1]
	\arrow["\cong"', from=1-3, to=2-3]
	\arrow["{T_g(\beta_{X,Y})}"', from=2-1, to=2-3]
\end{tikzcd}\]
for all $g,h\in G$, $X\in\mathcal{C}_h$ and $Y\in\mathcal{C}$;
\item the following diagram commutes
\begin{equation}\label{eq: Xbraiding1}\begin{tikzcd}
	{X\otimes Y\otimes Z} && {T_g(Y\otimes Z)\otimes X} \\
	{T_g(Y)\otimes X\otimes Z} && {T_g(Y)\otimes T_g(Z)\otimes X}
	\arrow["{\beta_{X, Y\otimes Z}}", from=1-1, to=1-3]
	\arrow["{\beta_{X,Y}\otimes\id_Z}"', from=1-1, to=2-1]
	\arrow["{\id_{T_g(Y)}\otimes \beta_{X,Z}}"', from=2-1, to=2-3]
	\arrow["\cong"', from=2-3, to=1-3]
\end{tikzcd}\end{equation}
for all $g\in G$, $X\in \mathcal{C}_g$ and $Y,Z\in \mathcal{C}$;
\item the following diagram commutes
\begin{equation}\label{eq: Xbraiding2}
\begin{tikzcd}
	{X\otimes Y\otimes Z} && {T_{gh}(Z)\otimes X\otimes Y} \\
	{X\otimes T_h(Z)\otimes Y} && {T_gT_h(Z)\otimes X\otimes Y}
	\arrow["{\beta_{X\otimes Y,Z}}", from=1-1, to=1-3]
	\arrow["{\id_X\otimes \beta_{Y,Z}}"', from=1-1, to=2-1]
	\arrow["{\beta_{X, T_h(Z)}\otimes \id_Y}"', from=2-1, to=2-3]
	\arrow["\cong"', from=2-3, to=1-3]
\end{tikzcd}\end{equation}
for all $g,h\in G$, $X\in \mathcal{C}_g$, $Y\in\mathcal{C}_h$ and $Z\in\mathcal{C}$.
    \end{enumerate}
\end{definition}
The unlabelled vertical isomorphisms in the diagrams above are constructed using the natural transformations $s$ and $\eta$, and the associators are omitted for readability. Given $(\mathcal{C}, \otimes^\Ccal, T^\Ccal, \beta^\Ccal)$ and $(\mathcal{D}, \otimes^\Dcal, T^\Dcal, \beta^\Dcal )$ two $G$-crossed braided $\mathrm{W}^*$-tensor categories, a functor between them consists of a $\mathrm{W}^*$-tensor functor $(F,\Psi): (\Ccal,\otimes^\Ccal)\to (\Dcal, \otimes^\Dcal)$ preserving the $G$-grading, together with unitary natural isomorphisms
\(
\Phi_g(X): F(T^\Ccal_g(X))\xrightarrow{\cong } T_g^\Dcal(F(X))
\)
indexed by $g\in G$ and $X\in \Ccal$. These are required to be compatible with the structure data of the action $T$ and to satisfy the following compatibility with the $G$-crossed braiding, 
\begin{equation}\label{eq: Gcrossedfunctor}\begin{tikzcd}
{F(X\otimes^\Ccal Y)} && {F(T_g^\Ccal(Y)\otimes^\Ccal X)} \\
{F(X)\otimes^\Dcal F(Y)} && {F(T_g^\Ccal(Y))\otimes^\Dcal F(X)} \\
& {T_g^\Dcal(F(Y))\otimes^\Dcal F(X)}
\arrow["{F(\beta^\Ccal_{X,Y})}", from=1-1, to=1-3]
\arrow["{\Psi_{X,Y}}"', from=1-1, to=2-1]
\arrow["{\Psi_{T_g^\Ccal(Y), X}}", from=1-3, to=2-3]
\arrow["{\beta_{F(X),F(Y)}^\Dcal}"', from=2-1, to=3-2]
\arrow["{\Phi_{g}(Y)\otimes\id_{F(X)}}", from=2-3, to=3-2]
\end{tikzcd}\end{equation}
for all $g\in G$ and $X\in \mathcal{C}_g$ and $Y\in\Ccal$. We call such a triple $(F,\Psi,\Phi)$ a \emph{$G$-crossed braided $\mathrm{W}^*$-tensor functor}. A full list of the required compatibilities can be found in \cite[Sec. 2.2]{GcrossedbraidedRep}. A $G$-crossed braided $\mathrm{W}^*$-tensor functor is an equivalence of $G$-crossed braided $\mathrm{W}^*$-tensor categories if its underlying $\mathrm{W}^*$-functor is an equivalence of $\mathrm{W}^*$-categories.

\begin{definition}Given a $G$-crossed braided $\mathrm{W}^*$-tensor category $\mathcal{C}$, a \emph{$G$-crossed balance} on $\mathcal{C}$ is a natural family of unitary isomorphisms $\theta: X\to T_g(X)$ parametrized by $g\in G$ and $X\in \mathcal{C}_g$ such that $T_g(\theta_X) = \theta_{T_g(X)}$ and the following diagram commutes
\begin{equation}\label{eq: Xtwist}
\begin{tikzcd}
{X\otimes Y} && {T_{gh}(X\otimes Y)} \\
&& {T_{gh}(X)\otimes T_{gh}(Y)} \\
&& {T_{ghgh^{-1}g^{-1}}T_{ghg^{-1}}(X)\otimes T_{ghg^{-1}}T_g(Y)} \\
{T_g(Y)\otimes X} && {T_{ghg^{-1}}(X)\otimes T_g(Y)}
\arrow["{\theta_{X\otimes Y}}", from=1-1, to=1-3]
\arrow["{\beta_{X,Y}}"', from=1-1, to=4-1]
\arrow["\cong", from=1-3, to=2-3]
\arrow["\cong"', from=3-3, to=2-3]
\arrow["{\beta_{T_g(Y), X}}"', from=4-1, to=4-3]
\arrow["{\theta_{T_{ghg^{-1}}(X)}\otimes\theta_{T_g(Y)}}"', from=4-3, to=3-3]
\end{tikzcd}\end{equation}
for all $g,h\in G$, $X\in \mathcal{C}_g$, $Y\in\mathcal{C}_h$. A \emph{$G$-crossed balanced $\mathrm{W}^*$-tensor category} is a $G$-crossed braided $\mathrm{W}^*$-tensor category with a $G$-crossed balance.
\end{definition}

Given $\Ccal$ and $\Dcal$ two $G$-crossed braided $\mathrm{W}^*$-tensor categories with balances $\theta^\Ccal$ and $\theta^\Dcal$ respectively, \emph{a $G$-crossed balanced $\mathrm{W}^*$-tensor functor} $(F,\Psi,\Phi):\Ccal\to \Dcal$ is a functor of $G$-crossed braided $\mathrm{W}^*$-tensor categories such that $\Phi_g(X)\circ F(\theta^\mathcal{C}_X) = \theta^\mathcal{D}_{F(X)}$ for all $g\in G$ and $X\in\Ccal_g$.

Let $\mathcal{C}$ be a $G$-crossed balanced $\mathrm{W}^*$-tensor category. Then, one can canonically obtain a balanced category $\mathcal{C}^G$ by means of an equivariantization procedure, see \cite[Sec. 2.2]{fixedpoints} for a detailed construction, proofs, and further references. 
 
The $G$-\emph{equivariantization} $\mathcal{C}^G$ of $\mathcal{C}$ is the $\mathrm{W}^*$-category with objects pairs $(X,u)$ where $X\in \mathcal{C}$ and $u = \{u_g\}_{g\in G}$ is a family of unitary isomorphisms $u_g: T_g(X)\cong X$ such that the diagram
\[\begin{tikzcd}
{T_gT_h(X)} && {T_g(X)} \\
{T_{gh}(X)} && X
\arrow["{T_g(u_h)}", from=1-1, to=1-3]
\arrow["\cong"', from=1-1, to=2-1]
\arrow["{u_g}", from=1-3, to=2-3]
\arrow["{u_{gh}}"', from=2-1, to=2-3]
\end{tikzcd}\]
commutes for every $g,h\in G$. A morphism between objects $(X,u)$ and $(Y,v)$ in $\mathcal{C}^G$ is a morphism from $X$ to $Y$ in $\mathcal{C}$ compatible with $u_g$ and $v_g$ for all $g\in G$.

The $\mathrm{W}^*$-category $\mathcal{C}^G$ inherits a tensor structure
\[
(X,u)\otimes (Y,v):=(X\otimes Y,u\otimes v)
\]
from that of $\mathcal{C}$, where $u\otimes v = \{(u_g\otimes v_g)\circ s_g^{-1}(X,Y)\}_{g\in G}$. The fact that the action of $G$ on $\mathcal{C}$ is by tensor automorphisms implies that the associators, unit and unitors of $\mathcal{C}$ descend to data on $\mathcal{C}^G$ providing its tensor structure. Finally, the $G$-crossed braiding $\beta$ on $\mathcal{C}$ induces an honest braiding $\beta^G$ on $\mathcal{C}^G$ as follows. If $X\in \mathcal{C}_g$ and $u_g: T_g(Y)\cong Y$ is an isomorphism, we can define
\[
X\otimes Y\xrightarrow{\beta_{X,Y}}T_g(Y)\otimes X\xrightarrow{u_g\otimes \id} Y\otimes X.
\]
These isomorphisms can be extended linearly to obtain, for every $(X,u),(Y,v)\in \mathcal{C}^G$, a unitary isomorphism
\[
{\beta^G}_{(X,u),(Y,v)}: (X,u)\otimes (Y,v)\xrightarrow{\cong}(Y,v)\otimes(X,u).
\]
The hexagon axioms for $\beta^G$ follow from the required coherences on $\beta$. Finally, the $G$-crossed balance $\theta$ on $\mathcal{C}$ provides an honest balance on $\mathcal{C}^G$. If $X\in\mathcal{C}_g$ and $u_g:T_g(X)\cong X$ is an isomorphism, we can write the map
\[
X\xrightarrow{\theta_X}T_g(X)\xrightarrow{u_g}X,
\]
which can be extended linearly to obtain unitary isomorphisms 
\[
{\theta^G}_{(X,u)}: (X,u)\to (X,u)
\]
for every $(X,u)\in \mathcal{C}^G$. The diagram \eqref{eq: Xtwist} implies that ${\theta}^G$ is a balance on the braided $\mathrm{W}^*$-tensor category $(\mathcal{C}^G,{\beta^G}).$

Given two $G$-crossed braided $\mathrm{W}^*$-tensor categories $\Ccal$ and $\Dcal$, it is straightforward to check that a $G$-crossed braided $\mathrm{W}^*$-functor from $\Ccal$ to $\Dcal$ defines a braided $\mathrm{W}^*$-tensor functor between the equivariantizations $\Ccal^G$ and $\Dcal^G$. If $\Ccal$ and $\Dcal$ are furthermore $G$-crossed balanced, and the functor is compatible with the crossed balances, then the induced functor $\Ccal^G\to \Dcal^G$ is compatible with the balances.

\subsection{Conformal nets}
\label{Sec: ConfNets}

Let $S^1:=\{z\in \C\ |\ |z| = 1\}$ denote the standard circle. We say that an \emph{interval} of $S^1$ is an open, connected, non-empty, non-dense subset of $S^1$, and we denote by $\Jcal$ the collection of intervals of $S^1$. We write $\Mob$ for the set of Möbius transformations of $S^1$, that is, transformations of the form
\[
z\mapsto \frac{az+b}{\bar{b}z+\bar{a}}
\]
with $a,b\in \C$, $|a|^2-|b|^2 = 1$. Then, $\Mob$ is a Lie group isomorphic to $PSL(2, \R).$ In particular, the group $\Rot$ of rotations of the circle embeds in $\Mob$. We denote an anticlockwise rotation of angle $\theta$ by $R_\theta$. Given a Hilbert space $H$ and a $*$-subalgebra $A\subset B(H)$, we denote by $A''\subset B(H)$ the von Neumann algebra defined as the double commutant of $A$ in $B(H).$

\begin{definition}
A \emph{Möbius covariant net on $S^1$} is a tuple $(H_0, \A, U,\Omega)$ where $H_0$ is a Hilbert space equipped with a nonzero vector $\Omega\in H_0$, $U$ is a strongly continuous representation of $\Mob$ on $H_0$ and $\A$ is an assignment of a von Neumann algebra $\A(I)$ acting on $H_0$ for every interval $I\in\Jcal$. This data is required to satisfy, for every $I,J\in\Jcal$ and $\varphi\in\Mob$,
\begin{enumerate}
\item isotony: if $J\subset I$, then $\A(J)\subset \A(I)$;
\item locality: if $I\cap J = \emptyset$, then $\A(J)$ and $\A(I)$ commute in $B(H_0)$,
\item Möbius covariance: $U(\varphi)\A(I)U(\varphi)^* = \A(\varphi I)$,
\item positivity of the energy: the representation $U$ is of positive energy, meaning that the conformal Hamiltonian $L_0$, defined by $U(R_\theta) = e^{i\theta L_0}$ is positive,
\item uniqueness of the vacuum: the vector $\Omega\in H_0$ is the unique vector, up to a constant, which is invariant under $U$,
\item cyclicity of the vacuum: $\Omega$ is cyclic for the von Neumann  algebra $\A(S^1):=\{\A(I)\ |\ I\subset S^1\}''\subset B(H_0)$.
\end{enumerate}
\end{definition}

One of the consequences of the definition of Möbius covariant net is Haag duality: given a Möbius covariant net $(H, \A, U, \Omega)$ and $I\in\Jcal$, the commutant of $\A(I)$ in $B(H_0)$ is $\A(I^c)$, where $I^c$ denotes the interior of $S^1\setminus I$ \cite[Thm. 2.19(ii)]{gf93}. It also holds that each of the local algebras $\A(I)$ is a type $\mathrm{III}_1$ factor \cite[Prop. 1.2]{MR1410566}, and the Reeh-Schlieder Theorem: the vacuum vector $\Omega$ is cyclic and separating for each of the algebras $\A(I)$ \cite[Cor. 2.8]{gf93}.

Let $\Diff^+(S^1)$ denote the group of orientation-preserving diffeomorphisms of $S^1$. A strongly continuous projective unitary representation $V$ of $\Diff^+(S^1)$ on a Hilbert space $H$ is a strongly continuous homomorphism $V: \Diff^+(S^1)\to \mathcal{P}U(H)$. Writing $[-]: U(H)\to \mathcal{P}U(H)$ for the projection map, we say that $V$ is an extension of a unitary representation $U$ of $\Mob$ on $H$ if, for every $\varphi\in \Mob$, it holds that $[U(\varphi)] = V(\varphi)$.

\begin{definition}\label{def: ConformalNet}
A Möbius covariant net $(H_0, \A ,U, \Omega)$ is said to be a \emph{conformal net} if there is an extension of $U$ to a unitary projective representation of $\Diff^+(S^1)$ on $H_0$, which we also denote by $U$, such that for all $I\in\Jcal$ and $\varphi\in \Diff^+(S^1)$, it holds that
\begin{enumerate}
\item $U(\varphi)\A(I) U(\varphi)^* = \A(\varphi I)$;
\item if $\varphi|_{I} = \id_I$, then $\text{Ad}(U(\varphi))|_{\A(I)} = \id_{\A(I)}.$
\end{enumerate}
\end{definition}

Let $(H_0,\A,U, \Omega)$ be a conformal net, which we simply denote by $\A$ from now on.

\begin{definition}\label{def: Rep}
A representation of a conformal net $\A$ consists of a Hilbert space $H$ and a collection of $*$-homomorphisms $\pi_I:\A(I)\to B(H)$ for every interval $I\in\Jcal$ such that
\[
\pi_I|_{\A(J)} = \pi_J
\]
whenever $J\in\Jcal$ is another interval such that $J\subset I$. A morphism between two representations of $A$ is a bounded linear map between the underlying Hilbert spaces equivariant with respect to the actions of $\A(I)$ for all $I\in\Jcal$. We write $\Rep(\A)$ for the category of representations~of~$\A$.
\end{definition}

Since we only consider representations on separable Hilbert spaces, all the $*$-actions $\A(I)\to B(H)$ are automatically normal, for any representation $H\in \Rep(\A)$ \cite[V.5.1]{MR1873025}. By definition, the vacuum Hilbert space $H_0$ comes with the structure of a representation of $\A$, called the \emph{vacuum representation}. For an interval $I\in \Jcal$ and $x\in \A(I)$, we write $\pi_{0,I}(x)$, or simply $\pi_0(x)$, for the action of $x$ on $H_0$. The category $\Rep(\A)$ is a braided tensor category, the structure of which can be produced in different equivalent ways \cite{gf93, was98, bdh15,Gui21}. The relevant construction for this paper is that of  \cite[Sec. 2]{Gui21}, also considered in \cite[Sec. 3]{GcrossedbraidedRep}. We note that these two references slightly differ in some conventions, in particular in the order in which representations are tensored. We follow the conventions of the latter, see~Remark~\ref{rk: bop}. 

Let us reproduce here the construction of the braided tensor structure on $\Rep(\A)$. Given $H,K\in \Rep(\A)$ two representations of $\A$ and $I\in \Jcal$ an interval, we write $\Hom_{\A(I)}(H, K)$ for the space of bounded linear maps which are equivariant with respect to the actions of the von Neumann algebra $\A(I)$. Since $\A(I)$ is a type $\mathrm{III}$-factor and its actions on $H$ and $K$ are normal, there are unitary maps in $\Hom_{\A(I)}(H, K)$. The space of \emph{$I$-bounded vectors} of $H$ is defined to be
\[
H(I):=\{\xi\in H\,|\, \xi = T\Omega\ \text{  for some $T\in\Hom_{\A(I^c)}(H_0, H) $}\}.
\]
Given $\xi\in H(I)$, there is a unique operator $T\in\Hom_{\A(I^c)}(H_0, H) $ such that $\xi = T\Omega$, by the Reeh-Schlieder Theorem, and we denote it by $T = Z(\xi, I)$. If $H = H_0$, then $H_0(I) = \A(I)\Omega$ by Haag duality, and the inclusion $H_0(I)\subset H_0$ is dense. Since there are unitary operators in $\Hom_{\A(I^c)}(H_0, H)$, the space $H(I)$ is dense in $H$ for any representation $H\in\Rep(\A)$.  If $I_1\in \Jcal$ is another interval with $I_1\subset I$, then $H(I_1)\subset H(I)$ is a dense inclusion. Given $K\in \Rep(\A)$ and $J\in \Jcal$ disjoint from $I$, we define the \emph{Connes fusion of $H$ and $K$ over $I$ and $J$}, denoted
\begin{equation}\label{eq: ConnesFusion2Ints}
H(I)\boxtimes K(J),
\end{equation}
as the completion of the algebraic tensor product $H(I)\otimes K(J)$ with respect to the positive sesquilinear form
\[
\langle \xi\otimes\eta, \xi'\otimes\eta'\rangle : = \langle Z(\eta', J)^*Z(\eta, J)Z(\xi', I)^*Z(\xi, I)\Omega,\Omega\rangle 
\]
for $\xi,\xi'\in H(I)$ and $\eta, \eta'\in  K(J)$. Note that the swap map $H(I)\otimes K(J)\to K(J)\otimes H(I)$ induces a canonical unitary $H(I)\boxtimes K(J)\cong K(J)\boxtimes H(I)$. Given $I_1,J_1\in  \Jcal$ with $I_1\subset I$ and $J_1\subset J$, we have a unitary equivalence
\begin{equation}\label{eq: EquivInclusionIntervals}
H(I_1)\boxtimes K(J_1)\xrightarrow{\cong} H(I)\boxtimes K(J)
\end{equation}
induced by the inclusion $H(I_1)\otimes K(J_1)\hookrightarrow H(I)\otimes K(J)$. Given two distinct points $z,\zeta\in S^1$, the \emph{Connes fusion of $H$ and $K$ over $z$ and $\zeta$} is defined to be the Hilbert space
\[
H(z)\boxtimes K(\zeta):=\varinjlim\limits_{(z,\zeta)\in I\times J}H(I)\boxtimes K(J)\ = \Bigg(\bigsqcup\limits_{(z,\zeta)\in I\times J} H(I)\boxtimes K(J)\Bigg)\Big/\cong,
\]
where the limit runs over pairs of disjoint intervals $I,J\in\Jcal$ containing $z$ and $\zeta$ respectively, and $\cong$ denotes the equivalence obtained via the unitaries $H(I_1)\boxtimes K(J_1)\cong H(I)\boxtimes K(J)$ in Equation \eqref{eq: EquivInclusionIntervals} whenever $I_1,J_1\in\Jcal$ are intervals such that $I_1\subset I$ and $J_1\subset J$. Given any disjoint intervals $I,J\in\Jcal$ containing $z$ and $\zeta$ respectively, the canonical map $H(I)\boxtimes K(J)\to H(z)\boxtimes K(\zeta)$ is a unitary equivalence.

We next relate the Connes fusion of representations over different sets of disjoint intervals. For any continuous path $\gamma = (\alpha, \beta):[0,1]\to \Conf_2(S^1)=\{(z,\zeta)\in S^1\times S^1\,|\, z \neq \zeta\}$ from $(z_0,\zeta_0):= \gamma(0)$ to $(z_1,\zeta_1):=\gamma(1)$, we obtain a canonical unitary equivalence, called a \emph{path-continuation},
\[
\gamma^\bullet: H(z_0)\boxtimes K(\zeta_0)\xrightarrow{\cong}  H(z_1)\boxtimes K(\zeta_1),
\]
defined as follows. If $\gamma$ is small enough so that there exist $I,J\in \Jcal$ disjoint intervals such that $\alpha([0,1])\subset I$ and $\beta([0,1])\subset J$, we define $\gamma^\bullet$ to be the composition
\[
H(z_0)\boxtimes K(\zeta_0)\xrightarrow{\cong } H(I)\boxtimes K(J)\xrightarrow{\cong} H(z_1)\boxtimes K(\zeta_1).
\]
For a general $\gamma$, we pick a partition $0 = t_0< t_1<\ldots <t_n = 1$ of $[0,1]$ such that each $\gamma|_{[t_i,t_{i+1}]}$ is small in the sense above, and define $\gamma^\bullet := \gamma|_{[t_{n-1}, 1]}^\bullet\circ\ldots\circ \gamma|_{[0, t_1]}^\bullet$. It is easy to see that $\gamma^\bullet$ is independent of the chosen partition. Given intervals $I_0,I_1,J_0,J_1\in\Jcal$ with $I_i$ and $J_i$ disjoint for $i = 0,1$, and a path $\gamma:[0,1]\to \Conf_2(S^1)$ from a point $(z_0,\zeta_0):=\gamma(0)\in I_0\times J_0$ to a point $(z_1,\zeta_1):=\gamma(1)\in I_1\times J_1$, we define the path continuation $\gamma^\bullet: H(I_0)\boxtimes K(J_0)\to H(I_1)\boxtimes K(J_1)$ to be the unitary equivalence
\[
H(I_0)\boxtimes K(J_0)\xrightarrow{\cong}H(z_0)\boxtimes K(\zeta_0)\xrightarrow[\cong]{\gamma^\bullet} H(z_1)\boxtimes K(\zeta_1)\xrightarrow{\cong} H(I_1)\boxtimes K(J_1).
\]

Fix $I,J\in \Jcal$ disjoint intervals. We next endow $H(I)\boxtimes K(J)$ with an action $\pi^{H \boxtimes K}$ of $\A$. Let us denote by $\pi^H$ and $\pi^K$ the actions of $\A$ on $H$ and $K$ respectively. Given an interval $L\in\Jcal$ such that $L\subset I$, and $x\in \A(L)$, we write $\pi^{H\boxtimes K}_L(x): H(I)\boxtimes K(J)\to H(I)\boxtimes K(J)$ for the morphism induced by $\pi^{H}_L(x)\otimes \id: H(I)\otimes K(J)\to H(I)\otimes K(J)$. If $L\in\Jcal$ is such that $L\subset J$ and $x\in\A(L)$, we similarly define $\pi^{H\boxtimes K}_L(x)$ as the completion of $\id\otimes\pi^K_L(x)$. Given a generic interval $L\in \Jcal$ and $x\in \A(L)$, let $M\in \Jcal$ be an interval disjoint from $L$, and $\gamma:[0,1]\to \Conf_2(S^1)$ a path from $\gamma(0)\in I\times J$ to $\gamma(1)\in L\times M$. We define the action of $x$ on $H(I)\boxtimes K(J)$ as the composition
\begin{equation}\label{eq: ActionOnConnesFusion}
\pi^{H\boxtimes K}_{L}(x) : H(I)\boxtimes K(J)\xrightarrow{\gamma^\bullet} H(L)\boxtimes K(M)\xrightarrow{\pi^{H\boxtimes K}_L(x)} H(L)\boxtimes K(M)\xrightarrow{(\gamma^\bullet)^{-1}} H(I)\boxtimes K(J).
\end{equation}
It can be seen that this provides a well-defined action of $\A$ on $H(I)\boxtimes K(J)$ \cite[Thm. 2.15]{Gui21}. If we had taken another path $\rho:[0,1]\to \Conf_2(S^1)$ from $I\times J$ to $M\times L$ and had considered the representation defined by
\[
H(I)\boxtimes K(J)\xrightarrow{\rho^\bullet} H(M)\boxtimes K(L)\xrightarrow{ \pi^{H\boxtimes K}_L(x)} H(M)\boxtimes K(L)\xrightarrow{(\rho^\bullet)^{-1}} H(I)\boxtimes K(J),
\]
we would have obtained an isomorphic representation of $\A$. The Connes fusion of morphisms of $\A$-representations, defined in the obvious way, is again a morphism of $\A$-representations, and so are path continuations.

Let $S_-^1$ and $S_+^1$ denote the lower and upper hemispheres of $S^1$ respectively. We define the tensor product of two representations $H,K\in\Rep(\A)$ of $\A$ to be
\begin{equation}\label{eq: ConnesFusionRepA}
H\boxtimes K:=H(S_-^1)\boxtimes K(S^1_+)\in \Rep(\A),
\end{equation}
where the $\A$-action is defined as above. Providing an associator for this tensor product is straightforward, see \cite[Sec. 2.5]{Gui21}. The unit object is the vacuum representation $H_0$ and the unitors can be found in \cite[Sec. 2.6]{Gui21}.

Let us next discuss the braiding. We denote by $\varrho: [0,1]\to \Conf_2(S^1)$ the path $\varrho(t) = (-ie^{\pi i t}, i e^{\pi i t})$, which goes from $(-i, i)\in S^1_-\times S^1_+$ to $(i, -i)\in S^1_+\times S^1_-$. The braiding between $H$ and $K$ is defined to be
\[
\mathbb{B}_{H, K}: H\boxtimes K = H(S^1_-)\boxtimes K(S^1_+)\xrightarrow{\varrho^\bullet} H(S^1_+)\boxtimes K(S^1_-)\cong K(S^1_-)\boxtimes H(S^1_+) = K\boxtimes H,
\]
where the non-labelled equivalence is induced by the swap map. One can show that $\mathbb{B}$ indeed satisfies the hexagon conditions and hence it provides a braiding on $\Rep(\A)$, see \cite[Thm. 3.8 and Prop. 2.12]{Gui21} or \cite[Thm. 3.25 and Prop. 3.5]{GcrossedbraidedRep}.

\begin{remark}\label{rk: bop}
    Given a braided tensor category $(\mathcal{C},\otimes, \beta)$, one can produce its braided opposite tensor category $\mathcal{C}^{\text{bop}} = (\mathcal{C}, \otimes^{\text{op}}, \beta^{\text{op}})$. The underlying linear category of $\mathcal{C}^{\text{bop}}$ is that of $\mathcal{C}$ but the tensor product is defined to be $X\otimes^{\text{bop}}Y := Y\otimes X$ for $X,Y\in\mathcal{C}$. The braiding is given by $\beta^{\text{op}}_{X, Y} = Y\otimes X\xrightarrow{\beta_{Y, X}^{-1}}X\otimes Y$. What we have described in this section is the braided monoidal opposite of the braided tensor category of representations of $\A$ described in \cite{Gui21}. We do so in order to make our constructions agree with those in the more general setting of twisted representations of $\A$, where the freedom to take $\Rep(\A)$ or $\Rep(\A)^{\text{bop}}$ disappears \cite{GcrossedbraidedRep}.   
\end{remark}

In \cite{balanceRepA}, we showed that $\Rep(\A)$ also admits a canonical balance, obtained as follows. Since $\A$ is a conformal net, its vacuum Hilbert space $H_0$ comes equipped with a projective unitary action of $\Diff^+(S^1)$. Recall that the Lie algebra of $\Diff^+(S^1)$ is the Lie algebra $\Vect(S^1)$ of vector fields on $S^1$, where the Lie bracket is the negative of the usual Lie bracket of vector fields. We let $\exp: \Vect(S^1)\to \Diff^+(S^1)$ denote the exponential map and write $\Vect_{\C}(S^1)$ for the complexification of $\Vect(S^1)$. For each $n\in \Z$, we define the complex vector field $L_n(e^{i\theta}):=-ie^{in\theta}\frac{d}{d\theta}\in \Vect_{\mathbb{C}}(S^1)$. These vectors $L_n$ form the Witt algebra $\mathscr{W}$, which is a dense Lie algebra of $\Vect_{\C}(S^1).$ We define a $*$-structure on $\mathscr{W}$ by setting $L_n^* = L_{-n}$. Then, a vector $X\in \mathscr{W}$ is self-adjoint if and only if $iX\in \Vect(S^1)$. For a self-adjoint vector $X\in \Vect_\C(S^1)$, we write $\exp_{iX}$ for the one parameter subgroup of $\Diff^+(S^1)$ given by $t\in\R\mapsto \exp(itX)$, so that $\exp_{iL_0}$ is the subgroup of rotations of $S^1$. Denoting by $\widetilde{\Diff^+}(S^1)$ the universal cover of ${\Diff^+}(S^1)$, we write $\widetilde{\exp}_X: \R\to \widetilde{\Diff^+}(S^1)$ for the one-parameter subgroup of $\widetilde{\Diff^+}(S^1)$ lifting $\exp_X$, and $\widetilde{\exp}(X):=\widetilde{\exp}_X(1)$.

It can be shown that the projective unitary action of $\Diff^+(S^1)$ on the vacuum Hilbert space $H_0$ induces a unitary projective action of the universal cover $\widetilde{\Diff^+}(S^1)$ of $\Diff^+(S^1)$ on any representation $H\in \Rep(\A)$, see \cite[Thm. 2.2]{Gui21}. Note that $\widetilde{\Diff^+}(S^1)$ contains the universal covering space $\widetilde{\Mob}$ of $\Mob$, which is generated by $\widetilde{\exp}(iX)$ where $X = \overline{a_{1}}L_{-1} + a_0L_0 + a_1 L_1\in\Vect_{\C}(S^1)$ with $a_1\in\C$ and $a_0\in\R$. The unitary projective action of $\widetilde{\Diff^+}(S^1)$ on a representation $H\in \Rep(\A)$ restricts to a unitary projective representation of $\widetilde{\Mob}$. By \cite{MR58601}, such a projective representation lifts to a unique unitary representation of $\widetilde{\Mob}$ on $H$. Given $X$ as above, we write $e^{iX}$ for the action of $\widetilde{\exp}(iX)\in \widetilde{\Mob}$ on $H$. We define the balance on $H\in \Rep(\A)$ to be 
\[
\theta_H:=e^{-2\pi i L_0}: H\xrightarrow{\cong }H.
\]
It can be shown that $e^{-2\pi i L_0}$ is indeed an isomorphism of $\A$-representations, and that it produces a balance for the braided $\mathrm{W}^*$-tensor category $\Rep(A)$.

\begin{theorem}(\cite{Gui21, balanceRepA})
The $\mathrm{W}^*$-category $\Rep(\A)$, equipped with the Connes fusion $\boxtimes$, the braiding $\mathbb{B}$ and the balance $\theta$ is a balanced $\mathrm{W}^*$-tensor category.
\end{theorem}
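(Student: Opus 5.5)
The plan is to establish the three layers of structure separately: the $\mathrm{W}^*$-tensor structure, the braiding, and the balance. Only the last needs new input beyond the path-continuation formalism recalled above.

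\emph{Step 1: $\mathrm{W}^*$-tensor structure.} For $H\in\Rep(\A)$, $\End(H)$ is the commutant in $B(H)$ of $\bigcup_I\pi_I(\A(I))$, so it is a von Neumann algebra, and $\Rep(\A)$ is a $\mathrm{W}^*$-category.
\begin{itemize}
\item \emph{Functoriality.} For morphisms $f:H\to H'$ and $g:K\to K'$, the operator $f\otimes g$ on $H(S^1_-)\otimes K(S^1_+)$ is bounded for the fusion inner product. This is because $Z(f\xi,I)=fZ(\xi,I)$, so the form transforms by conjugation. Hence $f\boxtimes g$ is well defined.
\item \emph{Equivariance.} $f\boxtimes g$ commutes with path continuations, hence with the action \eqref{eq: ActionOnConnesFusion}, so it is a morphism of $\A$-representations.
\item \emph{Normality in each variable.} Fix unitaries $H\cong H_0\cong K$ over $\A(S^1_+)$ and $\A(S^1_-)$ respectively. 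These identify $H(S^1_-)\boxtimes K(S^1_+)$ with Connes' relative tensor product $H\boxtimes_{\A(S^1_-)'}K$. Normality of $f\mapsto f\boxtimes\id$ then follows from normality of the relative tensor product of bimodules over the type $\mathrm{III}$ factors $\A(S^1_\pm)$.
\item \emph{Associator and unitors.} These are defined as in \cite[Sec.~2.5, 2.6]{Gui21} via triple fusions $H(I)\boxtimes K(J)\boxtimes L(M)$ and canonical identifications induced by inclusions of intervals. They are unitary by construction. The pentagon and triangle identities follow because all the maps involved are induced by the identity on algebraic tensor products of bounded vectors.
\end{itemize}

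\emph{Step 2: braiding.} Naturality of $\mathbb{B}$ follows from the commutation of $f\boxtimes g$ with path continuations. The hexagon axioms reduce to identities between path continuations in $\Conf_3(S^1)$. The composite appearing on each side of a hexagon is a path continuation along a path in $\Conf_3(S^1)$. The two paths being compared are homotopic rel endpoints in $\Conf_3(S^1)$, and path continuation depends only on the homotopy class. The latter holds because path continuation is locally constant: small paths inside a fixed product of disjoint intervals give the canonical identification. This is the content of \cite[Thm.~3.8]{Gui21}, and it transfers to the bop conventions of Remark~\ref{rk: bop} by inverting and swapping.

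\emph{Step 3: balance, the expected main obstacle.} There are three things to show.
\begin{itemize}
\item \emph{$\theta_H$ is a morphism.} On each $H$ the lifted $\widetilde{\Mob}$-representation is covariant:
\[
e^{iX}\pi_I(x)e^{-iX}=\pi_{\varphi I}\big(U(\varphi)xU(\varphi)^*\big).
\]
The element $\widetilde{\exp}(-2\pi iL_0)$ projects to the identity of $\Mob$. Hence $\theta_H$ commutes with every $\pi_I(\A(I))$.
\item \emph{Normalisation.} $\theta_{H_0}=\id$, since $U$ is an honest representation of $\Mob$.
\item \emph{Twist equation.} The key claim is
\[
\theta_{H\boxtimes K}=\mathbb{B}_{K,H}\circ\mathbb{B}_{H,K}\circ(\theta_H\boxtimes\theta_K).
\]
To prove it, I would first show that for $t\in\R$, the operator $e^{itL_0}$ on $H\boxtimes K$ equals $(e^{itL_0}\otimes e^{itL_0})$, viewed as a map $H(S^1_-)\boxtimes K(S^1_+)\to H(R_tS^1_-)\boxtimes K(R_tS^1_+)$, composed with the inverse path continuation along $s\mapsto(R_{st}(-i),R_{st}(i))$. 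Both sides give one-parameter families of unitaries implementing the same covariance, so they agree projectively. By uniqueness of the lift to $\widetilde{\Mob}$ \cite{MR58601} and continuity in $t$, they agree exactly. At $t=-2\pi$ the path traced by both points is a full rotation. In $\Conf_2(S^1)$ this path is homotopic to $\varrho$ followed by its swapped copy, which is exactly the double braiding $\mathbb{B}_{K,H}\mathbb{B}_{H,K}$.
\end{itemize}
The delicate points are the projective-phase bookkeeping in the uniqueness-of-lift argument and getting the orientation and sign conventions right under the bop convention. This is what \cite{balanceRepA} carries out.
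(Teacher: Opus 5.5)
Your decomposition matches what the paper relies on: it gives no argument of its own, citing \cite[Sec.~2]{Gui21} for the braided tensor structure and \cite{balanceRepA} for the balance. Your Steps~1 and~2 are acceptable summaries of the former, with one correction. Since $H\boxtimes K:=H(S^1_-)\boxtimes K(S^1_+)$, the associator is not induced by the identity on algebraic tensors alone: $(H\boxtimes K)(S^1_-)$ is only reached after path-continuing the $K$-point into $S^1_-$. So the pentagon, like the hexagons, rests on homotopy invariance of path continuation.

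\emph{The gap in Step~3.} The problem is your justification of $e^{itL_0}_{H\boxtimes K}=V(t)$, where $V(t):=(\gamma_t^\bullet)^{-1}\circ(e^{itL_0}\otimes e^{itL_0})$ and $\gamma_t(s)=(R_{st}(-i),R_{st}(i))$. Two unitaries implementing the same automorphism of every $\pi^{H\boxtimes K}_L(\A(L))$ differ by a unitary in $\End(H\boxtimes K)$, not by a scalar. So all you get is $V(t)=e^{itL_0}D(t)$ with $D(t)\in\End(H\boxtimes K)$, and $H\boxtimes K$ is in general reducible: already for $H=H_0\oplus H_0$, and massively so for the direct integrals in $\Rep(\Heis)$. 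Naturality in $H,K$ does not force $D(t)$ to be scalar either. Any one-parameter group of natural automorphisms of the identity functor, evaluated on $H\boxtimes K$, is natural.

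Even granting projective agreement, the uniqueness of lifts \cite{MR58601} concerns projective representations of all of $\widetilde{\Mob}$, which is simply connected and perfect. A single one-parameter group is determined only up to a character $e^{i\kappa t}$, and continuity does not fix $\kappa$. A non-integral $\kappa$ would change $\theta_{H\boxtimes K}$ by $e^{-2\pi i\kappa}$ and break the twist equation.

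\emph{What is missing.} You need the actual construction of the $\widetilde{\Mob}$-action on an arbitrary representation (\cite[Thm.~2.2]{Gui21}). It is assembled from the operators $\pi_L(U(\varphi))$ for diffeomorphisms $\varphi$ localized in intervals $L$, whose vacuum implementers lie in $\A(L)$ by Haag duality. Hence it commutes with all morphisms and has the same multiplier on every representation. This is also what gives naturality of $\theta$ in $H$, which your sketch does not address.

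To compare this action with $V(t)$, proceed as follows.
\begin{enumerate}
\item Factor a small rotation as $R_t=\varphi\circ\psi$. Take $\psi$ localized in an interval around $J\cup R_tJ$ that avoids $I$, with $\psi=R_t$ near $J$. Then $\varphi=R_t\circ\psi^{-1}$ is the identity near $R_tJ$.
\item Compute the action of $\pi^{H\boxtimes K}(U(\varphi))\,\pi^{H\boxtimes K}(U(\psi))$ on $\xi\otimes\eta\in H(I)\boxtimes K(J)$. Use that $Z(\xi,I)$ and $Z(\eta,J)$ intertwine the complementary local algebras, and that $U(R_t)\Omega=\Omega$.
\item This gives $e^{itL_0}_{H\boxtimes K}=c(t)V(t)$, with $c(t)$ a scalar fixed by the vacuum multiplier and hence independent of $H,K$.
\item The case $H=K=H_0$, transported through the unitor, forces $c\equiv 1$.
\end{enumerate}
This M\"obius covariance of Connes fusion is the real content of the balance axiom. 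It must be imported from \cite{Gui21,balanceRepA} or proved as above, not extracted from uniqueness of lifts. Once it is in place, your identification of the $t=-2\pi$ monodromy with $\mathbb{B}_{K,H}\circ\mathbb{B}_{H,K}$ does give the twist equation.
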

The braided tensor structure on $\Rep(\A)$ as described in this section is produced in \cite[Sec. 2]{Gui21}, and the balance is constructed in \cite{balanceRepA}.

\subsection{The Heisenberg conformal net}
\label{Sec: HeisDef}
In the current section, we introduce the Heisenberg conformal net, which is the main object of study of this paper. We follow \cite{MR2981815}. Let $L\R:= C^\infty(\R /\Z, \R)$ denote the loop group of $\R$, equipped with point-wise addition. Throughout the paper, we identify $\R/\Z\cong S^1$ via the usual exponential map $t\mapsto e^{2\pi i t}$. Since every loop $f\in L\R$ is periodic, we can write it as a Fourier series
\[
f(\theta) =\sum\limits_{k\in \Z} \hat{f}_ke^{2\pi ik\theta}, \hspace{1cm} \hat{f}_k = \int_0^{1}e^{-2\pi ik\theta}f(\theta){d\theta}
\]
with coefficients in $\C$. We can introduce a semi-norm
\begin{equation}\label{eq: seminorm}
||f||^2 = 2\pi \sum\limits_{k= 1}^\infty k\cdot ||\hat{f}_k||^2
\end{equation}
and a complex structure $\mathcal{J}(\hat{f}_k) = -i\text{sgn}(k)\hat{f}_k$ to obtain the Hilbert space 
\(
\overline{L\R/\R},
\)
where we include $\R\hookrightarrow L\R$ as the constant loops and complete with respect to the semi-norm \eqref{eq: seminorm}. The scalar product $\langle -,-\rangle$ on $\overline{L\R/\R}$ is obtained by polarization. The Hilbert space $H_0$ on which the Heisenberg net is defined is the Fock space of $\overline{L\R/\R}$, namely
\[
H_0:=e^{\overline{L\R/\R}}:=\bigoplus\limits_{n = 0}^\infty \text{Sym}^n(\overline{L\R/\R}).
\]
The set of vectors
\[
\bigg\{e^h:=\bigoplus_{n = 0}^\infty \frac{1}{\sqrt{n!}}h^{\otimes n}\bigg\}_{h\in L\R/\R}
\]
spans a dense subset of the vector space $e^{\overline{L\R/\R}}$ and it holds that $\langle e^h, e^f\rangle = e^{\langle h, f\rangle}$. The group $\Mob$ acts on $\overline{L\R/\R}$ by continuously extending
\[
u_\varphi: f\mapsto f\circ \varphi^{-1}
\]
for $\varphi\in \Mob$ and $f\in L\R/\R$. This induces a representation $U$ of $\Mob$ on the Fock space $H_0$
\[
U(\varphi)(e^h) = e^{u_\varphi h}
\]
for $\varphi\in\Mob$ and $h\in \overline{L\R/\R}$. In addition, given $f\in L\R$, we define the Weyl operator $W(f)\in U(H_0)$ by
\[
W(f)(e^h) := e^{-\frac{1}{2}||f||^2-\langle f, h\rangle} e^{f+h}
\]
for all $h\in L\R/\R$. Note that $W(f)W(g) = e^{-i\omega(f,g)}W(f+g) = e^{-2i\omega(f,g)}W(g)W(f)$, where 
\begin{equation}\label{eq: CocycleHeis}
\omega(f,g):= \text{Im}\langle f, g\rangle = \frac{1}{2}\int_0^{1} f(\theta) g'(\theta) \mathrm{d}\theta =: \frac{1}{2}\int fg'
\end{equation}
is the symplectic form coming from the scalar product. In addition, the adjoint action of $U(\varphi)$ on the Weyl operator $W(f)$, for $\varphi\in \Mob$ and $f\in L\R$ is given by
\[
U(\varphi)W(f) U(\varphi)^* = W(u_\varphi f).
\]
Given an interval $I\in\Jcal$, we write $L_I\R:=\{f\in L\R\ |\ \text{supp}(f)\subset I\}$. The family of von Neumann algebras
\[
\Heis(I):=\{W(f)\ |\ f\in L_I\R\}''\subset B(H_0),
\]
together with the representation $U$ and the vacuum vector $\Omega:=e^0$, form a Möbius covariant net \cite{GLW98}. By \cite[Thm. 9.3.1]{LG}, the representation $U$ of $\Mob$ on $H_0$ can be extended to a projective unitary representation of $\Diff^+(S^1)$ which we continue denoting by $U$. This makes the tuple $(H_0, U, \Heis, e^0)$ into a conformal net.

\begin{definition}
The Heisenberg conformal net is the conformal net $(H_0, \Heis, U, e^0)$
\end{definition}

\subsection{The crossed balanced $\otimes$-category of twisted representations of a conformal net}\label{sec: CrossedBalancedRepGA}
Let $\A = (\A, H_0, U, \Omega)$ be a conformal net. An automorphism of $\A$ is a von Neumann algebra automorphism $\varphi_I: \A(I)\to\A(I)$ for every $I\in\Jcal$ such that for every inclusion $J\subset  I$ of intervals in $\Jcal$, the following diagram commutes
\[\begin{tikzcd}
{\A(I)} & {\A(I)} \\
{\A(J)} & {\A(J)}.
\arrow["{\varphi_I}", from=1-1, to=1-2]
\arrow[hook, from=2-1, to=1-1]
\arrow["{\varphi_J}"', from=2-1, to=2-2]
\arrow[hook, from=2-2, to=1-2]
\end{tikzcd}\]
Automorphisms are furthermore required to be equipped with a unitary $V_\varphi\in U(H_0)$ such that, for every $I\in \Jcal$, it holds that
\[
\pi_{0,I}\circ\varphi(-) = V_\varphi\circ \pi_{0,I}(-)\circ V_\varphi^*,
\]
and $V_\varphi(\Omega) = \Omega$. We denote by $\Aut(\A)$ the group of automorphisms of $\A$.

\begin{definition}
Let $G$ be a discrete group. An \emph{action of $G$ on a conformal net $\A$} is a group homomorphism $G\to \Aut(\A)$.
\end{definition}

\begin{example}
The Heisenberg conformal net admits a canonical $\Z/2$-action as follows. The group $\Z/2 = \{1,\sigma\}$ acts on $\R$ by $x\xmapsto{\sigma} -x$, and hence it also acts on the loop group $L\R$ by $f\xmapsto{\sigma} -f$, where $(-f)(t):= -f(t)$. This action preserves the subspaces $L_I\R\subset L\R$ for every interval $I\in\Jcal$. We denote by $V_{\sigma}: H_0\to H_0$ the unitary
\[
V_{\sigma}(e^h) = e^{-h},
\]
which preserves the vacuum vector $e^0$ and satisfies that
\[
V_{\sigma}\circ W(f)\circ V_{\sigma}^*(e^h) = e^{-\frac{1}{2}||f||^2-\langle f,-h\rangle}e^{-f+h} = W(-f)(e^h)
\]
for all $f,h\in L\R$. This implies that the action $\sigma$ extends continuously to automorphisms $\sigma_I$ of the von Neumann algebras $\Heis(I)$. Hence, we obtain compatible actions of $\Z/2$ on each of the von Neumann algebras $\Heis(I)$. This produces an action $\Z/2\to \Aut(\Heis)$.
\end{example}

Automorphisms of a conformal net $\A$ allow us to define twisted representations of $\A$. Let us fix, once and for all, the point $\mathrm{p}:=1\in S^1$.

\begin{definition}\label{def: TwistedRep}
Let $\varphi\in\Aut(\A)$ be an automorphism of $\A$. A \emph{$\varphi$-twisted representation of $\A$} consists of a Hilbert space $H$ with a collection of $*$-actions $\pi^\varphi_I:\A(I)\to B(H)$ indexed by $I\in\Jcal$, such that for every pair of intervals $I,J\in\Jcal$ with $J\subset I$, 
\[\begin{tikzcd}
{\A(I)} & {\A(I)} & {B(H)} \\
& {\A(J)}
\arrow["{\varphi^{-1}_I}", from=1-1, to=1-2]
\arrow["{\pi^\varphi_I}", from=1-2, to=1-3]
\arrow[hook, from=2-2, to=1-1]
\arrow["{\pi^\varphi_J}"', from=2-2, to=1-3]
\end{tikzcd}\]
commutes if $\mathrm{p}\in \mathrm{cl}({I})$, $\mathrm{p}\notin{\mathrm{cl}({J})}$, and $J$ is to the right of $\mathrm{p}$ ($\mathrm{cl}(L)$ denotes the closure of an interval $L$); and
\[\begin{tikzcd}
{\A(I)} & {B(H)} \\
{\A(J)}
\arrow["{\pi^\varphi_I}", from=1-1, to=1-2]
\arrow[hook, from=2-1, to=1-1]
\arrow["{\pi^\varphi_J}"', from=2-1, to=1-2]
\end{tikzcd}\]
commutes otherwise. We write $\Rep^\varphi(\A)$ for the category of $\varphi$-twisted representations of $\A$, where morphisms are bounded linear operators commuting with the actions of all the von Neumann algebras $\A(I)$. We define the $\mathrm{W}^*$-category $\Rep^{\Aut(\A)}(\A) :=\bigoplus\limits_{\varphi\in \Aut(\A)}\Rep^\varphi(\A)$, whose objects are countable orthogonal direct sums of twisted $\A$-representations.
\end{definition}
\begin{remark}
Note that, although $\Aut(\A)$ may be naturally a topological group, we are treating it as a discrete group, and an object of $\Rep^{\Aut(\A)}(\A)$ decomposes as a direct sum of objects living in finitely many of the subcategories $\Rep^\varphi(\A)$.
\end{remark}

The condition in Definition \ref{def: TwistedRep} can also be formulated as follows. Let $q: \R\to S^1$ be the map $t\mapsto e^{2\pi i t}$. Given an interval $I\in \Jcal$, we write $\hat{I}\subset \R_{>0}$ for the lift of $I$ to $\R_{> 0}$ as close to zero as possible so that $\mathrm{cl}(\hat{I})\subset\R_{>0}$. We call an inclusion $J\subset I$ in $\Jcal$ \emph{standard} if $\hat{J}\subset \hat{I}$, and \emph{special} if it is not standard. For an inclusion $J\subset I$, we write $\delta^{\varphi^{-1}}_{J\subset I}: \A(J)\to \A(I)$ for the inclusion $\A(J)\hookrightarrow \A(I)$ if $J\subset I$ is standard, and for $\A(J)\hookrightarrow\A(I)\xrightarrow{\varphi^{-1}_I}\A(I)$ if $J\subset I$ is special. Then, the condition in Definition \ref{def: TwistedRep} is equivalent to the diagram
\[\begin{tikzcd}
	{\A(I)} & {B(H)} \\
	{\A(J)}
	\arrow["{\pi^H_I}", from=1-1, to=1-2]
	\arrow["{\delta_{J\subset I}^{\varphi^{-1}}}", hook, from=2-1, to=1-1]
	\arrow["{\pi^H_J}"', from=2-1, to=1-2]
\end{tikzcd}\]
commuting for all inclusions $J\subset I$ of intervals in $\Jcal$. 

Fix a discrete group $G$ acting on $\A$, and write $\Rep^{G}(\A)$ for the $G$-graded category $\bigoplus\limits_{g\in G}\Rep^g(\A)$, where we denote by $g$ both an element of $G$ and the automorphism of $\A$ it induces. It is known that the category $\Rep^{G}(\A)$ admits the structure of a $G$-crossed braiding \cite{muger05}. In particular, given $g\in G$, the action $T_g:\Rep^G(\A)\to \Rep^G(\A)$ of $g$ on $\Rep^G(\A)$ is given by sending an $h$-twisted representation $(H^h, \pi^H)\in \Rep^h(\A)$ to the $ghg^{-1}$-twisted representation of $\A$ on $H^h$ given by $\pi^H\circ g^{-1}$. In \cite[Thm.~3.31]{GcrossedbraidedRep}, using techniques from \cite{Gui21}, we endowed the $G$-crossed braided $\mathrm{W}^*$-tensor category $\Rep^{G}(\A)$ with the structure of a $G$-crossed balance.

\begin{theorem}\label{Thm: GcrossedBalancedRepGA}
Let $\A$ be a conformal net with an action of a discrete group $G$. The $G$-crossed braided $\mathrm{W}^*$-tensor category $\Rep^{G}(\A)$ of $G$-twisted representations of $\A$ admits a natural $G$-crossed balance.
\end{theorem}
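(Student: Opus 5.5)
The natural candidate is to transport the untwisted construction of \cite{balanceRepA} to each graded piece. For $g\in G$ and $H\in\Rep^g(\A)$, I would set $\theta_H:=e^{-2\pi i L_0}$, where $L_0$ generates the rotation subgroup of a unitary representation of $\widetilde{\Mob}$ on $H$, and then check that this is a $G$-crossed balance.

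The first step is to produce that representation. The projective action of $\widetilde{\Diff^+}(S^1)$ on $H_0$ is implemented locally by elements of $\A(I)$. For $\widetilde\varphi$ close to the identity and supported in an interval $I$, I would act on $H$ by $\pi^H_I$ of the local implementer; for general elements, I would factor them into such pieces. The twisting only affects special inclusions, that is, intervals crossing $\mathrm{p}$. Consistency of the assignment therefore has to be checked by working in the universal cover. Translating an element of $\A(I)$ once around the circle conjugates its action on $H$ by $g$, so one obtains a projective representation of $\widetilde{\Diff^+}(S^1)$ that is covariant in the sense
\[
\widetilde\varphi\,\pi^H_I(x)\,\widetilde\varphi^{*}=\pi^H_{\varphi I}\bigl(\delta(x)\bigr),
\]
where $\delta$ inserts $g^{\pm1}$ whenever $\varphi$ carries $I$ across $\mathrm{p}$. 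I would restrict this to $\widetilde{\Mob}$ and lift it to an honest unitary representation by \cite{MR58601}.

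The second step is to check the properties of $\theta_H$. The full $2\pi$ rotation $e^{-2\pi i L_0}$ carries every interval back to itself, but along a path that crosses $\mathrm{p}$ once. Hence it satisfies
\[
e^{-2\pi iL_0}\,\pi^H_I(x)\,e^{2\pi iL_0}=\pi^H_I\bigl(g^{-1}x\bigr),
\]
so it is a unitary isomorphism $H\to T_g(H)$ (sign conventions to be matched to $T_g=\pi\circ g^{-1}$). Naturality is clear, since morphisms of representations commute with local implementers. The condition $T_g(\theta_X)=\theta_{T_g(X)}$ holds because $T_g$ does not change the underlying Hilbert space, and the rotation representation built from $\pi\circ g^{-1}$ coincides with the original one up to the lifting ambiguity, which is fixed uniquely.

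The main obstacle is the twist compatibility \eqref{eq: Xtwist}. I would argue as in \cite{Gui21,balanceRepA}. The $G$-crossed braiding is the path continuation $\varrho^\bullet$, which rotates the pair of points by $\pi$. Its square, composed appropriately with the actions $T$, is the path continuation along a full rotation of the configuration $(z,\zeta)$. The key step is to show that the diagonal rotation of $X(S^1_-)\boxtimes Y(S^1_+)$ by $2\pi$ is implemented on the fusion product by $e^{-2\pi iL_0^{X\boxtimes Y}}$, while rotating each factor separately gives $e^{-2\pi iL_0^{X}}\otimes e^{-2\pi iL_0^{Y}}$. The twisted gluing at $\mathrm{p}$ then produces exactly the conjugations $T_{ghg^{-1}}$ and $T_g$ appearing in \eqref{eq: Xtwist}.

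The delicate part is bookkeeping which interval crosses $\mathrm{p}$ during each portion of the rotation. To keep this manageable, I would decompose the $2\pi$ rotation into small pieces, each supported in intervals away from $\mathrm{p}$ except one, and compare the two composites piece by piece using the path-continuation formalism of \cite{GcrossedbraidedRep}.
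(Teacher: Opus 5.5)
Your outline follows the paper's route. The paper defers to \cite[Thm.~3.31]{GcrossedbraidedRep}, which builds a projective $\widetilde{\Diff^+}(S^1)$-action on each twisted representation, lifts it to $\widetilde{\Mob}$, and sets $\theta_H=e^{-2\pi iL_0}\colon H\to T_g(H)$. However, the ingredient that makes this construction work for twisted representations is missing, and the substitute you name does not supply it.

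Take $\varphi\in\Diff_J(S^1)$ and a special inclusion $J\subset I$. You can act on $H$ by $\pi^H_J(U(\varphi))$ or by $\pi^H_I(U(\varphi))$, and the twisting condition gives $\pi^H_J(U(\varphi))=\pi^H_I\bigl(g^{-1}U(\varphi)\bigr)$. A diffeomorphism supported in $J$ has one canonical lift to $\widetilde{\Diff^+}(S^1)$, whichever lift of $J$ to $\R$ you use. Moreover, the colimit presentation of the universal cover \cite[Thm.~11]{Hen19} uses all inclusions, special ones included. So passing to the universal cover does not remove this discrepancy.

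What you need is that $G$ commutes with the diffeomorphism symmetry. First, $V_g$ commutes with $U(\Mob)$ by Bisognano--Wichmann. By uniqueness of diffeomorphism covariance (Carpi--Weiner, cf.\ \cite[Thm.~6.1.9]{weiner2007conformalcovariancerelatedproperties}), it then commutes with the projective representation of $\Diff^+(S^1)$. Hence $g(U(\varphi))=U(\varphi)$ for the local implementers $U(\varphi)\in\A(I)$; the a priori phase is a character of the perfect group $\Diff^+(S^1)$, hence trivial.

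Consequently a $g$-twisted representation restricts to an honest representation of the subnet generated by the $U(\varphi)$'s, or of $\A^G$ (the device used in Proposition~\ref{prop: Rsigmafactors}). The $\widetilde{\Diff^+}(S^1)$-action is then obtained exactly as in the untwisted case \cite{Gui21}. The universal cover enters for the same reason as there, not because of the twist.

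Your step $T_g(\theta_X)=\theta_{T_g(X)}$ silently relies on the same fact. The implementers on $T_g(X)$ are $\pi(g^{-1}U(\varphi))$. Uniqueness of the $\widetilde{\Mob}$-lift only helps once these are known to agree, at least projectively, with $\pi(U(\varphi))$.

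With this supplied, the rest of your plan is the argument of \cite{GcrossedbraidedRep}. That covers the covariance with monodromy $g$ under the full rotation, naturality, and the twist axiom. The twist axiom comes from covariance of Connes fusion: the $\widetilde{\Mob}$-action on $X\boxtimes Y$ is path continuation along $t\mapsto(\varphi_t(z),\varphi_t(\zeta))$ composed with the actions on the factors, and the $2\pi$ rotation is two copies of $\varrho$.
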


In order to produce the $G$-crossed balance on $\Rep^{G}(\A)$, in \cite{GcrossedbraidedRep} we generalize the constructions sketched in Section \ref{Sec: ConfNets} to twisted representations. In particular, given an element $g\in G$, we show that any $g$-twisted representation $H^g\in \Rep^g(\A)$ admits a projective unitary action of $\widetilde{\Diff^+}(S^1)$ providing an honest unitary action of $\widetilde{\Mob}$ (see \cite[Sec. 3.5]{GcrossedbraidedRep}), and we define the $G$-crossed balance $\theta_H$ on $H^g$ by
\[
e^{-2\pi i L_0}: H^g\to T_g(H^g),
\]
the action of $\widetilde{\exp}(-2\pi i L_0)$ on $H^g$, see \cite[Sec. 3.6]{GcrossedbraidedRep}.

We recall the construction of the $G$-crossed braided tensor structure on $\Rep^G(\A)$ from \cite{muger05}, as we will use it later. Müger works in the language of localized endomorphisms of $\A$ as opposed to representations of $\A$, and hence he produces the $G$-crossed braided structure on a category equivalent but not equal to $\Rep^{G}(\A)$. Let us describe this category first.

Let $\Jcal_\mathrm{p}:=\{I\in \Jcal\ |\ \mathrm{p}\notin \mathrm{cl}({I})\}$ be the collection of intervals of $S^1$ not containing $\mathrm{p}$ in their closure. We write 
\[
\A_\infty := \bigcup\limits_{I\in \Jcal_\mathrm{p}}\A(I)\subset B(H_0)
\]
for the $*$-subalgebra of $B(H_0)$ given by the union of all the $*$-algebras $\A(I)$, for $I\in \Jcal_\mathrm{p}$.

\begin{definition}
Let $\End\,\A_\infty$ be the $\C$-linear strict tensor category whose objects are unital $*$-algebra homomorphisms from $\A_\infty$ to itself, with    
\begin{align*}
\Hom(\rho,\sigma) & = \{s\in\A_\infty\ |\ s\rho(x) = \sigma(x)s\ \ \forall x\in \A_\infty\}\\
t\circ s &= ts,\text{      for $s\in \Hom(\rho,\sigma)$ and $t\in\Hom(\sigma,\eta)$}\\
\rho\otimes\sigma &= \rho\circ\sigma\\
s\otimes t & = s\rho(t) = \rho'(t)s,\text{      for $s\in \Hom(\rho,\rho')$ and $t\in\Hom(\sigma,\sigma')$}
\end{align*}
where $\rho,\rho',\sigma,\sigma'\in \End\,\A_\infty$. The unit is given by $\id_{\A_\infty}$.
\end{definition}

Fix $I_0\in \Jcal_{\mathrm{p}}$ and let $\varphi\in \Aut(\A)$ be an automorphism. We say an object $\rho\in \End\,\A_\infty$ is $\varphi$-localized in $I_0$ if
\[
\begin{array}{clc}
\rho(x) &= x \hspace{1cm} & \forall J\in \Jcal_{\mathrm{p}}\text{ such that $J\subset(\mathrm{p},\partial_-I_0)$ and all $x\in \A(J)$},\\
\rho(x) &= \varphi(x) \hspace{1cm} & \forall J\in \Jcal_{\mathrm{p}}\text{ such that $J\subset(\partial_+I_0,\mathrm{p})$ and all $x\in \A(J)$}.
\end{array}
\]
Here, $\partial_- I_0$, $\partial_+I_0$, $(\mathrm{p}, \partial_-I_0)$ and $(\partial_+I_0, \mathrm{p})$ are taken with the usual orientation of $S^1$, meaning in particular that $\partial_+I_0$ is counter-clockwise to $\partial_-I_0$ along $I_0$. By diffeomorphism covariance of $\A$, every $\varphi$-localized endomorphism $\rho\in\End\,\A_\infty$ is transportable, meaning that for every $J\in\Jcal_{\mathrm{p}}$, there exists $\rho'\in\End\,\A_\infty$ which is $\varphi$-localized in $J$ and satisfies that $\rho\cong \rho'$ unitarily. Given an automorphism $\varphi\in \Aut(\A)$, we write $\varphi-\Loc_{I_0}(\A)$ for the $\mathrm{W}^*$-subcategory of $\End\,\A_\infty$ whose objects are $\varphi$-localized endomorphisms in $I_0$ and whose morphisms between objects $\rho$ and $\sigma$ are given by $s\in \A(I_0)$ such that $s\rho(x) = \sigma(x)s$ for all $x\in \A_\infty$.

Let $G$ be a discrete group acting on $\A$. For every $g\in G$, we also denote by $g\in \Aut(\A)$ the automorphism of $\A$ induced by $g$. We say an endomorphism $\rho\in \End\,\A_\infty$ is $G$-localized in $I_0$ if it is $g$-localized in $I_0$ for some $g\in G$.

\begin{definition}\label{def: CategoryOfAut(A)LocEndomorphisms}
We write $G-\Loc_{I_0}\A: = \bigoplus\limits_{g\in G} g-\Loc_{I_0}(\A)$, whose objects are countable orthogonal direct sums of endomorphisms in $\End\,\A_\infty$ which are $G$-localized in $I_0$.
\end{definition}

Given $g,h\in G$ and $\rho\in g-\Loc_{I_0}(\A)$ and $\sigma\in h-\Loc_{I_0}(\A)$ localized endomorphisms, their tensor product
\[
\rho\otimes \sigma = \rho\circ\sigma,
\]
is $gh$-localized. Therefore, the tensor structure of $\End\,\A_\infty$ restricts to a tensor structure on $G-\Loc_{I_0}(\A)$ compatible with the $G$-grading. We let $G$ act on $G-\Loc_{I_0}\A$ by $\gamma_g(\rho) = g\circ\rho\circ g^{-1}$ and $\gamma_g(s) = g(s)$ for $g\in G$, $\rho,\sigma\in\End\,\A_\infty$ and $s\in\Hom(\rho,\sigma)$. The $G$-crossed braiding is defined as follows. Fix $g,h\in G$ and let $\rho,\sigma \in G-\Loc_{I_0}\A$ be endomorphisms with $\rho$ $g$-localized in $I\in \Jcal_{\mathrm{p}}$ and $\sigma$ $h$-localized in $J\in \Jcal_{\mathrm{p}}$, where $I, J\subset I_0$. Let $I', J'\in \Jcal_{\mathrm{p}}$ such that $(\partial_-I', \partial_+J')$ is covered by $I_0$, taking the usual orientation of $S^1$. By transportability, there exist $\rho'$ and $\sigma'$ localized in $I'$ and $J'$ respectively, and unitaries $u\in \Hom(\rho,\rho')$ and $v\in\Hom(\sigma,\sigma')$. By \cite[Lemm. 2.14]{muger05}, we have $\rho'\otimes\sigma' = \gamma_g(\sigma')\otimes\rho'$, and hence we can define the braiding of $\rho$ and $\sigma$ as the composite
\[
c_{\rho,\sigma}: \rho\otimes\sigma \xrightarrow{u\otimes v} \rho'\otimes \sigma'
= \gamma_g(\sigma')\otimes\rho'\xrightarrow{\gamma_g(v^*)\otimes u^*} \gamma_g(\sigma)\otimes\rho. \]
It can be shown that $c_{\rho,\sigma}$ is independent of the choices of $I',J',\rho',\sigma'$ and that it defines a $G$-crossed braiding on $G-\Loc_{I_0}\A$, see \cite[Prop. 2.17]{muger05}.

\begin{theorem}
Let $G$ be a discrete group acting on a conformal net $\A$. The category $G-\Loc_{I_0}(\A)$ of $G$-localized endomorphisms of $\A$ admits a canonical structure of a $G$-crossed braided tensor category. 
\end{theorem}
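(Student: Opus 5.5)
The plan is to verify directly, following \cite{muger05}, the axioms of Definition \ref{def: G-X-braidedWTensorCat} for the data $(G-\Loc_{I_0}\A, \circ, \gamma, c)$ described above.

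\textbf{Step 1: tensor structure, action and grading.} The tensor structure is the strict one inherited from $\End\,\A_\infty$. The first thing to check is that $G-\Loc_{I_0}\A$ is closed under composition with the correct grading: if $\rho$ is $g$-localized and $\sigma$ is $h$-localized in $I_0$, then $\rho\circ\sigma$ acts trivially to the left of $I_0$ and as $g\circ h$ to the right of it. Here one uses that $g$ preserves each $\A(J)$. Next, $\gamma_g(\rho)=g\rho g^{-1}$ is a strict tensor automorphism, since $\gamma_g(\rho\sigma)=\gamma_g(\rho)\gamma_g(\sigma)$. A direct check on local algebras shows that $\gamma_g$ sends $h$-localized endomorphisms to $ghg^{-1}$-localized ones. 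It preserves morphisms because $g(\A(I_0))=\A(I_0)$. Finally, $\gamma_g\gamma_h=\gamma_{gh}$ strictly, so the action has trivial structure maps $s$ and $\eta$.

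\textbf{Step 2: the exchange identity.} The core is the commutation lemma \cite[Lemm.~2.14]{muger05}. Suppose $\rho'$ is $g$-localized in $I'$ and $\sigma'$ is $h$-localized in $J'$, with $J'$ lying to the left of $I'$ inside $I_0$ in the relevant orientation. Then $\rho'\sigma'=\gamma_g(\sigma')\rho'$.

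I would prove this by evaluating both sides on $\A(K)$ for $K\in\Jcal_\mathrm{p}$ in each relative position: $K$ to the left of both intervals, between them, or to the right of both. In each case the localization conditions identify both sides with $x$, $g(x)$, $\sigma'(x)$, or $g\sigma'(x)$, as appropriate. For $K$ meeting $I'\cup J'$, I would enlarge $K$ and use that an endomorphism localized in $I'$ maps $\A(L)$ into $\A(L)$ for $L\supset I'$. This follows from Haag duality together with locality: $\rho'(x)$ commutes with $\A(M)$ for $M$ disjoint from $L$, after twisting by $g$ on the right.

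Since $\A_\infty$ is the union of the $\A(K)$, the identity holds globally. The same argument, applied to intertwiners, shows that a unitary $u\in\Hom(\rho,\rho')$ between endomorphisms localized in $I$ and $I'$ lies in $\A(L)$ for any interval $L\supset I\cup I'$.

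\textbf{Step 3: independence of choices (the main obstacle).} With Step 2 in hand, $c_{\rho,\sigma}$ is a well-defined unitary once the choices are fixed. The hard part is showing independence of $(I',J',\rho',\sigma',u,v)$, because the intertwiners must be positioned correctly relative to the endomorphisms.

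Changing $u,v$ for fixed $\rho',\sigma'$ changes them by unitaries in $\Hom(\rho',\rho')$ and $\Hom(\sigma',\sigma')$. These pass through the expression by naturality of $\otimes$ and by the identity $s\otimes t=s\rho(t)=\rho'(t)s$.

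Changing the intervals is harder. I would reduce to the case where the new pair $(I'',J'')$ can be reached from $(I',J')$ by moving only one interval while keeping it on the same side of the other. The unitary $w\in\Hom(\rho',\rho'')$ then lies in an algebra $\A(L)$ with $L$ disjoint from $J'$. Consequently $\sigma'(w)=w$ when $L$ is on the trivial side of $\sigma'$; dually, $\gamma_g(\sigma')$ acts on such $w$ in a controlled way. These facts make the two composites agree. Any two admissible configurations are connected by finitely many such moves inside $I_0$.

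\textbf{Step 4: remaining axioms.} Naturality of $c$ in $\rho$ and $\sigma$ follows from the same manipulations. For the compatibility $\gamma_k(c_{\rho,\sigma})=c_{\gamma_k\rho,\gamma_k\sigma}$, apply $\gamma_k$ to all the data; $\gamma_k$ preserves localization regions. For the two hexagon-type diagrams \eqref{eq: Xbraiding1} and \eqref{eq: Xbraiding2}, choose all three endomorphisms transported to mutually disjoint intervals in the appropriate order, so that every braiding in the diagram is an identity up to the intertwiners. Both composites then reduce to the same product of intertwiners, using only that $\gamma_g$ is strict, for example $\gamma_g(\sigma\tau)=\gamma_g(\sigma)\gamma_g(\tau)$. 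The $\mathrm{W}^*$ properties are inherited, since each endomorphism space is a von Neumann subalgebra of $\A(I_0)$ cut out by weakly closed conditions.
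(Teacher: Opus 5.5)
Your route is the one the paper relies on. The paper gives no argument of its own for this theorem; it defers to \cite[Prop.~2.17]{muger05}, which rests on the exchange lemma \cite[Lemm.~2.14]{muger05} and independence of choices, just as you propose.

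\textbf{The exchange identity as you state it is false.} Your hedge ``in the relevant orientation'' has to be resolved, and it resolves the other way from what you wrote. The paper's conventions are: $\rho$ is trivial on $(\mathrm{p},\partial_-I_0)$, equals $g$ on $(\partial_+I_0,\mathrm{p})$, and $I',J'$ are chosen so that $(\partial_-I',\partial_+J')$ is covered by $I_0$. Under these conventions, $\rho'\sigma'=\gamma_g(\sigma')\rho'$ requires $I'$ to lie on the \emph{trivial} side of $J'$, i.e.\ before $J'$ going counterclockwise from $\mathrm{p}$. In your order ($J'$ before $I'$) both sides differ:
\begin{itemize}
\item on $K$ strictly between $J'$ and $I'$ they are $h(x)$ and $ghg^{-1}(x)$;
\item on $K\supset J'$ lying before $I'$ they are $\sigma'(x)$ and $\gamma_g(\sigma')(x)$.
\end{itemize}
This already fails for $\Heis$. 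Take $\rho'$ twisted by the non-trivial element of $\Z/2$ and $\sigma'=\rho_l$; you get $e^{i\int lf}W(f)$ versus $e^{-i\int lf}W(f)$. The value $\sigma'(x)$ in your list suggests you had this reversed picture in mind. In the correct order the values are $x$, $\rho'(x)$, $g(x)$ and $g\sigma'(x)$. Your Step 3, which moves $\rho'$ within the trivial side of $J'$ so that $\gamma_g(\sigma')(w)=w$, is then consistent.

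\textbf{Two smaller points in Step 2 also need an argument.}

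First, enlarging $K$ does not cover the case $K\supset I'\cup J'$. Here you need one of two things:
\begin{itemize}
\item additivity $\A(K)=\A(K_1)\vee\A(K_2)$, with $K_1\supset I'$ avoiding $J'$, $K_2\supset J'$ avoiding $I'$, and $K_1\cup K_2=K$, plus normality of both sides on $\A(K)$; or
\item the usual trick of transporting $\sigma'$ past $K$ by a unitary $v$ localized on the twisted side of $I'$, so that $\rho'\sigma'(x)=g(v)^*\rho'(x)g(v)=\gamma_g(\sigma')(\rho'(x))$.
\end{itemize}

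Second, Haag duality on $S^1$ requires commutation with $\A(L')$, where $\mathrm{p}\in L'$. The algebras $\A(M)$ with $M\in\Jcal_{\mathrm p}$ generate $\A(L')$ only under strong additivity. The fix is to observe that $\rho'(x)$ and the intertwiners lie in $\A_\infty$, so they commute with $\A(N)$ for a small $N\ni\mathrm{p}$. Then apply additivity to the overlapping cover of $L'$ by $N$ and intervals of $\Jcal_{\mathrm p}$.
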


\begin{remark}
The braided $\mathrm{W}^*$-tensor full subcategory $\{\id\}-\Loc_{I_0}(\A)$ of $G-\Loc_{I_0}(\A)$ on the $\id$-localized endomorphisms is also known as the DHR category of $\A$, after Doplicher-Haag-Roberts \cite{MR258394}. We will denote it here by $\Loc_{I_0}(\A)$.
\end{remark}

Given $g\in G$ and $\rho\in g-\Loc_{I_0}(\A)$ a $g$-localized endomorphism, we can produce a $g$-twisted representation of $\A$ on $H_0$ by letting, for all $I\in \Jcal_\mathrm{p}$ and $x\in \A(I)$, the element $x$ act on $H_0$~by
\[
\pi_{0, I}(\rho(x)):H_0\to H_0.
\]
Since $\rho$ is $g$-localized, these actions extend to a $g$-twisted representation of $\A$ on $H_0$. We abuse the notation to write $(H_0, \pi_0\circ \rho)$ for this representation. The assignment $\rho \in  g-\Loc_{I_0}(\A) \mapsto (H_0, \pi_0\circ \rho)\in \Rep^g(\A)$ is extended to morphisms by sending an element $s\in \Hom_{G-\Loc_{I_0}(\A)}(\rho, \rho')$ to $\pi_0(s):H_0\to H_0$. We obtain a $\mathrm{W}^*$-functor $\mathfrak{E}: G-\Loc_{I_0}(\A)\to \Rep^{G}(\A)$, which can be further upgraded to a $\mathrm{W}^*$-tensor functor $(\mathfrak{E}, \Psi): G-\Loc_{I_0}(\A)\to \Rep^{G}(\A)$. The following result is the content of \cite[Thm. 4.9]{GcrossedbraidedRep}, where the $G$-crossed braided $\mathrm{W}^*$-tensor structure on $\Rep^G(\A)$ is that in Theorem \ref{Thm: GcrossedBalancedRepGA}.

\begin{theorem}
The $\mathrm{W}^*$-tensor functor $(\mathfrak{E}, \Psi): G-\Loc_{I_0}(\A)\to \Rep^{G}(\A)$ is an equivalence of $\mathrm{W}^*$-tensor categories. In addition, it can be upgraded to an equivalence of $G$-crossed braided $\mathrm{W}^*$-tensor categories.
\end{theorem}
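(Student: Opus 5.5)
We need to prove that $(\mathfrak{E},\Psi)$ is a $\mathrm{W}^*$-tensor equivalence and that it upgrades to a $G$-crossed braided equivalence. The plan has three stages: essential surjectivity and full faithfulness, then the tensor structure, then the comparison of braidings and actions.

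\emph{Equivalence of $\mathrm{W}^*$-categories.} First we check that $\mathfrak{E}$ is fully faithful. For $\rho,\sigma\in g-\Loc_{I_0}(\A)$, an operator $T\in B(H_0)$ intertwining $\pi_0\circ\rho$ and $\pi_0\circ\sigma$ commutes with $\A(J)$ for every $J\subset I_0^c$ with $\mathrm{p}\notin\mathrm{cl}(J)$. On such $J$ both endomorphisms are the identity or both are $g$. Hence $T\in\A(I_0^c)'=\A(I_0)$ by Haag duality, and $T$ is then exactly an element of $\Hom(\rho,\sigma)$.

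For essential surjectivity, take $H\in\Rep^g(\A)$, which we may assume irreducible or at least separable. Since $\A(I_0^c)$ is a type $\mathrm{III}$ factor acting normally on both $H$ and $H_0$, there is a unitary $U:H\to H_0$ intertwining the $\A(I_0^c)$-actions. The twisting condition must be arranged so that the actions on the two components of $I_0^c\setminus\{\mathrm{p}\}$ become the identity and $g$. To do this, choose the unitary intertwining the action of the larger interval $I_0^c$, read through $\delta^{g^{-1}}$ on the left component.

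Now define $\rho(x):=U\pi^H(x)U^*$ for $x\in\A(I)$ with $I\in\Jcal_\mathrm{p}$. Haag duality again places $\rho(\A(I))\subset\A(I\cup I_0)$ whenever $I\supset I_0$, so $\rho$ lands in $\A_\infty$, and $\rho$ is $g$-localized in $I_0$. Countable direct sums are handled componentwise.

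\emph{Tensor structure $\Psi$.} Next we construct $\Psi_{\rho,\sigma}:H_0^\rho\boxtimes H_0^\sigma\to H_0^{\rho\circ\sigma}$ on Connes fusion. After transporting $\sigma$ so that it is localized in an interval in $S^1_+$ and $\rho$ in $S^1_-$, we use the bounded-vector description: send $\xi\otimes\eta$ to $Z(\xi)Z(\eta)\Omega$, suitably composed with $\rho$. Unitarity comes from the defining inner product of $H(I)\boxtimes K(J)$. Equivariance for intervals away from $\mathrm{p}$ follows from the path-continuation definition of the action. We then verify naturality and compatibility with the associators, using the strict tensor structure on the endomorphism side.

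\emph{Crossed braided upgrade.} The maps $\Phi_g(\rho):\mathfrak{E}(g\rho g^{-1})\to T_g\mathfrak{E}(\rho)$ are given by $V_g$, since $V_g\pi_0(g\rho g^{-1}(x))V_g^*$ relates to $\pi_0(\rho(g^{-1}x))$ via the implementing unitary. The remaining step is diagram \eqref{eq: Gcrossedfunctor}.

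By naturality of both braidings we may assume $\rho$ and $\sigma$ are already localized in the standard position $I',J'$. In that position $c_{\rho,\sigma}$ is the identity. It then suffices to show that the path-continuation $\varrho^\bullet$ followed by the swap corresponds under $\Psi$ to the identity $\rho\sigma=\gamma_g(\sigma)\rho$, up to $\Phi_g$.

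\textbf{This is the main obstacle.} The path $\varrho$ moves the localization intervals past each other, and one endpoint crosses $\mathrm{p}$. That crossing is where the twist produces $\gamma_g$, and tracking the special inclusions $\delta^{g^{-1}}$ precisely along the path is delicate. I would split $\varrho$ into small steps and check that only the step crossing $\mathrm{p}$ contributes a factor of $g$. That factor is what turns $\sigma$ into $g\sigma g^{-1}$.
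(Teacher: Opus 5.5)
The paper gives no argument for this statement. It quotes it from \cite[Thm.~4.9]{GcrossedbraidedRep}, with the underlying $\mathrm{W}^*$-equivalence from \cite[Prop.~4.6]{GcrossedbraidedRep}, so your proposal has to stand on its own.

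Your first stage does stand: it is the standard DHR/Haag-duality argument, and it needs only two repairs.
\begin{enumerate}
\item Deducing $T\in\A(I_0^c)'$ from commutation with the $\A(J)$, $\mathrm{p}\notin\mathrm{cl}(J)$, tacitly uses strong additivity, which is not among the axioms. Use instead that $T$ intertwines the twisted actions of $\A(I_0^c)$ itself. For both $\pi_0\circ\rho$ and $\pi_0\circ\sigma$ this action is $\pi_0\circ g=\mathrm{Ad}(V_g)\circ\pi_0$, so $T\in\A(I_0^c)'=\A(I_0)$.
\item In essential surjectivity, the precise choice is a unitary $U$ with $U\pi^H_{I_0^c}(y)U^*=\pi_0(g(y))$ for $y\in\A(I_0^c)$. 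The special inclusions then force $\rho=\id$ after $\mathrm{p}$ and $\rho=g$ before it. Haag duality gives $\rho(\A(I))\subset\A(\tilde I)$ for any $\tilde I\in\Jcal_\mathrm{p}$ containing $I\cup I_0$.
\end{enumerate}
Since $\Psi$ is part of the given data, this settles the first sentence of the theorem.

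The genuine gap is the second sentence. Upgrading to a $G$-crossed braided equivalence requires three things:
\begin{enumerate}
\item each $\Phi_g$ is a monoidal isomorphism $\mathfrak{E}\circ\gamma_g\Rightarrow T_g\circ\mathfrak{E}$ relative to $\Psi$ and the tensor structure $s_g$ of $T_g$;
\item $\Phi$ is compatible with the composition data of the two actions;
\item diagram \eqref{eq: Gcrossedfunctor} commutes.
\end{enumerate}
You do not address the first two, and the third stops exactly where the work begins. Your reduction to $\rho,\sigma$ in standard position, where $c_{\rho,\sigma}=1$, is fine. What must then be proved is
$\Psi_{\gamma_g(\sigma),\rho}\circ(\Phi_g(\sigma)^{-1}\boxtimes\id)\circ\mathbb{B}_{\mathfrak{E}(\rho),\mathfrak{E}(\sigma)}=\Psi_{\rho,\sigma}$, and you only say you \emph{would} check it step by step.

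This check needs an explicit $\Psi$, which your second stage does not supply. The phrase ``$Z(\xi)Z(\eta)\Omega$, suitably composed with $\rho$'' is not a formula. Nor can it be the untwisted one: if $\sigma$ is $h$-localized in $S^1_+$, then $\mathfrak{E}(\sigma)(S^1_+)=V_h\A(S^1_+)\Omega$ with $Z(\eta)=V_ha$. For example $Z(\Omega)=V_h$, which is not a local operator, so $\rho$ cannot be applied to it.

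The check then needs an actual computation of $\varrho^\bullet$. The localization regions never move; they stay inside $I_0\in\Jcal_\mathrm{p}$. What moves are the intervals $I$ carrying bounded vectors. As the interval for $\mathfrak{E}(\rho)$ travels along $\varrho$, in particular across $\mathrm{p}$, the spaces $\Hom_{\A(I^c)}(H_0,\mathfrak{E}(\rho))$ must be re-described using charge transporters and the twisted action $\pi_0\circ g$. Following them along the path, and showing that the net effect is exactly $\Phi_g(\sigma)\boxtimes\id$ up to the two $\Psi$'s, is the content of the theorem. A plan to do so is not a proof.

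Finally, $\Phi_g(\rho)$ is $V_g^*$, not $V_g$, since $V_g^*\,\pi_0(g\rho g^{-1}(x))=\pi_0(\rho(g^{-1}(x)))\,V_g^*$.
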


\begin{remark}
Whenever there is no possible confusion, we will also denote by $(\mathfrak{E}, \Psi)$ the equivalence of braided $\mathrm{W}^*$-tensor categories $\Loc_{I_0}(\A)\cong \Rep(\A)$ obtained by restriction to the identity piece of $G-\Loc_{I_0}(\A)$.
\end{remark}

The action of $G$ on $\A$ defines a fixed-points conformal net $\A^G$ as follows. Recall that every $g\in G$ comes equipped, by definition, with a unitary $V_g\in U(H_0)$ implementing the action of $g$ on $\A$. The vacuum Hilbert space of $\A^G$ is $H_0^G : = \{\xi\in H_0\ |\ V_g\xi = \xi\ \text{ for all $g\in G$}\}$, with vacuum vector $\Omega\in H_0^G$. Given an interval $I\in \Jcal$, we define
\[
\A^G(I) : = \A(I)^G = \{x\in \A(I)\ |\ gx = x\ \text{ for all $g\in G$}\}.
\]
The projective action of $\Diff^+(S^1)$ on $H_0$ restricts to a projective action on $H_0^G$, which we continue denoting by $U$, see \cite[Thm. 6.1.9]{weiner2007conformalcovariancerelatedproperties} and \cite[Rk. 3.14]{GcrossedbraidedRep}.

\begin{definition}
The fixed points conformal net of a conformal net $(\A, H_0, U, \Omega)$ under the action of a group $G$ is the tuple $(\A^G, H_0^G, U|_{H_0^G}, \Omega)$.
\end{definition}

We showed in \cite{fixedpoints} that the category of representations of $\A^G$ is controlled by the category $\Rep^G(\A)$ of twisted representations of $\A$. Let $(\Rep^G(\A))^G$ denote the $G$-equivariantization of the $G$-crossed balanced $\mathrm{W}^*$-tensor category $\Rep^G(\A)$. An object in $(\Rep^G(\A))^G$ consists of an object $H\in \Rep^G(\A)$ together with, for every $g\in G$, a unitary $U_g: T_g(H)\to H$ satisfying certain coherence conditions. Recall that the underlying Hilbert space of $T_g(H)$ is $H$, and therefore we obtain a unitary action of $G$ on $H$ by $g\mapsto U_g$. We write $H^G$ for the fixed points of $H$ under this action. The $G$-twisted action of $\A$ on $H$ restricts to an honest action of $\A^G$ on the fixed-points Hilbert space $H^G$. A morphism in $(\Rep^G(\A))^G$ restricts to a morphism between the fixed-points representations of the domain and the target. We write
\[
\mathfrak{R}: (\Rep^G(\A))^G\to \Rep(\A^G)
\]
for the functor sending a $G$-equivariant, $G$-twisted representation $H$ of $\A$ to the restricted honest representation of $\A^G$ on the fixed-points Hilbert space $H^G$.

\begin{theorem}(\cite[Thm. 4.19]{fixedpoints})
Let $\A$ be a conformal net and let $G$ be a finite group acting faithfully on $\A$. Then, the $\mathrm{W}^*$-functor
\[
\mathfrak{R}:(\Rep^G(\A))^G\xrightarrow{} \Rep(\A^G)
\]
can be canonically upgraded to an equivalence of balanced $\mathrm{W}^*$-tensor categories.
\end{theorem}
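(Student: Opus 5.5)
The plan is to reduce everything to the localized-endomorphism picture, where tensor products are compositions and braidings are explicit, and then to transport the result back along the equivalences $\mathfrak{E}$. Concretely, I would first build a functor at the level of endomorphisms,
\[
\mathfrak{R}_{\Loc}:\big(G-\Loc_{I_0}(\A)\big)^G\to \Loc_{I_0}(\A^G).
\]
Take an object $(\rho,u)$, where $\rho$ is $g$-localized and $u_h\in\Hom(\gamma_h(\rho),\rho)$ are equivariance unitaries. The restriction of $\rho$ to $\A^G_\infty$ is an honest DHR endomorphism localized in $I_0$, because on $(\partial_+I_0,\mathrm{p})$ it acts by $g$, and $g$ is trivial on fixed points. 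The unitaries $u_h$ then let one conjugate $\rho$ into an endomorphism that preserves $\A^G_\infty$ on the $G$-invariant part. Next I would check that, under $\mathfrak{E}$ on both sides, this functor agrees up to natural unitary isomorphism with $\mathfrak{R}$. The point is that $H_0^G\subset H_0$ is exactly the fixed-point subspace, and the action $\pi_0\circ\rho$ restricted to $\A^G$ preserves $(H_0)^G$ after twisting by $u$.

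The tensor structure is then essentially tautological: composition of endomorphisms commutes with restriction, and the equivariance data multiply as in the definition of $u\otimes v$. This gives a strict tensor structure on $\mathfrak{R}_{\Loc}$, which is transported through $(\mathfrak{E},\Psi)$ for $\A$ and for $\A^G$ to a unitary tensor structure on $\mathfrak{R}$.

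For the braiding, compare Müger's crossed braiding $c_{\rho,\sigma}$ with the DHR braiding of $\A^G$. Both are computed by transporting $\rho,\sigma$ to intervals $I',J'$ in the prescribed order. Restricting $c_{\rho,\sigma}$ to $\A^G_\infty$ and composing with $u_g\otimes\id$ yields exactly the DHR braiding of the restrictions, since the transporters $u,v$ can be chosen $G$-invariantly up to the equivariance data. For the balance, the $\widetilde{\Mob}$-action on a $g$-twisted representation $H$ commutes with the $G$-action $U_h$ and so restricts to $H^G$. Hence $u_g\circ e^{-2\pi iL_0}$ restricts to $e^{-2\pi i L_0}$ on $H^G$, by uniqueness of the lift of the projective $\widetilde{\Mob}$-action.

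What remains is that $\mathfrak{R}$ is an equivalence of $\mathrm{W}^*$-categories; I expect this to be the main obstacle.
\begin{itemize}
\item \emph{Fully faithful.} I would use that $\A^G\subset\A$ is an irreducible finite-index inclusion (Doplicher--Roberts / Galois correspondence, using faithfulness of the action and finiteness of $G$), with $H_0=\bigoplus_{\pi\in\widehat G}$ (multiplicity spaces)$\otimes H_0^\pi$. A morphism between fixed-point restrictions then extends uniquely to a $G$-equivariant morphism of twisted representations, because $H$ is generated by $H^G$ under the charged intertwiners of $\A$.
\item \emph{Essentially surjective.} Given an irreducible $K\in\Rep(\A^G)$, form $\alpha$-induction along the dual canonical endomorphism of $\A^G\subset\A$. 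Localization at $\mathrm{p}$ gives a soliton, which Müger's analysis of solitons decomposes into $g$-twisted representations of $\A$. The group $G$ acts on it via the Q-system, which produces equivariance data, and its fixed points recover $K$. Checking that this soliton genuinely carries a twisted $\A$-action compatible with all intervals, including the special inclusions, is the delicate analytic step. I would handle it with the same path-continuation arguments used for the twisted Connes fusion.
\end{itemize}
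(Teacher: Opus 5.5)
The paper does not prove this statement; it quotes it from \cite{fixedpoints}. Judged on its own, your plan has a genuine gap at the step you yourself flag, essential surjectivity.

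\textbf{Essential surjectivity.} You argue only for irreducible $K\in\Rep(\A^G)$. For a non-rational net, $\Rep(\A^G)$ is not semisimple. A fully faithful functor whose essential image contains all irreducibles only reaches subobjects of countable direct sums of irreducibles.

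\begin{itemize}
\item The object $\int^\oplus_{\R_{>0}}H_x\,\mathrm{d}\lambda(x)\in\Rep(\Heis^{\Z/2})$, which is exactly what this paper needs, has commutant $L^\infty(\R_{>0})$. It therefore has no irreducible subobject and is missed by your argument.
\item Reducing to irreducibles by direct integrals would require measurability in the parameter of the whole construction: the induced soliton, its splitting, and the $G$-action. You do not address this.
\item The input you invoke, Müger's analysis of induced solitons, works with finite-dimensional sectors in the rational setting. The paper itself notes that Müger's equivalence is proved for rational $\A$. Here the twisted sectors have infinite statistical dimension: $H^\tau_\pm\boxtimes H^\tau_\pm$ contains no copy of the vacuum.
\end{itemize}

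What has to be supplied is a construction valid for arbitrary $K$:
\begin{enumerate}
\item Induce $K$ along the finite-index inclusions $\A^G(I)\subset\A(I)$ to an $(\A^G\subset\A)$-soliton.
\item Split the soliton by projections $P_g$ in the commutant of the soliton action, for instance coming from its monodromy with the subcategory $\Rep(G)\subset\Rep(\A^G)$ carried by $H_0$.
\item Prove that each $P_g$-summand extends to a genuine $g$-twisted representation, including on intervals containing $\mathrm{p}$ and across special inclusions.
\item Show that $G$ permutes the summands and that the fixed points recover $K$.
\end{enumerate}
This is the real content of the theorem; compare the equivalence $\text{Sol}_{\A^G\subset\A}\simeq\Rep^G(\A)$ mentioned in the introduction. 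Saying it will follow from path continuations does not supply it.

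\textbf{Two further steps need repair.}

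First, $\rho|_{\A^G_\infty}$ does not map into $\A^G_\infty$. From $u_h\,h\rho h^{-1}(x)=\rho(x)u_h$ you only get that $\rho(x)$ is fixed by $\mathrm{Ad}(u_h)\circ h$ for $x\in\A^G_\infty$. The correct normalization goes as follows.
\begin{itemize}
\item $h\mapsto u_h$ is a $U(\A(I_0))$-valued $1$-cocycle, $u_{gh}=u_g\,g(u_h)$, for the $G$-action on the type III factor $\A(I_0)$.
\item That action is outer because the action on the net is faithful, so the cocycle is a coboundary: $u_h=w\,h(w)^*$ with $w\in U(\A(I_0))$.
\item Then $\mathrm{Ad}(w^*)\circ\rho$ is unitarily equivalent to $\rho$, commutes with the $G$-action, and restricts to a DHR endomorphism of $\A^G$ localized in $I_0$.
\item With this normalization your tensor structure is indeed strict.
\end{itemize}

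Second, in the braiding step, DHR transporters for $\A^G$ lie in $\A^G$ but need not intertwine the corresponding endomorphisms of $\A_\infty$. Lifting them to $G$-invariant intertwiners in $\A$ is precisely full faithfulness. So the braiding comparison must come after, and be deduced from, your fully faithful argument, not from a free choice of $G$-invariant transporters.
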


\section{The category of representations of the Heisenberg conformal net}
\label{sec: TheCatOfReps}

In this section we characterize the underlying $\mathrm{W}^*$-category of $\Rep(\Heis)$ and its fusion rules. We show that they both agree with those of $\Hilb\,\R$. We do not show yet an equivalence of $\mathrm{W}^*$-tensor categories $\Rep(\Heis)\cong \Hilb\,\R$ as we do not argue compatibility with the associators. This is done in Section \ref{sec: RepZ/2AsTY}.

We first provide an equivalence of $\mathrm{W}^*$-categories $\Rep(\Heis)\cong \Rep(C_0(\R))$, see Theorem \ref{thm: RepHeisIsRepR}. We make use of two auxiliary categories: the category $\Rep(\R)$ of unitary representations of $\R$ and the category $\Rep^{pe}(\widetilde{L\R})$ of unitary positive energy representations of a central $U(1)$-extension of the loop group of $\R$ (a positive representation of $\widetilde{L\R}$ is a representation extending to $U(1)\ltimes \widetilde{L\R}$ where the action of $U(1)$ has positive spectrum). We then construct the following collection of functors
\[\begin{tikzcd}
	{\Rep(C_0(\R))} & {\Rep(\R)} & {\Rep^{pe}(\widetilde{L\R})} & {\Rep(\Heis)}
	\arrow["\cong", from=1-1, to=1-2]
	\arrow["{\mathfrak{L}}"', curve={height=18pt}, from=1-1, to=1-3]
	\arrow["{\mathfrak{M}}", shift left=2, curve={height=-24pt}, from=1-1, to=1-4]
	\arrow["\cong", from=1-2, to=1-3]
	\arrow["{\mathfrak{M}_0}", shift left, from=1-3, to=1-4]
	\arrow["{\mathfrak{R}}", shift left, from=1-4, to=1-3]
\end{tikzcd}\]
and show that they are all equivalences. The left-most equivalence is well-known and comes from the fact that $\Rep(\R)\cong \Rep(C_0(\hat{\R}))\cong \Rep(C_0(\R))$, where $\hat{\R}$ denotes the Pontryagin dual of $\R$. The middle equivalence comes from a good understanding of the positive energy representation theory of Heisenberg groups \cite{LG}, see Proposition \ref{prop: FunctorRepC0RtoReppeLR}. The right-most equivalence uses heavily the fact that representations of conformal nets are automatically of positive energy \cite{weiner}, see Proposition \ref{prop: R0Factors}. We then show in Proposition \ref{prop: TensoratorForM} that the functor
\[
\mathfrak{M}: \Rep(C_0(\R))\to \Rep(\Heis)
\]
defined as composition $\Rep(C_0(\R))\xrightarrow{\cong} \Rep(\R)\xrightarrow{\cong} \Rep^{pe}(\widetilde{L\R})\xrightarrow[\cong]{\mathfrak{M}_0} \Rep(\Heis)$ can be equipped with a unitary natural isomorphism (not necessarily compatible with the associators)
\[\begin{tikzcd}
	{\Hilb\,\R\times\Hilb\,\R} && {\Rep(\Heis)\times\Rep(\Heis)} \\
	{\Hilb\,\R} && {\Rep(\Heis)}
	\arrow["{\mathfrak{M}\times\mathfrak{M}}", from=1-1, to=1-3]
	\arrow["\otimes"', from=1-1, to=2-1]
	\arrow[between={0.2}{0.8}, Rightarrow, from=1-3, to=2-1]
	\arrow["\boxtimes", from=1-3, to=2-3]
	\arrow["{\mathfrak{M}}"', from=2-1, to=2-3]
\end{tikzcd}\]
when $\Rep(C_0(\R))$ is equipped with the tensor structure of $\Hilb\,\R$. We call such a unitary natural isomorphism a tensorator, although it might not be compatible with associators.

\subsection{Constructing irreducible representations}\label{Sec: ConstructingIrreps}

Let us first construct a family of irreducible representations of $\Heis$. We will later show these exhaust all irreducible representations up to isomorphism.

Fix a loop $l\in L\R$ and an interval $I\in\Jcal$. Let $\langle W(f)\rangle_{f\in L_I\R}$ be the $*$-algebra generated by $\{W(f)\ |\ f\in L_I\R\}$ in $B(H_0)$. We define the $*$-algebra homomorphism
\begin{equation}\label{eq: AutomorphismByl}
\begin{array}{cccc}
\rho_{l, I}: & \langle W(f)\rangle_{f\in L_I\R} &\to &\langle W(f)\rangle_{f\in L_I\R} \\
& W(f) &\mapsto & e^{i\int lf}W(f),
\end{array}
\end{equation}
where $\int lf:= \int_0^{1}l(\theta)f(\theta)\mathrm{d}\theta$. We will show that the $\Jcal$-indexed family of $*$-algebra homomorphisms $\rho_l=\{\rho_{l,I}\}_{I\in \Jcal}$ extends by continuity to a compatible family of automorphisms of the von Neumann algebras $\Heis(I)$, for all $I\in\Jcal$. We say that such a family of $*$-algebra homomorphisms $\rho_l$ is unitarily implemented on $H_0$ if there exists a unitary $V\in U(H_0)$ such that $\rho_{I, l}(W(f)) = VW(f)V^*$ for all $I\in \Jcal$ and $f\in L_I\R$. We define the charge of a loop $l\in L\R$ to be 
\[
q_l := \int_0^{1} l(\theta) {\mathrm{d}\theta} =:\int l.
\]

\begin{lemma}\label{lemm: IsInner}
The family of $*$-automorphisms $\rho_{l}$ is unitarily implemented on $H_0$ if and only if $q_l = 0$.
\end{lemma}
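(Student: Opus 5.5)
The plan is to reduce both directions to the Weyl commutation relation
\[
W(h)W(f)W(h)^* = e^{-2i\omega(h,f)}W(f) = e^{-i\int hf'}\,W(f),
\]
which follows from $W(h)W(f)=e^{-2i\omega(h,f)}W(f)W(h)$ and \eqref{eq: CocycleHeis}, together with the observation that constant loops act trivially on $H_0$.

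\emph{Sufficiency.} Assume $q_l=0$. Then $l$ has a smooth periodic antiderivative: there is $h\in L\R$ with $h'=l$, unique up to an additive constant. Integrating by parts on the circle gives $-\int hf'=\int h'f=\int lf$. Hence
\[
W(h)W(f)W(h)^*=e^{i\int lf}W(f)=\rho_{l,I}(W(f))
\]
for every $I\in\Jcal$ and every $f\in L_I\R$. So the single unitary $V:=W(h)$ implements the whole family $\rho_l$ simultaneously. As a by-product, $\rho_{l,I}=\mathrm{Ad}(V)$ extends by continuity to the von Neumann algebras $\Heis(I)$, compatibly with inclusions.

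\emph{Necessity.} Assume $q_l\neq 0$ and that some unitary $V$ implements $\rho_l$. First, a constant loop $c\in L\R$ acts as the identity on $H_0$: its image in $\overline{L\R/\R}$ is $0$, so $\|c\|=0$ and $\langle c,h\rangle=0$, giving $W(c)e^h=e^h$.

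Next, write $c$ as a finite sum $c=\sum_j f_j$ of loops supported in intervals $I_j$ covering $S^1$, using a smooth partition of unity scaled by $c$. The cocycle relation then gives
\[
W(c)=\lambda\prod_j W(f_j)
\]
for an explicit phase $\lambda$ built from the $\omega(f_j,f_k)$. Conjugating by $V$, and using that $\mathrm{Ad}(V)$ is multiplicative, gives
\[
V\,W(c)\,V^*=\lambda\prod_j e^{i\int lf_j}W(f_j)=e^{i\int lc}\,W(c)=e^{icq_l}\,W(c).
\]
The left-hand side equals $V\,\id\,V^*=\id$, while the right-hand side equals $e^{icq_l}\,\id$. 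Choosing $c\in\R$ with $cq_l\notin 2\pi\Z$ gives a contradiction.

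The main point requiring care is this last step. The global constant loop is not localized in any single interval, so one must pass through the product decomposition and check that the phases $\lambda$ appear identically on both sides. They do, because $\mathrm{Ad}(V)$ is a homomorphism agreeing with $\rho_{l,I_j}$ on each factor.
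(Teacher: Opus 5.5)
Your proof is correct and follows essentially the same route as the paper. For sufficiency you implement $\rho_l$ by the Weyl operator of a periodic primitive of $l$, which is the paper's $W(L)$. For necessity you split a constant loop using a partition of unity subordinate to a cover of $S^1$ by intervals, and compare $V W(c) V^*$ with $e^{icq_l}W(c)$, exactly as the paper does with its $P_t$. The only cosmetic differences are that you use the commutation form $W(h)W(f)W(h)^*=e^{-2i\omega(h,f)}W(f)$ rather than expanding $W(L)W(f)W(-L)$ through the cocycle twice, and a single well-chosen constant $c$ in place of the paper's parameter $t$.
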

\begin{proof}
Assume that $q_l = 0$ and let $L$ be a primitive of $l$ on $(0,1)$. Since $\int l = 0$, we obtain that $L$ can be extended by continuity to $L\in L\R$. Given $f\in L_I\R$, we have
\begin{align*}
W(L) W(f) W(-L) = W(L)\big(e^{-i\omega(f, -L)}W(f-L)\big) = e^{-i\omega(f, -L)}e^{-i\omega(L, f-L)}W(f),
\end{align*}
and 
\[
-\omega(f, -L)-\omega(L, f-L) = \frac{1}{2}\int fl-\frac{1}{2}\int Lf' = \int lf.
\]
Hence, $\rho_l(W(f)) = W(L)W(f)W(L)^*$.

Assume that, given $l\in L\R$, there is a unitary $V\in U(e^{\overline{L\R/\R}})$ such that 
\[
e^{i\int lf}W(f) = VW(f)V^*
\]
for every $I\in\Jcal$ and $f\in L_I\R$. Let $I_1,I_2,\ldots, I_n\in\Jcal$ be a finite family of intervals covering $S^1$, and pick a loop $\varphi_j\in L_{I_j}\R$ for every $j = 1,\ldots,n$ such that $1 = \sum_{j = 1}^n\varphi_j$. For any $t\in \R$, let us write $P_t:=W(t\varphi_1)\cdots W(t\varphi_n) = c_t\cdot W(t\sum_{i = 1}^n \varphi_i) = c_t\cdot W(t\cdot 1)$, where $c_t\in U(1)$ can be computed using the Weyl relations above Equation \eqref{eq: CocycleHeis}. Since $t\cdot 1$ is a constant loop, $W(t\cdot 1) = 1$ is the identity on $e^{\overline{L\R/\R}}$. Therefore, $P_t = c_t\cdot 1$ acts by a scalar, and $VP_tV^* = P_t$. Now, we compute
\[
P_t = VP_tV^* = \prod\limits_{j = 1}^n e^{it\int \varphi_j l}W(t\varphi_j) = e^{it\int(\sum_j \varphi_j) l}P_t = e^{it\int l}P_t.
\]
Since $P_t \neq 0$, we get that $e^{it\int l} =1 $ for all $t\in \R$, implying that $\int l = 0$.
\end{proof}

\begin{remark}\label{rk: IntertwineActionsSameCharge}
From the proof of Lemma \ref{lemm: IsInner}, we see that if $l_1,l_2\in L\R$ are loops with the same charge, and $L_{21}$ is a primitive of $l_2-l_1$, it holds that $W(L_{21})$ intertwines the actions $\rho_{l_{1},I}(W(f))$ and $\rho_{l_2,I}(W(f))$ for all $I\in\Jcal$ and $f\in L_I\R$.
\end{remark}

By Lemma \ref{lemm: IsInner}, if $l\in L\R$ is a loop of charge $q_l = 0$, the family of $*$-homomorphisms $\rho_l=\{\rho_{l,I}(-) = W(L)\circ-\circ W(L)^*\}_{I\in\Jcal}$ extends by continuity to a compatible family of von Neumann algebra automorphisms $\Heis(I)\to \Heis(I)$, which we continue denoting by $\rho_l = \{\rho_{l,I}: \Heis(I)\to \Heis(I)\}_{I\in \Jcal}$. Assume that the charge of $l$ is nonzero and fix $I\in \Jcal$. Then, we can pick another loop $m\in L\R$ such that $m|_{I} = l|_I$ and $q_{m} = 0$. Since $q_{m} = 0$, the induced automorphism $\rho_{m, I}: \langle W(f)\rangle_{f\in L_I\R}\to \langle W(f)\rangle_{f\in L_I\R}$ extends by continuity to an automorphism $\rho_{m, I}: \Heis(I)\to \Heis(I)$. Since $m|_{I} = l|_I$, the $*$-algebra homomorphisms $\rho_{l,I}$ and $\rho_{m,I}$ agree on $\langle W(f)\rangle_{f\in L_I\R}$. Hence, $\rho_{l,I}$ also extends by continuity to an automorphism $\rho_{l,I}:\Heis(I)\to \Heis(I)$. The extension $\rho_{l,I}:\Heis(I)\to \Heis(I)$ is independent of the choice of $m$. Since each $\langle W(f)\rangle_{f\in L_I\R}\subset \Heis(I)$ is dense and the family of automorphism $\{\rho_{l, I}: \langle W(f)\rangle_{f\in L_I\R}\to \langle W(f)\rangle_{f\in L_I\R}\}_{I\in\Jcal}$ is compatible with the inclusion of intervals, so is the extension $\{\rho_{l,I}:\Heis(I)\to \Heis(I)\}_{I\in\Jcal}$. We obtain the following result.

\begin{lemma}
Fix a loop $l\in L\R$. The family of homomorphisms $\rho_{l,I}: \langle W(f)\rangle_{f\in L_I\R}\to \langle W(f)\rangle_{f\in L_I\R}$ for $I\in\Jcal$ defined by Equation \eqref{eq: AutomorphismByl} extends by continuity to a compatible family of automorphisms $\rho_{l,I}: \Heis(I)\to \Heis(I)$ for $I\in\Jcal$.
\end{lemma}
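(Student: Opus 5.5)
The argument for each fixed interval is a local unitary implementation, and the only real issue is checking that the resulting extensions are independent of the auxiliary choices.

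Fix $I\in\Jcal$. Since $I$ is non-dense, there is a loop $m\in L\R$ with $m|_I=l|_I$ and $q_m=0$. To build it, take a bump function $\psi\in L\R$ supported in the nonempty open set $S^1\setminus\mathrm{cl}(I)$ with $\int\psi=1$, and put $m:=l-q_l\psi$. By Lemma \ref{lemm: IsInner} and its proof, a primitive $M$ of $m$ lies in $L\R$, and
\[
\rho_{m,I}(W(f))=W(M)W(f)W(M)^*\quad\text{for all } f\in L_I\R.
\]
For $f\in L_I\R$ we have $\int mf=\int lf$, so $\rho_{l,I}$ agrees with $\mathrm{Ad}\,W(M)$ on generators, hence on the $*$-algebra $\langle W(f)\rangle_{f\in L_I\R}$. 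The map $\mathrm{Ad}\,W(M)$ is a normal $*$-automorphism of $B(H_0)$. It preserves the generating set of $\Heis(I)$ up to phases, and so does its inverse $\mathrm{Ad}\,W(-M)$. Therefore it maps $\Heis(I)=\langle W(f)\rangle''$ onto itself. We define $\rho_{l,I}$ on $\Heis(I)$ as this restriction.

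Next I check that this is well defined. Suppose $m'$ is another choice. Both $\mathrm{Ad}\,W(M)$ and $\mathrm{Ad}\,W(M')$ are normal and agree on the weakly dense $*$-subalgebra $\langle W(f)\rangle_{f\in L_I\R}$. By normality they agree on its weak closure $\Heis(I)$. The same argument shows the extension is unique among normal extensions, which is what "extends by continuity" means here.

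For compatibility, let $J\subset I$. A zero-charge loop $m$ with $m|_I=l|_I$ also satisfies $m|_J=l|_J$, so the same $W(M)$ can be used for $J$. Hence $\rho_{l,I}|_{\Heis(J)}=\rho_{l,J}$.

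The only step requiring care is the well-definedness and compatibility. It reduces to normality of $\mathrm{Ad}$ of a unitary together with density of the Weyl algebra in $\Heis(I)$, which holds by definition of $\Heis(I)$ as a double commutant. Bijectivity follows because $\rho_{-l,I}$ is a two-sided inverse: it is implemented by $W(-M)$, up to the scalar $W(M)W(-M)=1$ coming from the Weyl relations.
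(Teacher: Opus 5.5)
Your proposal is correct and follows essentially the same route as the paper: choose a charge-zero loop $m$ with $m|_I=l|_I$, implement $\rho_{l,I}=\rho_{m,I}$ on the Weyl generators by $\mathrm{Ad}\,W(M)$ via Lemma \ref{lemm: IsInner}, and then obtain independence of the choice of $m$ and compatibility for $J\subset I$ from normality together with the weak density of $\langle W(f)\rangle_{f\in L_I\R}$ in $\Heis(I)$. The only difference is that you make explicit a few points the paper leaves implicit, namely the bump-function construction of $m$ and why $\mathrm{Ad}\,W(M)$ maps $\Heis(I)$ onto itself.
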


We can make the following definition.

\begin{definition}\label{def: IrrepsOfHeis}
Let $l\in L\R$. The representation $\pi_l$ of $\Heis$ on $H_0$ is given by, for every $I\in \Jcal$ and $a\in \Heis(I)$,
\[
\pi_{l,I}(a):= \pi_0\circ \rho_{l,I}(a).
\]
\end{definition}

The representations $\pi_l$ are irreducible representations of $\Heis$ by construction. By Lemma \ref{lemm: IsInner}, two such representations $\pi_l$ and $\pi_m$ are equivalent if and only if they have the same charge. We have therefore produced one irreducible representation of $\Heis$ for every charge $q\in \R$.

\subsection{The restriction functor $\Rep(\Heis)\to \Rep^{pe}(\widetilde{L\R})$}\label{sec: restrictionFunctor}
Recall that in Section \ref{Sec: HeisDef} we have defined a 2-cocycle on $L\R$ with coefficients in $\R$ by $\omega(f,g) = \frac{1}{2}\int fg'$, see Equation \eqref{eq: CocycleHeis}. This induces a 2-cocycle on $L\R$ with values in $U(1)$ by
\[
c(f,g):= e^{-i\omega(f,g)}
\]
and provides a central extension
\[
1\to U(1)\to \widetilde{L\R}\to L\R\to 1.
\]
The same cocycle restricted to $L_I\R$ for an interval $I\in\Jcal$ provides a central extension $\widetilde{L_I\R}$, which can be equivalently defined as the pullback
\[\begin{tikzcd}
{\widetilde{L_I\R}} & {\widetilde{L\R}} \\
{L_I\R} & {L\R}.
\arrow[from=1-1, to=1-2]
\arrow[from=1-1, to=2-1]
\arrow["\lrcorner"{anchor=center, pos=0.125}, draw=none, from=1-1, to=2-2]
\arrow[from=1-2, to=2-2]
\arrow[from=2-1, to=2-2]
\end{tikzcd}\]
\begin{definition}
    A \emph{(unitary) representation of $\widetilde{L\R}$} consists of a Hilbert space $H$ together with a strongly continuous group homomorphism $\widetilde{L\R}\to U(H)$. We write $\Rep(\widetilde{L\R})$ for the category of representations of $\widetilde{L\R}$.
\end{definition}

A \emph{representation} of $\widetilde{L\R}$ shall always mean a unitary representation. We now provide~a functor
\(
\mathfrak{R}_0:\Rep(\Heis)\to \Rep(\widetilde{L\R}).
\)
 Let $(K, \{\pi^K_I\})\in \Rep(\Heis)$ be a representation of $\Heis$. For every $I\in\Jcal$, we write $U(\Heis(I))$ for the unitary elements in $\Heis(I)$. The composition
\[
L_I\R\hookrightarrow \{W(f)\ |\ f\in L_I\R\}\subset U(\Heis(I))\xrightarrow{\pi_I^K} U(K)
\]
induces a projective representation of $L_I\R$ on $K$ with cocycle $e^{-i\omega}$, that is, an honest representation of the central extension $\widetilde{L_I\R}$. These representations are compatible with the~inclusions 
\[
\widetilde{L_J\R}\hookrightarrow\widetilde{L_I\R}
\]
whenever $J\subset I$, and hence produce a representation of the colimit topological group
\[
\colim_{I\in \INT}\widetilde{L_I\R}\cong \widetilde{L\R},
\]
where the equivalence follows from \cite[Thm. 4]{Hen19}. We extend this assignment trivially at the level of morphisms to obtain the required functor 
\[
\mathfrak{R}_0: \Rep(\Heis)\to \Rep(\widetilde{L\R}),
\]
which is fully faithful, as the image of the $*$-algebra generated by $\widetilde{L_I\R}$ in $\Heis(I)$ is dense for all $I\in\Jcal$.

The next step is to characterize the essential image of the functor $\mathfrak{R}_0$, for which we follow \cite[Sec. 3.2]{Hen19}. The group $U(1)$ acts on $L\R$ by precomposition with the inverse of a rotation, and hence it also acts on the central extension $\widetilde{L\R}$. We can construct the semi-direct product $U(1)\ltimes \widetilde{L\R}$. 

\begin{definition}
A representation $(H, \pi)$ of $\widetilde{L\R}$ is said to be \emph{of positive energy} if it extends to a representation of $U(1)\ltimes \widetilde{L\R}$ for which the generator $L_0:=-i\frac{d}{dt}\Big|_{t = 0}\big(\pi(e^{2\pi it}, 1)\big)$ of the action of $U(1)$ has positive spectrum. We furthermore require that the central $U(1)$ acts in the standard way. 
\end{definition}

We now argue that the essential image of $\mathfrak{R}_0: \Rep(\Heis)\to \Rep(\widetilde{L\R})$ lands in the full subcategory of $\Rep(\widetilde{L\R})$ on the positive energy representations, which we denote by $\Rep^\pe(\widetilde{L\R})$. Given an interval $I\in\Jcal$, the subgroup $\Diff_I(S^1)\subset \Diff^+(S^1)$ of diffeomorphisms of the circle supported on $I$ acts on the group $L_I\R$, and hence on the central extension $\widetilde{L_I\R}$, by precomposing with the inverse of a diffeomorphism $\varphi\in \Diff_I(S^1)$. Hence, we can construct the semi-direct product
\[
\Diff_I(S^1)\ltimes \widetilde{L_I\R}.
\]
The group $\Diff^+(S^1)$ admits a well-known central extension by $\R$ called the Virasoro-Bott group, which we denote by $\Diff^{\R}(S^1)$ \cite{bot77, kw09, tl99}. We write $\Diff_I^\R(S^1)$ for the~pullback
\[\begin{tikzcd}
{\Diff_I^\R(S^1)} & {\Diff^{\R}(S^1)} \\
{\Diff_I(S^1)} & {\Diff^+(S^1)}.
\arrow[from=1-1, to=1-2]
\arrow[from=1-1, to=2-1]
\arrow["\lrcorner"{anchor=center, pos=0.125}, draw=none, from=1-1, to=2-2]
\arrow[from=1-2, to=2-2]
\arrow[from=2-1, to=2-2]
\end{tikzcd}\]
By the diffeomorphism covariance of $\Heis$, there are canonical homomorphisms
\[
\Diff_I^\R(S^1)\ltimes \widetilde{L_I\R}\to U(\Heis(I)).
\]
Given a representation $(K, \pi)\in \Rep(\Heis)$, composing the map above with $\pi_I: \Heis(I)\to B(K)$, we obtain a family of homomorphisms
\[
\Diff_I^\R(S^1)\ltimes \widetilde{L_I\R}\to U(K)
\]
which are compatible with the inclusions $\Diff_J^\R(S^1)\ltimes \widetilde{L_J\R}\to \Diff_I^\R(S^1)\ltimes \widetilde{L_I\R}$ whenever $J\subset I$. By \cite[Thm. 11]{Hen19}, we obtain a strongly continuous unitary action 
\[
\Diff^{\R\times \Z}(S^1)\ltimes \widetilde{L\R}\to U(K),
\]
where $\Diff^{\R\times \Z}(S^1)$ denotes the universal cover of $\Diff^{\R}(S^1)$, which is a central extension by the group $\Z$. In particular, we obtain an action of $\widetilde{U(1)}\ltimes \widetilde{L\R}$, where $\widetilde{U(1)}$ denotes the universal cover of $U(1)\subset \Diff^+(S^1)$. By \cite[Thm. 3.8]{weiner}, the implementation of the Möbius symmetry in $K$ is of positive energy, meaning that the induced representation of the universal cover $\widetilde{\Mob}\to \Mob$ satisfies that the generator of rotations of $\widetilde{\Mob}$ has positive spectrum. Hence, the generator of $L_0$ of $\widetilde{U(1)}$ on $K$ has positive spectrum. In order to argue that $K$ is a positive energy representation of $\widetilde{L\R}$, we need to modify the action of $\widetilde{U(1)}$ so that it descends to an action of $U(1)$. We proceed as in the proof of \cite[Thm. 29]{Hen19}.

We can decompose $K$ as a direct integral according to the characters of $\Z\subset \widetilde{U(1)}$
\[
K = \int_{\theta\in U(1)}K_\theta.
\]
For every $\theta\in U(1)$ we can extend the character $n\mapsto \theta^n$ of $\Z$ to a character $z\mapsto z^{\log(\theta)/2\pi i}$ of $\widetilde{U(1)}$ by taking the principal branch of the logarithm. Let $\mathbb{C}_\theta$ be the 1-dimensional representation of $\widetilde{U(1)}$ induced by the character above. Then, the representation
\[
K' := \int_{\theta\in U(1)}K_\theta\otimes \overline{\mathbb{C}_\theta}
\]
of $\widetilde{U(1)}$ descends to a representation of $U(1)$ whose generator has positive spectrum, as the spectrum of $L_0$ has only been modified by a controlled bounded amount. Since $\overline{\mathbb{C}_\theta}$ is one-dimensional, $K'\cong K$ as vector spaces. This isomorphism allows us to equip $K'$ with an action of $\widetilde{L\R}$. We need to show that the actions of $U(1)$ and $\widetilde{L\R}$ on $K'$ assemble into an action of $U(1)\ltimes\widetilde{L\R}$. 

Let $\widetilde{U(1)}_*\subset \widetilde{U(1)}$ be a countable dense subgroup that contains $\mathbb{Z}\subset \widetilde{U(1)}$, and let $U(1)_*:= \widetilde{U(1)}_
*/\Z\subset U(1)$. Pick a $\widetilde{U(1)}_*$-invariant countable dense subgroup $\widetilde{L\R}_*\subset \widetilde{L\R}$. Since $\Z\subset \widetilde{U(1)}_*\ltimes\widetilde{L\R}_*$ is central, the Hilbert spaces $K_\theta$ carry representations of $\widetilde{U(1)}_*\ltimes\widetilde{L\R}_*$ for almost all $\theta$. By construction, on almost all $K_\theta\otimes\overline{\mathbb{C}_\theta}$, the action of $\widetilde{U(1)}_*\ltimes\widetilde{L\R}_*$ descends to an action of $U(1)_*\ltimes\widetilde{L\R}_*$. The actions of $U(1)_*$ and $\widetilde{L\R}_*$ on $K'$ hence assemble into an action of $U(1)_*\ltimes \widetilde{L\R}_*$. Since $U(1)_*\ltimes \widetilde{L\R}_*\subset U(1)\ltimes \widetilde{L\R}$ is dense and the actions of $U(1)$ and $\widetilde{L\R}$ on $K'$ are strongly continuous, these two assemble into an action of $U(1)\ltimes \widetilde{L\R}$ on $K'$. Therefore, $K'$ (and hence $K$) is a positive energy representation of $\widetilde{L\R}$.

By the discussion above, we obtain the following result.
\begin{proposition}\label{prop: R0Factors}
The fully faithful $\mathrm{W}^*$-functor $\mathfrak{R}_0: \Rep(\Heis)\to \Rep(\widetilde{L\R})$ factors through a fully faithful $\mathrm{W}^*$-functor $\mathfrak{R}:\Rep(\Heis)\to \Rep^{pe}(\widetilde{L\R})$.
\end{proposition}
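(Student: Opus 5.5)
The factorization is essentially the content of the discussion preceding the statement; the plan is to organize it into three verifications.

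First, recall that $\mathfrak{R}_0$ is a fully faithful $\mathrm{W}^*$-functor. On objects it sends $(K,\{\pi^K_I\})$ to the representation of $\colim_{I}\widetilde{L_I\R}\cong\widetilde{L\R}$ assembled from the maps $\widetilde{L_I\R}\to U(\Heis(I))\xrightarrow{\pi^K_I}U(K)$, using \cite[Thm. 4]{Hen19}. It is identity on morphisms. Full faithfulness holds because the $*$-algebra generated by the Weyl operators $W(f)$, $f\in L_I\R$, is weakly dense in $\Heis(I)$, and each $\pi^K_I$ is normal, being a representation on a separable Hilbert space. The $*$-compatibility is clear. Since $\Rep^{pe}(\widetilde{L\R})$ is by definition a full subcategory of $\Rep(\widetilde{L\R})$, it then suffices to show that $\mathfrak{R}_0(K)$ is of positive energy for every $K\in\Rep(\Heis)$. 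The corestriction $\mathfrak{R}$ is then automatically a fully faithful $\mathrm{W}^*$-functor.

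Second, I would produce the rotation action. Using diffeomorphism covariance, I obtain for each interval compatible homomorphisms $\Diff^\R_I(S^1)\ltimes\widetilde{L_I\R}\to U(\Heis(I))$. Composing with $\pi^K_I$ and gluing via \cite[Thm. 11]{Hen19} gives a strongly continuous action of $\Diff^{\R\times\Z}(S^1)\ltimes\widetilde{L\R}$ on $K$. One must check that the central $U(1)$ of $\widetilde{L\R}$ acts standardly, which holds because $W$ satisfies the Weyl relations with cocycle $e^{-i\omega}$. Restricting to $\widetilde{U(1)}\ltimes\widetilde{L\R}$, positivity of the generator $L_0$ follows from \cite[Thm. 3.8]{weiner}: the Möbius symmetry on any representation of a conformal net has positive energy. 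One also needs that the $\widetilde{U(1)}$ obtained from $\Diff^{\R\times\Z}$ coincides, up to a central character, with the rotation subgroup of the $\widetilde{\Mob}$ action. I expect this to follow from the uniqueness of lifts of projective representations of $\widetilde{\Mob}$ \cite{MR58601}.

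Third, and this is the main obstacle, the action of $\widetilde{U(1)}$ must descend to $U(1)$ while remaining compatible with $\widetilde{L\R}$. I would follow \cite[Thm. 29]{Hen19}.
\begin{itemize}
\item Decompose $K=\int_{U(1)}K_\theta$ along the characters of the central $\Z\subset\widetilde{U(1)}$.
\item Twist each fiber by the inverse character $\overline{\C_\theta}$, defined using the principal branch of the logarithm. This shifts the spectrum of $L_0$ by a bounded amount, so it stays bounded below.
\item If necessary, add a constant to $L_0$ to make its spectrum positive. This is harmless, since only the projective class matters.
\end{itemize}
Measurability of the semidirect-product action on the fibers is handled with countable dense subgroups $\widetilde{U(1)}_*\ltimes\widetilde{L\R}_*$: the action makes sense fiberwise almost everywhere, descends to $U(1)_*\ltimes\widetilde{L\R}_*$ on almost every fiber, and strong continuity extends it to all of $U(1)\ltimes\widetilde{L\R}$ on $K'\cong K$. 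The identification $K'\cong K$ is the identity for the $\widetilde{L\R}$-action, since $\Z$ is central and the twist is scalar on each fiber. This gives positive energy and completes the factorization.
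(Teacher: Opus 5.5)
Your proposal is correct and follows the paper's argument essentially step for step: restriction via the colimit $\colim_I\widetilde{L_I\R}\cong\widetilde{L\R}$, gluing the local actions of $\Diff^\R_I(S^1)\ltimes\widetilde{L_I\R}$ via \cite[Thm. 11]{Hen19}, positivity from \cite[Thm. 3.8]{weiner}, and descent of $\widetilde{U(1)}$ to $U(1)$ by twisting with $\overline{\C_\theta}$ plus the countable-dense-subgroup argument of \cite[Thm. 29]{Hen19}. Your remark about matching this $\widetilde{U(1)}$ with the Möbius rotations addresses a point the paper passes over silently.

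The one imprecision is your optional shift of $L_0$. After descent, the shift must be by an integer, i.e.\ tensoring with a character of $U(1)$. Appealing to projectivity does not justify it, because positive energy asks for an honest $U(1)\ltimes\widetilde{L\R}$-action. For Weiner's rotations the shift is also unnecessary: the twisted spectrum is integral and $\geq -1/2$, hence already nonnegative.
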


We claim that $\mathfrak{R}$ is an equivalence of $\mathrm{W}^*$-categories. Before providing the inverse functor to $\mathfrak{R}$, we need a better understanding of the $\mathrm{W}^*$-category $\Rep^{pe}(\widetilde{L\R})$.

\subsection{Characterizing $\Rep^{pe}(\widetilde{L\R})$} We claim that there is an equivalence of $\mathrm{W}^*$-categories $\Rep(C_0(\R))\xrightarrow{\cong}\Rep^{pe}(\widetilde{L\R})$, constructed via a composition
\[
\mathfrak{L}: \Rep(C_0(\R))\xrightarrow{\cong}\Rep(\R)\xrightarrow[\cong]{\mathfrak{L}_0}\Rep^{pe}(\widetilde{L\R})
\]
through the category $\Rep(\R)$ of unitary representations of $\R$. We first discuss the functor $\Rep(C_0(\R))\xrightarrow{\cong}\Rep(\R)$. It is given by sending an object $\int^\oplus_{t\in T}\delta_{p(t)}\mathrm{d}\upsilon(t)\in \Rep(C_0(\R))$ to the representation of $\R$ on $L^2(T,\upsilon)$ where $x\in \R$ acts by multiplication by the function $t\mapsto e^{i xp(t)}$ on $T$. This functor (which is trivial on morphisms) is well-known to produce an equivalence of categories
\begin{equation}\label{eq: C0RandR}
\Rep(C_0(\R))\xrightarrow{\cong} \Rep(\R).
\end{equation}
This equivalence comes from the isomorphism of $\mathrm{C}^*$-algebras $C_0(\R)\cong C_0(\hat{\R})\cong C^*(\R)$, where $\hat{\R}\cong \R$ is the Pontryagin dual of $\R$ and $C^*(\R)$ denotes the group $\mathrm{C}^*$-algebra of $\R$. We also use the well-known equivalence $\Rep(C^*(\R))\cong \Rep(\R)$ between the $\mathrm{W}^*$-category of representations of the group $\mathrm{C}^*$-algebra $C^*(\R)$ and the $\mathrm{W}^*$-category of unitary representations of~$\R$.

We next discuss the representation theory of $\widetilde{L\R}$. Let $V$ be the subset of $L\R$ of loops with average zero. Then, $L\R$ breaks as a product $L\R = \R\times V$ of topological groups via the map $f\mapsto (\int f, f-\int f)$. Note that $V$ is a topological vector space carrying a nondegenerate skew-symmetric bilinear~form
\[
\begin{array}{cccc}
\omega: &V\times V&\to &\R
\\ &(f, g)&\mapsto &\frac{1}{2}\int  fg' .
\end{array}
\]
This produces the Heisenberg group $\widetilde{V}$ as the $U(1)$-central extension of $V$ classified by the cocycle $c(f, g) = e^{-i \omega(f,g)} = e^{-i\frac{1}{2} \int  fg' }$, see \cite[Def. 9.5.1]{LG}. Chasing the definitions, we see that the central extension $\widetilde{L\R}$ breaks as a product $\widetilde{L\R}\cong \R\times \widetilde{V}$ by $(f,z)\mapsto (\int f, f-\int f, z)$. Therefore,
\[
\Rep(\widetilde{L\R})\cong \Rep(\R\times \widetilde{V}).
\]
Note that the action of $U(1)$ on $\widetilde{L\R}$ by rotating the domain is trivial on the factor $\R$ of $\R\times \widetilde{V}$ and only acts on $\widetilde{V}$. Hence, we find
\[
U(1)\ltimes \widetilde{L\R}\cong \R\times(U(1)\ltimes \widetilde{V}).
\]
We say that a representation of $\widetilde{V}$ is of \emph{positive energy} if it extends to a representation of $U(1)\ltimes \widetilde{V}$ such that the generator of rotations of $U(1)$ has positive spectrum. We write $\Rep^{pe}(\widetilde{V})$ for the category of positive energy representations of $\widetilde{V}$. We require that the central $U(1)$ in $\widetilde{V}$ acts in the standard way. It is clear that we obtain an equivalence of categories
\begin{equation}\label{eq: RepLRsplits}
\Rep^{pe}(\widetilde{L\R})\cong \Rep(\R)\boxtimes\Rep^{pe}(\widetilde{V}).
\end{equation}
The functor $\Rep(\R)\boxtimes\Rep^{pe}(\widetilde{V})\to \Rep^{pe}(\widetilde{L\R})$ witnessing this equivalence sends a pair $(H, \pi^H)\otimes (K, \pi^K)\in \Rep(\R)\boxtimes\Rep^{pe}(\widetilde{V})$ to the representation $\pi^{H\otimes K}$ of $\widetilde{L\R}$ on $H\otimes K$ where $(f,z)\in\widetilde{L\R}$ acts~by
\[
\pi^{H\otimes K}(f,z)(h\otimes k) = z\cdot \pi^H\Big(\int f\Big)(h)\otimes\pi^K\bigg(f-\int f\bigg)(k),
\]
for any $h\in H$ and $k\in K$.

We now construct the so-called standard irreducible positive energy representation of $\widetilde{V}$. The underlying Hilbert space of this representation is the Fock space $e^{\overline{L\R/\R}}$ constructed in Section \ref{Sec: HeisDef}. Given an element $(f,z)\in \widetilde{V}$ and $e^h\in e^{\overline{L\R/\R}}$, we define
\[
\Pi(f,z)(e^h) = z\cdot e^{-\frac{1}{2}||f||^2-\langle f,h\rangle}e^{f+h}
\]
where $\langle-,-\rangle$ and $||-||$ are those on $\overline{L\R/\R}$.
\begin{proposition}\label{prop: StandardRep}(\cite[Prop. 9.5.8 and 9.5.10]{LG})
The pair $(e^{\overline{L\R/\R}},\Pi)$ defines an irreducible positive energy representation of $\widetilde{V}$. Furthermore, it is the only irreducible positive energy representation of $\widetilde{V}$ up to isomorphism, and any positive energy representation of $\widetilde{V}$ is unitarily equivalent to $H\cdot (e^{\overline{L\R/\R}},\Pi)$ for some multiplicity space $H\in \Hilb$.
\end{proposition}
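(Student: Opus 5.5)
The proof follows the classical Stone--von Neumann strategy for Heisenberg groups of a symplectic vector space equipped with a positive complex structure, as in \cite[Sec. 9.5]{LG}. It has three parts: the Fock space carries a well-defined representation, that representation is irreducible, and every positive energy representation is a multiple of it.

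\textbf{Well-definedness and positive energy.} First check that the formula for $\Pi(f,z)$ extends from the exponential vectors $e^h$ to a unitary on $e^{\overline{L\R/\R}}$. Using $\langle e^h, e^{h'}\rangle = e^{\langle h,h'\rangle}$, one verifies directly that $\langle \Pi(f,z)e^h,\Pi(f,z)e^{h'}\rangle = \langle e^h, e^{h'}\rangle$. Since the exponential vectors span a dense subspace, $\Pi(f,z)$ extends to an isometry with dense range, hence a unitary. A short computation gives
\[
\Pi(f,1)\Pi(g,1) = e^{-i\,\mathrm{Im}\langle f,g\rangle}\Pi(f+g,1),
\]
and $\mathrm{Im}\langle f,g\rangle=\omega(f,g)$ by \eqref{eq: CocycleHeis}, so $\Pi$ is a homomorphism on $\widetilde{V}$. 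Strong continuity follows from continuity of $h\mapsto e^h$. For positive energy, let $U(1)$ act by $e^h\mapsto e^{u_\theta h}$, the second quantization of rotation. The generator of rotation on $\overline{L\R/\R}$ acts by $k\geq 1$ on the $k$-th Fourier mode: with the complex structure $\mathcal{J}$, the positive modes are the holomorphic ones. Hence the generator on $\mathrm{Sym}^n$ is a sum of positive operators, so $L_0\geq 0$. Finally, $U(\theta)\Pi(f,z)U(\theta)^*=\Pi(u_\theta f,z)$ on exponential vectors, which is exactly the compatibility needed for a representation of $U(1)\ltimes\widetilde V$.

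\textbf{Irreducibility.} The vacuum $e^0$ spans the ground state eigenspace $\ker L_0$, which is one-dimensional. The vectors $\Pi(f)e^0$ are multiples of the $e^f$, so $e^0$ is cyclic. Any operator $T$ commuting with $\Pi$ can be assumed to commute with $U(1)$ as well, since the $U(1)$ action is implemented canonically by the positive energy condition. Then $T$ preserves $\ker L_0$, so $Te^0=ce^0$. By cyclicity, $T=c$.

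\textbf{Uniqueness.} Let $(H,\pi)$ be a positive energy representation of $\widetilde V$. Since $L_0\geq 0$, we can consider the lowest energy space $H_{\min}$, and more precisely the space $\Omega_H$ of vectors annihilated by the annihilation operators $a(f)$ for $f$ in the negative modes. Positive energy forces $\Omega_H\neq 0$: the operators $a(f)$ lower $L_0$-eigenvalues by the mode number, and the spectrum of $L_0$ is bounded below. On $\Omega_H$ one computes
\[
\langle \pi(f)\xi,\xi'\rangle = e^{-\frac12\|f\|^2}\langle \xi,\xi'\rangle,
\]
first for trigonometric-polynomial $f$ via the Baker--Campbell--Hausdorff formula on finite-dimensional Heisenberg subgroups, then for all $f$ by continuity. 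Hence $\xi\otimes e^h\mapsto \pi(h)\xi$, suitably normalised, defines an isometry $\Omega_H\otimes e^{\overline{L\R/\R}}\to H$ intertwining the actions. Its image is a subrepresentation. Its orthogonal complement is again of positive energy and has no vacuum vectors, so it is zero by the previous argument. This gives $H\cong \Omega_H\cdot(e^{\overline{L\R/\R}},\Pi)$.

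The main obstacle is this last step. Making the annihilation operators rigorous on a non-smooth representation requires the $L_0$-finite (smooth) vectors, where Fourier modes of $f$ act as unbounded operators. I would handle it by restricting to the dense subgroup of trigonometric-polynomial loops, where everything reduces to finitely many copies of the canonical commutation relations and the classical Stone--von Neumann theorem applies mode by mode. Positive energy then selects the Fock (vacuum) representation in each mode, and a continuity argument extends the result to all of $\widetilde V$.
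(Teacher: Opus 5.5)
The paper gives no proof of this statement; it imports it from Pressley--Segal. Your construction of the Fock representation is the standard route, and so is your uniqueness argument: lowest-energy vectors are vacuum vectors, the Gaussian vacuum expectation comes from finite-dimensional Stone--von Neumann on trigonometric-polynomial loops, there is an isometry $\Omega_H\otimes e^{\overline{L\R/\R}}\to H$, and the orthogonal complement is killed. Both parts are sound in outline, modulo the domain issues you already flag.

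The genuine gap is in the irreducibility step. You assert that an operator $T$ commuting with $\Pi(\widetilde V)$ ``can be assumed to commute with $U(1)$'' because the rotation action is ``implemented canonically'' by positive energy. Nothing you have established supports this. From $U(\theta)\Pi(f,z)U(\theta)^*=\Pi(u_\theta f,z)$ you only get that $\mathrm{Ad}\,U(\theta)$ maps the commutant $\Pi(\widetilde V)'$ into itself, not that it fixes $T$. An implementation of the rotations is determined only up to unitaries in $\Pi(\widetilde V)'$, which is precisely the algebra you are trying to show is trivial. So the justification is circular, and without it the conclusion $Te^0\in\ker L_0=\C e^0$ is not available.

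The step can be repaired with what you already have; here are two ways.

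\textbf{Spectral decomposition.} For $T\in\Pi(\widetilde V)'$ put $T_n=\int_0^1 e^{-2\pi i n t}\,U(t)TU(t)^*\,\mathrm{d}t$. These lie in the weakly closed, $\mathrm{Ad}\,U$-invariant algebra $\Pi(\widetilde V)'$, and (with $U(t)=e^{2\pi i tL_0}$) each $T_n$ shifts the $L_0$-eigenvalue by $n$. If $T_n$ lowers energy, then $T_ne^0=0$, so $T_n\Pi(f)e^0=\Pi(f)T_ne^0=0$, and $T_n=0$ by cyclicity of $e^0$. If $T_n$ raises energy, then $T_n^*=(T^*)_{-n}$ is an energy-lowering component of $T^*\in\Pi(\widetilde V)'$, so again $T_n=0$. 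The continuous functions $t\mapsto\langle U(t)TU(t)^*\xi,\eta\rangle$ then have no nonzero Fourier modes, hence are constant. Therefore $T=T_0$ commutes with $U$, and your $\ker L_0$ argument goes through.

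\textbf{Annihilation operators.} Alternatively, bypass $L_0$ entirely. $T$ commutes with the one-parameter groups $t\mapsto\Pi(tf)$, hence, on their natural domains, with their self-adjoint generators. It therefore commutes with the annihilation operators of your third step. So $T$ preserves the vacuum space of the Fock representation, which is $\C e^0$, and cyclicity of $e^0$ gives $T\in\C\cdot 1$.
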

 Hence, the $\mathrm{W}^*$-functor $\Hilb\to \Rep^{pe}(\widetilde{V})$ sending $\mathbb{C}$ to $(e^{\overline{L\R/\R}},\Pi)$ is an equivalence of $\mathrm{W}^*$-categories. We call $(e^{\overline{L\R/\R}},\Pi)$ the \emph{standard} positive energy unitary representation of $\widetilde{V}$. Proposition \ref{prop: StandardRep}, together with the discussion leading to Equation \eqref{eq: RepLRsplits} provides the following~result. 
\begin{proposition}\label{prop: FunctorRepC0RtoReppeLR}
The $\mathrm{W}^*$-functor $\mathfrak{L}: \Rep(C_0(\R))\to \Rep^{pe}(\widetilde{L\R})$ sending $\int^\oplus_{t\in T}\delta_{p(t)}\mathrm{d}\upsilon(t)\in C_0(\R)$ to the Hilbert space $L^2(T,\upsilon)\otimes e^{\overline{L\R/\R}}$ with the action
\begin{equation}\label{eq: RepOfLRGivenByRepOfC0R}
(f,z)\cdot(\psi\otimes e^h) = z\cdot e^{-\frac{1}{2}||f||^2-\langle f,h\rangle}\cdot\big(t\mapsto e^{i p(t)\int f}\psi(t)\big)\otimes e^{f+h}
\end{equation}
for $(f,z)\in\widetilde{L\R}$ and $\psi\otimes e^h\in L^2(T,\upsilon)\otimes e^{\overline{L\R/\R}}$ is an equivalence of $\mathrm{W}^*$-categories.
\end{proposition}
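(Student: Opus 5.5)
The functor $\mathfrak{L}$ is, by construction, the composite
\[
\Rep(C_0(\R))\xrightarrow{\eqref{eq: C0RandR}}\Rep(\R)\xrightarrow{-\otimes (e^{\overline{L\R/\R}},\Pi)}\Rep(\R)\boxtimes\Rep^{pe}(\widetilde{V})\xrightarrow{\eqref{eq: RepLRsplits}}\Rep^{pe}(\widetilde{L\R}).
\]
The plan is to check that $\mathfrak{L}$ agrees with this composite, including on morphisms, and then to show that each factor is an equivalence of $\mathrm{W}^*$-categories.

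\textbf{The composite agrees with $\mathfrak{L}$.} An object $\int^\oplus_{t\in T}\delta_{p(t)}\mathrm{d}\upsilon(t)$ goes first to $L^2(T,\upsilon)$, on which $x\in\R$ acts by multiplication by $e^{ixp(t)}$. Tensoring with the standard representation and applying the formula for the functor in \eqref{eq: RepLRsplits} with $x=\int f$ gives exactly the action \eqref{eq: RepOfLRGivenByRepOfC0R}. On morphisms every functor in the chain acts by $T\mapsto T\otimes\id$ or by the identity. Normality and the $*$-property are therefore clear, and so is the fact that positive energy holds: $U(1)$ acts only on the Fock factor, whose $L_0$ is positive.

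\textbf{The first two factors are equivalences.} The first functor is the classical equivalence \eqref{eq: C0RandR}. By the Fourier/SNAG theorem, every unitary representation of $\R$ is a direct integral of characters over $\hat\R\cong\R$. For the second, Proposition \ref{prop: StandardRep} says that $\Hilb\to\Rep^{pe}(\widetilde V)$, $\C\mapsto(e^{\overline{L\R/\R}},\Pi)$, is an equivalence. Applying $\Rep(\R)\boxtimes-$ to it gives an equivalence $\Rep(\R)\cong\Rep(\R)\boxtimes\Hilb\to\Rep(\R)\boxtimes\Rep^{pe}(\widetilde V)$.

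\textbf{The splitting \eqref{eq: RepLRsplits} is an equivalence.} This is where the actual work is.

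\emph{Essential surjectivity.} Let $K$ be a positive energy representation of $\widetilde{L\R}\cong\R\times\widetilde V$. Its restriction to $\widetilde V$ extends to $U(1)\ltimes\widetilde V$ with the same positive generator, because the $U(1)$-action on $U(1)\ltimes\widetilde{L\R}\cong\R\times(U(1)\ltimes\widetilde V)$ is trivial on the $\R$ factor. By Proposition \ref{prop: StandardRep}, this restriction is unitarily equivalent to $H\otimes e^{\overline{L\R/\R}}$ for a multiplicity space $H$. The $\R$-factor commutes with $\widetilde V$, so it acts in the commutant of $\id\otimes\Pi(\widetilde V)$. By irreducibility of the standard representation (Schur), this commutant is $B(H)\otimes\id$. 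Hence $\R$ acts as $\pi^H\otimes\id$ for a strongly continuous unitary representation $\pi^H$ of $\R$ on $H$, and $K\cong H\otimes e^{\overline{L\R/\R}}$ with exactly the action produced by the functor.

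\emph{Full faithfulness.} Intertwiners $H\otimes e^{\overline{L\R/\R}}\to H'\otimes e^{\overline{L\R/\R}}$ commuting with $\widetilde V$ are of the form $T\otimes\id$, again by irreducibility. Commuting with $\R$ then forces $T$ to be an $\R$-intertwiner.

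\textbf{Expected obstacle.} The delicate point is the measure-theoretic and topological bookkeeping. One must make sure that the positive-energy extension of $K$ really restricts to $\widetilde V$ with the standard central character, so that Proposition \ref{prop: StandardRep} applies. One must also check that the $\R$-action obtained from the commutant argument is strongly continuous. This should follow because $B(H)\otimes\id$ carries the strong topology faithfully: vectors $h\otimes e^0$ recover $H$ isometrically. Everything else is formal.
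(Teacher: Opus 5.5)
Your proposal is correct and follows the paper's route. The paper also writes $\mathfrak{L}$ as the equivalence $\Rep(C_0(\R))\cong\Rep(\R)$ followed by $\mathfrak{L}_0\colon H\mapsto H\otimes e^{\overline{L\R/\R}}$ (with $(f,z)$ acting by $\pi^H(\int f)\otimes\Pi(f-\int f,z)$), and gets that $\mathfrak{L}_0$ is an equivalence from the splitting \eqref{eq: RepLRsplits} and Proposition \ref{prop: StandardRep}; the one difference is that you actually justify the splitting (restricting to $\widetilde V$, then using Schur and the commutant to force $\R$ to act as $\pi^H\otimes\id$), which the paper only asserts as clear.
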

\begin{proof}
    Consider the functor $\mathfrak{L}_0: \Rep(\R)\to \Rep^{pe}(\widetilde{L\R})$ defined as follows. Given $(H, \pi^H)\in\Rep(\R)$, we send it to the representation of $\widetilde{L\R}$ on $H\otimes e^{\overline{L\R/\R}}$ where $(f,z)\in\widetilde{L\R}$ acts by
    \[
   \pi^H\Big(\int f\Big)\otimes \Pi\bigg( f-\int f, z\bigg).
    \]
    By Equation \eqref{eq: RepLRsplits} and Proposition \ref{prop: StandardRep}, the functor $\mathfrak{L}_0$ is a $\mathrm{W}^*$-equivalence. The functor $\mathfrak{L}:\Rep(C_0(\R))\to \Rep^{pe}(\widetilde{L\R})$ is the composition of $\mathfrak{L}_0$ and the equivalence in Equation~\eqref{eq: C0RandR}. 
\end{proof}

\subsection{The inverse functor $\Rep^{pe}(\widetilde{L\R})\to \Rep(\Heis)$}

In this section, we produce a functor $\mathfrak{M}_0: \Rep^{pe}(\widetilde{L\R})\to \Rep(\Heis)$ providing an inverse to the restriction functor $\mathfrak{R}: \Rep(\Heis)\to \Rep^{pe}(\widetilde{L\R})$ defined in Section \ref{sec: restrictionFunctor}.

Fix $(K,\pi)\in \Rep^{pe}(\widetilde{L\R})$. We first describe how to produce compatible actions of all the $*$-algebras $\langle W(f)\rangle_{L_I\R}$ for $I\in \Jcal$. Let $I\in \Jcal$ be an interval and $f\in L_I\R$ be a localized loop. Then, we define the action of $W(f)\in \langle W(f)\rangle_{L_I\R}$ on $K$ by $\pi(f,1)\in U(K)$. This produces a $*$-representation of the dense $*$-subalgebra $\langle W(f)\rangle_{f\in L_I\R}$ of $\Heis(I)$ generated by $\{W(f)\}_{f\in L_I\R}$ on the Hilbert space $K$. These representations are compatible with the inclusions of intervals $J\subset I$ of $S^1$. In order to obtain a representation of $\Heis$ on $K$, it is only left to show that the local representations $\langle W(f)\rangle_{f\in L_I\R}\to U(K)\subset B(K)$ extend by continuity to representations of the algebras $\Heis(I)$ for all $I\in \Jcal$.

\begin{definition}
A unitary representation $(K, \pi)$ of $\widetilde{L\R}$ is called \emph{locally normal} if, for every $I\in\Jcal$, the representation of $\langle W(f)\rangle_{f\in L_I\R}$ on $K$ given by $W(f)\mapsto \pi(f, 1)$ extends to a normal representation of the von Neumann algebra $\Heis(I)$.   
\end{definition}

The rest of this section is devoted to arguing that every positive energy representation of $\widetilde{L\R}$ is locally normal. We first focus on the irreducible positive energy representations of $\widetilde{L\R}$.

\begin{lemma}\label{lemma: irrepsOfLRAreLocNorm}
Every irreducible positive energy representation of $\widetilde{L\R}$ is locally normal.
\end{lemma}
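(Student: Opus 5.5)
Every irreducible positive energy representation of $\widetilde{L\R}$ is, up to unitary equivalence, one of the representations $\pi_l$ from Definition \ref{def: IrrepsOfHeis}, restricted to $\widetilde{L\R}$. These are built from actual representations of the net $\Heis$, so they are locally normal by construction. The plan is therefore to match the classification of Proposition \ref{prop: FunctorRepC0RtoReppeLR} with the explicit family $\pi_l$.

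\emph{Step 1: identify the irreducibles.} By Proposition \ref{prop: FunctorRepC0RtoReppeLR} (equivalently, Equation \eqref{eq: RepLRsplits} together with Proposition \ref{prop: StandardRep}), every irreducible positive energy representation of $\widetilde{L\R}$ is unitarily equivalent to $\mathfrak{L}(\delta_q)$ for some $q\in\R$. This is the Fock space $e^{\overline{L\R/\R}}$ on which $(f,z)$ acts by
\[
z\, e^{iq\int f}\,\Pi\Big(f-\int f,1\Big).
\]
Since $\Pi(f-\int f,1)$ and $W(f)$ differ only by the constant part of $f$, and constant loops act trivially through $W$, we have $\Pi(f-\int f,1)=W(f)$ on $H_0$. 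Hence the representation is
\[
(f,z)\mapsto z\,e^{iq\int f}\,W(f).
\]

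\emph{Step 2: match with $\pi_l$.} Take $l$ to be the constant loop $q$, so that $q_l=q$. Definition \ref{def: IrrepsOfHeis} gives, for $f\in L_I\R$,
\[
\pi_{l,I}(W(f))=\pi_0\big(e^{i\int lf}W(f)\big)=e^{iq\int f}W(f).
\]
This coincides with the action of $(f,1)$ from Step 1. So $\mathfrak{L}(\delta_q)$ agrees exactly with the restriction of $\pi_l$ to the generators $W(f)$, $f\in L_I\R$, for every interval $I$.

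\emph{Step 3: normality.} For each $I$, the map $\pi_{l,I}=\pi_0\circ\rho_{l,I}$ is a composition of a von Neumann algebra automorphism of $\Heis(I)$ (the continuous extension established in the lemma preceding Definition \ref{def: IrrepsOfHeis}) with the identity representation. It is therefore normal: an automorphism of a von Neumann algebra is automatically normal, or one can note directly that on $\Heis(I)$ we have $\rho_{l,I}=\mathrm{Ad}\,W(M)$ for a charge-zero $m$ with $m|_I=l|_I$. This normal representation extends $W(f)\mapsto\pi(f,1)$. Local normality transfers across unitary equivalence, so every irreducible positive energy representation is locally normal.

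The only delicate point is Step 1: checking that the splitting $\widetilde{L\R}\cong\R\times\widetilde{V}$ is compatible with the cocycle, so that the phase of the central element matches exactly. One must confirm that $\omega(f,g)$ depends only on the average-zero parts (it does, since $\int c\,g'=0$ for constant $c$). Once that is verified, Steps 2 and 3 are immediate.
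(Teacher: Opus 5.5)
Your proposal is correct and follows the paper's own argument. The paper also uses Proposition~\ref{prop: FunctorRepC0RtoReppeLR} to reduce to the Fock-space representations $(f,z)\mapsto z\,e^{ix\int f}W(f)$, and observes that these are the restrictions of $\pi_l$ for $l$ the constant loop with value $x$, hence locally normal. Your Step~3 and your cocycle check only make explicit what the paper leaves implicit: that $\pi_0\circ\rho_{l,I}$ is normal and that the splitting $\widetilde{L\R}\cong\R\times\widetilde{V}$ respects the cocycle.
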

\begin{proof}
By Proposition \ref{prop: FunctorRepC0RtoReppeLR}, irreducible positive energy representations of $\widetilde{L\R}$ are parametrized by points of $\R$, up to isomorphism. Given $x\in \R$, the corresponding irreducible positive energy representation of $\widetilde{L\R}$ has underlying Hilbert space $e^{\overline{L\R/\R}}$ and action
\begin{equation}\label{eq: ActionIrrepPE}
(f,z)\cdot e^h = z\cdot e^{-\frac{1}{2}||f||^2-\langle f, h\rangle}\cdot e^{i x\int f} e^{f+h}
\end{equation}
for $(f,z)\in \widetilde{L\R}$ and $e^h\in e^{\overline{L\R/\R}}$. The corresponding induced action of the algebra $\langle W(f)\rangle_{f\in L_I\R}$ on $e^{\overline{L\R/\R}}$ is the (restriction to $\langle W(f)\rangle_{f\in L_I\R}$ of the) action $\pi_l$ in Definition \ref{def: IrrepsOfHeis} for $l$ the constant loop with value $x$. Hence, the representation \eqref{eq: ActionIrrepPE} on $e^{\overline{L\R/\R}}$ is locally~normal.  
\end{proof}

In order to extend the conclusion of Lemma \ref{lemma: irrepsOfLRAreLocNorm} to all positive energy representations of $\widetilde{L\R}$ we introduce direct integrals of representations of $\widetilde{L\R}$. We refer the reader to \cite[App. A]{MR1452364} for an introduction to direct integrals of Hilbert spaces and representations. Given a Borel space $X$ with a positive standard measure $\lambda$, a \emph{$\lambda$-measurable bundle of Hilbert spaces on $X$} is an $X$-parametrized family $\{H_x\}_{x\in X}$ of Hilbert spaces together with a vector subspace $\Gamma\subset \prod\limits_{x\in X}H_x$ such that
\begin{enumerate}
\item there exists a sequence $(\xi_1,\xi_2,\ldots )$ of elements of $\Gamma$ such that, for all $x\in X$, the vectors $(\xi_1(x),\xi_2(x),\ldots)$ span a dense subset of $H_x$;
\item for all $\xi\in \Gamma$, the function $x\mapsto ||\xi(x)||$ is $\lambda$-measurable;
\item given $\eta\in \prod\limits_{x\in X}H_x$ if, for all $\xi\in \Gamma$, the function $x\mapsto \langle \xi(x), \eta(x)\rangle$ is $\lambda$-measurable, then $\eta\in \Gamma$.
\end{enumerate}
Let $(\{H_x\}_{x\in X},\Gamma)$ be a $\lambda$-measurable bundle of Hilbert spaces. The elements of $\Gamma$ are called \emph{measurable sections} of $\{H_x\}_{x\in X}$. Given $\xi,\eta\in \Gamma$ measurable sections, the function $x\mapsto \langle \xi(x),\eta(x)\rangle$ is measurable. We say that $\xi\in \Gamma$ is square-integrable if $\int_{x\in X}||\xi(x)||^2 \mathrm{d}\lambda(x)<\infty$. If $\xi,\eta\in\Gamma$ are square-integrable and $z\in \mathbb{C}$ is a complex number, then $\xi+\eta$ and $z\xi$ are square-integrable, and $x\mapsto \langle \xi(x),\eta(x)\rangle$ is integrable. We write
\(
\langle \xi,\eta\rangle:=\int_{x\in X}\langle \xi(x),\eta(x)\rangle\mathrm{d}\lambda(x).
\)
The square-integrable sections of $\{H_x\}_{x\in X}$, up to norm zero sections, form a Hilbert space, which we denote by $\int^\oplus_{x\in X}H_x\mathrm{d}\lambda(x)$ and call the \emph{direct integral of $(\{H_x\}_{x\in X}, \Gamma)$}. Since $\lambda$ is a standard measure, the Hilbert space $\int^\oplus_{x\in X}H_x\mathrm{d}\lambda(x)$ is separable. 

A \emph{measurable bundle of operators on $(\{H_x\}_{x\in X},\Gamma)$} is an assignment of an operator $T_x\in B(H_x)$ for every $x\in X$ such that, for any measurable section $\xi\in \Gamma$, the section $x\mapsto T_x\xi(x)$ is measurable. Given a measurable bundle of operators $\{T_x\}_{x\in X}$ on $H:=(\{H_x\}_{x\in X},\Gamma)$, the function $x\mapsto ||T_x||$ is measurable. If this function is furthermore essentially bounded, we say that $\{T_x\}_{x\in X}$ is \emph{essentially bounded}. In this situation, if $\xi\in\int^\oplus_{x\in X}H_x\mathrm{d}\lambda(x)$ is a square integrable section, then $x\mapsto T_x\xi(x)$ is also square-integrable, and we obtain a bounded linear map $T:=\int_{x\in X}^\oplus T_x\mathrm{d\lambda}(x)\in B\Big(\int^\oplus_{x\in X}H_x\mathrm{d}\lambda(x)\Big)$, with norm equal to the essential upper bound of $x\mapsto ||T_x||$.

\begin{definition}
    Let $(X,\lambda)$ be a standard measure space. A \emph{$\lambda$-measurable bundle of unitary representations of $\widetilde{L\R}$} is a $\lambda$-measurable bundle of Hilbert spaces $(\{H_x\}_{x\in X}, \Gamma)$ together with, for every $x\in X$, a representation $\pi_x: \widetilde{L\R}\to U(H_x)$ of $\widetilde{L\R}$ on $H_x$ such that, for every $(f,z)\in \widetilde{L\R}$, the assignment $x\mapsto \pi_x(f,z)$ is a $\lambda$-measurable field of operators on $(\{H_x\}_{x\in X}, \Gamma)$.
\end{definition}
Given a $\lambda$-measurable bundle of unitary representations of $\widetilde{L\R}$, denoted by $(\{(H_x, \pi_x)\}_{x\in X}, \Gamma)$, the map $x\mapsto ||\pi_x(f,z)||$ is essentially bounded for every fixed $(f,z)$, and we define
\[
\Bigg(\int_{x\in X}^\oplus H_x\mathrm{d}\lambda(x), \int_{x\in X}^\oplus \pi_x\mathrm{d}\lambda(x)\Bigg)\in \Rep(\widetilde{L\R})
\]
to be the \emph{direct integral of $(\{(H_x, \pi_x)\}_{x\in X}, \Gamma)$ with respect to $\lambda$}. The following result shows that the direct integral of locally normal unitary representations of $\widetilde{L\R}$ is locally normal.

\begin{proposition}\label{prop: IntegralsOfLocNormAreLocNorm}
Let $(\{(H_x, \pi_x)\}_{x\in X}, \Gamma)$ be a $\lambda$-measurable bundle of unitary representations of $\widetilde{L\R}$ such that $(H_x,\pi_x)$ is locally normal for every $x\in X$. Then, the direct integral $\big(\int_{x\in X}^\oplus H_x\mathrm{d}\lambda, \int_{x\in X}^\oplus \pi_x\mathrm{d}\lambda(x)\big)\in \Rep(\widetilde{L\R})$ is locally~normal.
\end{proposition}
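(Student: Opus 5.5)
Fix $I\in\Jcal$ and write $A_I:=\langle W(f)\rangle_{f\in L_I\R}$, which is strongly dense in $\Heis(I)$. For each $x$ we have a normal representation $\tilde\pi_{x,I}:\Heis(I)\to B(H_x)$ extending $W(f)\mapsto\pi_x(f,1)$. The goal is to show that the bundle $x\mapsto\tilde\pi_{x,I}(a)$ is a measurable field of operators for every $a\in\Heis(I)$. Granting that, we define
\[
\tilde\pi_I(a):=\int^\oplus_{x\in X}\tilde\pi_{x,I}(a)\,\mathrm{d}\lambda(x).
\]
Since $\|\tilde\pi_{x,I}(a)\|\le\|a\|$, this field is essentially bounded, so $\tilde\pi_I$ is a well-defined $*$-homomorphism $\Heis(I)\to B\big(\int^\oplus H_x\big)$. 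It extends $\int^\oplus\pi_x(f,1)$ on $A_I$. It remains to prove measurability and normality.

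For measurability, let $\mathcal{M}$ be the set of $a\in\Heis(I)$ for which $x\mapsto\tilde\pi_{x,I}(a)$ is a measurable field. By hypothesis $\mathcal{M}$ contains every $W(f)$ with $f\in L_I\R$. It is closed under sums, products and adjoints, so $A_I\subset\mathcal{M}$. Next, take $a\in\Heis(I)$. By Kaplansky density there is a sequence $a_n\in A_I$ with $\|a_n\|\le\|a\|$ and $a_n\to a$ strongly on $H_0$. A single sequence suffices because $H_0$ is separable, so the unit ball is metrizable in the strong topology. Each $\tilde\pi_{x,I}$ is normal, hence strongly continuous on bounded sets, and therefore $\tilde\pi_{x,I}(a_n)\xi(x)\to\tilde\pi_{x,I}(a)\xi(x)$ pointwise in $x$ for every measurable section $\xi\in\Gamma$. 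A pointwise limit of measurable sections is measurable: the pairings $\langle\eta(x),\tilde\pi_{x,I}(a)\xi(x)\rangle$ are pointwise limits of measurable functions, and axiom (3) of measurable bundles then gives measurability of the section. Hence $\mathcal{M}=\Heis(I)$.

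For normality, $\tilde\pi_I$ is bounded on bounded sets, so it suffices to check strong continuity on bounded nets. Since $\Heis(I)$ acts on a separable space, it is enough to check sequences $a_n\to a$ strongly with $\sup_n\|a_n\|<\infty$. Then for every square-integrable $\xi$ we have $\|(\tilde\pi_{x,I}(a_n)-\tilde\pi_{x,I}(a))\xi(x)\|\to 0$ pointwise, by normality of each $\tilde\pi_{x,I}$. These functions are dominated by $2\sup_n\|a_n\|\,\|\xi(x)\|$, which is square-integrable. Dominated convergence then gives $\tilde\pi_I(a_n)\xi\to\tilde\pi_I(a)\xi$. A sequentially strongly continuous positive map on bounded sets of a von Neumann algebra on a separable space is normal: apply the argument to increasing bounded sequences, which are strongly convergent, and then use that $\sigma$-normality equals normality for countably decomposable algebras.

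The main obstacle is the reduction from nets to sequences, both in the Kaplansky step and in normality, so I would state the separability and countable-decomposability inputs explicitly. An alternative is to characterize normality as continuity in the $\sigma$-weak topology, check it on vector functionals $\omega_\xi$, and apply monotone convergence along increasing sequences.
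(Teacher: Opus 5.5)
Your proof is correct and takes essentially the same route as the paper. You approximate $a\in\Heis(I)$ by a sequence from $\langle W(f)\rangle_{f\in L_I\R}$, use normality of each $\pi_x$ to get measurability of $x\mapsto\pi_x(a)$ as a pointwise limit, and bound the norms by $\|a\|$; your Kaplansky/metrizability justification for using a single sequence and your dominated-convergence proof that the direct integral representation is normal make explicit two points the paper leaves implicit (the second also follows from the automatic normality of representations of the type $\mathrm{III}$ factors $\Heis(I)$ on separable Hilbert spaces, noted in the preliminaries on conformal nets).
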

\begin{proof}
For every interval $I\in \Jcal$, we continue denoting by $\pi_x$ the extension of $\pi_x: \widetilde{L_I\R}\to B(H_x)$ to $\Heis(I)$, which exists by hypothesis. Fix $I\in \Jcal$ and $a\in \Heis(I)$. We show that the assignment $x\in X\mapsto \pi_x(a)\in B(H_x)$ is a $\lambda$-measurable field of operators on $(\{H_x\}_{x\in X}, \Gamma)$. It is enough to show, for every pair $\xi,\eta\in \Gamma$ of measurable sections, that $x\mapsto \langle \pi_x(a)(\xi_x), \eta_x\rangle$ is a $\lambda$-measurable function.

Fix $\xi,\eta\in \Gamma$ measurable sections. Let $\{W(f_i)\}_{i}$ be a sequence of elements of $\langle W(f)\rangle_{f\in L_I\R}$ converging ultraweakly to $a\in \Heis(I)$. We then have that, for $x\in X$,
\begin{align*}
\langle \pi_{x}(a)(\xi_x), \eta_x\rangle & = \langle \pi_{x}(\lim\limits_{i} W(f_i))(\xi_x),\eta_x\rangle \\
& = \langle \lim_{i}\pi_{x}(W(f_i))(\xi_x),\eta_x\rangle \\ & =\lim_i\langle \pi_x(W(f_i))(\xi_x),\eta_x\rangle.
\end{align*}
Since each $x\mapsto \langle \pi_x(W(f_i))(\xi_x),\eta_x\rangle $ is measurable by hypothesis and the limit of measurable functions is measurable, the claim follows. In addition, the map $x\in X\mapsto ||\pi_x(a)||\leq ||a||$ is automatically essentially bounded.

By the argument above, we obtain, for every interval $I\in\Jcal$, a direct integral representation $(\int_{x\in X}^\oplus H_x\mathrm{d}\lambda(x), \int_{x\in X}^\oplus \pi_{x,I}\mathrm{d}\lambda)$ of $\Heis(I)$, which restricts to the representation $(\int_{x\in X}^\oplus H_x\mathrm{d}\lambda(x), \int_{x\in X}^\oplus \pi_{x}\mathrm{d}\lambda)$ of $\langle W(f)\rangle_{f\in L_I\R}\subset \Heis(I)$, as needed
\end{proof}

The following lemma is the last result to prove that every positive energy representation of $\widetilde{L\R}$ is locally normal. 

\begin{lemma}\label{lemm: PERepsAreIntOfIrreps}
Every positive energy representation of $\widetilde{L\R}$ can be written as the direct integral of irreducible positive energy representations of $\widetilde{L\R}$.
\end{lemma}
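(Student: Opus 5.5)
The plan is to use the structure theory from Proposition \ref{prop: FunctorRepC0RtoReppeLR}. Let $(K,\pi)\in\Rep^{pe}(\widetilde{L\R})$. By that proposition, $K$ is unitarily equivalent to $\mathfrak{L}\big(\int^\oplus_{t\in T}\delta_{p(t)}\mathrm{d}\upsilon(t)\big)$ for some object of $\Rep(C_0(\R))$. This reduces the problem to a concrete model: the Hilbert space is $L^2(T,\upsilon)\otimes e^{\overline{L\R/\R}}$ and the action is given by Equation \eqref{eq: RepOfLRGivenByRepOfC0R}. Unitary equivalence preserves the property of being a direct integral of irreducibles, so it suffices to treat this model. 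As recalled in Example \ref{ex HilbW*Tensor}, every object of $\Rep(C_0(\R))$ is isomorphic to one of this form. In particular, we may take $\upsilon$ to be a standard measure and $p$ measurable, and continuity of $p$ is not needed.

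Next I would exhibit the model as a direct integral over $(T,\upsilon)$. Take the constant bundle $H_t:=e^{\overline{L\R/\R}}$ for each $t\in T$. Its measurable sections $\Gamma$ are the weakly measurable maps $T\to e^{\overline{L\R/\R}}$. The three axioms of a measurable bundle hold by standard arguments; for axiom (1), use an orthonormal basis of the separable Fock space, viewed as constant sections. On each fibre let $\widetilde{L\R}$ act by $\pi_t$, the irreducible positive energy representation of charge $p(t)$ from Equation \eqref{eq: ActionIrrepPE}. The canonical unitary $L^2(T,\upsilon)\otimes e^{\overline{L\R/\R}}\cong\int^\oplus_T e^{\overline{L\R/\R}}\mathrm{d}\upsilon(t)$ sends $\psi\otimes e^h$ to $t\mapsto\psi(t)e^h$. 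Comparing with \eqref{eq: RepOfLRGivenByRepOfC0R}, it intertwines the action with $\int^\oplus\pi_t\,\mathrm{d}\upsilon(t)$.

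The remaining point is to check that $t\mapsto\pi_t(f,z)$ is a measurable field of operators for each fixed $(f,z)$. We have $\pi_t(f,z)=e^{ip(t)\int f}\,\Pi(f-\int f,z)$. This is a fixed unitary multiplied by the scalar function $t\mapsto e^{ip(t)\int f}$, which is measurable because $p$ is measurable. So it maps measurable sections to measurable sections.

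I expect no real obstacle. The only delicate point is bookkeeping: the equivalence in Proposition \ref{prop: FunctorRepC0RtoReppeLR} is an essential surjectivity statement, so the given $K$ is only unitarily equivalent to the model. One must also confirm that each fibre $\pi_t$ is irreducible and of positive energy. Irreducibility is Proposition \ref{prop: StandardRep}, since $\widetilde{V}$ already acts irreducibly. For positive energy, take the rotation action on the Fock space, which commutes with the charge factor because rotations preserve $\int f$.
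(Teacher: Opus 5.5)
Your proposal is correct and follows the paper's proof: you use Proposition \ref{prop: FunctorRepC0RtoReppeLR} to reduce to a model $\mathfrak{L}\big(\int^\oplus_{t\in T}\delta_{p(t)}\mathrm{d}\upsilon(t)\big)$, then identify that model as the direct integral over $(T,\upsilon)$ of the irreducibles $\mathfrak{L}(\delta_{p(t)})$. The paper states this in two lines. Your checks of the measurable bundle structure, of the measurability of $t\mapsto\pi_t(f,z)$, and of irreducibility and positive energy of the fibres supply routine details that the paper leaves implicit.
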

\begin{proof}
Let $(K,\pi)\in \Rep^{pe}(\widetilde{L\R})$. Then, by Proposition \ref{prop: FunctorRepC0RtoReppeLR}, there exists an object $\int^\oplus_{t\in T}\delta_{p(t)}\mathrm{d}\upsilon(t)\in\Rep(C_0(\R))$ such that $(K,\pi)\cong \mathfrak{L}\bigg(\int^\oplus_{t\in T}\delta_{p(t)}\mathrm{d}\upsilon(t)\bigg)$. The latter is the direct integral of the $T$-parametrized family of irreducible representations $\{\mathfrak{L}(\delta_{p(t)})\}_{t\in T}$ with respect to $\upsilon$.
\end{proof}

\begin{proposition}\label{prop: AllRepsAreLocNormal}
Let $(K,\pi)$ be a positive energy representation of $\widetilde{L\R}$. Then, for every $I\in\Jcal$, the representation of $\langle W(f)\rangle_{f\in L_I\R}$ on $K$ given by letting $W(f)$ act by $\pi(f,1)$ for all $f\in L_I\R$ extends by continuity to a $*$-representation of $\Heis(I)$. These representations are compatible with the inclusions $\Heis(J)\hookrightarrow\Heis(I)$ whenever $I,J\in\Jcal$ are intervals such that $J\subset I$, and hence they define a representation of the Heisenberg conformal net.
\end{proposition}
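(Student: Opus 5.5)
The plan is to combine the three preceding results. Fix $(K,\pi)\in\Rep^{pe}(\widetilde{L\R})$. By Lemma \ref{lemm: PERepsAreIntOfIrreps}, together with Proposition \ref{prop: FunctorRepC0RtoReppeLR}, we may assume up to unitary equivalence that
\[
(K,\pi)\cong \mathfrak{L}\Big(\int^\oplus_{t\in T}\delta_{p(t)}\mathrm{d}\upsilon(t)\Big)=\int^\oplus_{t\in T}\mathfrak{L}(\delta_{p(t)})\,\mathrm{d}\upsilon(t).
\]
Being locally normal is invariant under unitary equivalence of $\widetilde{L\R}$-representations: if $u:K\to K'$ intertwines, we conjugate the normal extension by $u$. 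So it suffices to treat this direct integral.

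\textbf{Measurability check.} The family $\{\mathfrak{L}(\delta_{p(t)})\}_{t\in T}$ is a $\upsilon$-measurable bundle of unitary representations. The underlying bundle is the constant bundle with fibre $e^{\overline{L\R/\R}}$, whose measurable sections are the weakly measurable maps $T\to e^{\overline{L\R/\R}}$. By Equation \eqref{eq: RepOfLRGivenByRepOfC0R}, $(f,z)$ acts fibrewise by $e^{ip(t)\int f}\,\Pi(f-\int f,z)$. This is a measurable field of operators because $p$ is continuous.

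\textbf{Local normality.} Each fibre is locally normal by Lemma \ref{lemma: irrepsOfLRAreLocNorm}. Proposition \ref{prop: IntegralsOfLocNormAreLocNorm} then gives that the direct integral is locally normal. Hence for each $I\in\Jcal$ the assignment $W(f)\mapsto\pi(f,1)$, $f\in L_I\R$, extends to a normal $*$-representation $\pi_I$ of $\Heis(I)$. The extension is unique because $\langle W(f)\rangle_{f\in L_I\R}$ is ultraweakly dense in $\Heis(I)$ and $\pi_I$ is normal. I would state this density explicitly, via the bicommutant theorem.

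\textbf{Compatibility.} For $J\subset I$, both $\pi_I|_{\Heis(J)}$ and $\pi_J$ are normal representations of $\Heis(J)$. They agree on the dense $*$-subalgebra $\langle W(f)\rangle_{f\in L_J\R}$, since $L_J\R\subset L_I\R$ and both are defined by $\pi(f,1)$. By normality they agree on all of $\Heis(J)$. This is exactly Definition \ref{def: Rep}, so $(K,\{\pi_I\})$ is a representation of $\Heis$.

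The main obstacle is a subtlety in the proof of Proposition \ref{prop: IntegralsOfLocNormAreLocNorm}, which I would make precise here. It passes to the limit along a sequence in $\langle W(f)\rangle$ converging ultraweakly to $a$, but ultraweak density only gives nets. I would handle this in one of two ways.

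\emph{First option.} Use that $\Heis(I)$ acts on the separable space $H_0$, so the unit ball is metrizable and Kaplansky density yields bounded sequences. This requires taking a bounded sequence in the $*$-algebra generated by the $W(f)$, not just single Weyl operators.

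\emph{Second option.} Note that the set of $a\in\Heis(I)$ for which $t\mapsto\pi_t(a)$ is a measurable field is closed under bounded ultraweak sequential limits and contains the dense $*$-algebra. A monotone-class argument then gives all of $\Heis(I)$.

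Once that point is settled, the resulting direct-integral operator restricted to the Weyl algebra is $\pi$. It is normal as a direct integral of normal representations, which one checks by testing on vector functionals.
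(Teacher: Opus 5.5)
Your proposal is correct and follows the paper's own argument: reduce via Lemma \ref{lemm: PERepsAreIntOfIrreps} to a direct integral of irreducibles, apply Lemma \ref{lemma: irrepsOfLRAreLocNorm} fibrewise and Proposition \ref{prop: IntegralsOfLocNormAreLocNorm} to the integral, and obtain compatibility (which the paper calls clear) from density and normality, exactly as you do. Your point about nets versus sequences in the proof of Proposition \ref{prop: IntegralsOfLocNormAreLocNorm} is a legitimate tightening of the paper, and your first option is the cleanest fix: Kaplansky density together with metrizability of bounded sets for the separable $H_0$, using general elements of the $*$-algebra rather than single Weyl operators.
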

\begin{proof}
Take $(K,\pi)$ to be a positive energy representation of $\widetilde{L\R}$. Then, by Lemma \ref{lemm: PERepsAreIntOfIrreps}, $(K,\pi)$ is equivalent to a direct integral of irreducible positive energy representations of $\widetilde{L\R}$, hence to a direct integral of locally normal representations, by Lemma \ref{lemma: irrepsOfLRAreLocNorm}. By Proposition \ref{prop: IntegralsOfLocNormAreLocNorm}, $(K,\pi)$ is equivalent to a locally normal representation, hence locally normal. The rest of the statement in the Proposition is clear.
\end{proof}

By Proposition \ref{prop: AllRepsAreLocNormal}, we obtain an assignment from positive energy representations of $\widetilde{L\R}$ to representations of the Heisenberg conformal net. Extending this assignment trivially at the level of morphisms, we obtain a functor
\[
\mathfrak{M}_0: \Rep^{pe}(\widetilde{L\R})\to \Rep(\Heis).
\]

\begin{proposition}\label{prop: RM0Equivalences}
The functors $\mathfrak{R}: \Rep(\Heis)\leftrightarrows\Rep^{pe}(\widetilde{L\R}): \mathfrak{M}_0$ are inverses of each other.
\end{proposition}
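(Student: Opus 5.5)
We show directly that both composites $\mathfrak{R}\circ\mathfrak{M}_0$ and $\mathfrak{M}_0\circ\mathfrak{R}$ are equal to the identity functors, not merely naturally isomorphic to them. Both functors are the identity on underlying Hilbert spaces and on morphisms, so we only need to compare the actions on objects.

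\emph{The composite $\mathfrak{R}\circ\mathfrak{M}_0$.} Take $(K,\pi)\in\Rep^{pe}(\widetilde{L\R})$. Then $\mathfrak{M}_0(K,\pi)$ is the representation of $\Heis$ on $K$ in which $W(f)$, for $f\in L_I\R$, acts by $\pi(f,1)$, extended by Proposition \ref{prop: AllRepsAreLocNormal}. Applying $\mathfrak{R}_0$ gives back, on each subgroup $\widetilde{L_I\R}$, the homomorphism $(f,z)\mapsto z\,\pi(f,1)=\pi(f,z)$. By the colimit identification $\colim_I\widetilde{L_I\R}\cong\widetilde{L\R}$ of \cite[Thm. 4]{Hen19}, a strongly continuous representation of $\widetilde{L\R}$ is determined by its restrictions to the $\widetilde{L_I\R}$. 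Hence the resulting representation is $\pi$ itself. As a representation of $\widetilde{L\R}$ this is literally the original $(K,\pi)$, which lies in the full subcategory $\Rep^{pe}(\widetilde{L\R})$.

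\emph{The composite $\mathfrak{M}_0\circ\mathfrak{R}$.} Take $(K,\{\pi^K_I\})\in\Rep(\Heis)$. By Proposition \ref{prop: R0Factors}, $\mathfrak{R}(K)$ is positive energy, and its restriction to $\widetilde{L_I\R}$ is $(f,z)\mapsto z\,\pi^K_I(W(f))$. Then $\mathfrak{M}_0\mathfrak{R}(K)$ is a representation of $\Heis$ on $K$ whose action of $\Heis(I)$ agrees with $\pi^K_I$ on the $*$-algebra $\langle W(f)\rangle_{f\in L_I\R}$. Both actions of $\Heis(I)$ are normal: $\pi^K_I$ by \cite[V.5.1]{MR1873025}, and the other by construction in Proposition \ref{prop: AllRepsAreLocNormal}. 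The algebra $\langle W(f)\rangle_{f\in L_I\R}$ is ultraweakly dense in $\Heis(I)$ by the definition $\Heis(I)=\{W(f)\}''$ and the bicommutant theorem. Two normal representations agreeing on this dense subalgebra therefore coincide on $\Heis(I)$, for every $I\in\Jcal$.

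\emph{Main obstacle.} The only genuinely delicate point is the uniqueness of the extension in the second composite. The extension provided by Proposition \ref{prop: AllRepsAreLocNormal} was constructed through a unitary equivalence with a direct integral, so it is not a priori clear that it is independent of choices. I would settle this by checking that this extension is normal: it is unitarily conjugate to a direct integral of normal representations, and a direct integral of normal representations is normal. Ultraweak density of $\langle W(f)\rangle_{f\in L_I\R}$ in $\Heis(I)$ then forces uniqueness of the normal extension. With uniqueness in hand, both composites are identities on objects and morphisms, so $\mathfrak{R}$ and $\mathfrak{M}_0$ are mutually inverse.
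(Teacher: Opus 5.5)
Your proof is correct and follows essentially the paper's argument: the paper also notes that $\mathfrak{R}\circ\mathfrak{M}_0$ is the identity by construction. For the other composite, the paper argues abstractly from the full faithfulness of $\mathfrak{R}$ (Proposition \ref{prop: R0Factors}), whereas you check $\mathfrak{M}_0\circ\mathfrak{R}=\mathrm{Id}$ directly via uniqueness of normal extensions from the ultraweakly dense subalgebra $\langle W(f)\rangle_{f\in L_I\R}\subset\Heis(I)$, which is the same density-plus-normality fact that underlies that full faithfulness.
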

\begin{proof}
By the construction, it is clear that the composition $\mathfrak{R}\circ \mathfrak{M}_0$ is the identity functor on $\Rep^{pe}(\widetilde{L\R})$. Hence, $\mathfrak{R}$ is essentially surjective. Since it is also fully faithful, it is an equivalence, and $\mathfrak{M}_0$ is an inverse to $\mathfrak{R}$.
\end{proof}

By composing the equivalences $\Rep(C_0(\R))\xrightarrow{\cong}\Rep(\R)\xrightarrow{\cong}\Rep^{pe}(\widetilde{L\R})\xrightarrow[\cong]{\mathfrak{M_0}}\Rep(\Heis)$ we obtain a functor $\mathfrak{M}:\Rep(C_0(\R))\to \Rep(\Heis)$. We summarize the main result of this section in the following theorem.

\begin{theorem}\label{thm: RepHeisIsRepR}
Let $\mathfrak{M}:\Rep(C_0(\R))\to \Rep(\Heis)$ be the functor sending $\int^\oplus_{t\in T}\delta_{p(t)}\mathrm{d}\upsilon(t)\in \Rep(C_0(\R))$ to the representation of $\Heis$ on $L^2(T,\upsilon)\otimes e^{\overline{L\R/\R}}$ where, given $I\in \Jcal$ and $f\in L_I\R$, the element $W(f)\in \Heis(I)$ acts on $\psi\otimes e^h\in L^2(T,\upsilon)\otimes e^{\overline{L\R/\R}}$ via
\[
W(f)\cdot (\psi\otimes e^h) = e^{-\frac{1}{2}||f||^2-\langle f,h\rangle }\cdot \big(t\mapsto e^{ip(t)\int f}\psi(t)\big)\otimes e^{f+h}.
\]
For a morphism $F$ in $\Rep(C_0(\R))$, we set $\mathfrak{M}(F) = F\otimes \id$. Then, $\mathfrak{M}$ is an equivalence of $\mathrm{W}^*$-categories.
\end{theorem}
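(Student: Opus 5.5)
The plan is to show that $\mathfrak{M}$ is exactly the composite
\[
\Rep(C_0(\R))\xrightarrow{\cong}\Rep(\R)\xrightarrow[\cong]{\mathfrak{L}_0}\Rep^{pe}(\widetilde{L\R})\xrightarrow[\cong]{\mathfrak{M}_0}\Rep(\Heis),
\]
and then deduce the statement from the three equivalences already established. No new analytic input is needed; what remains is to unwind the definitions and check that the formula in the statement is the one produced by this chain.

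The first step is to trace an object $\int^\oplus_{t\in T}\delta_{p(t)}\mathrm{d}\upsilon(t)$ through the chain. Under \eqref{eq: C0RandR} it becomes $L^2(T,\upsilon)$, with $x\in\R$ acting by multiplication by $t\mapsto e^{ixp(t)}$. Applying $\mathfrak{L}_0$ gives $L^2(T,\upsilon)\otimes e^{\overline{L\R/\R}}$, with $(f,z)$ acting by $\pi^H(\int f)\otimes\Pi(f-\int f,z)$. This is the action \eqref{eq: RepOfLRGivenByRepOfC0R} of Proposition \ref{prop: FunctorRepC0RtoReppeLR}. The one point to check is that $\Pi(f-\int f,1)$ agrees with the formula $e^{-\frac12\|f\|^2-\langle f,h\rangle}e^{f+h}$. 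This holds because constant loops are zero in $\overline{L\R/\R}$: the seminorm and inner product only involve Fourier modes $k\neq 0$, so subtracting $\int f$ changes neither $\|f\|$, nor $\langle f,h\rangle$, nor the class of $f+h$.

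The second step is to apply $\mathfrak{M}_0$. By Proposition \ref{prop: AllRepsAreLocNormal}, this lets $W(f)$ act by $\pi(f,1)$ for $f\in L_I\R$, extended by continuity to $\Heis(I)$. Taking $z=1$ gives exactly the displayed formula for $W(f)\cdot(\psi\otimes e^h)$. Since $\langle W(f)\rangle_{f\in L_I\R}$ is ultraweakly dense in $\Heis(I)$, the formula on Weyl operators determines the whole representation.

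The third step is to identify morphisms. All three functors act trivially on underlying operators, except $\mathfrak{L}_0$, which sends $F$ to $F\otimes\id_{e^{\overline{L\R/\R}}}$. Hence $\mathfrak{M}(F)=F\otimes\id$.

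Finally, $\mathfrak{M}$ is a composite of three equivalences of $\mathrm{W}^*$-categories: \eqref{eq: C0RandR}, the functor $\mathfrak{L}_0$ from Proposition \ref{prop: FunctorRepC0RtoReppeLR}, and $\mathfrak{M}_0$ from Proposition \ref{prop: RM0Equivalences}, which is inverse to $\mathfrak{R}$. A composite of equivalences of $\mathrm{W}^*$-categories is again one, so $\mathfrak{M}$ is an equivalence.

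The main obstacle is not in this theorem itself. The substantive inputs have already been proved in earlier results: every positive energy representation of $\widetilde{L\R}$ is locally normal (Proposition \ref{prop: AllRepsAreLocNormal}), and $\mathfrak{R}$ is essentially surjective and fully faithful (Propositions \ref{prop: R0Factors} and \ref{prop: RM0Equivalences}). Within this proof, the only step needing care is the compatibility of the normalisation of $\Pi$ on the average-zero component with the decomposition $\widetilde{L\R}\cong\R\times\widetilde V$ (including the central $U(1)$ factor), and this is handled by the remark on constant loops above.
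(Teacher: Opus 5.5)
Your proposal is correct and follows the paper's proof: $\mathfrak{M}$ is the composite $\mathfrak{M}_0\circ\mathfrak{L}$, where $\mathfrak{L}$ is itself \eqref{eq: C0RandR} followed by $\mathfrak{L}_0$, and the equivalence comes from Propositions \ref{prop: FunctorRepC0RtoReppeLR} and \ref{prop: RM0Equivalences}. Your additional check is a detail the paper leaves implicit: subtracting $\int f$ changes neither $\|f\|$, nor $\langle f,h\rangle$, nor the class of $f+h$ in $\overline{L\R/\R}$, so the formula in the statement really is the one this composite produces.
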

\begin{proof}
    The functor $\mathfrak{M}: \Rep(C_0(\R))\to \Rep(\Heis)$ is the composition of the functors
\[\Rep(C_0(\R))\xrightarrow{\mathfrak{L}}\Rep^{pe}(\widetilde{L\R})\xrightarrow[]{\mathfrak{M_0}}\Rep(\Heis).\]
The functor $\mathfrak{L}: \Rep(C_0(\R))\xrightarrow{}\Rep^{pe}(\widetilde{L\R})$ is an equivalence by Proposition \ref{prop: FunctorRepC0RtoReppeLR}. The functor $\mathfrak{M}_0:\Rep^{pe}(\widetilde{L\R})\to \Rep(\Heis)$ is an equivalence by Proposition \ref{prop: RM0Equivalences}.
\end{proof}

\subsection{Providing a tensorator for the functor $\Hilb\,\R\to \Rep(\Heis)$} \label{sec: ProvidingTensorator}
We can now characterize the fusion rules on $\Rep(\Heis)$. We will show in Proposition \ref{prop: TensoratorForM} that, endowing $\Rep(C_0(\R))$ with the tensor structure of $\Hilb\,\R$, we can upgrade $\mathfrak{M}$ to a functor with a tensorator that witnesses compatibility with the fusion rules. We will not argue compatibility with the associators at this point, we defer this to Section \ref{sec: RepZ/2AsTY}. Recall that the tensor structure on $\Hilb\,\R$ is given, for objects $\int^\oplus_{t\in T}\delta_{p(t)}\mathrm{d}\upsilon(t),\, \int^\oplus_{s\in S}\delta_{n(s)}\mathrm{d}\nu(s)\in \Rep(C_0(\R))$, by
\[
\int^\oplus_{t\in T}\delta_{p(t)}\mathrm{d}\upsilon(t)\otimes\int^\oplus_{s\in S}\delta_{n(s)}\mathrm{d}\nu(s) = \int^\oplus_{(t, s)\in T\times S}\delta_{p(t)+ n(s)}\mathrm{d}(\upsilon\times \nu)(t,s),
\]
which is also denoted as $\upsilon\otimes \nu = \upsilon\times \nu$ and $(p,\upsilon)\otimes (n, \nu) = (p+n, \upsilon\times\nu)$. The goal of this section is to prove the following~result.
\begin{proposition}\label{prop: TensoratorForM}
The equivalence of $\mathrm{W}^*$-categories $\mathfrak{M}: \Rep(C_0(\R))\to \Rep(\Heis)$ admits a unitary natural isomorphism filling the diagram
\[\begin{tikzcd}
{\Rep(C_0(\R))\times \Rep(C_0(\R))} && {\Rep(\Heis)\times \Rep(\Heis)} \\
{\Rep(C_0(\R))} && {\Rep(\Heis)}
\arrow["{\mathfrak{M}\times \mathfrak{M}}", from=1-1, to=1-3]
\arrow["\otimes"', from=1-1, to=2-1]
\arrow["{[-,-]}"{description}, Rightarrow, from=1-3, to=2-1]
\arrow["\boxtimes", from=1-3, to=2-3]
\arrow["{\mathfrak{M}}"', from=2-1, to=2-3]
\end{tikzcd}\]
where the tensor structure on $\Rep(C_0(\R))$ is that of $\Hilb\,\R$.
\end{proposition}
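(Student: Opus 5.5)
We construct the unitary $[\upsilon,\nu]:\mathfrak{M}(\upsilon)\boxtimes\mathfrak{M}(\nu)\to\mathfrak{M}(\upsilon\times\nu)$ by first treating the irreducible objects $\delta_x,\delta_y$ explicitly and then passing to direct integrals. We never compute Connes fusion abstractly. Instead we use the DHR picture: by the equivalence $(\mathfrak{E},\Psi):\Loc_{I_0}(\Heis)\cong\Rep(\Heis)$, it suffices to realise each $\mathfrak{M}(\upsilon)$ as a localized endomorphism and to check that composition of these endomorphisms corresponds to $\upsilon\times\nu$.

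\textbf{Step 1 (irreducibles).} Fix $I_0\in\Jcal_\mathrm{p}$ and, for each $x\in\R$, a loop $l_x$ supported in $I_0$ with charge $q_{l_x}=x$. Choose $l_x$ depending linearly on $x$, say $l_x=x\,l_1$. The automorphism $\rho_{l_x}$ of Section \ref{Sec: ConstructingIrreps} is then localized in $I_0$, and the corresponding representation $\pi_{l_x}=\mathfrak{E}(\rho_{l_x})$ is isomorphic to $\mathfrak{M}(\delta_x)$. A unitary implementing this isomorphism is $W(L)$, where $L$ is a primitive of $l_x-x$, as in Remark \ref{rk: IntertwineActionsSameCharge} and Lemma \ref{lemma: irrepsOfLRAreLocNorm}. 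Since $\rho_{l}\circ\rho_{m}=\rho_{l+m}$ on Weyl operators, we get $\rho_{l_x}\otimes\rho_{l_y}=\rho_{l_{x+y}}$ in $\Loc_{I_0}$. Composing $\Psi$ with the intertwiners above yields unitaries $\mathfrak{M}(\delta_x)\boxtimes\mathfrak{M}(\delta_y)\cong\mathfrak{M}(\delta_{x+y})$.

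\textbf{Step 2 (direct integrals).} For general $(p,\upsilon)$ we use the same idea, but with the charge varying measurably in $t$. Write $\mathfrak{M}(\upsilon)=L^2(T,\upsilon)\otimes H_0$, and conjugate fibrewise by $W(L_{p(t)})$, where $L_{p(t)}=p(t)L_1$ for a fixed primitive $L_1$ of $l_1-1$. This gives a unitary from $\mathfrak{M}(\upsilon)$ onto $L^2(T,\upsilon)\otimes H_0$ in which $W(f)$, for $f$ localized away from $I_0$, acts by $1\otimes W(f)$. In other words, $\mathfrak{M}(\upsilon)\cong\mathfrak{E}(\rho_\upsilon)$, where $\rho_\upsilon$ is the localized endomorphism of $\Heis_\infty\otimes L^\infty(T,\upsilon)$ acting by $W(f)\mapsto e^{ip(t)\int l_1 f}W(f)$, multiplication-valued in $t$. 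Equivalently, $\mathfrak{M}(\upsilon)$ is a ``multiplicity'' localized object $L^2(\upsilon)\cdot\rho$, in the sense of $\mathrm{W}^*$-direct integrals. The pointwise formula $\rho_{l_{p(t)}}\circ\rho_{l_{n(s)}}=\rho_{l_{p(t)+n(s)}}$ then gives the identification of the product with $\rho_{\upsilon\times\nu}$. Equivalently, we can define $[\upsilon,\nu]$ as the direct integral over $T\times S$ of the Step 1 unitaries $[\delta_{p(t)},\delta_{n(s)}]$. This requires that Connes fusion commute with direct integrals in each variable, which follows from normality of $\boxtimes$ together with measurability of the chosen intertwiners. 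We arrange this measurability through the linear dependence $L_{x}=xL_1$, which makes $t\mapsto W(p(t)L_1)$ a strongly continuous, hence measurable, field.

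\textbf{Step 3 (naturality).} Morphisms $\upsilon\to\upsilon'$ in $\Rep(C_0(\R))$ are decomposable multiplication-type operators over the fibres, and $\mathfrak{M}(F)=F\otimes\id$. All the unitaries above act as the identity in the multiplicity direction and depend only on the charge $p(t)$. So they commute with $F\otimes\id$, which gives naturality in each variable.

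The main obstacle is Step 2: making rigorous that $\boxtimes$, together with the isomorphism $\Psi$, commutes with the direct integrals and that the resulting field of unitaries is measurable. I would handle this by reducing to $L^\infty(T)$-equivariance. The von Neumann algebra $L^\infty(T,\upsilon)$ acts on $\mathfrak{M}(\upsilon)$ by $\Heis$-endomorphisms, and $\boxtimes$ is functorial. Hence the unitary built from the explicit intertwiners commutes with $L^\infty(T)\otimes L^\infty(S)$ and is determined by its restriction to $\mathrm{C}$-valued test sections. Because the formulas are explicit Weyl-operator expressions, checking the relevant relation on the dense span of $\psi\otimes e^h$ suffices.
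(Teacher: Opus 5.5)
Your Step 1 matches the paper's treatment of irreducibles: the $\kappa$-unitaries, followed by $\Psi$ composed with a Weyl intertwiner. The problem is Step 2, and none of the routes you sketch closes it.

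\begin{itemize}
\item Your $\rho_\upsilon$, an $L^\infty(T,\upsilon)$-valued endomorphism of $\Heis_\infty\otimes L^\infty(T,\upsilon)$, is not an object of $\Loc_{I_0}(\Heis)$. So neither $\mathfrak{E}$ nor its tensorator $\Psi$ applies to it.
\item ``Connes fusion commutes with direct integrals'' does not follow from normality of $\boxtimes$. Normality gives you commuting normal actions of $L^\infty(T)$ and $L^\infty(S)$ on $\mathfrak{M}(\upsilon)\boxtimes\mathfrak{M}(\nu)$. It does not give a normal action of $L^\infty(T\times S,\upsilon\times\nu)$. Nor does it identify the almost-everywhere defined fibres with $\mathfrak{M}(\delta_{p(t)})\boxtimes\mathfrak{M}(\delta_{n(s)})$ as $\Heis$-representations.
\item With both charges carried by the same loop $l_1$, measurability of $(t,s)\mapsto[\delta_{p(t)},\delta_{n(s)}]$ is not even well posed. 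The space $\mathfrak{E}(\rho_{xl_1})(S^1_-)$ is defined through $\Hom_{\Heis(S^1_+)}(H_0,-)$, and $\mathfrak{E}(\rho_{yl_1})(S^1_+)$ through $\Hom_{\Heis(S^1_-)}(H_0,-)$. The support of $l_1$ meets $S^1_-$ or $S^1_+$, so at least one of these complementary actions depends on the charge. Hence the fusion Hilbert spaces on which $\Psi$ acts vary with $(x,y)$. You would first have to put a measurable structure on this family; strong continuity of $t\mapsto W(p(t)L_1)$ only controls the $\kappa$-factors.
\item Your fallback of checking on the span of $\psi\otimes e^h$ is unavailable. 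Those vectors live in $\mathfrak{M}(\upsilon)$, not in the fusion. For general $I$, the action of $\Heis(I)$ on the fusion is defined by path continuation, not by an explicit Weyl formula.
\end{itemize}

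The missing idea is to carry the charges of the two factors by different loops: $g_1$ supported in $I_1\subset S^1_-$, $g_2$ in $I_2\subset S^1_+$, and $g_0$ for the target, all inside one $J\in\Jcal_\mathrm{p}$. Then $\Heis(S^1_+)$ acts on $\mathfrak{M}_{g_1}(p,\upsilon)$ as the vacuum with multiplicity $L^2(T,\upsilon)$, and $\Heis(S^1_-)$ acts likewise on $\mathfrak{M}_{g_2}(n,\nu)$. So $\mathfrak{M}_{g_1}(p,\upsilon)(S^1_-)\boxtimes\mathfrak{M}_{g_2}(n,\nu)(S^1_+)\cong L^2(T\times S,\upsilon\times\nu,e^{\overline{L\R/\R}})$ via the unitor, canonically and independently of $p,n$.

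In this fixed model, the irreducible tensorator, extended linearly to countable-image $p,n$, is $\psi\mapsto W(p(t)G_{01}+n(s)G_{02})\psi(t,s)$. The paper takes this formula as the definition for all $(p,n)$. It proves the intertwining property by approximating $p,n$ in essential-sup norm by countable-image functions, using strong continuity in $(p,n)$ of the actions of $W(f)$ on both sides. No theory of direct integrals of Connes fusions is needed.

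Your Step 3 then works in this model, with one correction. Morphisms in $\Rep(C_0(\R))$ are not multiplication operators on $T$; they can mix points with equal $p$-value. What you need, and what holds by Borel functional calculus, is that $F\otimes G\otimes\id$ intertwines multiplication by $\Phi(p_1(t),n_1(s))$ with multiplication by $\Phi(p_2(t),n_2(s))$ for bounded Borel $\Phi$.
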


To produce the tensor structure $[-,-]$ on $\mathfrak{M}$, we make use of the category of DHR endomorphisms of $\Heis$. We will produce $[-,-]$ in the following steps. First of all, the construction will depend on a choice of suitably placed intervals $I_0, I_1, I_2\in \Jcal$ and loops $g_0\in L_{I_0}\R$, $g_1\in L_{I_1}\R$ and $g_2\in L_{I_2}\R$ of charge one, meaning that $\int g_i = 1$ for $i = 0,1,2$. Each such loop $g_i$ produces another equivalence $\mathfrak{M}_{g_i}: \Rep(C_0(\R))\to \Rep(\Heis)$ naturally isomorphic to $\mathfrak{M}$, see Definition \ref{def: FunctorMg}. Let $\Rep^{\text{irr}}(C_0(\R))$ be the full subcategory of $\Rep(C_0(\R))$ on the irreducible representations. Recall that these are given by $\delta_x$ for $x\in \R$. The advantage of using $\mathfrak{M}_{g_i}$ as opposed to $\mathfrak{M}$ is that the restriction of $\mathfrak{M}_{g_i}$ to $\Rep^{\text{irr}}(C_0(\R))$ factors through the tensor equivalence $(\mathfrak{E},\Psi):\Loc_{I_i}(\Heis)\xrightarrow{\cong}\Rep(\Heis)$, see Equation \eqref{eq: diagram FME}. This will allow us to produce a unitary natural isomorphism (not necessarily compatible with associators)
\begin{equation}\label{eq: TensoratorMIrreps}\begin{tikzcd}
	{\Rep^{\text{irr}}(C_0(\R))\times \Rep^{\text{irr}}(C_0(\R))} && {\Rep(\Heis)\times\Rep(\Heis)} \\
	{\Rep^{\text{irr}}(C_0(\R))} && {\Rep(\Heis)}.
	\arrow["{\mathfrak{M}_{g_1}\times\mathfrak{M}_{g_2}}", from=1-1, to=1-3]
	\arrow["\otimes"', from=1-1, to=2-1]
	\arrow["{^{\text{irr}}[-,-]_{g_1,g_2}^{g_0}}"{description}, Rightarrow, from=1-3, to=2-1]
	\arrow["\boxtimes", from=1-3, to=2-3]
	\arrow["{\mathfrak{M}_{g_0}}"', from=2-1, to=2-3]
\end{tikzcd}\end{equation}
Denoting by $\Rep^{\text{ss}}(C_0(\R))$ the full subcategory of $\Rep(C_0(\R))$ on objects given as countable direct sums of objects in $\Rep^{\text{irr}}(C_0(\R))$, possibly with infinite multiplicity, we will extend the tensorator above directly to $\Rep^{\text{ss}}(C_0(\R))$ by linearity with respect to direct sums and multiplicity spaces. We will finally extend by continuity the tensorator from $\Rep^{\text{ss}}(C_0(\R))$ to the whole category $\Rep(C_0(\R))$, see Proposition \ref{prop: ExtendTensoratorsByContinuity}. Using the natural isomorphisms $\mathfrak{M}\cong \mathfrak{M}_{g_i}$, we transport this structure to a tensorator (not necessarily compatible with associators) for $\mathfrak{M}.$

\begin{definition}\label{def: FunctorMg}
    Let $I\in \Jcal$ be an interval and $g\in L_{I}\R$ be a loop of charge $\int g= 1$. We~write 
    \[
\mathfrak{M}_g: \Rep(C_0(\R))\to \Rep(\Heis)  
\]
for the $\mathrm{W}^*$-functor that sends $\int^\oplus_{t\in T}\delta_{p(t)}\mathrm{d}\upsilon(t)\in \Rep(C_0(\R))$ to the Hilbert space $L^2(T,\upsilon)\otimes e^{\overline{L\R/\R}}$ with action
\[
W(f)\cdot (\psi\otimes e^h) = e^{-\frac{1}{2}||f||^2-\langle f,h\rangle }\cdot \big(t\mapsto e^{ip(t)\int g f}\psi(t)\big)\otimes e^{f+h},
\]
for $J\in \Jcal$, $f\in L_J\R$ and $\psi\otimes e^h\in L^2(T,\upsilon)\otimes e^{\overline{L\R/\R}}$. At the level of morphisms, we define $\mathfrak{M}_g(F) = F\otimes\id_{e^{\overline{L\R/\R}}}$ for any morphism $F$ in $\Rep(C_0(\R))$.
\end{definition}

The tensor product $L^2(T,\upsilon)\otimes e^{\overline{L\R/\R}}$ is canonically isomorphic to the Hilbert space $L^2(T, \upsilon, e^{\overline{L\R/\R}})$ of $e^{\overline{L\R/\R}}$-valued square-integrable functions on $(T,\upsilon)$. Note that $g-1\in L\R$ is a loop of charge 0 by hypothesis, and hence there exists a loop $G\in L\R$ whose derivative is $G' = g-1$. Fix such a loop $G\in L\R$ with $G' = g-1$.

\begin{lemma}\label{Lemm: MgsFormClique}
For every $I\in\Jcal$ and $g\in L_I\R$ of charge 1, there is a canonical unitary natural isomorphism $\kappa_g: \mathfrak{M}\cong \mathfrak{M}_{g}$ given on $\int^\oplus_{t\in T}\delta_{p(t)}\mathrm{d}\upsilon(t)\in\Rep(C_0(\R))$ by
\[
\begin{array}{cccc}
\kappa_g(p,\upsilon):& L^2(T,\upsilon, e^{\overline{L\R}/\R})&\to &L^2(T,\upsilon, e^{\overline{L\R}/\R})\\
& \psi&\mapsto &\Big(t\mapsto W(p(t)\cdot G)(\psi(t))\Big) .
\end{array}
\]

\end{lemma}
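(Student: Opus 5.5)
The plan is to verify the claim fiberwise over $T$ and then assemble the result using measurability. Three things need checking: that $\kappa_g(p,\upsilon)$ is a well-defined unitary; that it intertwines the $\Heis$-actions of $\mathfrak{M}(p,\upsilon)$ and $\mathfrak{M}_g(p,\upsilon)$; and that it is natural in morphisms of $\Rep(C_0(\R))$.

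For well-definedness and unitarity, note that $t\mapsto W(p(t)G)$ is a family of unitaries on $e^{\overline{L\R/\R}}$. It is strongly continuous in $t$, because $p$ is continuous and $s\mapsto W(sG)$ is a strongly continuous one-parameter unitary group. This uses the Weyl relation $W(sG)W(s'G)=e^{-iss'\omega(G,G)}W((s+s')G)$ together with $\omega(G,G)=\frac12\int GG'=0$, since $GG'$ is a derivative of the periodic function $G^2/2$. The family is therefore a measurable, essentially bounded field of unitaries. Its pointwise application defines a unitary on $L^2(T,\upsilon,e^{\overline{L\R/\R}})$, whose inverse is $t\mapsto W(-p(t)G)$.

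The intertwining property is the key computation, and it is the pointwise version of Remark \ref{rk: IntertwineActionsSameCharge}. Fix $x=p(t)$, $J\in\Jcal$ and $f\in L_J\R$. On the fiber over $t$, $\mathfrak{M}$ acts by $W(f)$ twisted by $e^{ix\int f}$, which is $\pi_0\circ\rho_{x\cdot 1}$ (the constant loop of value $x$). Likewise $\mathfrak{M}_g$ acts by $\pi_0\circ\rho_{xg}$, twisted by $e^{ix\int gf}$. Both loops have charge $x$, and $xg-x\cdot1$ has primitive $xG$. The computation in the proof of Lemma \ref{lemm: IsInner} gives
\[
W(xG)W(f)W(xG)^*=e^{i\int x(g-1)f}W(f).
\]
Hence
\[
W(xG)\,e^{ix\int f}W(f)\,W(xG)^*=e^{ix\int gf}W(f),
\]
so $\kappa_g$ intertwines the two actions on the generators $W(f)$. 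Both actions are normal on each $\Heis(J)$, since they are representations of $\Heis$ by Theorem \ref{thm: RepHeisIsRepR} and its evident analogue for $\mathfrak{M}_g$. The intertwining relation therefore extends by ultraweak density from $\langle W(f)\rangle_{f\in L_J\R}$ to all of $\Heis(J)$. A subtle point is that $\mathfrak{M}_g$ really is a representation of $\Heis$; I would deduce this from the same intertwining, since a unitary conjugate of a representation is again one.

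Naturality should be the easiest step, and it also requires knowing the morphisms concretely. A morphism $F:(p,\upsilon)\to(n,\nu)$ in $\Rep(C_0(\R))$ intertwines the $C_0(\R)$-actions, so it commutes with multiplication by any bounded Borel function of the position. In particular it intertwines the decomposable operators obtained by the functional calculus of the fibered Borel family $x\mapsto W(xG)$. Concretely, $\kappa_g(p,\upsilon)=\Phi(M_p)$, where $M_p$ is the multiplication operator and $\Phi(x)=W(xG)$ acts on the second tensor factor. Then $(F\otimes\id)\Phi(M_p)=\Phi(M_n)(F\otimes\id)$, because $FM_p=M_nF$ holds at the level of spectral measures. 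I expect this to be the main technical point. The cleanest route is to note that $\kappa_g(p,\upsilon)=\int^\oplus_\R W(xG)\,dE(x)$, where $E$ is the projection-valued measure of the $C_0(\R)$-representation, and that $F\otimes\id$ intertwines the projection-valued measures $E_p\otimes\id$ and $E_n\otimes\id$.

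Canonicity then depends only on $g$. A different primitive $G$ differs by a constant $c$, and $W(xc)=1$ on $e^{\overline{L\R/\R}}$, so the choice of $G$ does not matter.
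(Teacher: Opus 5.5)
Your proof is correct and takes essentially the same approach as the paper. The paper simply invokes the conjugation computation from the proof of Lemma \ref{lemm: IsInner} fiberwise (with $l=p(t)(g-1)$ and primitive $p(t)G$) and declares naturality clear. Your discussion of unitarity, the normal extension, naturality via the spectral measure, and independence of the choice of $G$ fills in those omitted details; the naturality step can be made concrete with the same $\mathfrak{X}_\varepsilon$ step-function approximation used in Proposition \ref{prop: ExtendTensoratorsByContinuity}.
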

\begin{proof}
By the proof of Lemma \ref{lemm: IsInner}, $\kappa_g(p,\upsilon)$ is an isomorphism $\mathfrak{M}(p,\upsilon)\cong \mathfrak{M}_{g}(p,\upsilon).$  Naturality is clear.

\end{proof}

We now restrict our attention to the full subcategory $\Rep^{\text{irr}}(C_0(\R))\subset \Rep(C_0(\R))$ on the irreducible representations. Recall that, equipping $\Rep(C_0(\R))$ with the tensor structure of $\Hilb\,\R$, the fusion rules of $\Rep^{\text{irr}}(C_0(\R))$ are given by $\delta_x\otimes \delta_y = \delta_{x+y}$ for $x,y\in \R$. Let $I_0,I_1,I_2\in \Jcal$ be intervals such that they can be covered by an interval in $J\in \Jcal_{\mathrm{p}}$, and pick $g_0\in L_{I_0}\R$, $g_1\in L_{I_1}\R$ and $g_2\in L_{I_2}\R$ all of charge 1. We will provide a unitary natural isomorphism $^{\text{irr}}[-,-]_{g_1,g_2}^{g_0}$ filling 
\[\begin{tikzcd}
{\Rep^{\text{irr}}(C_0(\R))\times \Rep^{\text{irr}}(C_0(\R))} && {\Rep(\Heis)\times \Rep(\Heis)} \\
{\Rep^{\text{irr}}(C_0(\R))} & {} & {\Rep(\Heis)}.
\arrow["{\mathfrak{M}_{g_1}\times \mathfrak{M}_{g_2}}", from=1-1, to=1-3]
\arrow["\otimes", from=1-1, to=2-1]
\arrow["{^{\text{irr}}[-,-]_{g_1, g_2}^{g_0}}"{description}, Rightarrow, from=1-3, to=2-1]
\arrow["\boxtimes", from=1-3, to=2-3]
\arrow["{\mathfrak{M}_{g_0}}"', from=2-1, to=2-3]
\end{tikzcd}\]

Recall from Section \ref{sec: CrossedBalancedRepGA} and \cite[Thm. 4.9]{GcrossedbraidedRep} applied to the trivial group $G$ that there is a tensor equivalence $(\mathfrak{E}, \Psi):\Loc_{J}(\Heis)\to \Rep(\Heis)$ given by sending an endomorphism $\rho$ to the representation $(e^{\overline{L\R/\R}}, \pi_0\circ\rho)$, and a morphism $s\in \Heis(J)$ to $\pi_0(s)$. For $g\in\{g_0,g_1,g_2\}$, the functor $\mathfrak{M}_{g}: \Rep^{\text{irr}}(C_0(\R))\to \Rep(\Heis)$ factors through the equivalence $\mathfrak{E}:\Loc_{J}(\Heis)\to \Rep(\Heis)$. Indeed, define a functor $\mathfrak{F}_g: \Rep^{\text{irr}}(C_0(\R))\to \Loc_{J}(\Heis)$ by sending $\delta_x\in\Rep^{\text{irr}}(C_0(\R))$ to the endomorphism $\rho_{xg}: W(f)\mapsto e^{ix\int fg}W(f)$ for $I\in \Jcal_\mathrm{p}$, and $f\in L_I\R$ as defined in Section \ref{Sec: ConstructingIrreps}. At the level of morphisms, we send $z\cdot\id_{\delta_x}\in \Hom_{\Rep^{\text{irr}}(C_0(\R))}(\delta_x,\delta_x)$ to $z \cdot 1\in \Heis(J)$. The triangle 
\begin{equation}\label{eq: diagram FME}\begin{tikzcd}
	& {\Loc_{J}(\Heis)} & \\
	{\Rep^{\text{irr}}(C_0(\R))} && {\Rep(\Heis)}
	\arrow["{\mathfrak{E}}", from=1-2, to=2-3]
	\arrow["{\mathfrak{F}_g}", from=2-1, to=1-2]
	\arrow["{\mathfrak{M}_g}", from=2-1, to=2-3]
\end{tikzcd}\end{equation}
commutes strictly. Indeed, given $\delta_x\in \Rep^{\text{irr}}(C_0(\R))$, the $\Heis$-representation $\mathfrak{M}_g(\delta_x)$ consists of the Hilbert space $e^{\overline{L\R/\R}}$ where the action of $W(f)$ on $e^h\in e^{\overline{L\R/\R}}$ is given by $e^{ix\int gf}\cdot W(f)(e^h)$ for all $I\in\Jcal$, $f\in L_I\R$. This is exactly the representation $\mathfrak{E}\circ \mathfrak{F}_g(\delta_x)$. Let $G_{01}\in L_{J}\R$ and $G_{02}\in L_{J}\R$ be respectively primitives of $g_0-g_1$ and $g_0-g_2$. Note that, in $\Loc_{J}(\Heis)$, we have a unitary isomorphism
\[
\mathfrak{F}_{g_1}(\delta_x)\otimes \mathfrak{F}_{g_2}(\delta_y) = \rho_{xg_1}\circ \rho_{yg_2} = \rho_{xg_1+yg_2} \xrightarrow{W(xG_{01} + yG_{02})}\rho_{(x+y)g_0} =\mathfrak{F}_{g_0}(\delta_{x+y})
\]
for $\delta_x,\delta_y\in \Rep^{\text{irr}}(C_0(\R))$, by Remark \ref{rk: IntertwineActionsSameCharge}. Therefore, we can define the unitary isomorphism
\begin{align*}
^\text{irr}[x,y]_{g_1,g_2}^{g_0}:\mathfrak{M}_{g_1}(\delta_x)\boxtimes \mathfrak{M}_{g_2}(\delta_y) = \mathfrak{E}(\rho_{xg_1})\boxtimes\mathfrak{E}(\rho_{yg_2})&\xrightarrow{\Psi_{\rho_{xg_1}, \rho_{yg_2}}}\mathfrak{E}(\rho_{xg_1+yg_2}) \\&\xrightarrow{W(xG_{01}+yG_{02})}\mathfrak{E}(\rho_{(x+y)g_0})= \mathfrak{M}_{g_0}(\delta_{x+y}),
\end{align*}
where $\Psi$ is the tensorator of the equivalence $\mathfrak{E}:\Loc_{J}(\Heis)\to \Rep(\Heis)$. These unitaries are natural in $\delta_x$ and $\delta_y$, and provide the natural unitary isomorphism $^\text{irr}[-,-]_{g_1,g_2}^{g_0}(\delta_x,\delta_y):=\, ^\text{irr}[x,y]_{g_1,g_2}^{g_0}$.

Recall that we write $\Rep^{\text{ss}}(C_0(\R))$ for the full subcategory of $\Rep(C_0(\R))$ on objects which are countable direct sums of irreducible representations of $C_0(\R)$, possibly with infinite multiplicity. By extending the unitary natural isomorphism $^{\text{irr}}[-,-]_{g_1,g_2}^{g_0}$ from $\Rep^{\text{irr}}(C_0(\R))$ to $\Rep^{\text{ss}}(C_0(\R))$ linearly with respect to direct sums and multiplicity spaces, we obtain a unitary natural isomorphism $^{\text{ss}}[-,-]_{g_1,g_2}^{g_0}$ filling the diagram
\[\begin{tikzcd}
{\Rep^{\text{ss}}(C_0(\R))\times \Rep^{\text{ss}}(C_0(\R))} && {\Rep(\Heis)\times \Rep(\Heis)} \\
{\Rep^{\text{ss}}(C_0(\R))} & {} & {\Rep(\Heis)}.
\arrow["{\mathfrak{M}_{g_1}\times \mathfrak{M}_{g_2}}", from=1-1, to=1-3]
\arrow["\otimes", from=1-1, to=2-1]
\arrow["{^{\text{ss}}[-,-]_{g_1, g_2}^{g_0}}"{description}, Rightarrow, from=1-3, to=2-1]
\arrow["\boxtimes", from=1-3, to=2-3]
\arrow["{\mathfrak{M}_{g_0}}"', from=2-1, to=2-3]
\end{tikzcd}\]

For objects $\int^\oplus_{t\in T}\delta_{p(t)}\mathrm{d}\upsilon(t)$ and $\int^\oplus_{s\in S}\delta_{n(s)}\mathrm{d}\nu(s)$ of $\Rep(C_0(\R))$, we write ${^{\text{ss}}[(p,\upsilon),(n, \nu)]_{g_1, g_2}^{g_0}}:= {^{\text{ss}}[-,-]_{g_1, g_2}^{g_0}}((p,\upsilon),(n,\nu))$. The final step is to extend the unitary natural isomorphism $^{\text{ss}}[-,-]_{g_1, g_2}^{g_0}$ by continuity to all $\Rep(C_0(\R))$.

Fix objects $\int^\oplus_{t\in T}\delta_{p(t)}\mathrm{d}\upsilon(t),\,\int^\oplus_{s\in S}\delta_{n(s)}\mathrm{d}\nu(s)\in\Rep(C_0(\R))$ and assume that we have taken the intervals $I_1,I_2\in\Jcal$ so that $I_1\subset S^1_-$ and $I_2\subset S^1_+$. We still require that $I_0\cup I_1\cup I_2\subset J$ for some $J\in \Jcal_\mathrm{p}$. In what follows, we will produce a unitary isomorphism
\[
\mathfrak{M}_{g_1}(p,\upsilon)\boxtimes \mathfrak{M}_{g_2}(n,\nu)\cong \mathfrak{M}_{g_0}(p+n,\upsilon\times\nu).
\]
Such an isomorphism is, by definition, a unitary between the underlying Hilbert spaces of $\mathfrak{M}_{g_1}(p,\upsilon)\boxtimes \mathfrak{M}_{g_2}(n,\nu)$ and $\mathfrak{M}_{g_0}(p+n, \upsilon\times\nu)$. 

We write $\text{Forget}: \Rep(\Heis)\to \Hilb$ for the functor that sends a representation to its underlying Hilbert space. By definition, the underlying Hilbert space of $\mathfrak{M}_{g_0}(p+n, \upsilon\times\nu)$ is canonically unitarily equivalent to 
\begin{equation}\label{eq: Forgetg0}\text{Forget}(\mathfrak{M}_{g_0}(p+n, \upsilon\times\nu)) = L^2(T\times S, \upsilon\times\nu)\otimes e^{\overline{L\R/\R}}\cong L^2(T\times S, \upsilon\times\nu, e^{\overline{L\R/\R}}).
\end{equation}
Recall from Equation \eqref{eq: ConnesFusionRepA} that $\text{Forget}(\mathfrak{M}_{g_1}(p, \upsilon)\boxtimes \mathfrak{M}_{g_2}(n, \nu)) := \text{Forget}\big(\mathfrak{M}_{g_1}(p, \upsilon)(S^1_-)\boxtimes \mathfrak{M}_{g_2}(n, \nu)(S^1_+)\big)$. Since we have taken $g_1$ to be supported in $I_1\subset S^1_-$, the action of $\Heis(S^1_+)$ on $\mathfrak{M}_{g_1}(p, \upsilon)$ is exactly the action of $\Heis(S^1_+)$ on the representation $L^2(T,\upsilon)\cdot H_0$, and therefore we have that $\mathfrak{M}_{g_1}(p, \upsilon)(S^1_-) = (L^2(T, \upsilon)\cdot H_0)(S^1_-)$ is independent of $p$, where ${L^2(T, \upsilon)}\cdot H_0\in \Rep(\Heis)$ denotes the vacuum representation $H_0$ with multiplicity $L^2(T, \upsilon)$. Similarly $\mathfrak{M}_{g_2}(n, \nu)(S^1_+) = (L^2(S, \nu)\cdot H_0)(S^1_+)$. Therefore, we have a canonical unitary isomorphism
\begin{align}\label{eq: CompotisionUnitorAndSo}
 \nonumber\text{Forget}(\mathfrak{M}_{g_1}(p, \upsilon)&(S^1_-)\boxtimes \mathfrak{M}_{g_2}(n, \nu)(S^1_+)) \\&=  \nonumber\text{Forget}\big((L^2(T, \upsilon)\cdot H_0)(S^1_-)\boxtimes (L^2(S, \nu)\cdot H_0)(S^1_+)\big)\\&\cong L^2(T,\upsilon)\otimes L^2(S,\nu)\otimes \text{Forget}(H_0\boxtimes H_0)  \\& \nonumber\xrightarrow[\cong]{\id\otimes \text{unitor}}  L^2(T,\upsilon)\otimes L^2(S,\nu)\otimes  \text{Forget}(H_0)\\ & \nonumber\cong L^2(T\times S, \upsilon\times\nu, e^{\overline{L\R/\R}})
\end{align}
independent of the maps $p$ and $n$. Let us denote by $U\big(\mathfrak{M}_{g_1}(p, \upsilon)\boxtimes \mathfrak{M}_{g_2}(n, \nu), \mathfrak{M}_{g_0}(p+n, \upsilon\times\nu)\big)$ the topological space of unitary maps between the Hilbert spaces $\text{Forget}(\mathfrak{M}_{g_1}(p, \upsilon)\boxtimes \mathfrak{M}_{g_2}(n, \nu))$ and $\text{Forget}(\mathfrak{M}_{g_0}(p+n, \upsilon\times\nu))$. The unitaries in Equations \eqref{eq: Forgetg0} and \eqref{eq: CompotisionUnitorAndSo} produce a canonical homeomorphism (with respect to the strong operator topologies)
\begin{equation}\label{eq: UnderlyingHilbSpaces}
\mathfrak{U}:U\big(\mathfrak{M}_{g_1}(p,\upsilon)\boxtimes \mathfrak{M}_{g_2}(n,\nu), \mathfrak{M}_{g_0}(p+n,\upsilon\times\nu)\big)\cong U\big(L^2(T\times S,\upsilon\times\nu, e^{\overline{L\R/\R}})\big)
\end{equation}
independent of the maps $p$ and $n$.

In order to extend the tensorator $^{\text{ss}}[-,-]^{g_0}_{g_1,g_2}$ from $\Rep^{\text{ss}}(C_0(\R))$ to all of $\Rep(C_0(\R))$, we need the following terminology. Let $T$ be a locally compact Hausdorff space and $\upsilon$ a measure on $T$. A \emph{partition} of $T$ is a topological space of the form $\bigsqcup_{\alpha\in A} T_\alpha$, where $A$ is a finite or countable set and $\{T_\alpha\}_{\alpha\in A}$ is a family of locally compact, disjoint subsets of $T$ such that $T = \bigsqcup_{\alpha\in A} T_\alpha$ as sets. Note that, since $T$ is locally compact and Hausdorff, the subsets $T_\alpha\subset T$ are locally compact if and only if they are locally closed. Given such a partition $\bigsqcup_{\alpha\in A} T_\alpha$, we write $\upsilon_\alpha$ for the restriction of $\upsilon$ to ${T_\alpha}\subset T$. We also set $C_{\sqcup}(\bigsqcup_{\alpha\in A}T_\alpha) = \prod_{\alpha\in A}C({T_\alpha}, \R)$, where $C({T_\alpha}, \R)$ is the space of continuous functions from ${T_\alpha}\subset T$ into $\R$. We write
\[
C_\sqcup(T):=\bigcup_{\bigsqcup_{\alpha\in A} T_\alpha} C_\sqcup\big(\bigsqcup_{\alpha\in A}T_\alpha\big),
\]
where the union runs over all partitions of $T$. There is an assignment from $C_\sqcup(T)$ to the class of objects of $\Rep(C_0(\R))$ given by
\begin{equation}\label{eq: PartitionToObject}
\begin{array}{ccc}
    C_\sqcup\big(\bigsqcup_{\alpha\in A}T_\alpha\big) &\to & \Rep(C_0(\R))  \\ p&\mapsto & \bigoplus_{\alpha\in A}\int^\oplus_{t\in{T_\alpha}}\delta_{p(t)}\mathrm{d}\upsilon_\alpha(t).
\end{array}
\end{equation}
We also denote the right-hand side by $\int_{t\in T}^\oplus \delta_{p(t)}\mathrm{d}\upsilon(t)$, and note that its underlying Hilbert space is $\bigoplus\limits_{\alpha\in A}L^2(T_\alpha, \upsilon_\alpha) = L^2(T, \upsilon)$. We write $$C_\sqcup(T)_0\subset C_\sqcup(T)$$ for the subset of maps whose image is at most a countable subset of $\R$. Elements of $C_\sqcup(T)_0$ are assigned to representations in $\Rep^{\text{ss}}(C_0(\R))$ by the map in Equation \eqref{eq: PartitionToObject}. In addition, the essential image of the restriction of the map in Equation \eqref{eq: PartitionToObject} to $C_\sqcup(T)_0$ is equivalent to $\Rep^{\text{ss}}(C_0(\R))$. Let $S$ be another locally compact Hausdorff space with a Borel measure $\nu$.

\begin{lemma}\label{lemm: TensoratorInSSSimplified}
Fix $p\in C_\sqcup(T)_0$ and $n\in C_\sqcup(S)_0$. Then, under the homeomorphism \[\mathfrak{U}: U\big(\mathfrak{M}_{g_1}(p,\upsilon)\boxtimes \mathfrak{M}_{g_2}(n,\nu), \mathfrak{M}_{g_0}(p+n,\upsilon\times\nu)\big)\cong U\big(L^2(T\times S,\upsilon\times\nu, e^{\overline{L\R/\R}})\big),\]
the unitary isomorphism $^{\text{ss}}[(p, \upsilon),(n, \nu)]_{g_1,g_2}^{g_0}\in U\big(\mathfrak{M}_{g_1}(p,\upsilon)\boxtimes \mathfrak{M}_{g_2}(n,\nu), \mathfrak{M}_{g_0}(p+n,\upsilon\times\nu)\big)$ is sent to the~unitary
\begin{align}\label{eq: TensoratorsInUForm}
\psi\mapsto \Big((t,s)\mapsto  W\big(p(t)\cdot G_{01} + n(s)\cdot G_{02}\big)\big(\psi(t,s)\big)\Big)
\end{align}
in $U(L^2(T\times S, \upsilon\times\nu, e^{\overline{L\R/\R}}))$.
\end{lemma}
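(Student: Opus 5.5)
The plan is to reduce to the irreducible case and then compute the irreducible tensorator $^{\text{irr}}[x,y]^{g_0}_{g_1,g_2}$ explicitly under $\mathfrak{U}$. Since $p\in C_\sqcup(T)_0$ and $n\in C_\sqcup(S)_0$ take countably many values, we can refine the partitions so that $p$ is constant equal to $x_\alpha$ on each $T_\alpha$ and $n$ is constant equal to $y_\beta$ on each $S_\beta$. Then $(p,\upsilon)\cong\bigoplus_\alpha L^2(T_\alpha,\upsilon_\alpha)\cdot\delta_{x_\alpha}$, and similarly for $(n,\nu)$. By construction, $^{\text{ss}}[-,-]$ is the linear extension of $^{\text{irr}}[-,-]$ over direct sums and multiplicity spaces. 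The homeomorphism $\mathfrak{U}$ is also built from the unitaries \eqref{eq: Forgetg0} and \eqref{eq: CompotisionUnitorAndSo}, which are compatible with these decompositions: Connes fusion and the unitor are linear in multiplicity spaces, and $L^2(T\times S)=\bigoplus_{\alpha,\beta}L^2(T_\alpha)\otimes L^2(S_\beta)$. So it suffices to show that, for $x,y\in\R$, the map $^{\text{irr}}[x,y]^{g_0}_{g_1,g_2}$ corresponds under $\mathfrak{U}$ (with $T=S=$ a point) to the unitary $W(xG_{01}+yG_{02})$ on $e^{\overline{L\R/\R}}$. The formula \eqref{eq: TensoratorsInUForm} then follows blockwise, since $p(t)=x_\alpha$ and $n(s)=y_\beta$ on each block.

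For the irreducible case, recall that $^{\text{irr}}[x,y]=W(xG_{01}+yG_{02})\circ\Psi_{\rho_{xg_1},\rho_{yg_2}}$. We therefore need $\mathfrak{U}(\Psi_{\rho_{xg_1},\rho_{yg_2}})=\id$. In other words, the tensorator $\Psi$ of $\mathfrak{E}$ should coincide, for endomorphisms localized in $I_1\subset S^1_-$ and $I_2\subset S^1_+$, with the canonical map
\[
H_0(S^1_-)\boxtimes H_0(S^1_+)\xrightarrow{\text{unitor}} H_0.
\]
The key steps are the following. First, recall the explicit form of $\Psi$ from \cite[Thm.~4.9]{GcrossedbraidedRep}. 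For $\rho$ localized away from $S^1_-$'s complement, and $\sigma$ similarly, $\Psi_{\rho,\sigma}$ is given on bounded vectors by $a\Omega\otimes b\Omega\mapsto \rho(b)a\Omega$ or a similar formula; one should check that in the present geometric situation it reduces to $a\Omega\otimes b\Omega\mapsto ab\Omega$. Second, check this reduction. Since $\rho_{xg_1}$ acts trivially on $\Heis(S^1_+)$ (because $g_1$ is supported in $S^1_-$), the bounded vectors $\mathfrak{E}(\rho_{xg_1})(S^1_-)$ coincide with $H_0(S^1_-)$ with the same operators $Z(\xi,S^1_-)$. The same holds for $\rho_{yg_2}$ on $S^1_+$. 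Consequently, the formula for $\Psi$ agrees with the unitor $H_0\boxtimes H_0\to H_0$, $\xi\otimes\eta\mapsto Z(\xi,S^1_-)Z(\eta,S^1_+)\Omega$, up to the identifications already used in \eqref{eq: CompotisionUnitorAndSo}. With this in hand, $\mathfrak{U}$ sends $^{\text{irr}}[x,y]$ to $W(xG_{01}+yG_{02})$ acting on $e^{\overline{L\R/\R}}$, as required.

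The main obstacle is this second step: matching the tensorator $\Psi$ of $\mathfrak{E}$, as defined in \cite{GcrossedbraidedRep} (including its convention choices, see Remark~\ref{rk: bop}), with the vacuum unitor when the endomorphisms are localized in the respective hemispheres. I would handle it by unwinding the definition of $\Psi$ on $I$-bounded vectors, using the strict equality $\rho_{xg_1}|_{\Heis(S^1_+)}=\id$, and invoking the uniqueness of $Z(\xi,I)$ given by the Reeh--Schlieder theorem. A secondary, routine check is that the linear extension to infinite multiplicities is compatible with the unitaries in \eqref{eq: CompotisionUnitorAndSo}.
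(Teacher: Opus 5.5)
Your proposal is correct and follows essentially the same route as the paper. You refine the partitions so that $p$ and $n$ are constant on each piece, use that both ${}^{\text{ss}}[-,-]$ and $\mathfrak{U}$ respect the resulting block decomposition and multiplicity spaces, and reduce to the irreducible case, where the key input is that the tensorator $\Psi$ of $\mathfrak{E}$ is the same Hilbert-space map as the vacuum unitor. The only difference is that the paper cites this last fact from \cite{GcrossedbraidedRep} and \cite{Gui21}, whereas you plan to re-derive it by unwinding the definition of $\Psi$.
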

\begin{proof}
Let us first assume that $T$ and $S$ are a point. Then, $\int^\oplus_{t\in T}\delta_{p(t)}\mathrm{d}\upsilon(t)$ and $\int^\oplus_{s\in S}\delta_{n(s)}\mathrm{d}\nu(s)$ are irreducible objects $\delta_x$ and $\delta_y$ respectively. We compute the composition
\[H_0\xleftarrow[\cong]{\text{unitor}} H_0\boxtimes H_0 = \mathfrak{E}(\rho_{xg_1})\boxtimes\mathfrak{E}(\rho_{yg_2})\xrightarrow{\Psi} \mathfrak{E}(\rho_{xg_1+yg_2})\xrightarrow{W(xG_{01}+yG_{02})}\mathfrak{E}(\rho_{(x+y)g_0}) = H_0.
\]
Since $\Psi$ is given by the same map of Hilbert spaces as the unitor $H_0\boxtimes H_0\to H_0$ (see \cite[Thm. 3.8 and Sec. 4]{GcrossedbraidedRep} or \cite[Sec. 2 and Rk. 6.7]{Gui21}) the claim follows when $\int^\oplus_{t\in T}\delta_{p(t)}\mathrm{d}\upsilon(t)$ and $\int^\oplus_{s\in S}\delta_{n(s)}\mathrm{d}\nu(s)$ are irreducible objects of $\Rep(C_0(\R))$. By linearly extending with respect to multiplicities, the claim also holds when $p:T\to \R$ and $n:S\to \R$ have image exactly one point in $\R$. Let us now generalize the statement to direct~sums.

Let $p$ be associated to a partition $T = \sqcup_{\alpha\in A} T_\alpha$ and $n$ be associated to a partition $S = \sqcup_{\beta\in B} S_\beta$. By taking finer partitions if necessary, we can assume that $p$ and $n$ are constant on each of the $T_\alpha$'s and $S_\beta$'s respectively, as $p$ and $n$ have at most countable image. Then, the statement holds for the pair $(p_\alpha,n_\beta)\in C_\sqcup(T)_0\times C_\sqcup(S)_0$ for all $\alpha\in A$ and $\beta\in B$. We need to show that the statement holds for the pair $(p,n)$.

By definition, the tensorator $^{\text{ss}}[(p,\upsilon),(n,\nu)]_{g_1,g_2}^{g_0}$ belongs to the subset
\begin{align*}
\prod\limits_{(\alpha,\beta)\in A\times B}U\big(\mathfrak{M}_{g_1}(p_\alpha,\upsilon_\alpha)&\boxtimes \mathfrak{M}_{g_2}(n_\beta,\nu_\beta), \mathfrak{M}_{g_0}(p_\alpha + n_\beta, \upsilon_\alpha\times\nu_\beta)\big)\\&\subset U\big(\mathfrak{M}_{g_1}(p, \upsilon)\boxtimes \mathfrak{M}_{g_2}(n, \nu), \mathfrak{M}_{g_0}(p+n, \upsilon\times\nu)\big),
\end{align*}
and $\mathfrak{U}$ restricts to a homeomorphism
\begin{align*}
\prod\limits_{(\alpha,\beta)\in A\times B}U\big(\mathfrak{M}_{g_1}(p_\alpha,\upsilon_\alpha)&\boxtimes \mathfrak{M}_{g_2}(n_\beta,\nu_\beta), \mathfrak{M}_{g_0}(p_\alpha + n_\beta, \upsilon_\alpha\times\nu_\beta)\big)\\&\cong \prod\limits_{(\alpha,\beta)\in A\times B} U\big(L^2(T_\alpha\times S_\beta, \upsilon_\alpha\times\nu_\beta, e^{\overline{L\R/\R}})\big)\subset U\big(L^2(T\times S, \upsilon\times\nu, e^{\overline{L\R/\R}})\big).
\end{align*}
In fact, $\mathfrak{U}$ preserves the grading given by $A\times B$, and the restriction \[U(\mathfrak{M}_{g_1}\big(p_\alpha, \upsilon_\alpha\big)\boxtimes \mathfrak{M}_{g_2}(n_\beta, \nu_\beta), \mathfrak{M}_{g_0}(p_\alpha + n_\beta, \upsilon_\alpha\times\nu_\beta))\cong U(L^2(T_\alpha\times S_\beta, \upsilon_\alpha\times\nu_\beta, e^{\overline{L\R/\R}}))\] is again given by $\mathfrak{U}$ in Equation \eqref{eq: UnderlyingHilbSpaces} for $\upsilon = \upsilon_\alpha$ and $\nu = \nu_\beta$. Hence, the claim follows.
\end{proof}

We now endow $C_\sqcup(T)\times C_\sqcup(S)$ with a topology and show that the formula from Lemma \ref{lemm: TensoratorInSSSimplified} extends by continuity to a dashed map
\[\begin{tikzcd}
	{C_\sqcup(T)\times C_\sqcup(S)} & {U\big(L^2(T\times S,\upsilon\times\nu, e^{\overline{L\R/\R}})\big)} \\
	{C_\sqcup(T)_0\times C_\sqcup(S)_0}
	\arrow[dashed, from=1-1, to=1-2]
	\arrow[hook, from=2-1, to=1-1]
	\arrow["{\text{Equation}\ \eqref{eq: TensoratorsInUForm}}"', from=2-1, to=1-2]
\end{tikzcd}\]
Given $p_1,p_2\in C_\sqcup(T)$, we define the extended pseudo-metric
\begin{equation}\label{eq: MetricContinuous}
\text{dist}(p_1,p_2) := ||p_1-p_2||_\infty = \text{ess}\sup\limits_{t\in T}|p_1(t)-p_2(t)|,
\end{equation}
where the essential supremum is taken with respect to the measure $\upsilon$ on $T$.
\begin{lemma}\label{lemm: dense}
With the extended pseudo-metric \eqref{eq: MetricContinuous}, the subspace $C_\sqcup(T)_0$ is dense in $C_\sqcup(T)$.
\end{lemma}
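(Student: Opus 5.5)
Given $p\in C_\sqcup(T)$ and $\varepsilon>0$, the plan is to build $p_\varepsilon\in C_\sqcup(T)_0$ with $\text{dist}(p,p_\varepsilon)\le\varepsilon$. The natural candidate is a discretization of the values of $p$ on a refined partition. Suppose $p$ is associated to the partition $T=\bigsqcup_{\alpha\in A}T_\alpha$, so that $p_\alpha:=p|_{T_\alpha}\in C(T_\alpha,\R)$. For each $k\in\Z$ set $Q_k:=[k\varepsilon,(k+1)\varepsilon)\subset\R$ and
\[
T_{\alpha,k}:=p_\alpha^{-1}(Q_k)\subset T_\alpha .
\]
Define $p_\varepsilon$ on $T_{\alpha,k}$ to be the constant $k\varepsilon$. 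Then $p_\varepsilon$ takes values in the countable set $\varepsilon\Z$, and $|p(t)-p_\varepsilon(t)|<\varepsilon$ for every $t\in T$. This pointwise bound gives $\text{ess}\sup_t|p(t)-p_\varepsilon(t)|\le\varepsilon$, regardless of the measure $\upsilon$.

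What remains is to check that $\{T_{\alpha,k}\}_{(\alpha,k)\in A\times\Z}$ is a legitimate partition. It is countable since $A$ is, and its members are pairwise disjoint and cover $T$. We also need each $T_{\alpha,k}$ to be locally closed, equivalently locally compact, and $p_\varepsilon$ to be continuous on each piece. Continuity is trivial because $p_\varepsilon$ is constant on each piece.

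Local closedness is the only genuine point to verify. $T_\alpha$ is locally closed in $T$, so it is open in its closure. The half-open interval $Q_k$ is the intersection of the open set $(-\infty,(k+1)\varepsilon)$ with the closed set $[k\varepsilon,\infty)$. Since $p_\alpha$ is continuous on $T_\alpha$, the preimage $p_\alpha^{-1}(Q_k)$ is the intersection of an open and a closed subset of $T_\alpha$, hence locally closed in $T_\alpha$. Locally closed subsets of locally closed subsets are locally closed in $T$, so $T_{\alpha,k}$ is locally closed in $T$. Because $T$ is locally compact Hausdorff, this makes $T_{\alpha,k}$ locally compact, as already noted in the paper before the definition of $C_\sqcup(T)$.

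Empty pieces can be discarded, or kept, since an empty space is trivially locally compact. Thus $p_\varepsilon\in C_\sqcup(\bigsqcup_{\alpha,k}T_{\alpha,k})\cap C_\sqcup(T)_0$. Letting $\varepsilon\to0$ gives density of $C_\sqcup(T)_0$ in $C_\sqcup(T)$ for the extended pseudo-metric \eqref{eq: MetricContinuous}.
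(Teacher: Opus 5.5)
Your proof is correct and takes essentially the same approach as the paper: you discretize the values of $p$ into half-open intervals of length $\varepsilon$, use that their preimages are locally closed (hence locally compact) to obtain a countable refined partition, and take the resulting countably-valued, piecewise-constant approximation. The one difference is that you carry out the refinement of an arbitrary partition $\bigsqcup_\alpha T_\alpha$ explicitly, including the check that a locally closed subset of a locally closed set is locally closed, whereas the paper simply reduces ``without loss of generality'' to $p$ continuous on all of $T$.
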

\begin{proof}
Let $p\in C_\sqcup(T)$ and $\varepsilon>0$. We may assume, without loss of generality, that $p$ is continuous on all of $T$. Consider the partition of $T$ given by the sets $$\{p^{-1}\Big(\big(n\varepsilon, (n+1)\varepsilon\big]\Big)\}_{n\in\mathbb{Z}},$$ and define $p': \bigsqcup\limits_{n\in \mathbb{Z}} T_n\to \R$ by sending $t\in T_n$ to $(n+1/2)\varepsilon$. Note that each $\big(n\varepsilon, (n+1)\varepsilon\big]$ is locally closed, hence its preimage under $p$ is locally closed. Since $T$ is locally compact and Hausdorff, $p^{-1}\Big(\big(n\varepsilon, (n+1)\varepsilon\big]\Big)$ is locally compact. Then, $p'\in C_{\sqcup}(T)_0$ and it holds by construction that $|p(t)-p'(t)|\leq\varepsilon/2$ for every $t\in T$. Hence, the claim follows.
\end{proof}

We define on the product $C_\sqcup(T)\times C_\sqcup(S)$ the extended pseudo-metric $$\text{dist}\big((p_1,n_1), (p_2,n_2)\big) = \max\{\text{dist}(p_1,p_2), \text{dist}(n_1,n_2)\},$$ where the metric on $C_\sqcup(S)$ is defined analogously to that in $C_\sqcup(T)$. Then, it holds that $C_\sqcup(T)_0\times C_\sqcup(S)_0\subset C_\sqcup(T)\times C_\sqcup(S)$ is dense.

Recall that, given loops $g_i$ and $g_j$ in $L\R$ of the same charge, we write $G_{ij}\in L\R$ for a primitive of $g_i-g_j$.

\begin{definition}\label{def: Family[-,-]g0g1g2}
Let $T$ and $S$ be locally compact Hausdorff spaces with Borel measures $\upsilon$ and $\nu$ respectively. Let $I_0,I_1,I_2\in \Jcal$ be intervals such that $I_1\subset S_-^1$, $I_2\subset S_+^1$, and $I_0\cup I_1\cup I_2\subset J$ for some $J\in\Jcal_\mathrm{p}$. Let $g_0\in L_{I_0}\R$, $g_1\in L_{I_1}\R$ and $g_2\in L_{I_2}\R$ be loops of charge 1. Let $(p,n)\in C_\sqcup(T)\times C_\sqcup(S)$. We write $[(p, \upsilon), (n, \nu)]_{g_1,g_2}^{g_0}\in U\big(\mathfrak{M}_{g_1}(p,\upsilon)\boxtimes \mathfrak{M}_{g_2}(n,\nu), \mathfrak{M}_{g_0}(p+n,\upsilon\times\nu)\big)$ for the image under the homeomorphism
\[\mathfrak{U}^{-1}:U\big(L^2(T\times S, \upsilon\times\nu, e^{\overline{L\R/\R}})\big)\xrightarrow{\cong} U\big(\mathfrak{M}_{g_1}(p,\upsilon)\boxtimes \mathfrak{M}_{g_2}(n,\nu), \mathfrak{M}_{g_0}(p+n,\upsilon\times\nu)\big) 
\]
of the unitary sending $\psi\in L^2(T\times S,\upsilon\times\nu, e^{\overline{L\R/\R}})$ to
\begin{align*}
\Big((t,s)\mapsto & W\big(p(t)\cdot G_{01} + n(s)\cdot G_{02}\big)\big(\psi(t,s)\big)\Big) \in L^2(T\times S,\upsilon\times\nu, e^{\overline{L\R/\R}}).
\end{align*}
\end{definition}

We can now show that the collection of unitaries $[(p, \upsilon), (n, \nu)]_{g_1,g_2}^{g_0}$ provides a unitary natural isomorphism $[-,-]_{g_1,g_2}^{g_0}$ filling 
\[\begin{tikzcd}
{\Rep(C_0(\R))\times \Rep(C_0(\R))} && {\Rep(\Heis)\times \Rep(\Heis)} \\
{\Rep(C_0(\R))} & {} & {\Rep(\Heis)}.
\arrow["{\mathfrak{M}_{g_1}\times \mathfrak{M}_{g_2}}", from=1-1, to=1-3]
\arrow["\otimes", from=1-1, to=2-1]
\arrow["{[-,-]_{g_1, g_2}^{g_0}}"{description}, Rightarrow, from=1-3, to=2-1]
\arrow["\boxtimes", from=1-3, to=2-3]
\arrow["{\mathfrak{M}_{g_0}}"', from=2-1, to=2-3]
\end{tikzcd}\]
We will need the following lemma. Given a representation $H$ of $\Heis$, we write $U(H)$ for $U(\text{Forget}(H))$, the space of unitaries on the underlying Hilbert space of $H$. Given objects $\int^{\oplus}_{t\in T}\delta_{p(t)}\mathrm{d}\upsilon(t),\,\int^{\oplus}_{s\in S}\delta_{n(s)}\mathrm{d}\nu(s)\in\Rep(C_0(\R))$, intervals $I_1, I_2\in\Jcal$ with $I_1\subset S^1_-$ and $I_2\subset S^1_+$ and localized loops $g_1\in L_{I_1}\R$ and $g_2\in L_{I_2}\R$ of charge 1, the underlying Hilbert spaces of $\mathfrak{M}_{g_1}(p,\upsilon)$ and $\mathfrak{M}_{g_1}(p,\upsilon)\boxtimes \mathfrak{M}_{g_2}(n,\nu)$ are independent of $p$ and $n$. Hence, we may write $U(\mathfrak{M}_{g_1}(\upsilon))$ and $U\big(\mathfrak{M}_{g_1}(\upsilon)\boxtimes \mathfrak{M}_{g_2}(\nu)\big)$, without making reference to the maps $p$ and $n$.

\begin{lemma}\label{lemma: HeisActionsDependContinuously}
Let $(T,\upsilon)$ be a Borel measure space, $I_1\in \Jcal$ be an interval and $g_1\in L_{I_1}\R$ be a loop of charge 1. Given a function $p\in C_\sqcup(T)$, we write $\pi_1^p$ for the action of $\Heis$ on $\mathfrak{M}_{g_1}(p,\upsilon).$ Let $f\in L_I\R$ be a loop supported on an interval $I\in \Jcal$. Then, the map
\[
\begin{array}{ccc}
C_\sqcup(T) &\to &U(\mathfrak{M}_{g_1}(\upsilon))  \\
p&\mapsto &\pi_{1, I}^{p}(W(f)) 
\end{array}
\]
is strongly continuous. Let $(S,\nu)$ be another Borel space. Assume that $I_1\subset S_-^1$ and let $I_2\in \Jcal$ be another interval with $I_2\subset S_+^1$. Let $g_2\in L_{I_2}\R$ be another loop of charge 1. For a pair $(p,n)\in C_\sqcup(T)\times C_\sqcup(S)$ we write $\pi_{1\boxtimes 2}^{p\boxtimes n}$ for the action of $\Heis$ on $\mathfrak{M}_{g_1}(p,\upsilon)\boxtimes\mathfrak{M}_{g_2}(n,\nu)$. Then, the map  
\[
\begin{array}{ccc}
C_\sqcup(T)\times C_\sqcup(S) &\to &U(\mathfrak{M}_{g_1}(p,\upsilon)\boxtimes \mathfrak{M}_{g_2}(n,\nu))  \\
(p,n)&\mapsto &\pi_{1\boxtimes 2, I}^{p\boxtimes n}(W(f)) 
\end{array}
\]
is strongly continuous.
\end{lemma}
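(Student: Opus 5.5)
The first claim reduces to an explicit formula. By Definition \ref{def: FunctorMg}, on $L^2(T,\upsilon, e^{\overline{L\R/\R}})$ we have
\[
\pi^p_{1,I}(W(f))\psi = \Big(t\mapsto e^{ip(t)\int g_1 f}\, W(f)\psi(t)\Big).
\]
So $\pi^p_{1,I}(W(f))$ is the composite of the fixed unitary $\id\otimes W(f)$ with multiplication by the scalar function $m_p(t)=e^{ip(t)c}$, where $c=\int g_1 f$. It therefore suffices to show that $p\mapsto m_p$ is continuous into the unitary group with the strong topology, which follows from a norm estimate:
\[
\|(m_{p_1}-m_{p_2})\psi\|\le \|m_{p_1}-m_{p_2}\|_\infty\|\psi\|\le |c|\,\mathrm{dist}(p_1,p_2)\,\|\psi\|,
\]
using $|e^{ia}-e^{ib}|\le |a-b|$ and the essential supremum defining \eqref{eq: MetricContinuous}. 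This gives norm continuity, hence strong continuity, and the estimate is uniform.

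For the fused representation, the plan is to reduce to the first part using the description \eqref{eq: ActionOnConnesFusion} of the action on $H(S^1_-)\boxtimes K(S^1_+)$. The key structural fact, already noted before \eqref{eq: CompotisionUnitorAndSo}, is that the $\Heis(S^1_+)$-action on $\mathfrak{M}_{g_1}(p,\upsilon)$ and the $\Heis(S^1_-)$-action on $\mathfrak{M}_{g_2}(n,\nu)$ do not depend on $p$ and $n$. Consequently the Hilbert spaces $H(S^1_-)\boxtimes K(S^1_+)$, their inner products, and the path continuations $\gamma^\bullet$ for paths staying within the region where the $p,n$-independence holds are all independent of $(p,n)$.

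The argument then proceeds by cases on the interval.

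\emph{Case $I\subset S^1_-$.} The action is $\pi^p_{1,I}(W(f))\boxtimes\id$. I would identify it, via $\mathfrak{U}$, with multiplication by $e^{ip(t)\int g_1f}$ composed with a $(p,n)$-independent unitary. The reason is that the fusion is built from $\Hom_{\Heis(S^1_+)}(H_0,\cdot)$, which is $p$-independent, so the fused operator is obtained from $\pi^p_{1,I}(W(f))$ acting on the $L^2(T)$-factor.

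\emph{Case $I\subset S^1_+$.} This is symmetric, with $n$ and $g_2$ in place of $p$ and $g_1$.

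\emph{General $I$.} Write $f=\sum_j f_j$ using a partition of unity subordinate to small intervals. Then $W(f)$ is a scalar times a product of $W(f_j)$. Each $W(f_j)$ is supported either in an interval contained in $S^1_\pm$, or in a small interval around $\pm1$. For the latter, conjugate by a path continuation $\gamma^\bullet$ that rotates slightly, as in \eqref{eq: ActionOnConnesFusion}. Products and conjugations by fixed unitaries preserve strong continuity on the unitary group.

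The main obstacle is the intervals straddling $\pm1$. There the path continuation $\gamma^\bullet\colon H(S^1_-)\boxtimes K(S^1_+)\to H(L)\boxtimes K(M)$ moves the fusion intervals, and $H(L)$ may now see $g_1$ inside $L^c$, so its identification becomes $p$-dependent.

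To handle this, I would first shrink $I_1$ and $I_2$ away from $\pm 1$. I would then choose the rotation small enough that $I_1\subset L$ and $I_2\subset M$ still hold. In that situation the bounded-vector spaces $H(L)=(L^2(T)\cdot H_0)(L)$ remain $p$-independent, so $\gamma^\bullet$ is a fixed unitary. Strong continuity of the straddling generators then follows from the previous cases applied to $L$ and $M$.
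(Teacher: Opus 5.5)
Your proposal is correct. The first part is the paper's argument: the paper's pointwise bound $|e^{ip(t)c}-e^{ip_n(t)c}|\le|c|\,|p(t)-p_n(t)|$ is the same uniform estimate as yours.

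For the fused action you take a genuinely different route. The paper fixes an arbitrary $J$ disjoint from $I$ and a path $\gamma$ from $S^1_-\times S^1_+$ to $I\times J$. It writes $\pi^{p\boxtimes n}_{1\boxtimes 2,I}(W(f))=(\gamma^\bullet)^{-1}\circ(\pi^p_{1,I}(W(f))\boxtimes\id)\circ\gamma^\bullet$ and concludes because conjugation by $\gamma^\bullet$ is a homeomorphism. This reduces to the first part in one line. However, it tacitly treats $\gamma^\bullet$ and the space $\mathfrak{M}_{g_1}(p,\upsilon)(I)\boxtimes\mathfrak{M}_{g_2}(n,\nu)(J)$ as independent of $(p,n)$. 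That is only evident when, along the whole path, $g_1$ vanishes on the complement of the first interval and $g_2$ on the complement of the second. For instance, for a small $I\subset S^1_+$, the $I$-bounded vectors of $\mathfrak{M}_{g_1}(p,\upsilon)$ are defined through the $p$-dependent action of $\Heis(I^c)\supset\Heis(S^1_-)$.

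Your route uses the Weyl relations and a partition of unity to write $\pi_I(W(f))$ as a fixed phase times a product of factors $\pi_{I_j}(W(f_j))$. Each $I_j$ lies in $S^1_\pm$ or is a short arc through $\pm1$. For the arcs you use only a rotation small enough that $\text{supp}(g_1)\subset L$ and $\text{supp}(g_2)\subset M$.
\begin{itemize}
\item Cost: a case analysis, plus the compatibility of the fused action with the inclusions $I_j\subset I$, which comes from the well-definedness of the Connes fusion action.
\item Gain: the $(p,n)$-independence of every bounded-vector space and every path continuation you use is actually true.
\item Gain: you obtain norm continuity, not just strong continuity.
\end{itemize}

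Two small points are worth recording. Shrinking $I_1,I_2$ away from $\pm1$ is legitimate because $\text{supp}(g_i)$ is compact inside the open hemisphere. Also, once the arcs through $\pm1$ are short enough, $\int g_1f_j=\int g_2f_j=0$, so those factors are in fact constant in $(p,n)$.
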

\begin{proof}
The unitary $\pi_{1, I}^{p}(W(f))\in U(\mathfrak{M}_{g_1}(\upsilon))$ acts on $\psi\otimes e^h\in \mathfrak{M}_g(\upsilon) = L^2(T,\upsilon)\otimes e^{\overline{L\R/\R}}$ by
\[
\pi_{1, I}^{p}(W(f))(\psi\otimes e^h)
= (t\mapsto e^{ip(t)\int g_1f}\psi(t))\otimes W(f)(e^h).
\]
Hence, it is enough to argue that the map $C_\sqcup(T)\to U(L^2(T,\upsilon))$ given by $$p\mapsto e^{ip(t)\int g_1f}\cdot\id_{L^2(T,\upsilon)}$$ is strongly continuous. This is clear because $|e^{i p(t)\int g_1f} - e^{ip_n(t)\int g_1f}|\leq |p(t)-p_n(t)|\cdot |\int g_1f| $ for all $p,p_n\in C_\sqcup(T)$ and all $t\in T$.

We now show the second statement. We use $\pi_{1,I}^p(W(f))\boxtimes \id\in B\big( \mathfrak{M}_{g_1}(p, \upsilon)(I)\boxtimes \mathfrak{M}_{g_2}(n, \nu)(J)\big)$ to denote the map induced by $ \pi_{1}^p(W(f))\otimes \id$. Recall from Section \ref{Sec: ConfNets} that the unitary $\pi_{1\boxtimes 2, I}^{p\boxtimes n}(W(f))\in U\big(\mathfrak{M}_{g_1}(p,\upsilon)\boxtimes\mathfrak{M}_{g_2}(n,\nu)\big)$ is given by the composition \cite[Sec. 3.2]{GcrossedbraidedRep}
\begin{align*}\hspace{-.3cm}
\mathfrak{M}_{g_1}(p,\upsilon)(S^1_-)\boxtimes\mathfrak{M}_{g_2}(n,\nu)(S^1_+)&\fixedxrightarrow{\gamma^\bullet} \mathfrak{M}_{g_1}(p,\upsilon)(I)\boxtimes\mathfrak{M}_{g_2}(n,\nu)(J)\\&\fixedxrightarrow{\pi_{1,I}^p(W(f))\boxtimes \id}\mathfrak{M}_{g_1}(p,\upsilon)(I)\boxtimes\mathfrak{M}_{g_2}(n,\nu)(J)\\&\fixedxrightarrow{(\gamma^\bullet)^{-1}} \mathfrak{M}_{g_1}(p,\upsilon)(S^1_-)\boxtimes\mathfrak{M}_{g_2}(n,\nu)(S^1_+)
\end{align*}
for $J\in \Jcal$ any interval disjoint from $I$ and $\gamma^\bullet$ the path continuation of a path $\gamma:[0,1]\to \text{Conf}_2(S^1)$ from $S_-^1\times S_+^1$ to $I\times J$. Since the assignment $p\in C_\sqcup(T)\mapsto \pi_{1}^p(W(f))\otimes \id\in U\big(\mathfrak{M}_{g_1}(p,\upsilon)(I)\otimes \mathfrak{M}_{g_2}(n,\nu)(J)\big)$ is strongly continuous by the discussion above and the map 
\[
U\big(\mathfrak{M}_{g_1}(p,\upsilon)(I)\boxtimes\mathfrak{M}_{g_2}(n,\nu)(J)\big)\xrightarrow{(\gamma^\bullet)^{-1}\circ (-) \circ\gamma^\bullet}U\big(\mathfrak{M}_{g_1}(p,\upsilon)(S^1_-)\boxtimes\mathfrak{M}_{g_2}(n,\nu)(S^1_+)\big)
\]
is a homeomorphism, the claim follows.
\end{proof}

We can now show that the family $[-,-]^{g_0}_{g_1,g_2}$ constructed in Definition \ref{def: Family[-,-]g0g1g2} provides a unitary natural isomorphism $\mathfrak{M}_{g_1}(-)\boxtimes \mathfrak{M}_{g_2}(-)\cong \mathfrak{M}_{g_0}(-\otimes -)$, where $\otimes$ denotes the tensor structure of $\Hilb\,\R$.

\begin{proposition}\label{prop: ExtendTensoratorsByContinuity}
Let $T$ and $S$ be locally compact Hausdorff spaces with Borel measures $\upsilon$ and $\nu$ respectively. Let $I_0, I_1,I_2\in \Jcal$ be intervals with $I_1\subset S^1_-$ and $I_2\subset S^1_+$ and such that $I_0\cup I_1\cup I_2$ can be covered by an interval $J\in \Jcal_\mathrm{p}$. Let $g_0\in L_{I_0}\R$, $g_1\in L_{I_1}\R$ and $g_2\in L_{I_2}\R$ be loops of charge 1. Let $(p,n)\in C_\sqcup(T)\times C_\sqcup(S)$ be a pair of functions. Then, 
\begin{enumerate}
\item $[(p, \upsilon),(n, \nu)]_{g_1,g_2}^{g_0}\in U\big(\mathfrak{M}_{g_1}(p,\upsilon)\boxtimes \mathfrak{M}_{g_2}(n,\nu), \mathfrak{M}_{g_0}(p+n,\upsilon\times\nu)\big)$ intertwines the actions of $\Heis$ on $\mathfrak{M}_{g_1}(p,\upsilon)\boxtimes \mathfrak{M}_{g_2}(n,\nu)$ and $\mathfrak{M}_{g_0}(p+n,\upsilon\times\nu)$;
\item The family $[-,-]_{g_1,g_2}^{g_0}$ is natural with respect to morphisms in $\Rep(C_0(\R))$.
\end{enumerate}
\end{proposition}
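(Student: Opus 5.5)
Part (1) follows by continuity from the semisimple case. Part (2) is a direct computation, using that morphisms in $\Rep(C_0(\R))$ act on the $L^2$ factor.

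\textbf{Part (1).} Fix an interval $I\in\Jcal$ and a loop $f\in L_I\R$. Since $\{W(f)\}_{f\in L_I\R}$ generates a dense $*$-subalgebra of $\Heis(I)$ and all actions are normal, it suffices to prove
\[
[(p,\upsilon),(n,\nu)]^{g_0}_{g_1,g_2}\circ \pi^{p\boxtimes n}_{1\boxtimes 2,I}(W(f)) = \pi^{p+n}_{0,I}(W(f))\circ [(p,\upsilon),(n,\nu)]^{g_0}_{g_1,g_2}.
\]
Here $\pi_0^{p+n}$ denotes the action on $\mathfrak{M}_{g_0}(p+n,\upsilon\times\nu)$. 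When $(p,n)\in C_\sqcup(T)_0\times C_\sqcup(S)_0$, Lemma~\ref{lemm: TensoratorInSSSimplified} says $[(p,\upsilon),(n,\nu)]$ coincides with ${}^{\text{ss}}[(p,\upsilon),(n,\nu)]$. The latter is an intertwiner by construction: it is assembled from $\Psi$ and the intertwiners $W(xG_{01}+yG_{02})$ of Remark~\ref{rk: IntertwineActionsSameCharge}. So the identity holds on the dense subset given by Lemma~\ref{lemm: dense}.

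For general $(p,n)$, choose a sequence $(p_k,n_k)$ in $C_\sqcup(T)_0\times C_\sqcup(S)_0$ converging to $(p,n)$ in the pseudo-metric. We show every factor of the identity converges strongly.
\begin{enumerate}
\item The left action $\pi^{p_k\boxtimes n_k}_{1\boxtimes2,I}(W(f))$ converges by Lemma~\ref{lemma: HeisActionsDependContinuously}.
\item The right action $\pi_0^{p_k+n_k}(W(f))$ converges by the first part of the same lemma, applied to $g_0$ and $T\times S$ (note $\|(p_k+n_k)-(p+n)\|_\infty\le 2\,\mathrm{dist}$).
\item Under $\mathfrak{U}$, the tensorator is pointwise multiplication by $W(p(t)G_{01}+n(s)G_{02})$. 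We need the map $(p,n)\mapsto$ this multiplication operator to be strongly continuous. We use
\[
W(aG_{01}+bG_{02})=c(a,b)\,W(aG_{01})W(bG_{02}),
\]
where $c$ is a continuous phase, together with strong continuity of $x\mapsto W(xG)$ on $H_0$. The latter holds because $\|xG-x'G\|\to0$ and $e^h$ depends continuously on $h$.
\end{enumerate}
Pointwise strong convergence is uniform on compact parameter sets once $\sup|p_k-p|\to 0$. Dominated convergence then gives strong convergence on $L^2(T\times S,\upsilon\times\nu,e^{\overline{L\R/\R}})$. Products of uniformly bounded, strongly convergent sequences converge strongly, so the identity passes to the limit.

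This step is the main technical point. We need the convergence in (3) to be uniform in $t,s$ given only uniform convergence of the parameters. That uniformity follows from the norm estimate
\[
\|(W(aG)-W(a'G))\xi\|\le C|a-a'|\,\|G\|\,\|\xi\|_{\mathrm{dom}},
\]
valid on a dense set of vectors, combined with an $\varepsilon/3$ argument.

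\textbf{Part (2).} A morphism $F\colon(p,\upsilon)\to(p',\upsilon')$ in $\Rep(C_0(\R))$ is a bounded operator $L^2(T,\upsilon)\to L^2(T',\upsilon')$ intertwining multiplication by $\phi\circ p$ and $\phi\circ p'$ for all $\phi\in C_0(\R)$. Hence $F$ also intertwines every bounded Borel function of $p$ and $p'$, such as $p\mapsto W(pG_{01})$, viewed as multiplication operators valued in $B(H_0)$. The map $\mathfrak{U}$ is built from canonical identifications and the unitor, all natural in the multiplicity spaces $L^2(T,\upsilon)$ and $L^2(S,\nu)$. Therefore $\mathfrak{M}_{g_1}(F)\boxtimes\mathfrak{M}_{g_2}(F')$ corresponds to $F\otimes F'\otimes \id$. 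We conclude by checking that $F\otimes F'$ commutes with the operator $\psi\mapsto W(p(t)G_{01}+n(s)G_{02})\psi(t,s)$. It does, since $(F\otimes F')\circ\Phi(p,n)=\Phi(p',n')\circ(F\otimes F')$ for any bounded Borel $\Phi\colon\R^2\to B(H_0)$, via the functional calculus of the commuting pair $(p\otimes1,1\otimes n)$.
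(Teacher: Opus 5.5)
Your proposal is correct. Part (1) follows the paper's route:
\begin{itemize}
\item reduce to Weyl operators by normality;
\item use that on $C_\sqcup(T)_0\times C_\sqcup(S)_0$ the unitary coincides with ${}^{\text{ss}}[-,-]$ (Lemma~\ref{lemm: TensoratorInSSSimplified});
\item pass to the limit using density (Lemma~\ref{lemm: dense}), the strong continuity of both actions (Lemma~\ref{lemma: HeisActionsDependContinuously}), and the strong continuity of the tensorator in $(p,n)$. The paper only asserts the last point; you justify it.
\end{itemize}

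One correction in part (1): the uniformity in $(t,s)$ that you call the main technical point is not needed, and it is false in general. Write $A=aG_{01}+bG_{02}$ and $B=\delta G_{01}+\delta' G_{02}$. Then $W(A+B)=e^{i\omega(A,B)}W(A)W(B)$ with $\omega(A,B)=(a\delta'-b\delta)\,\omega(G_{01},G_{02})$, so
\[
\|(W(A+B)-W(A))\xi\|=\|(e^{i(a\delta'-b\delta)\omega(G_{01},G_{02})}W(B)-1)\xi\|.
\]
When $\omega(G_{01},G_{02})\neq 0$ and $p,n$ are unbounded, this does not tend to $0$ uniformly in $(t,s)$. Your single-loop estimate does not transfer to the combination $aG_{01}+bG_{02}$. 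What you wrote just before is already enough: $(p_k(t),n_k(s))\to(p(t),n(s))$ almost everywhere. Hence $\|(\Phi_k(t,s)-\Phi(t,s))\Psi(t,s)\|$ tends to $0$ pointwise and is dominated by $2\|\Psi(t,s)\|$, and dominated convergence gives strong convergence.

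Part (2) takes a genuinely different route. The paper proceeds as follows:
\begin{itemize}
\item it discretizes, setting $p_{i,\varepsilon}=\mathfrak{X}_\varepsilon\circ p_i$ and $n_{i,\varepsilon}=\mathfrak{X}_\varepsilon\circ n_i$;
\item it uses the bounded Borel calculus only to see that $F$ and $G$ remain morphisms between the discretized, semisimple objects;
\item it invokes the naturality of ${}^{\text{ss}}[-,-]$ there;
\item it lets $\varepsilon\to 0$, using the same continuity of the tensorator as in part (1).
\end{itemize}
You instead verify naturality directly from the explicit formula. Since $\mathfrak{U}$ is natural in the multiplicity spaces, the two sides become $(F\otimes F'\otimes\id)\,M_{\Phi(p_1,n_1)}$ and $M_{\Phi(p_2,n_2)}\,(F\otimes F'\otimes\id)$. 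These agree by the Borel functional calculus of $C_0(\R^2)$ acting through $(p,n)$. To extend this to the $B(H_0)$-valued $\Phi$, expand $\Phi(x)\xi=\sum_k\langle\Phi(x)\xi,e_k\rangle e_k$ in an orthonormal basis of $H_0$; you should state this reduction, since you only assert it.

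Your argument is shorter and independent of both part (1) and the DHR construction. The paper's argument keeps the extended tensorator tied to the semisimple one, whose naturality is already established, at the cost of a second limiting argument.
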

\begin{proof}

We write\footnote{Note that the subscript $0$ on $\pi_{0}^{p+n}$ refers to the subscript $0$ on the loop $g_0$, and not to the vacuum representation of $\Heis$.} $\pi_{0}^{p+n}$ for the action of $\Heis$ on $\mathfrak{M}_{g_0}(p+n,\upsilon\times\nu)$, and $\pi_{1\boxtimes 2}^{p\boxtimes n}$  for the action of $\Heis$ on $\mathfrak{M}_{g_1}(p,\upsilon)\boxtimes \mathfrak{M}_{g_2}(n, \nu)$. For every $I\in \Jcal$ and $a\in \Heis(I)$, we need to show that
\[
\pi_{0, I}^{p+ n}(a)\circ [(p, \upsilon),(n, \nu)]_{g_1,g_2}^{g_0} = [(p, \upsilon),(n, \nu)]_{g_1,g_2}^{g_0}\circ \pi_{1\boxtimes 2}^{p\boxtimes n}(a).
\]
Actually, since the representations $\pi_{0}^{p+n}$ and $\pi_{1\boxtimes 2}^{p\boxtimes n}$ of $\Heis(I)$ are normal, it is enough to show the equality for $a \in \langle W(f)\rangle_{f\in L_I\R}$. Fix $a = W(f)$ for some $f\in L_I\R$. Then, we know that the equality holds if $(p,n)\in C_\sqcup(T)_0\times C_\sqcup(S)_0$, since $[(p, \upsilon),(n, \nu)]_{g_1,g_2}^{g_0} =\, ^{\text{ss}}[(p, \upsilon),(n, \nu)]_{g_1,g_2}^{g_0}$ by Lemma \ref{lemm: TensoratorInSSSimplified}, and $^{\text{ss}}[(p, \upsilon),(n, \nu)]_{g_1,g_2}^{g_0}$ is by construction a morphism in $\Rep(\Heis)$. Recall that $C_\sqcup(T)_0\times C_\sqcup(S)_0$ is dense in $C_\sqcup(T)\times C_\sqcup(S)$ by Lemma \ref{lemm: dense} and note that the map
\[\hspace{-1cm}
\begin{array}{ccccc}
C_\sqcup(T)\times C_\sqcup(S) &\to &U(L^2(T\times S, \upsilon\times\nu, e^{\overline{L\R/\R}})) &\xrightarrow{\mathfrak{U}^{-1}}&U(\mathfrak{M}_{g_1}(\upsilon)\boxtimes \mathfrak{M}_{g_2}(\nu), \mathfrak{M}_{g_0}(\upsilon\times\nu)) \\
(p,n)&\mapsto & \big((t,s)\mapsto W(p(t)\cdot G_{01}+n(s)\cdot G_{02})\big)&\mapsto & [(p, \upsilon),(n, \nu)]_{g_1,g_2}^{g_0}
\end{array}
\]
is strongly continuous, as it is the composition of strongly continuous maps. Therefore, since
\[
\begin{array}{ccc}
C_\sqcup(T)\times C_\sqcup(S) &\to &U(\mathfrak{M}_{g_1}(\upsilon)\boxtimes \mathfrak{M}_{g_2}(\nu))  \\
(p,n)&\mapsto &\pi_{1\boxtimes 2}^{p\boxtimes n}(W(f)) 
\end{array}
\]
and 
\[
\begin{array}{ccc}
C_\sqcup(T)\times C_\sqcup(S) &\to &U(\mathfrak{M}_{g_0}(\upsilon\times \nu))  \\
(p,n)&\mapsto &\pi_{0}^{p+ n}(W(f)) 
\end{array}
\]
are strongly continuous by Lemma \ref{lemma: HeisActionsDependContinuously}, the first claim follows. 

We now prove the second part. Fix objects $\int^\oplus_{t\in T_1}\delta_{p_1(t)}\mathrm{d}\upsilon_1(t), \, \int^\oplus_{t\in T_2}\delta_{p_2(t)}\mathrm{d}\upsilon_2(t),\, \int^\oplus_{s\in S_1}\delta_{n_1(s)}\mathrm{d}\nu_1(s)$ and $\int^\oplus_{s\in S_2}\delta_{n_2(s)}\mathrm{d}\nu_2(s)$ of $\Rep(C_0(\R))$, and morphisms
\begin{align*}
F&\in \Hom_{\Rep(C_0(\R))}\bigg(\int^\oplus_{t\in T_1}\delta_{p_1(t)}\mathrm{d}\upsilon_1(t), \int^\oplus_{t\in T_2}\delta_{p_2(t)}\mathrm{d}\upsilon_2(t)\bigg)\\
G&\in \Hom_{\Rep(C_0(\R))}\bigg(\int^\oplus_{s\in S_1}\delta_{n_1(s)}\mathrm{d}\nu_1(s), \int^\oplus_{s\in S_2}\delta_{n_2(s)}\mathrm{d}\nu_2(s)\bigg).
\end{align*}
We need to show that
\begin{equation}\label{eq: naturality}
\mathfrak{M}_{g_0}(F\otimes G)\circ [(p_1,\upsilon_1), (n_1, \nu_1)]^{g_0}_{g_1, g_2} = [(p_2,\upsilon_2), (n_2, \nu_2)]^{g_0}_{g_1, g_2}\circ \big(\mathfrak{M}_{g_1}(F)\boxtimes \mathfrak{M}_{g_2}(G)\big).
\end{equation}
Fix $\varepsilon>0$ and let $\mathfrak{X}_\varepsilon: \R\to \R$ denote the Borel map that sends $x\in (k\varepsilon, [k+1]\varepsilon]$ to $\mathfrak{X}_{\varepsilon}(x) = (k+\frac{1}{2})\varepsilon$. We define $p_{i,\epsilon} : =\mathfrak{X}_\varepsilon\circ  p_i$ and $n_{i, \varepsilon}: = \mathfrak{X}_\varepsilon\circ n_i$ for $i = 1,2$. By the same arguments as Lemma \ref{lemm: dense}, $p_{i,\varepsilon}$ and $n_{i,\varepsilon}$ tend to $p_i$ and $n_i$ respectively as $\varepsilon\to 0$ for $i  =1,2$. By definition, for every function $f\in C_0(\R)$, the map $F$ intertwines the actions of pointwise multiplication by $f\circ p_1$ on $L^2(T_1, \upsilon_1)$ and by $f\circ p_2$ on $L^2(T_2, \upsilon_2)$. Let $B_b(T_i)$ be the space of bounded Borel maps from $T_i$ into $\C$, for $i = 1,2$. Then, $B_b(T_i)$ acts on $L^2(T_i, \upsilon)$ by pointwise multiplication, and the map $B_b(T_i)\to B(L^2(T_i,\upsilon))$ is strongly continuous with respect to bounded pointwise convergence on the domain, by dominated convergence. Since any function $f = g\circ \mathfrak{X}_\varepsilon\in B_b(\R)$ for $g\in C_0(\R)$ can be approximated by functions in $C_0(\R)$ under bounded pointwise convergence, the map $F$ intertwines the actions of pointwise multiplication by $f = g\circ \mathfrak{X}_\varepsilon$ for any function $g\in C_0(\R).$ Therefore, $F: L^2(T_1, \upsilon_1)\to L^2(T_2, \upsilon_2)$ intertwines the actions of $C_0(\R)$ given by pointwise multiplication by the pullback along $p_{1, \varepsilon}$ and $p_{2, \varepsilon}$. Similarly, $G: L^2(S_1, \nu_1)\to L^2(S_2, \nu_2)$ intertwines the actions of $C_0(\R)$ given by pointwise multiplication by the pullback along $n_{1, \varepsilon}$ and $n_{2, \varepsilon}$. 

Since $p_{i, \varepsilon}\in C_{\sqcup}(T)_{0}$ and $n_{i, \varepsilon}\in C_{\sqcup}(S)_0$, by Lemma \ref{lemm: TensoratorInSSSimplified} it holds that $[(p_{1,\varepsilon},\upsilon_1), (n_{1,\varepsilon}, \nu_1)]^{g_0}_{g_1, g_2} = \,^{\text{ss}}[(p_{1,\varepsilon},\upsilon_1), (n_{1,\varepsilon}, \nu_1)]^{g_0}_{g_1, g_2}$ and $[(p_{2,\varepsilon},\upsilon_2), (n_{2,\varepsilon}, \nu_2)]^{g_0}_{g_1, g_2} = \,^{\text{ss}}[(p_{2,\varepsilon},\upsilon_2), (n_{2,\varepsilon}, \nu_2)]^{g_0}_{g_1, g_2}$. By the discussion above, we have
\begin{align*}
F&\in \Hom_{\Rep^{\text{ss}}(C_0(\R))}\bigg(\int^\oplus_{t\in T_1}\delta_{p_{1,\varepsilon}(t)}\mathrm{d}\upsilon_1(t), \int^\oplus_{t\in T_2}\delta_{p_{2,\varepsilon}(t)}\mathrm{d}\upsilon_2(t)\bigg)\\
G&\in \Hom_{\Rep^{\text{ss}}(C_0(\R))}\bigg(\int^\oplus_{s\in S_1}\delta_{n_{1,\varepsilon}(s)}\mathrm{d}\nu_1(s), \int^\oplus_{s\in S_2}\delta_{n_{2, \varepsilon}(s)}\mathrm{d}\nu_2(s)\bigg),
\end{align*}
and since $^{\text{ss}}[-,-]^{g_0}_{g_1,g_2}$ is natural by construction, it holds that
\[
\mathfrak{M}_{g_0}(F\otimes G)\circ [(p_{1,\varepsilon},\upsilon_1), (n_{1,\varepsilon}, \nu_1)]^{g_0}_{g_1, g_2} = [(p_{2,\varepsilon},\upsilon_2), (n_{2,\varepsilon}, \nu_2)]^{g_0}_{g_1, g_2}\circ \big(\mathfrak{M}_{g_1}(F)\boxtimes \mathfrak{M}_{g_2}(G)\big).
\]
By taking the limit as $\varepsilon\to 0$, Equation \eqref{eq: naturality} holds. 
\end{proof}

We can now prove the main result of this section, Proposition \ref{prop: TensoratorForM}. Let us recall the statement here.

\begin{proposition*}
The equivalence of $\mathrm{W}^*$-categories $\mathfrak{M}: \Rep(C_0(\R))\to \Rep(\Heis)$ admits a unitary natural isomorphism $[-,-]$ filling the diagram
\[\begin{tikzcd}
{\Rep(C_0(\R))\times \Rep(C_0(\R))} && {\Rep(\Heis)\times \Rep(\Heis)} \\
{\Rep(C_0(\R))} && {\Rep(\Heis)},
\arrow["{\mathfrak{M}\times \mathfrak{M}}", from=1-1, to=1-3]
\arrow["\otimes"', from=1-1, to=2-1]
\arrow["{[-,-]}"{description}, Rightarrow, from=1-3, to=2-1]
\arrow["\boxtimes", from=1-3, to=2-3]
\arrow["{\mathfrak{M}}"', from=2-1, to=2-3]
\end{tikzcd}\]
where the tensor structure on $\Rep(C_0(\R))$ is that of $\Hilb\,\R$.
\end{proposition*}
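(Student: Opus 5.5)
The plan is to obtain $[-,-]$ by conjugating the already constructed tensorator $[-,-]^{g_0}_{g_1,g_2}$ of Proposition \ref{prop: ExtendTensoratorsByContinuity} with the canonical natural isomorphisms $\kappa_{g}:\mathfrak{M}\cong\mathfrak{M}_{g}$ of Lemma \ref{Lemm: MgsFormClique}. First, we fix once and for all intervals $I_0,I_1,I_2\in\Jcal$ with $I_1\subset S^1_-$, $I_2\subset S^1_+$ and $I_0\cup I_1\cup I_2\subset J$ for some $J\in\Jcal_\mathrm{p}$. Such a choice exists: for instance, take $I_1$ a small interval around $-i$, $I_2$ a small interval around $i$, $I_0$ a small interval near $-1$, and $J$ the complement of a small closed arc around $\mathrm{p}=1$. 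We then choose loops $g_k\in L_{I_k}\R$ of charge $1$, together with primitives $G_{01},G_{02}$ and $G_0,G_1,G_2$ as in the text.

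For objects $(p,\upsilon),(n,\nu)$ of $\Rep(C_0(\R))$, we define $[(p,\upsilon),(n,\nu)]$ as the composite
\[
\mathfrak{M}(p,\upsilon)\boxtimes\mathfrak{M}(n,\nu)\xrightarrow{\kappa_{g_1}(p,\upsilon)\boxtimes\kappa_{g_2}(n,\nu)}\mathfrak{M}_{g_1}(p,\upsilon)\boxtimes\mathfrak{M}_{g_2}(n,\nu)\xrightarrow{[(p,\upsilon),(n,\nu)]^{g_0}_{g_1,g_2}}\mathfrak{M}_{g_0}(p+n,\upsilon\times\nu)\xrightarrow{\kappa_{g_0}(p+n,\upsilon\times\nu)^{*}}\mathfrak{M}(p+n,\upsilon\times\nu).
\]
Each factor is a unitary morphism of $\Heis$-representations. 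For the outer factors this follows from Lemma \ref{Lemm: MgsFormClique} together with functoriality of Connes fusion on morphisms. For the middle factor it is part (1) of Proposition \ref{prop: ExtendTensoratorsByContinuity}. Hence the composite is a unitary isomorphism in $\Rep(\Heis)$.

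Naturality in both variables then follows formally. The family $\kappa_{g_k}$ is natural by Lemma \ref{Lemm: MgsFormClique}, and Connes fusion $\boxtimes$ is a bifunctor, so $\kappa_{g_1}\boxtimes\kappa_{g_2}$ is natural. The family $[-,-]^{g_0}_{g_1,g_2}$ is natural by part (2) of Proposition \ref{prop: ExtendTensoratorsByContinuity}. For morphisms $F$ and $G$, composing the three naturality squares gives $\mathfrak{M}(F\otimes G)\circ[-,-]=[-,-]\circ(\mathfrak{M}(F)\boxtimes\mathfrak{M}(G))$. This uses $\mathfrak{M}(F\otimes G)=\mathfrak{M}_{g_0}(F\otimes G)=(F\otimes G)\otimes\id$ on underlying Hilbert spaces.

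The only genuinely delicate point has already been handled in Proposition \ref{prop: ExtendTensoratorsByContinuity}. That is the extension from semisimple objects to arbitrary direct integrals via the density Lemma \ref{lemm: dense} and the strong continuity Lemma \ref{lemma: HeisActionsDependContinuously}.

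What remains to check is the bookkeeping: every object of $\Rep(C_0(\R))$ is isomorphic to one of the form $\int^\oplus_{t\in T}\delta_{p(t)}\mathrm{d}\upsilon(t)$. By \cite[Prop.~4.4]{AM2025}, these objects span an equivalent full subcategory. So the construction on this subcategory transports along the equivalence to a unitary natural isomorphism on all of $\Rep(C_0(\R))$, by choosing isomorphisms to standard forms and conjugating. I expect this transport to be routine. I do not attempt compatibility with associators here, since the statement does not require it.
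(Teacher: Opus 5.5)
Your proposal is correct and matches the paper's proof: the paper also fixes intervals $I_1\subset S^1_-$ and $I_2\subset S^1_+$ with $I_0\cup I_1\cup I_2\subset J\in\Jcal_\mathrm{p}$, and defines $[(p,\upsilon),(n,\nu)]:=\kappa_{g_0}(p+n,\upsilon\times\nu)^{-1}\circ[(p,\upsilon),(n,\nu)]^{g_0}_{g_1,g_2}\circ(\kappa_{g_1}(p,\upsilon)\boxtimes\kappa_{g_2}(n,\nu))$, deducing unitarity, intertwining and naturality from Lemma \ref{Lemm: MgsFormClique} and Proposition \ref{prop: ExtendTensoratorsByContinuity}. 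Your explicit choice of intervals and your transport step to arbitrary objects are routine bookkeeping that the paper leaves implicit, since both $\mathfrak{M}$ and the tensor product of $\Hilb\,\R$ are only defined on objects of the form $\int^\oplus_{t\in T}\delta_{p(t)}\mathrm{d}\upsilon(t)$.
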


\begin{proof}[Proof of Proposition \ref{prop: TensoratorForM}]
Let $I_0,I_1,I_2\in \Jcal$ be intervals such that $I_1\subset S_-^1$ and $I_2\subset S_+^1$ and $I_0,I_1,I_2$ can be covered by an interval $J\in \Jcal_\mathrm{p}$. Let $g_0\in L_{I_0}\R$, $g_1\in L_{I_1}\R$ and $g_2\in L_{I_2}\R$ be loops of charge 1. Given $\int^{\oplus}_{t\in T}\delta_{p(t)}\mathrm{d}\upsilon(t), \int^\oplus_{s\in S}\delta_{n(s)}\mathrm{d}\nu(s)\in \Rep(C_0(\R))$, we define
\begin{align*}
[(p,\upsilon)&, (n, \nu)]:= \\ &\kappa_{g_0}(p+n, \upsilon\times\nu)^{-1} \circ [(p,\upsilon), (n, \nu)]_{g_1, g_2}^{g_0}\circ (\kappa_{g_1}(p,\upsilon)\boxtimes \kappa_{g_2}(n,\nu)).
\end{align*}
By Lemma \ref{Lemm: MgsFormClique} and Proposition \ref{prop: ExtendTensoratorsByContinuity}, this provides a well-defined unitary isomorphism $\mathfrak{M}(p, \upsilon)\boxtimes\mathfrak{M}(n, \nu)\xrightarrow{\cong} \mathfrak{M}(p+n, \upsilon\times\nu)$ natural in $\int^{\oplus}_{t\in T}\delta_{p(t)}\mathrm{d}\upsilon(t)$ and $ \int^\oplus_{s\in S}\delta_{n(s)}\mathrm{d}\nu(s)$.
\end{proof}

\section{The category of twisted representations of the Heisenberg conformal net}
We now focus on the category $\Rep^{\sigma}(\Heis)$ of $\sigma$-twisted representations of $\Heis$, where the action of $\sigma$ is given by $\sigma W(f) = W(-f)$ for all $I\in\Jcal$ and $f\in L_I\R$. We first produce a fully faithful functor from $\Rep^\sigma(\Heis)$ into the $\mathrm{W}^*$-category $\Rep^{pe}(\widetilde{L^\sigma\R})$ of positive energy representations of the centrally extended twisted loop group $\widetilde{L^\sigma\R}$, see Proposition \ref{prop: Rsigmafactors}. The category $\Rep^{pe}(\widetilde{L^\sigma\R})$ is then identified with $\Hilb$ in Proposition \ref{prop: ReppeLSigmaIsHIlb}, implying that there exists at most one irreducible $\sigma$-twisted representation of $\Heis$ up to equivalence. The fact that there exists a nonzero $\sigma$-twisted representation of $\Heis$ \cite{fredenhagen} implies that the constructed functor $\Rep^\sigma(\Heis)\to \Rep^{pe}(\widetilde{L^\sigma\R})$ is an equivalence. We then characterize the fusion rules of honest representations of $\Heis$ with the unique $\sigma$-twisted representation, see Proposition \ref{prop: TensoratorForMSigma}. We use the terminology \emph{twisted representations} to refer to $\sigma$-twisted representations of $\Heis$.

\subsection{The restriction functor $\Rep^\sigma(\Heis)\to \Rep^{pe}(\widetilde{L^\sigma\R})$}
\label{Sec: CatOfTwistedReps}

Let $\mathfrak{t}: \mathrm{T}:= \R/2\Z\to S^1$, $t\mapsto e^{2\pi i t}$ be a double cover of the circle. We define the twisted loop group of $\R$ by
\[
L^\sigma \R:= \{f\in C^\infty(\mathrm{T}, \R)\ |\ f(x) = -f(x+1)\}.
\]
Analogously to the untwisted case, the twisted loop group $L^\sigma\R$ admits a cocycle $c(f,g) = e^{-\frac{i}{2}\int_0^{1} f(\theta)g'(\theta)\mathrm{d}\theta}$ for $f,g\in L^\sigma\R$, which provides a central extension 
\[
1\to U(1)\to \widetilde{L^\sigma\R}\to L^\sigma\R\to 1.
\]
For any interval $I\in\Jcal$, we write $L^\sigma_I\R$ for the subgroup of loops $f\in L^\sigma\R$ such that $\mathfrak{t}(\text{supp}(f))\subset I$ and we denote by $\widetilde{L^\sigma_I\R}:=\widetilde{L^\sigma\R}\times_{L^\sigma\R}L_I^\sigma\R$ the restriction of the central extension $\widetilde{L^\sigma\R}$ to $L^\sigma_I\R$, for $I\in\Jcal$.

Let $\Rep(\widetilde{L^\sigma\R})$ be the category of strongly continuous unitary representations of $\widetilde{L^\sigma\R}$ on separable Hilbert spaces. We now construct a restriction functor
\[
\mathfrak{R}_0^\sigma: \Rep^\sigma(\Heis)\to \Rep(\widetilde{L^\sigma\R}).
\]
Given an interval $I\in \Jcal$, we write $\hat{I}$ for the lift of $I$ to $\R$ as close to $0$ as possible so that $\text{cl}(\hat{I})\subset \R_{>0}$. We continue denoting by $\hat{I}$ the image of $\hat{I}\subset\R$ in $\mathrm{T}$.  For an element $f\in L_I\R$, we denote by $\hat{f}$ the unique element of $L_{I}^\sigma\R$ such that $\hat{f}$ restricted to $\hat{I}$ equals $f$. We write $\hat{f}_I$ when we want to make the interval $I$ explicit. On the other hand, given another interval $J\in \Jcal$ and $g\in L^\sigma_{J}\R$, we denote by $g_0 = g_{0,J}\in L_J\R$ the unique loop such that $\hat{g_0}_J = g$. See Figure~\ref{fig: TtoS^1}.
\begin{figure}[htb]
\resizebox{!}{4.5cm}{
\begin{tikzpicture}[
    line cap=round,
    line join=round,
    scale=1.15,
    cover/.style={black, line width=1.25pt},
    point/.style={circle, fill=black, inner sep=2.4pt},
    bump/.style={line width=1.55pt},
    lab/.style={font=\Large},
    interval/.style n args={1}{
        #1,
        line width=4.8pt,
        postaction={decorate},
        decoration={
            }
        }
]

\def\R{3.55}
\def\r{0.55}
\def\Y{2.35}
\def\pitch{1.22}

\def\Ja{-158}
\def\Jb{-98}
\def\Ia{-62}
\def\Ib{-18}

\def\gamp{-0.68}
\def\famp{0.55}


\newcommand{\BaseInterval}[3]{%
    \draw[interval={#3}, domain=#1:#2, samples=80, variable=\ang]
        plot ({\R*cos(\ang)}, {\r*sin(\ang)});
}

\newcommand{\LiftInterval}[3]{%
    \draw[interval={#3}, domain=#1:#2, samples=80, variable=\ang]
        plot (
            {\R*cos(\ang-90)},
            {\Y + \pitch*\ang/360 + \r*sin(\ang-90)}
        );
}

\newcommand{\BaseBump}[4]{%
    \draw[#4,bump, domain=#1:#2, samples=120, variable=\ang]
        plot (
            {\R*cos(\ang)},
            {\r*sin(\ang)
             + (#3)*sin(180*((\ang)-(#1))/((#2)-(#1)))}
        );
}

\newcommand{\LiftBump}[4]{%
    \draw[#4,bump, domain=#1:#2, samples=120, variable=\ang]
        plot (
            {\R*cos(\ang-90)},
            {\Y + \pitch*\ang/360 + \r*sin(\ang-90)
             + (#3)*sin(180*((\ang)-(#1))/((#2)-(#1)))}
        );
}


\draw[cover] (0,0) ellipse[x radius=\R, y radius=\r];

\draw[cover, domain=0:720, samples=420, variable=\ang]
    plot (
        {\R*cos(\ang-90)},
        {\Y + \pitch*\ang/360 + \r*sin(\ang-90)}
    );


\BaseBump{\Ja}{\Jb}{\gamp}{RedOrange}
\BaseBump{\Ia}{\Ib}{\famp}{RoyalBlue}


\LiftBump{292}{352}{\gamp}{RedOrange}
\LiftBump{652}{712}{0.68}{RedOrange}

\LiftBump{28}{72}{\famp}{RoyalBlue}
\LiftBump{388}{432}{-0.55}{RoyalBlue}


\BaseInterval{\Ja}{\Jb}{Red}
\BaseInterval{\Ia}{\Ib}{Blue}

\LiftInterval{292}{352}{Red}
\LiftInterval{28}{72}{Blue}


\node[point] at ({\R*cos(-90)}, {\r*sin(-90)}) {};
\node[point] at ({\R*cos(-90)}, {\Y+\r*sin(-90)}) {};


\node[lab] at (0.3,1.42) {\LARGE$0$};
\node[lab, below] at (.3,-.7) {\LARGE$\mathrm{p} = 1$};

\node[lab, black] at (-4.4,-0.15) {\huge$S^1$};
\node[lab, black] at (-4.4,4.65) {\huge$\mathrm{T}$};

\node[lab, Red] at (-2,0.1) {\LARGE$J$};
\node[lab, Blue] at (2.70,-1.05) {\LARGE$I$};

\node[lab, Red] at (-3.75,2.85) {\LARGE$\hat J$};
\node[lab, Blue] at (3.35,1.95) {\LARGE$\hat I$};

\node[lab, RedOrange] at (-1.25,-1.45) {\LARGE$g_0$};
\node[lab, RoyalBlue] at (1.10,0) {\LARGE$f$};

\node[lab, RedOrange] at (-2.0,3.8) {\LARGE$g$};
\node[lab, RoyalBlue] at (4,3.25) {{\LARGE$\hat f$}};
\end{tikzpicture}}
\caption{}
\label{fig: TtoS^1}
\end{figure}

Let $(K, \pi^K)\in \Rep^\sigma(\Heis)$ be a twisted representation. For every $I\in \Jcal$, we obtain an inclusion of groups
\[
\begin{array}{ccc}
    \widetilde{L^\sigma_I\R} & \to & U(\Heis(I)) \\
     (f, z) &\mapsto &z\cdot W(f_{0,I})
\end{array}
\]
satisfying the following compatibility condition. Recall from Section \ref{sec: CrossedBalancedRepGA} that, given an inclusion of intervals $J\subset I$ in $\Jcal$, we write $\delta_{J\subset I}^\sigma: \Heis(J)\hookrightarrow\Heis(I)$ for the inclusion $\Heis(J)\hookrightarrow\Heis(I)$ if $J\subset I$ is standard and for the composition $\Heis(J)\hookrightarrow\Heis(I)\xrightarrow{\sigma_I^{-1}}\Heis(I)$ if $J\subset I$ is special. It is clear that, given any inclusion $J\subset I$, the following square~commutes,
\[\begin{tikzcd}
	{\widetilde{L^\sigma_J\R}} &&& {U(\Heis(J))} \\
	{\widetilde{L^\sigma_I\R}} &&& {U(\Heis(I))},
	\arrow["{(f,z)\mapsto z\cdot W(f_{0,J})}", from=1-1, to=1-4]
	\arrow[hook, from=1-1, to=2-1]
	\arrow["{\delta^\sigma_{J\subset I}}", hook, from=1-4, to=2-4]
	\arrow["{(f, z)\mapsto z\cdot W(f_{0,I})}"', from=2-1, to=2-4]
\end{tikzcd}\]
where $\widetilde{L^\sigma_J\R}\hookrightarrow \widetilde{L^\sigma_I\R}$ is the obvious inclusion. Therefore, we obtain homomorphisms $\pi_I: \widetilde{L^\sigma_I\R}\xrightarrow{(z,f)\mapsto z\cdot W(f_{0,I})} U(\Heis(I))\xrightarrow{\pi^K_I} U(K)$ parametrized by $I\in\Jcal$ so that the outer diagram
\[\begin{tikzcd}
	{\widetilde{L^\sigma_J\R}} &&& {U(\Heis(J))} && \\
	&&&&& {U(K)} \\
	{\widetilde{L^\sigma_I\R}} &&& {U(\Heis(I))}
	\arrow["{(f,z)\mapsto z\cdot W(f_{0,J})}", from=1-1, to=1-4]
	\arrow[hook, from=1-1, to=3-1]
	\arrow["{\pi^K_J}", from=1-4, to=2-6]
	\arrow["{\delta^\sigma_{J\subset I}}", hook, from=1-4, to=3-4]
	\arrow["{(f,z)\mapsto z\cdot W(f_{0,I})}"', from=3-1, to=3-4]
	\arrow["{\pi^K_I}"', from=3-4, to=2-6]
\end{tikzcd}\]
commutes, as the square commutes by the discussion above and the triangle commutes by the definition of a $\sigma$-twisted representation. Therefore, we obtain a strongly continuous unitary representation of the colimit
\[
\colim_{I\in \Jcal} \widetilde{L^\sigma\R}
\]
on the Hilbert space $K$. By the same arguments as in \cite[Thm. 4]{Hen19}, the colimit $\colim_{I\in \Jcal} \widetilde{L^\sigma\R}$ is canonically equivalent to $\widetilde{L^\sigma\R}$, and hence we obtain a strongly continuous unitary representation of $\widetilde{L^\sigma\R}$ on $K$ so that $\widetilde{L^\sigma_I\R}\subset \widetilde{L^\sigma\R}$ acts by the composition $\widetilde{L^\sigma_I\R}\xrightarrow{(f, z)\mapsto z\cdot W(f_{0,I})}U(\Heis(I))\xrightarrow{\pi^K_I}U(K)$. Extending this assignment trivially at the level of morphisms, we obtain a functor
\[
\mathfrak{R}^\sigma_0: \Rep^\sigma(\Heis)\to \Rep(\widetilde{L^\sigma\R}).
\]
Since the $*$-algebra generated by $\{W(f_{0,I})\}_{f\in L^\sigma_I\R}$ in $\Heis(I)$ is dense, the functor $\mathfrak{R}^\sigma_0$ is fully faithful. We will next show that, as in the untwisted case, the essential image of $\mathfrak{R}^\sigma_0$ is the category of positive energy representations of $\widetilde{L^\sigma\R}$. Note that the double cover $U(1)^{(2)}$ of $U(1)$, seen as the group of rotations on $\mathrm{T}$, acts on $\widetilde{L^\sigma\R}.$

\begin{definition}
    A strongly continuous unitary representation $(K, \pi)$ of $\widetilde{L^\sigma\R}$ is \emph{a positive energy representation} if the action of $\widetilde{L^\sigma\R}$ on $K$ extends to an action of $U(1)^{(2)}\ltimes \widetilde{L^\sigma\R}$ so that the generator of the action of $U(1)^{(2)}$ has positive spectrum. We write $\Rep^{pe}(\widetilde{L^\sigma\R})\subset \Rep(\widetilde{L^\sigma\R})$ for the full subcategory on the positive energy representations.
\end{definition}
\begin{proposition}\label{prop: Rsigmafactors}
    The fully faithful $\mathrm{W}^*$-functor $\mathfrak{R}^\sigma_0: \Rep^\sigma(\Heis)\to \Rep(\widetilde{L^\sigma\R})$ factors through a fully faithful $\mathrm{W}^*$-functor $\mathfrak{R}^\sigma: \Rep^\sigma(\Heis)\to \Rep^{pe}(\widetilde{L^\sigma\R})$.
\end{proposition}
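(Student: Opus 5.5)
The plan mirrors the untwisted argument preceding Proposition \ref{prop: R0Factors}. Full faithfulness of $\mathfrak{R}^\sigma_0$ is already established. A fully faithful functor whose essential image lies in a full subcategory automatically factors through a fully faithful functor into it. So it remains to show that, for every $(K,\pi^K)\in\Rep^\sigma(\Heis)$, the representation $\mathfrak{R}^\sigma_0(K)$ of $\widetilde{L^\sigma\R}$ extends to $U(1)^{(2)}\ltimes\widetilde{L^\sigma\R}$ with positive generator of rotations.

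\textbf{Step 1: rotations on $K$.} By \cite[Sec. 3.5]{GcrossedbraidedRep}, the $\sigma$-twisted representation $K$ carries a projective unitary action of $\widetilde{\Diff^+}(S^1)$ that restricts to an honest unitary action of $\widetilde{\Mob}$. The twisted analogue of \cite[Thm. 3.8]{weiner} gives that the generator $L_0$ of the rotation subgroup $\widetilde{U(1)}\subset\widetilde{\Mob}$ has positive spectrum. If no such analogue is available, I would prove it by passing to the fixed-point net $\Heis^{\Z/2}$, on which $K$ restricts to an honest representation, and applying \cite{weiner} there, since the $\widetilde{\Mob}$-action is determined locally. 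Locally, these actions are implemented inside the algebras $\Heis(I)$ via the Virasoro--Bott group. As in the untwisted case, this yields compatible homomorphisms $\Diff^\R_I(S^1)\ltimes\widetilde{L^\sigma_I\R}\to U(K)$, where $\Diff_I(S^1)$ acts on $L^\sigma_I\R$ through the lift of $I$ to $\mathrm{T}$. Gluing these as in \cite[Thm. 11]{Hen19} produces an action of $\widetilde{U(1)}\ltimes\widetilde{L^\sigma\R}$ on $K$, covariant in the sense that rotations act on $\widetilde{L^\sigma\R}$ by rotating the domain $\mathrm{T}$.

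\textbf{Step 2: descending to $U(1)^{(2)}$.} The rotation by $2\pi$ in $\widetilde{U(1)}$ acts on $L^\sigma\R$ as $f\mapsto -f$, that is, as $\sigma$, because $f(x+1)=-f(x)$. Rotation by $4\pi$ therefore acts trivially on $L^\sigma\R$. Hence the subgroup $2\Z\subset\Z\subset\widetilde{U(1)}$ generated by $4\pi$ is central in $\widetilde{U(1)}\ltimes\widetilde{L^\sigma\R}$. Note that the subgroup generated by $2\pi$ is not central, so one must work with this $2\Z$ subgroup. Decompose $K=\int_{\theta\in U(1)}K_\theta$ according to the characters of this central $2\Z$. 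Twist each $K_\theta$ by the conjugate of the character of $\widetilde{U(1)}$ extending $\theta$ via the principal branch of the logarithm. The spectrum of $L_0$ moves by a bounded amount, so positivity is preserved, and the twisted action factors through $\widetilde{U(1)}/2\Z=U(1)^{(2)}$. Finally, repeat verbatim the countable-dense-subgroup argument from the untwisted case to check that the new $U(1)^{(2)}$-action and the unchanged $\widetilde{L^\sigma\R}$-action assemble into an action of the semidirect product. This works because the character twist is central on each fibre.

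\textbf{Main obstacle.} I expect Step 1 to be the hardest part: producing positivity of energy and the covariant extension for twisted representations. In particular, the local implementations near the point $\mathrm{p}$ must glue correctly, given the special inclusions $\delta^\sigma_{J\subset I}$. I would first try to deduce positivity directly from \cite[Sec. 3.5]{GcrossedbraidedRep}, together with the restriction-to-$\Heis^{\Z/2}$ trick. For the gluing, I would check that the cocycle identification $(f,z)\mapsto z\,W(f_{0,I})$ intertwines the diffeomorphism actions on both sides.
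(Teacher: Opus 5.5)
Your proposal is correct and follows the paper's route: repeat the untwisted gluing and descent argument, and get positivity of energy by restricting the twisted representation to an honest representation of $\Heis^{\Z/2}$ and applying Weiner's theorem there. Your point that the descent must use the central subgroup $2\Z$ (rotation by $4\pi$) is a correct detail that the paper leaves implicit in ``the same arguments'': rotation by $2\pi$ acts on $L^\sigma\R$ as $f\mapsto -f$, so it is not central.
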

\begin{proof}
    Let $H\in \Rep^\sigma(\Heis)$ be a twisted representation. The same arguments as those leading to Proposition \ref{prop: R0Factors} imply that $\mathfrak{R}^\sigma(H)$ extends to a unitary representation of $U(1)^{(2)}\ltimes \widetilde{L^\sigma\R}$. The main difference with Proposition \ref{prop: R0Factors} is that the result in \cite{weiner} is only stated for honest representations of the conformal net. However, restricting $H$ to a representation of the subnet $\Heis^{\Z/2}\subset \Heis$ provides an honest representation of the conformal net $\Heis^{\Z/2}$. Then, we may apply the result in \cite{weiner} to this honest representation of $\Heis^{\Z/2}$ to show that the constructed action of $U(1)^{(2)}\ltimes \widetilde{L^\sigma\R}$ on $H$ is of positive energy, as in the untwisted case.
\end{proof}

\subsection{Characterizing $\Rep^\sigma(\Heis)$}

In the current section, we use the fully faithful $\mathrm{W}^*$-functor $\mathfrak{R}^\sigma: \Rep^\sigma(\Heis)\to \Rep^{pe}(\widetilde{L^\sigma\R})$ and our understanding of positive energy representations of twisted loop groups (\cite{LG}) to characterize $\Rep^\sigma(\Heis)$, see Theorem \ref{thm: RepSigmaHeisIsHilb}.

We claim that there is a unique irreducible positive energy representation of $\widetilde{L^\sigma\R}$ up to equivalence. We first construct this representation. Given a twisted loop $f\in L^\sigma\R$, we can~define 
\begin{equation}\label{eq: normTwisted}
||f||^2 = 2\pi \sum\limits_{k \in\mathbb{Z}_{\geq0} + 1/2} k\cdot ||\hat{f}_k||^2,
\end{equation}
where $\hat{f_k} = \int_0^{1} e^{-2\pi i k \theta}f(\theta){\text{d}\theta}$. Together with the complex structure $\mathcal{J}(\hat{f}_k) = -i\text{sgn}(k)\hat{f}_k$, the vector space $L^\sigma\R$ can be completed with respect to the norm in Equation \eqref{eq: normTwisted} to obtain the Hilbert space $\overline{L^\sigma\R}$. The inner product is obtained by polarization. For every twisted loop $f\in \widetilde{L^\sigma\R}$, we write $W^\sigma(f)\in U(e^{\overline{L^\sigma\R}})$ for the Weyl operator
\[
W^\sigma(f)(e^h) = e^{-\frac{1}{2}||f||^2-\langle f, h\rangle}e^{f+h}.
\]
It holds that $W^\sigma(f)W^\sigma(g) = e^{-i\omega(f,g)}W^\sigma(f+g)$, where 
\[
\omega(f,g) = \text{Im}\langle f, g\rangle = \frac{1}{2}\int_0^{1}f(\theta)g'(\theta) {\mathrm{d}\theta}=:\frac{1}{2}\int fg'.
\]
\begin{proposition}\label{prop: ReppeLSigmaIsHIlb}
    The pair $(e^{\overline{L^\sigma\R}}, W^\sigma(-))$ is an irreducible positive energy unitary representation of $\widetilde{L^\sigma\R}$. In addition, it is the only irreducible positive energy unitary representation of $\widetilde{L^\sigma\R}$. Therefore, the $\mathrm{\mathrm{W}^*}$-functor
    \[
    \mathfrak{L}^\sigma: \Hilb\cdot \tau\to \Rep^{pe}(\widetilde{L^\sigma\R})
    \]
    sending $\tau$ to $(e^{\overline{L^\sigma\R}}, W^\sigma(-))$ is an equivalence of $\mathrm{W}^*$-categories.
\end{proposition}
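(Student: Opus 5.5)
The plan is to mirror the untwisted case, Proposition \ref{prop: StandardRep}, and in fact reduce to the same structural result from \cite{LG}. The key observation is that the twisted loop group has no constant (zero-mode) part. The Fourier modes of $f\in L^\sigma\R$ are indexed by $k\in\Z+\tfrac12$, so the restriction of $\omega(f,g)=\tfrac12\int fg'$ to $L^\sigma\R$ is nondegenerate. There is no central $\R$-factor analogous to the splitting $L\R=\R\times V$ used before Equation \eqref{eq: RepLRsplits}. Hence $\widetilde{L^\sigma\R}$ is itself a Heisenberg group of the type considered in \cite[Sec. 9.5]{LG}. The polarization is given by the complex structure $\mathcal{J}$, which picks out the positive half-integer modes, and the positive definite form is the one of Equation \eqref{eq: normTwisted}.

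The first step is to check that $(e^{\overline{L^\sigma\R}},W^\sigma)$ is a strongly continuous unitary representation of $\widetilde{L^\sigma\R}$. Unitarity and the Weyl relations $W^\sigma(f)W^\sigma(g)=e^{-i\omega(f,g)}W^\sigma(f+g)$ follow from the same Fock-space computation as in Section \ref{Sec: HeisDef}, using $\omega=\mathrm{Im}\langle-,-\rangle$. Strong continuity follows from continuity of $f\mapsto e^{-\frac12\|f\|^2-\langle f,h\rangle}e^{f+h}$ on coherent vectors together with density of coherent vectors. Irreducibility is the standard fact that the Weyl operators act irreducibly on a Fock space: a bounded operator commuting with all $W^\sigma(f)$ commutes with the creation and annihilation operators, hence preserves the vacuum line, and is therefore scalar.

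For positive energy, I would let $U(1)^{(2)}$ act on $\overline{L^\sigma\R}$ by rotation of $\mathrm{T}$. Rotation multiplies $\hat f_k$ by a phase $e^{2\pi i k\theta}$, so it preserves the norm \eqref{eq: normTwisted} and commutes with $\mathcal{J}$. It therefore induces a unitary action $\Gamma(R_\theta)(e^h)=e^{R_\theta h}$ on the Fock space. This action satisfies $\Gamma(R_\theta)W^\sigma(f)\Gamma(R_\theta)^*=W^\sigma(R_\theta f)$, so together with $W^\sigma$ it assembles into a representation of the semidirect product $U(1)^{(2)}\ltimes\widetilde{L^\sigma\R}$. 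Its generator is the second quantization of the one-particle generator, whose spectrum is $\{k\in\Z_{\ge0}+\tfrac12\}$. That spectrum is positive, so the second-quantized generator has spectrum in $\tfrac12\Z_{\ge0}$.

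Uniqueness is the main point and the step I expect to be the real obstacle. I would invoke the uniqueness theorem \cite[Prop. 9.5.10]{LG}: every positive energy representation of a Heisenberg group built from a positive polarization of a nondegenerate symplectic space is a multiple of the Fock representation. The argument behind this theorem goes as follows. In a positive energy representation, the finite-energy vectors of lowest energy are annihilated by the lowering operators, which correspond to the $\mathcal{J}$-antiholomorphic modes. The cyclic subrepresentation generated by such a vacuum is then a copy of the Fock space, and a Zorn/orthogonal-complement argument decomposes the representation into copies $H\cdot e^{\overline{L^\sigma\R}}$. The delicate point is verifying that the hypotheses of \cite{LG} apply: the twisted energy grading is by $\tfrac12\Z$ rather than $\Z$, and one needs the rotation action to exist on the full $\widetilde{L^\sigma\R}$ rather than only on a dense subgroup. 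I expect the statement in \cite{LG} to be phrased for general Heisenberg groups with a positive energy operator having discrete spectrum bounded below, so the half-integer shift is harmless. Once uniqueness is established, $\mathfrak{L}^\sigma$ is fully faithful, since $\End(e^{\overline{L^\sigma\R}})=\C$, and essentially surjective by the multiplicity decomposition, hence an equivalence of $\mathrm{W}^*$-categories.
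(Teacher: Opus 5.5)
Your proposal is correct and follows essentially the same route as the paper. You show that $\omega$ is nondegenerate on $L^\sigma\R$; the paper argues that $\omega(-,g)=0$ forces $g$ to be constant and hence zero by antiperiodicity, which is your ``no zero mode'' observation. You then recognize $\widetilde{L^\sigma\R}$ as a Heisenberg group in the sense of \cite[Def. 9.5.1]{LG} and invoke \cite[Prop. 9.5.6 and 9.5.10]{LG}, exactly as the paper does; your hand verification of irreducibility and positive energy of the Fock representation is extra detail that the paper delegates entirely to \cite{LG}.
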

\begin{proof}
Note that $L^\sigma\R$ is a topological vector space and the form
\[
\omega(f,g) = \frac{1}{2}\int_0^{1} f(\theta)g'(\theta){\mathrm{d}\theta}
\]
is skew bilinear and nondegenerate. Indeed, if $g' \neq 0$, there exists a twisted loop $f\in L^\sigma\R$ such that $\int_0^{1} fg' \neq 0$. Hence, if $\omega(f,g) = 0$ for all $f\in L^\sigma\R$, then $g$ is necessarily constant in $(0,1)$, hence constant on $\mathrm{T}$. But the only constant function in $L^\sigma\R$ is the zero function. Thus, the central extension $\widetilde{L^\sigma\R}$ is a Heisenberg group in the sense of \cite[Def. 9.5.1]{LG} and by \cite[Prop. 9.5.10]{LG} it has a unique irreducible positive energy representation. Moreover, the unique irreducible positive energy representation of $\widetilde{L^\sigma\R}$ is the defining representation $(e^{\overline{L^\sigma\R}}, W^\sigma(-))$ \cite[Prop. 9.5.6 and 9.5.10]{LG}.
\end{proof}

We can now characterize the $\mathrm{W}^*$-category $\Rep^\sigma(\Heis)$. The representation $(e^{\overline{L^\sigma\R}}, W^\sigma(-))$ of $\widetilde{L^\sigma\R}$ induces $*$-actions of all the dense subalgebras $\langle W(f)\rangle_{f\in L_I\R}\subset \Heis(I)$ for $I\in\Jcal$. Indeed, given $I\in\Jcal$ and $f\in L_I\R$, we define the action of $W(f)$ on $e^{\overline{L^\sigma\R}}$ as $\pi^\sigma_I(W(f)):=W^\sigma(\hat{f}_{\hat{I}})$. These actions satisfy that, if $J\subset I$ is an inclusion in $\Jcal$ and $f\in L_J\R$ is a twisted loop, $\pi^\sigma_J(W(f)) = \pi_I^\sigma(\delta^\sigma_{J\subset I}W(f))$. Hence, if the actions of $\langle W(f)\rangle_{f\in L_I\R}$ extend to actions $\pi^\sigma_I$ of $\Heis(I)$ for all $I$, then we obtain a $\sigma$-twisted representation $\pi^\sigma$ of $\Heis$ on $e^{\overline{L^\sigma\R}}$.

\begin{theorem}\label{thm: RepSigmaHeisIsHilb}
The positive energy representation $(e^{\overline{L^\sigma\R}}, W^\sigma(-))$ of $\widetilde{L^\sigma\R}$ extends to a $\sigma$-twisted representation $H_0^\sigma = (e^{\overline{L^\sigma\R}}, \pi^\sigma)$ of $\Heis$. Therefore, the functor
\[
\mathfrak{M}^\sigma:\Hilb\cdot\tau\to \Rep^\sigma(\Heis)
\]
sending $\tau$ to $H_0^\sigma$ is an equivalence of $\mathrm{W}^*$-categories.
\end{theorem}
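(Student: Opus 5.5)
The plan has two parts. First, show that some nonzero $\sigma$-twisted representation of $\Heis$ exists; second, combine it with the fully faithful functor $\mathfrak{R}^\sigma$ of Proposition \ref{prop: Rsigmafactors} and the classification in Proposition \ref{prop: ReppeLSigmaIsHIlb}.

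For existence I would invoke the construction of \cite{fredenhagen}, which produces a nonzero $\sigma$-twisted representation $K$ of $\Heis$. By Proposition \ref{prop: Rsigmafactors}, $\mathfrak{R}^\sigma(K)$ is a nonzero positive energy representation of $\widetilde{L^\sigma\R}$. By Proposition \ref{prop: ReppeLSigmaIsHIlb} (concretely, \cite[Prop. 9.5.10]{LG}), it is unitarily equivalent to $H\cdot(e^{\overline{L^\sigma\R}},W^\sigma(-))$ for some nonzero multiplicity space $H$. Fix a unit vector in $H$. The corresponding isometric inclusion $e^{\overline{L^\sigma\R}}\hookrightarrow K$ identifies $W^\sigma(\hat f_{\hat I})$ with the restriction of $\pi^K_I(W(f))$ to a closed subspace. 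That subspace is invariant under $\pi^K_I(\Heis(I))$: it is invariant under the generating Weyl unitaries and their adjoints, hence under the weak closure, since $\pi^K_I$ is normal.

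Restricting therefore gives normal actions of every $\Heis(I)$ on $e^{\overline{L^\sigma\R}}$ that extend $W(f)\mapsto W^\sigma(\hat f_{\hat I})$. The compatibility $\pi^\sigma_J=\pi^\sigma_I\circ\delta^\sigma_{J\subset I}$ holds on the dense subalgebras $\langle W(f)\rangle$, as observed just before the statement. It then extends to all of $\Heis(J)$ by normality, because both sides are normal maps agreeing on an ultraweakly dense $*$-subalgebra. This yields $H^\sigma_0$. An alternative that avoids \cite{fredenhagen} would be to argue local normality directly, via quasi-equivalence of the local restrictions with the vacuum; that requires a Shale-type criterion comparing $\overline{L_I\R}$ inside the two Fock spaces, and it is the step I would regard as the main obstacle if one wanted a self-contained proof.

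With $H_0^\sigma$ in hand, note that $\mathfrak{R}^\sigma(H^\sigma_0)=(e^{\overline{L^\sigma\R}},W^\sigma)$ by construction. Hence $\mathfrak{R}^\sigma\circ\mathfrak{M}^\sigma\cong\mathfrak{L}^\sigma$, which is an equivalence by Proposition \ref{prop: ReppeLSigmaIsHIlb}; here $\mathfrak{M}^\sigma$ is extended to multiplicity spaces by $H\mapsto H\otimes H_0^\sigma$. Since $\mathfrak{R}^\sigma$ is fully faithful, it is also essentially surjective, because every object of the target is isomorphic to the image of $H\otimes H^\sigma_0$. So $\mathfrak{R}^\sigma$ is an equivalence, and therefore $\mathfrak{M}^\sigma$ is an equivalence too.
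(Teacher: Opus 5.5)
Your proposal is correct and follows essentially the same route as the paper: existence of a nonzero $\sigma$-twisted representation via \cite{fredenhagen}, then splitting off a copy of $(e^{\overline{L^\sigma\R}},W^\sigma(-))$ using the fully faithful functor $\mathfrak{R}^\sigma$ and Proposition \ref{prop: ReppeLSigmaIsHIlb}. Your invariant-subspace argument is the concrete form of the paper's step, which lifts the projection $\mathfrak{p}$ through $\mathfrak{R}^\sigma$ and splits the resulting idempotent in $\Rep^\sigma(\Heis)$. One small precision: \cite{fredenhagen} provides a nonzero object of $\sigma-\Loc_{I_0}(\Heis)$ rather than a twisted representation directly, so the paper first passes through the equivalence $\sigma-\Loc_{I_0}(\Heis)\cong\Rep^\sigma(\Heis)$ of \cite[Prop.~4.6]{GcrossedbraidedRep} (or simply the functor $\rho\mapsto(H_0,\pi_0\circ\rho)$).
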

\begin{proof}
    Let $I_0\in\Jcal_\mathrm{p}$. By \cite[Sec. 5]{fredenhagen}, there exists a nonzero object in the category $\sigma-\Loc_{I_0}(\Heis)$. Since there is an equivalence of categories $\sigma-\Loc_{I_0}(\Heis)\cong \Rep^\sigma(\Heis)$ by \cite[Prop. 4.6]{GcrossedbraidedRep}, there exists a nonzero object $H^\sigma$ in $\Rep^\sigma(\Heis)$. By Proposition \ref{prop: Rsigmafactors}, we have a fully faithful restriction functor $\mathfrak{R}^\sigma: \Rep^\sigma(\Heis)\to \Rep^{pe}(\widetilde{L^\sigma\R})$, and $\mathfrak{R}^\sigma(H^\sigma)\in \Rep^{pe}(\widetilde{L^\sigma\R})$ is nonzero. Since $\Rep^{pe}(\widetilde{L^\sigma\R})\cong\Hilb$ by Proposition \ref{prop: ReppeLSigmaIsHIlb}, there exists a projection $\mathfrak{p}: \mathfrak{R}^\sigma(H^\sigma)\to \mathfrak{R}^\sigma(H^\sigma)$ in $\Rep^{pe}(\widetilde{L^\sigma\R})$ whose splitting is unitarily isomorphic to $(e^{\overline{L^\sigma\R}}, W^\sigma(-))\in \Rep^{pe}(\widetilde{L^\sigma\R})$. By fully faithfulness of $\mathfrak{R}^\sigma$, we can lift $\mathfrak{p}$ to an idempotent $\tilde{\mathfrak{p}}:  H^\sigma\to  H^\sigma$, whose splitting (which exists because $\Rep^\sigma(\Heis)$ is idempotent complete) produces an irreducible object $H^\sigma_0\in \Rep^\sigma(\Heis)$ whose image under $\mathfrak{R}^\sigma$ is unitarily isomorphic to $(e^{\overline{L^\sigma\R}}, W^\sigma(-))$. Therefore, the first claim follows. Since $\Rep^{pe}(\widetilde{L^\sigma\R})\cong \Hilb$ by Proposition \ref{prop: ReppeLSigmaIsHIlb}, the second statement also follows.
\end{proof}

\subsection{Tensoring with the twisted representation}

Recall that we denote by $H^\sigma_0 = (e^{\overline{L^\sigma\R}}, \pi^\sigma)$ the unique (up to isomorphism) irreducible $\sigma$-twisted representation of $\Heis$, which is the extension to a $\sigma$-twisted representation of $\Heis$ of the standard positive energy representation $(e^{\overline{L^\sigma\R}}, W^\sigma(-))$ of $\widetilde{L^\sigma\R}.$ The twisted representation $H^\sigma_0$ is also the image of $\tau\in \Hilb\cdot \tau$ under the functor $\mathfrak{M}^\sigma: \Hilb\cdot \tau\to \Rep^\sigma(\Heis)$ from Theorem \ref{thm: RepSigmaHeisIsHilb}, by construction. Since $\Rep^{\Z/2}(\Heis)$ is a $\Z/2$-graded tensor category, there exist functors $-\boxtimes H^\sigma_0: \Rep(\Heis)\to \Rep^\sigma(\Heis)$ and $ H^\sigma_0\boxtimes -: \Rep(\Heis)\to \Rep^\sigma(\Heis)$ given by tensoring with $H^\sigma_0$. We denote by $\text{Forget}: \Rep(C_0(\R))\to \Hilb$ the functor that sends a representation of $C_0(\R)$ to its underlying Hilbert space. The goal of this section is to prove the following proposition, which characterizes the fusion rules of honest representations of $\Heis$ with the twisted representation $H^\sigma_0$.

\begin{proposition}\label{prop: TensoratorForMSigma}
The equivalence of $\mathrm{W}^*$-categories $\mathfrak{M}\oplus\mathfrak{M}^\sigma: \Rep(C_0(\R))\oplus\Hilb\to \Rep^{\mathbb{Z}/2}(\Heis)$ admits unitary natural isomorphisms filling the diagram
\[\begin{tikzcd}
&& {\Rep(\Heis)} && \\
{\Rep^\sigma(\Heis)} && {\Rep(C_0(\R))} && {\Rep^\sigma(\Heis)} \\
&& \Hilb
\arrow[""{name=0, anchor=center, inner sep=0}, "{H^\sigma_0\boxtimes -}"', from=1-3, to=2-1]
\arrow[""{name=1, anchor=center, inner sep=0}, "{-\boxtimes H^\sigma_0}", from=1-3, to=2-5]
\arrow["{\mathfrak{M}}"', from=2-3, to=1-3]
\arrow["{\text{Forget}}", from=2-3, to=3-3]
\arrow[""{name=2, anchor=center, inner sep=0}, "{\mathfrak{M}^\sigma}", from=3-3, to=2-1]
\arrow[""{name=3, anchor=center, inner sep=0}, "{\mathfrak{M}^\sigma}"', from=3-3, to=2-5]
\arrow["{[H^\sigma_0,-]}"'{pos=0.3}, shift left=3, between={0.2}{0.8}, Rightarrow, from=0, to=2]
\arrow["{[-,H^\sigma_0]}"{pos=0.3}, shift right=3, between={0.2}{0.8}, Rightarrow, from=1, to=3]
\end{tikzcd}\]
\end{proposition}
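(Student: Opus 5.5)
We will construct the isomorphisms $[H^\sigma_0,-]$ and $[-,H^\sigma_0]$ by adapting the three-step strategy of Section \ref{sec: ProvidingTensorator}. First we treat irreducible objects, then semisimple objects by linearity, and finally all of $\Rep(C_0(\R))$ by continuity. The target is now much simpler, because $\Rep^\sigma(\Heis)\cong\Hilb\cdot\tau$ by Theorem \ref{thm: RepSigmaHeisIsHilb}. For any $K\in\Rep(\Heis)$ the object $H^\sigma_0\boxtimes K$ is therefore isomorphic to $\Hom_{\Rep^\sigma(\Heis)}(H^\sigma_0, H^\sigma_0\boxtimes K)\cdot H^\sigma_0$. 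It thus suffices to produce, naturally in $\int^\oplus_{t\in T}\delta_{p(t)}\mathrm{d}\upsilon(t)$, a unitary $L^2(T,\upsilon)\otimes e^{\overline{L^\sigma\R}}\cong H^\sigma_0\boxtimes\mathfrak{M}(p,\upsilon)$ of $\sigma$-twisted representations. The same holds with the order of the factors swapped.

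\emph{Step 1 (irreducibles).} We replace $\mathfrak{M}$ by $\mathfrak{M}_g$ via $\kappa_g$ (Lemma \ref{Lemm: MgsFormClique}), with $g\in L_{I_2}\R$ of charge 1 and $I_2\subset S^1_+$. For $\delta_x$ this gives $\mathfrak{M}_g(\delta_x)=\mathfrak{E}(\rho_{xg})$, with $\rho_{xg}$ localized in $J\in\Jcal_\mathrm{p}$. We then work in $\Z/2-\Loc_{J}(\Heis)$. There $H^\sigma_0$ corresponds to some $\sigma$-localized endomorphism $\rho^\sigma$ (using \cite{fredenhagen} and the equivalence $\mathfrak{E}$). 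The composite $\rho^\sigma\circ\rho_{xg}$ is a $\sigma$-localized endomorphism, and the induced twisted representation $\mathfrak{E}(\rho^\sigma\circ\rho_{xg})$ is irreducible with the same restriction to $\widetilde{L^\sigma\R}$ as $H^\sigma_0$ up to a unitary. Concretely, we look for a unitary $V_x$ on $e^{\overline{L^\sigma\R}}$ implementing $W^\sigma(\hat f)\mapsto e^{ix\int g f}W^\sigma(\hat f)$. By the twisted analogue of Lemma \ref{lemm: IsInner} this is $V_x=W^\sigma(x\hat G)$, where $\hat G\in L^\sigma\R$ is a twisted primitive of $\hat g$. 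Such a primitive always exists, since every twisted function has a twisted antiderivative (there is no charge obstruction, because $\int_0^2$ of an antiperiodic function vanishes). We define $[H^\sigma_0,\delta_x]$ as $\Psi$ composed with $V_x$, exactly as for $^{\mathrm{irr}}[x,y]$. Under the canonical identification of the Hilbert space of $H^\sigma_0(S^1_-)\boxtimes\mathfrak{M}_g(\delta_x)(S^1_+)$ with $e^{\overline{L^\sigma\R}}$ (via the unitor-type map, as in Equation \eqref{eq: CompotisionUnitorAndSo}), $[H^\sigma_0,\delta_x]$ becomes $W^\sigma(x\hat G)$. For $[-,H^\sigma_0]$ we take $g\in L_{I_1}\R$ with $I_1\subset S^1_-$ instead.

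\emph{Steps 2 and 3.} We extend linearly to $\Rep^{\mathrm{ss}}(C_0(\R))$. For a general object we define the unitary on $L^2(T,\upsilon,e^{\overline{L^\sigma\R}})$ by $\psi\mapsto(t\mapsto W^\sigma(p(t)\hat G)\psi(t))$, and check that it agrees with the semisimple one on $C_\sqcup(T)_0$, as in Lemma \ref{lemm: TensoratorInSSSimplified}. The intertwining property and naturality then follow by the density argument of Lemma \ref{lemm: dense} and Proposition \ref{prop: ExtendTensoratorsByContinuity}. This requires strong continuity in $p$ of both sides, which is the twisted analogue of Lemma \ref{lemma: HeisActionsDependContinuously} with the same path-continuation argument. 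Finally we conjugate by $\kappa_g$ and compose with the identification $L^2(T,\upsilon)\cdot H^\sigma_0=\mathfrak{M}^\sigma(\text{Forget}(p,\upsilon))$.

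\textbf{Main obstacle.} The delicate point is Step 1. We must check that the Connes fusion of $H^\sigma_0$ with $\mathfrak{M}_g(\delta_x)$ over $S^1_-\times S^1_+$ is literally the localized-endomorphism twisted representation. This requires taking the twisted version of $\Psi$ from \cite[Sec. 4]{GcrossedbraidedRep} to be given by the same Hilbert-space map as the unitor, as was used in the untwisted case. It also requires care with standard versus special inclusions. If $g$ is supported away from $\mathrm{p}$ this causes no trouble, but the twisted primitive $\hat G$ is supported across the lift of $\mathrm{p}$. So we must verify, rather than assume, that $W^\sigma(x\hat G)$ intertwines the twisted actions for special inclusions as well.
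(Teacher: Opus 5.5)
Your construction of $[H^\sigma_0,-]$ is the paper's proof. On $\delta_x$ you factor through $\mathfrak{E}$ and use $\Psi_{\rho^\sigma,\rho_{xg}}$, which is the unitor as a map of Hilbert spaces. You use the isomorphism $\mathfrak{E}(\rho^\sigma)\cong H^\sigma_0$, which cancels against the unitor by naturality, and the Weyl operator of the antiperiodic primitive $\hat G(\theta)=\int_0^\theta g-\tfrac12\int_0^1 g$. You then extend by multiplicities and identify the result with $t\mapsto W^\sigma(-p(t)\hat G)$ (the inverse of your formula, since you orient the arrow the other way). Finally you pass to all of $C_\sqcup(T)$ by density and strong continuity, and precompose with $\id\boxtimes\kappa_g$.

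The obstacle you flag about special inclusions is harmless. The paper checks the intertwining relation only for $I\in\Jcal_{\mathrm p}$, where $\int\hat f\hat g=\int fg$. This suffices: for $I\ni\mathrm p$, each $W(f)$ is a phase times a product of Weyl operators localized in intervals of $\Jcal_{\mathrm p}$, and the twisted compatibility maps dictate their actions.

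The step that does not go through as written is $[-,H^\sigma_0]$. The paper does not redo the construction there. It sets $[(p,\upsilon),H^\sigma_0]:=[H^\sigma_0,(p,\upsilon)]\circ\mathbb{B}_{\mathfrak{M}(p,\upsilon),H^\sigma_0}$. This is a natural unitary of $\sigma$-twisted representations because $\mathfrak{M}(p,\upsilon)$ has trivial degree and $T_1=\mathrm{Id}$.

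Your ``same construction with $g$ supported in $S^1_-$'' is not actually symmetric. With the factors swapped, $\Psi$ lands in $\mathfrak{E}(\rho_{xg}\circ\rho^\sigma)$, where $\rho^\sigma$ acts first. Since $H^\sigma_0$ is not invertible, $\rho^\sigma$ is not an automorphism. So transporting along $\mathfrak{E}(\rho^\sigma)\cong H^\sigma_0$ does not produce $H^\sigma_0$ precomposed with $\rho_{xg}$, and without further argument $V_x$ is not the right intertwiner.

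To repair this, choose the localization interval $I'$ of $\rho^\sigma$ disjoint from $\mathrm{supp}(g)$ and use M\"uger's commutation relation.
\begin{itemize}
\item If $\mathrm{supp}(g)\subset(\partial_+I',\mathrm{p})$ (e.g.\ $I'\subset S^1_+$ and $g$ in $S^1_-$), then $\rho^\sigma\circ\rho_{yg}=\gamma_\sigma(\rho_{yg})\circ\rho^\sigma=\rho_{-yg}\circ\rho^\sigma$. Hence $\rho_{xg}\circ\rho^\sigma=\rho^\sigma\circ\rho_{-xg}$, and the sign of $x$ in $V_x$ flips.
\item If $\mathrm{supp}(g)\subset(\mathrm{p},\partial_-I')$, the two composites simply agree.
\item If the supports overlap, neither identity is available, and you would first have to transport $\rho^\sigma$.
\end{itemize}
With this added, together with the analogue of the continuity lemma for a varying left factor, your direct route works. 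The crossed braiding, however, gives the second isomorphism in one line.
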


We follow a similar strategy to Section \ref{sec: ProvidingTensorator}. We will first work towards providing the unitary natural isomorphism $[H_0^\sigma,-]$. The existence of $[-,H^\sigma_0]$ will then follow using the $\Z/2$-crossed braiding. We fix an interval $I_0\in \Jcal_{\mathrm{p}}$ and a loop $g\in L_{I_0}\R$ of charge 1. Recall from Definition \ref{def: FunctorMg} and Lemma \ref{Lemm: MgsFormClique} that we obtain a functor $\mathfrak{M}_g: \Rep(C_0(\R))\to \Rep(\Heis)$ naturally isomorphic to $\mathfrak{M}$. We first provide a unitary natural isomorphism $^{\text{irr}}[H_0^\sigma, -]$ filling
\begin{equation}\begin{tikzcd}\label{eq: 2cellIrrTwisted}
{\Rep^{\text{irr}}(C_0(\R))} && {\Rep^\sigma(\Heis)}.
\arrow[""{name=0, anchor=center, inner sep=0}, "{H_0^\sigma\boxtimes  \mathfrak{M}_{g}(-)}", curve={height=-18pt}, from=1-1, to=1-3]
\arrow[""{name=1, anchor=center, inner sep=0}, "{H_0^\sigma}"', curve={height=18pt}, from=1-1, to=1-3]
\arrow["{^{\text{irr}}[H_0^\sigma,-]_g}", shift right=4, between={0.2}{0.8}, Rightarrow, from=0, to=1]
\end{tikzcd}\end{equation}
Here, the bottom functor is the constant $\mathbb{C}$-linear functor with value $H_0^\sigma$.

Recall that there is a tensor equivalence $\mathfrak{E}: \mathbb{Z}/2-\Loc_{I_0}(\Heis)\to \Rep^{\mathbb{Z}/2}(\Heis)$, see Section \ref{sec: CrossedBalancedRepGA}. On the tensor subcategory $\Loc_{I_0}(\Heis)\subset \mathbb{Z}/2-\Loc_{I_0}(\Heis)$, this functor restricts to the equivalence (also denoted by $\mathfrak{E}$) that we have used in Section \ref{sec: ProvidingTensorator}. On the twisted localized endomorphisms $\sigma-\Loc_{I_0}(\Heis)\subset \mathbb{Z}/2-\Loc_{I_0}(\Heis)$, the functor sends a $\sigma$-twisted endomorphism $\rho$ to the $\sigma$-twisted representation $\mathfrak{E}(\rho)$ of $\Heis$ on $e^{\overline{L\R/\R}}$ where $W(f)\in \Heis(I)$ acts by $\pi_0\circ \rho(W(f))$ for $I\in \Jcal_{\mathrm{p}}$. Let us fix an endomorphism $\rho_\tau\in \sigma-\Loc_{I_0}(\Heis)$ such that $\mathfrak{E}(\rho_\tau)\cong H_0^\sigma$ and a unitary isomorphism
\[
\phi: \mathfrak{E}(\rho_\tau)\xrightarrow{\cong}H_0^\sigma
\]
of $\sigma$-twisted representations of $\Heis$. We write $(e^{\overline{L\R/\R}}, \pi_\tau):=\mathfrak{E}(\rho_\tau)$. The tensorator $\Psi$ for $\mathfrak{E}$ and the isomorphism $\phi$ allow us to construct unitary natural isomorphisms 
\begin{equation}\label{eq: 2Cells1}\begin{tikzcd}
& {\Loc_{I_0}(\Heis)} && {\sigma-\Loc_{I_0}(\Heis)} \\
	{\Rep^{\text{irr}}(C_0(\R))} \\
	& {\Rep(\Heis)} && {\Rep^\sigma(\Heis)}.
	\arrow["{\rho_\tau \circ-}", from=1-2, to=1-4]
	\arrow["{\mathfrak{E}}", from=1-2, to=3-2]
	\arrow["\Psi_{\rho_\tau,-}"', shift right=4, between={0.3}{0.7}, Rightarrow, from=1-4, to=3-2]
	\arrow["{\mathfrak{E}}"', from=1-4, to=3-4]
	\arrow["{\mathfrak{F}_g}", from=2-1, to=1-2]
	\arrow[""{name=0, anchor=center, inner sep=0}, "{\mathfrak{M}_g}"', from=2-1, to=3-2]
	\arrow[""{name=1, anchor=center, inner sep=0}, "{H_0^\sigma\boxtimes -}"', curve={height=12pt}, from=3-2, to=3-4]
	\arrow[""{name=2, anchor=center, inner sep=0}, "{\mathfrak{E}(\rho_\tau)\boxtimes -}", curve={height=-12pt}, from=3-2, to=3-4]
	\arrow["{\text{Id}}"', between={0.2}{0.7}, Rightarrow, from=0, to=1-2]
	\arrow["{\phi\boxtimes -}"', between={0.2}{0.8}, Rightarrow, from=2, to=1]
\end{tikzcd}\end{equation}
Therefore, to produce the unitary natural isomorphism $^{\text{irr}}[H_0^\sigma, -]_g: H_0^\sigma\boxtimes \mathfrak{M}_g(-)\xrightarrow{\cong} H_0^\sigma$ in Diagram \eqref{eq: 2cellIrrTwisted}, it is enough to produce a unitary natural isomorphism
\[\begin{tikzcd}
	{\Rep^{\text{irr}}(C_0(\R))} && {\Rep^\sigma(\Heis)} \\
	{\Loc_{I_0}(\Heis)} && {\sigma-\Loc_{I_0}(\Heis)}
	\arrow[""{name=0, anchor=center, inner sep=0}, "{H_0^\sigma}", from=1-1, to=1-3]
	\arrow[""{name=0p, anchor=center, inner sep=0}, phantom, from=1-1, to=1-3, start anchor=center, end anchor=center]
	\arrow["{\mathfrak{F}_g}", from=1-1, to=2-1]
	\arrow[""{name=1, anchor=center, inner sep=0}, "{\rho_\tau \circ-}"', from=2-1, to=2-3]
	\arrow[""{name=1p, anchor=center, inner sep=0}, phantom, from=2-1, to=2-3, start anchor=center, end anchor=center]
	\arrow["{\mathfrak{E}}"', from=2-3, to=1-3]
	\arrow[between={0.2}{0.8}, Rightarrow, from=1p, to=0p]
\end{tikzcd}\]
We will do this in two steps. Let us write $H^\sigma_g:\Rep^{\text{irr}}(C_0(\R))\to \Rep^\sigma(\Heis)$ for the $\mathbb{C}$-linear functor sending $\delta_x\in \Rep^\text{irr}(C_0(\R))$ to the twisted representation $H_0^\sigma$ precomposed with the family of automorphisms $\rho_{xg,I}:W(f)\mapsto e^{ix\int gf}W(f)$ for $I\in\Jcal$ and $f\in L_I\R$, as defined in Section \ref{Sec: ConstructingIrreps}. Then, the $\sigma$-twisted representation $H^\sigma_g(\delta_x)$ of $\Heis$ has underlying Hilbert space $e^{\overline{L^\sigma\R}}$ and, given $I\in \Jcal_{\mathrm{p}}$, an element $W(f)\in \Heis(I)$ acts on $e^{\overline{L^\sigma\R}}$ via 
\begin{equation}\label{eq: twistedRepEq1}
e^{i x \int f g}\cdot W^\sigma(\hat{f}_{I}).
\end{equation}

We will produce two unitary natural isomorphisms, as follows
\begin{equation}\label{eq: 2Cells2}\begin{tikzcd}
{\Rep^{\text{irr}}(C_0(\R))} && {\Rep^\sigma(\Heis)} \\
\\
{\Loc_{I_0}(\Heis)} && {\sigma-\Loc_{I_0}(\Heis)}.
\arrow[""{name=0, anchor=center, inner sep=0}, "{H_0^\sigma}", curve={height=-18pt}, from=1-1, to=1-3]
\arrow[""{name=1, anchor=center, inner sep=0}, "{H_g^\sigma}"', curve={height=18pt}, from=1-1, to=1-3]
\arrow["{\mathfrak{F}_g}", from=1-1, to=3-1]
\arrow[""{name=2, anchor=center, inner sep=0}, "{\rho_\tau \circ-}"', curve={height=12pt}, from=3-1, to=3-3]
\arrow["{\mathfrak{E}}"', from=3-3, to=1-3]
\arrow["{w_g}"', between={0.2}{0.8}, Rightarrow, from=1, to=0]
\arrow["{v_g}"', between={0.2}{0.7}, Rightarrow, from=2, to=1]
\end{tikzcd}\end{equation}
Given $\delta_x\in \Rep^{\text{irr}}(C_0(\R))$, we write
\[
v_g(\delta_x):\mathfrak{E}(\rho_\tau\circ \rho_{xg}) = (e^{\overline{L\R/\R}}, \pi_\tau\circ \rho_{xg})\xrightarrow{\phi}H^\sigma_g(\delta_x) = (e^{\overline{L^\sigma\R}}, \pi^\sigma \circ \rho_{xg}),
\]
which provides the unitary natural isomorphism $v_g$. It is only left to produce the unitary natural isomorphism $w$. Note that there is an alternative description of the formula \eqref{eq: twistedRepEq1} which will be useful. Let $\hat{g}\in L_{I_0}^\sigma \R$ be the unique twisted loop such that, when restricted to $\hat{I_0}$, it is given by $g$. Then, since both $I$ and $I_0$ are intervals in $\Jcal_\mathrm{p}$, we have
\[
\int \hat{f}\hat{g} = \int_0^{1} {f}(\theta)g(\theta){\mathrm{d}\theta}= \int fg,
\]
and therefore Equation \eqref{eq: twistedRepEq1} can be rewritten as
\begin{equation}\label{eq: twistedRepEq2}
e^{i x \int \hat{f} \hat{g}}\cdot W^\sigma(\hat{f}).
\end{equation}
We note that $\hat{g}$ admits a primitive $\hat{G}\in L^\sigma\R$, given by
\[
\hat{G}(\theta):=\int_0^\theta g(z)\mathrm{d}z -\frac{1}{2}\int_0^{1} g(z)\mathrm{d}z.
\]
Since $W^\sigma(-x\hat{G})W^\sigma(\hat{f}) = e^{ix\int\hat{G}\hat{f}'}W^\sigma(\hat{f})W^\sigma(-x\hat{G}) = e^{-ix\int\hat{g}\hat{f}}W^\sigma(\hat{f})W^\sigma(-x\hat{G})$, we deduce that $w_g(\delta_x):=W^\sigma(-x\hat{G})$ is a unitary equivalence between the $\sigma$-twisted $\Heis$-representations $H^\sigma_g(\delta_x)$ and $H_0^\sigma$, as needed. We have proved the following lemma.

\begin{lemma}
There is a natural unitary isomorphism $^{\text{irr}}[H_0^\sigma,-]_{g}$ between the functors
\[\begin{tikzcd}
{\Rep^{\text{irr}}(C_0(\R))} && {\Rep^\sigma(\Heis)}
\arrow[""{name=0, anchor=center, inner sep=0}, "{H_0^\sigma\boxtimes  \mathfrak{M}_{g}(-)}", curve={height=-18pt}, from=1-1, to=1-3]
\arrow[""{name=1, anchor=center, inner sep=0}, "{H_0^\sigma}"', curve={height=18pt}, from=1-1, to=1-3]
\arrow["{^{\text{irr}}[H_0^\sigma,-]_g}", shift right=5, between={0.2}{0.8}, Rightarrow, from=0, to=1]
\end{tikzcd}\]
which, on $\delta_x\in \Rep^{\text{irr}}(C_0(\R))$ is given by the composition
\[
\begin{array}{rcl}
^{\text{irr}}[H_0^\sigma,-]_g(\delta_x):=\ ^\text{irr}[H_0^\sigma,\delta_x]_g: H_0^\sigma\boxtimes \mathfrak{M}_g(\delta_x)&\fixedxrightarrow{\phi^{-1}\boxtimes\id}& \mathfrak{E}(\rho_\tau)\boxtimes \mathfrak{E}\circ\mathfrak{F}_g(\delta_x)\\
&\fixedxrightarrow{\Psi_{\rho_\tau, \rho_{xg}}} & \mathfrak{E}(\rho_\tau\circ \rho_{xg})\\
&\fixedxrightarrow{\phi}& H_g^\sigma(\delta_x)\\
&\fixedxrightarrow{W^\sigma(-x\hat{G})} &H_0^\sigma,
\end{array}
\]
where $\hat{G}\in L^\sigma \R$ is the twisted loop $\hat{G}(\theta) = \int_0^\theta g(z)\mathrm{d}z-\frac{1}{2}\int_0^{1}g(z)\mathrm{d}z$.
\end{lemma}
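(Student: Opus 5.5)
The lemma asserts two things: each composite ${}^{\text{irr}}[H_0^\sigma,\delta_x]_g$ is a unitary isomorphism of $\sigma$-twisted $\Heis$-representations, and these composites are natural in $\delta_x$. I would verify each of the four maps separately and then deal with naturality.

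\emph{The four maps.} The first map $\phi^{-1}\boxtimes\id$ is the Connes fusion of two morphisms of twisted representations. It is legitimate because the triangle \eqref{eq: diagram FME} commutes strictly, so $\mathfrak{M}_g(\delta_x)=\mathfrak{E}\circ\mathfrak{F}_g(\delta_x)$ as twisted representations; hence this map is a unitary intertwiner. The second map $\Psi_{\rho_\tau,\rho_{xg}}$ is the tensorator of the tensor equivalence $\mathfrak{E}:\Z/2-\Loc_{I_0}(\Heis)\to\Rep^{\Z/2}(\Heis)$, so it is a unitary morphism in $\Rep^\sigma(\Heis)$ by \cite[Thm.~4.9]{GcrossedbraidedRep}. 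For the third map $\phi$, I would check that precomposing both sides of the intertwiner $\phi:(e^{\overline{L\R/\R}},\pi_\tau)\to(e^{\overline{L^\sigma\R}},\pi^\sigma)$ with the family of automorphisms $\rho_{xg}$ preserves the intertwining property. This holds because $\rho_{xg}$ is a compatible family of automorphisms of the local algebras (established in Section \ref{Sec: ConstructingIrreps}), and it is compatible with the special inclusions since $\rho_{xg}$ commutes with $\sigma$: on generators, $\sigma\rho_{xg}W(f)=e^{ix\int gf}W(-f)$, whereas $\rho_{xg}\sigma W(f)=e^{-ix\int gf}W(-f)$.

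That last computation shows the two composites are \emph{not} equal, so I must be careful. Since $g$ is supported in $I_0\in\Jcal_\mathrm{p}$, I would only need to check the identity on intervals in $\Jcal_\mathrm{p}$, with the extension to all of $\Jcal$ determined by twisted localization. The cleanest route is to interpret $\pi^\sigma\circ\rho_{xg}$ through formula \eqref{eq: twistedRepEq2}, that is, as $\widetilde{L^\sigma\R}$ acting via the character $\hat f\mapsto e^{ix\int\hat f\hat g}$. This is a well-defined twisted representation because $\hat g$ is a twisted loop.

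\emph{The fourth map.} For $W^\sigma(-x\hat G)$, I use the computation already given: $\hat G'=\hat g$ on $\mathrm T$, where antiperiodicity is ensured by the constant $-\tfrac12\int g$. The twisted Weyl relations then give
\[
W^\sigma(-x\hat G)\,e^{ix\int\hat f\hat g}W^\sigma(\hat f)\,W^\sigma(-x\hat G)^*=W^\sigma(\hat f).
\]
This holds for all $f\in L_I\R$ and every $I\in\Jcal$, not only $I\in\Jcal_\mathrm{p}$, since \eqref{eq: twistedRepEq2} is phrased globally on $\mathrm T$. By normality (density of $\langle W(f)\rangle$ in $\Heis(I)$), the intertwining extends to $\Heis(I)$.

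\emph{Naturality.} Since $\Rep^{\text{irr}}(C_0(\R))$ has only scalar endomorphisms and no morphisms between distinct $\delta_x$, naturality is automatic by linearity of all four maps.

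The main obstacle is the third step: making sure that $H^\sigma_g(\delta_x)$ is a genuine $\sigma$-twisted representation for intervals containing $\mathrm p$, and that $\phi$ remains an intertwiner there. I would handle it by restricting to $\Jcal_\mathrm{p}$ and invoking the equivalence $\sigma-\Loc_{I_0}(\Heis)\cong\Rep^\sigma(\Heis)$, under which twisted representations are determined by their restriction to $\Jcal_\mathrm{p}$.
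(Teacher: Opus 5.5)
Your proposal is correct and follows the paper's route: the paper's proof simply composes the $2$-cells of Diagrams \eqref{eq: 2Cells1} and \eqref{eq: 2Cells2}, and these are exactly the four unitaries you verify, with naturality automatic on $\Rep^{\text{irr}}(C_0(\R))$. Concretely, they are $\phi^{-1}\boxtimes\id$ (justified by the strict commutativity of \eqref{eq: diagram FME}), the tensorator $\Psi$, the map $v_g$ given by $\phi$, and $w_g=W^\sigma(-x\hat G)$ (justified by the twisted Weyl relations). Your extra care about intervals containing $\mathrm{p}$ makes explicit a point the paper leaves implicit: you read $H^\sigma_g(\delta_x)$ through \eqref{eq: twistedRepEq2} rather than as a literal precomposition with $\rho_{xg}$, which does not commute with $\sigma$; just delete your initial claim that $\rho_{xg}$ commutes with $\sigma$, since your own computation refutes it.
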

\begin{proof}
We define $^\text{irr}[H_0^\sigma,\delta_x]_g$ to be the compositions of the unitary natural isomorphisms in Diagrams \eqref{eq: 2Cells1} and \eqref{eq: 2Cells2}.
\end{proof}

By extending linearly by direct sums and multiplicities, we obtain a natural unitary isomorphism $^{\text{ss}}[-,-]_{g}$ filling the diagram
\[\begin{tikzcd}
{\Rep^{\text{ss}}(C_0(\R))} && {\Rep^\sigma(\Heis)}.
\arrow[""{name=0, anchor=center, inner sep=0}, "{H_0^\sigma\boxtimes  \mathfrak{M}_{g}(-)}", curve={height=-18pt}, from=1-1, to=1-3]
\arrow[""{name=1, anchor=center, inner sep=0}, "{H_0^\sigma}"', curve={height=18pt}, from=1-1, to=1-3]
\arrow["{^{\text{ss}}[H_0^\sigma,-]_g}", shift right=5, between={0.2}{0.8}, Rightarrow, from=0, to=1]
\end{tikzcd}\]
We now extend this unitary natural isomorphism by continuity to all of $\Rep(C_0(\R))$. Assume that $I_0\subset S_+^1$. Then, for any object $ \int^\oplus_{t\in T}\delta_{p(t)}\mathrm{d}\upsilon(t)\in \Rep(C_0(\R))$, there is a canonical unitary isomorphism of Hilbert spaces
\begin{align}
\text{Forget}(H_0^\sigma\boxtimes \mathfrak{M}_{g}(p, \upsilon)) &\fixedxrightarrow{=} \text{Forget}(H_0^\sigma(S^1_-)\boxtimes \mathfrak{M}_g(p,\upsilon)(S^1_+))\nonumber\\& \fixedxrightarrow{=} \text{Forget}(H_0^\sigma(S_-^1)\boxtimes (L^2(\upsilon)\cdot H_0)(S^1_+))\\&\nonumber\fixedxrightarrow{\cong} L^2(\upsilon)\otimes \text{Forget}(H_0^\sigma\boxtimes H_0)\\&\nonumber\fixedxrightarrow{\id\otimes\text{unitor}} L^2(\upsilon)\otimes \text{Forget}(H_0^\sigma)
\end{align}
between the underlying Hilbert space of $ H_0^\sigma\boxtimes \mathfrak{M}_g(p,\upsilon)$ and the underlying Hilbert space of $L^2(\upsilon)\cdot H_0^\sigma$. Moreover, this unitary does not depend on $p$. Hence, there is a homeomorphism
\begin{equation}\label{eq: UnderlyingHilbSpacesTwisted}
\mathfrak{U}^\sigma: U\big(H_0^\sigma\boxtimes \mathfrak{M}_g(p,\upsilon), L^2(T, \upsilon)\cdot H_0^\sigma\big) = U\big(L^2(T,\upsilon, e^{\overline{L^\sigma\R}})\big)
\end{equation}
independent of $p$, using the strong operator topologies on the source and the target. Recall that we write $\hat{G}\in L^\sigma\R$ for the twisted loop $\hat{G}(\theta) = \int_0^\theta g(z)\mathrm{d}z-\frac{1}{2}\int_0^{1}g(z)\mathrm{d}z$.

\begin{lemma}\label{lemm: TwistedTensoratorInSSSimplified}
Fix $p\in C_\sqcup(T)_0$. Under the homeomorphism
\begin{equation}\nonumber
\mathfrak{U}^\sigma: U\big(H_0^\sigma\boxtimes \mathfrak{M}_g(p,\upsilon), L^2(T, \upsilon)\cdot H_0^\sigma\big) = U\big(L^2(T,\upsilon, e^{\overline{L^\sigma\R}})\big),
\end{equation}
the unitary $^{\text{ss}}[H_0^\sigma,-]_g(p, \upsilon)\in U\big(H^\sigma_0\boxtimes \mathfrak{M}_g(p,\upsilon),L^2(T,\upsilon)\cdot H_0^\sigma\big)$ maps to the unitary on $L^2(T,\upsilon, e^{\overline{L^\sigma\R}})$ that sends a function $\psi\in L^2(T,\upsilon, e^{\overline{L^\sigma\R}})$ to 
\begin{align*}
\Big(t\mapsto &W^\sigma\big(-p(t)\cdot \hat{G}\big)\big(\psi(t)\big)\Big) \in L^2(T,\upsilon, e^{\overline{L^\sigma\R}}).
\end{align*}
\end{lemma}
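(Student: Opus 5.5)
The argument mirrors the proof of Lemma~\ref{lemm: TensoratorInSSSimplified}: first treat the case where $T$ is a point, then extend to multiplicities and countable direct sums.

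\textbf{Irreducible case.} Suppose $T$ is a point and $p$ has value $x$, so the object is $\delta_x$. By construction, ${}^{\text{ss}}[H_0^\sigma,-]_g(\delta_x) = {}^{\text{irr}}[H_0^\sigma,\delta_x]_g$ is the composite
\[
H_0^\sigma\boxtimes\mathfrak{M}_g(\delta_x)\xrightarrow{\phi^{-1}\boxtimes\id}\mathfrak{E}(\rho_\tau)\boxtimes\mathfrak{E}(\rho_{xg})\xrightarrow{\Psi_{\rho_\tau,\rho_{xg}}}\mathfrak{E}(\rho_\tau\circ\rho_{xg})\xrightarrow{\phi}H_g^\sigma(\delta_x)\xrightarrow{W^\sigma(-x\hat G)}H_0^\sigma.
\]
The point to verify is that the first three arrows, read through $\mathfrak{U}^\sigma$, collapse to the identity. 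The key input, as in the untwisted case, is that the tensorator $\Psi$ of $\mathfrak{E}$ is given on underlying Hilbert spaces by the same map as the right unitor $\mathfrak{E}(\rho_\tau)\boxtimes H_0\to \mathfrak{E}(\rho_\tau)$ when the second factor is the vacuum space $e^{\overline{L\R/\R}}$ (see \cite[Thm. 3.8 and Sec. 4]{GcrossedbraidedRep}). This applies because $g$ is supported in $I_0\subset S^1_+$, so $\mathfrak{M}_g(\delta_x)(S^1_+)=H_0(S^1_+)$.

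Next, $\phi^{-1}\boxtimes\id$ followed by the unitor equals the unitor followed by $\phi^{-1}$, by naturality of the unitor in the first variable. Composing with $\phi$ then cancels $\phi^{-1}$, since $\phi$ is the same map of Hilbert spaces whether regarded as $\mathfrak{E}(\rho_\tau)\to H^\sigma_0$ or as $\mathfrak{E}(\rho_\tau\circ\rho_{xg})\to H^\sigma_g(\delta_x)$. Hence the composite becomes unitor followed by $W^\sigma(-x\hat G)$. Since $\mathfrak{U}^\sigma$ is defined exactly via the unitor $H_0^\sigma\boxtimes H_0\to H_0^\sigma$, we obtain the claim for $\delta_x$.

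\textbf{Multiplicities and direct sums.} If $p$ is constant with value $x$ on $T$, both ${}^{\text{ss}}[H_0^\sigma,-]_g$ and $\mathfrak{U}^\sigma$ are obtained by tensoring with the multiplicity space $L^2(T,\upsilon)$, so the unitary is $\id\otimes W^\sigma(-x\hat G)$, which is the stated formula. For general $p\in C_\sqcup(T)_0$, refine the partition so that $p$ is constant on each $T_\alpha$; this is possible because $p$ has countable image. Then ${}^{\text{ss}}[H_0^\sigma,-]_g(p,\upsilon)$ is by definition the direct sum over $\alpha$ of the pieces for $(p_\alpha,\upsilon_\alpha)$. Moreover, $\mathfrak{U}^\sigma$ respects the decomposition $L^2(T,\upsilon)=\bigoplus_\alpha L^2(T_\alpha,\upsilon_\alpha)$ and restricts on each summand to the $\mathfrak{U}^\sigma$ for $\upsilon_\alpha$. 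The blockwise formulas therefore assemble to $\psi\mapsto(t\mapsto W^\sigma(-p(t)\hat G)\psi(t))$.

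\textbf{Main obstacle.} The only delicate point is the identification of $\Psi_{\rho_\tau,\rho_{xg}}$ with the unitor on underlying Hilbert spaces in the twisted setting. This should follow from the construction of $(\mathfrak{E},\Psi)$ for $G$-localized endomorphisms in \cite[Sec. 4]{GcrossedbraidedRep}: there $\Psi$ is defined via the unitor $\mathfrak{E}(\rho)\boxtimes H_0\cong\mathfrak{E}(\rho)$, using that $\rho_{xg}$ acts trivially on $\Heis(S^1_-)$. I would check that this also applies when the first factor is $\sigma$-twisted.
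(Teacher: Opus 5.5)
Your proposal is correct and follows the paper's proof essentially step for step. In the irreducible case it identifies $\Psi_{\rho_\tau,\rho_{xg}}$ with the unitor on underlying Hilbert spaces (the paper simply cites \cite[Thm. 3.8 and Sec. 4]{GcrossedbraidedRep} for the point you flag as the main obstacle) and uses naturality of the unitor to cancel $\phi^{-1}$ against $\phi$, leaving $W^\sigma(-x\hat{G})$; it then extends by multiplicities and by refining the partition so that $p$ is constant on each piece.
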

\begin{proof} The proof is analogous to that of Lemma \ref{lemm: TensoratorInSSSimplified}. Let us first discuss the case where $ \int^\oplus_{t\in T}\delta_{p(t)}\mathrm{d}\upsilon(t)$ is an irreducible representation $\delta_x$. We need to compute the composition of maps of the underlying Hilbert spaces
\begin{equation}\label{eq: ProofLemmTwistedTensoratorInSSSimplified}
H_0^\sigma\xleftarrow{\text{unitor}} H_0^\sigma\boxtimes H_0 \xrightarrow{\phi^{-1}\boxtimes\id} \mathfrak{E}(\rho_{\tau})\boxtimes\mathfrak{E}(\rho_{xg})\xrightarrow{\Psi_{\rho_\tau, \rho_{xg}}} \mathfrak{E}(\rho_\tau\circ \rho_{xg})\xrightarrow{\phi}H^\sigma_g(\delta_x)\xrightarrow{W^\sigma(-x\hat{G})} H_0^\sigma.
\end{equation}
Here, $\Psi_{\rho_\tau, \rho_{xg}}$ is the same map of Hilbert spaces as the unitor $\mathfrak{E}(\rho_\tau)\boxtimes H_0\xrightarrow{\text{unitor}} \mathfrak{E}(\rho_\tau)$, see \cite[Thm. 3.8 and Sec. 4]{GcrossedbraidedRep}. By naturality of the unitor, the diagram
\[\begin{tikzcd}
{H_0^\sigma\boxtimes H_0} && {H_0^\sigma} \\
{\mathfrak{E}(\rho_\tau)\boxtimes H_0} && {\mathfrak{E}(\rho_\tau)}
\arrow["{\text{unitor}}", from=1-1, to=1-3]
\arrow["{\phi^{-1}\boxtimes \id}"', from=1-1, to=2-1]
\arrow["{\phi^{-1}}", from=1-3, to=2-3]
\arrow["{\text{unitor}}"', from=2-1, to=2-3]
\end{tikzcd}\]
commutes, and the composition \eqref{eq: ProofLemmTwistedTensoratorInSSSimplified} is exactly given by $W^\sigma(-x\hat{G}): e^{\overline{L^\sigma\R}}\to e^{\overline{L^\sigma\R}}$, as needed. Hence, the claim follows when $ \int^\oplus_{t\in T}\delta_{p(t)}\mathrm{d}\upsilon(t)$ is an irreducible object of $\Rep(C_0(\R))$. Extending the result linearly by multiplicities, the statement also holds when $p:T\to\R$ has image exactly one point in $\R$. Let us now extend it to direct sums.

Let $p\in C_\sqcup(T)_0$ be a map associated to a partition $T = \bigsqcup\limits_{\alpha\in A}T_\alpha$. By making the partition finer if necessary, we can assume that $p$ is constant on each $T_\alpha$. We write $p_\alpha$ for the restriction of $p$ to $T_\alpha$. Then, the statement holds for $p_\alpha$ for all $\alpha\in A$.

By definition, the tensorator $^{\text{ss}}[H_0^\sigma,-]_g(\upsilon)$ belongs to the subset
\[
\prod\limits_{\alpha\in A}U\big(H_0^\sigma\boxtimes \mathfrak{M}_{g}(p_\alpha, \upsilon_\alpha), L^2(T_\alpha, \upsilon_\alpha)\cdot H_0^\sigma\big)\subset U\big(H_0^\sigma\boxtimes \mathfrak{M}_{g}(\upsilon), L^2(T, \upsilon)\cdot H_0^\sigma\big)
\]
and $\mathfrak{U}^\sigma$ restricts to a homeomorphism
\[
\prod\limits_{\alpha\in A}U\big(H_0^\sigma\boxtimes\mathfrak{M}_{g}(p_\alpha, \upsilon_\alpha), L^2(T_\alpha, \upsilon_\alpha)\cdot H_0^\sigma\big)\cong \prod\limits_{\alpha\in A} U\big(L^2(T_\alpha, \upsilon_\alpha, e^{\overline{L^\sigma\R}})\big)\subset U\big(L^2(T, \upsilon, e^{\overline{L^\sigma\R}})\big).
\]
In fact, $\mathfrak{U}^\sigma$ preserves the grading given by $A$, and the restriction \[U\big(H_0^\sigma\boxtimes\mathfrak{M}_{g}(p_\alpha, \upsilon_\alpha), L^2(T_\alpha, \upsilon_\alpha)\cdot H_0^\sigma\big)\cong U\big(L^2(T_\alpha, \upsilon_\alpha, e^{\overline{L^\sigma\R}})\big)\] is again given by $\mathfrak{U}^\sigma$ in Equation \eqref{eq: UnderlyingHilbSpacesTwisted} for $\upsilon = \upsilon_\alpha$. Hence, the claim follows.
\end{proof}

Let $ \int^\oplus_{t\in T}\delta_{p(t)}\mathrm{d}\upsilon(t)\in\Rep(C_0(\R))$ be a representation and let $I_0\in \Jcal_{\mathrm{p}}$ be an interval with $I_0\subset S^1_+$. Fix $g\in L_{I_0}\R$ a loop of charge 1. Since the underlying Hilbert space of $\mathfrak{M}_g(p, \upsilon)\boxtimes H^\sigma_0$ is independent of $p:T\to \R$, we may write it as $\text{Forget}(\mathfrak{M}_g(\upsilon)\boxtimes H^\sigma_0)$. We also write $U\big(\mathfrak{M}_g(\upsilon)\boxtimes H^\sigma_0, L^2(T, \upsilon, e^{\overline{L^\sigma\R}})\big)$ for the space of unitaries between $\text{Forget}(\mathfrak{M}_g(\upsilon)\boxtimes H^\sigma_0)$ and $L^2(T, \upsilon, e^{\overline{L^\sigma\R}})$.

\begin{definition}
Let $T$ be a locally compact Hausdorff space with a Borel measure $\upsilon$. Let $I_0\in \Jcal_{\mathrm{p}}$ be an interval with $I_0\subset S^1_+$ and $g\in L_{I_0}\R$ be a loop of charge 1. Let $p\in C_\sqcup(T)$ be a function. We write $[H_0^\sigma,(p, \upsilon)]_g\in U\big(H_0^\sigma\boxtimes \mathfrak{M}_{g}(\upsilon), L^2(T, \upsilon)\cdot H_0^\sigma\big)$ for the image under the homeomorphism
\[
(\mathfrak{U}^\sigma)^{-1}:U(L^2(T, \upsilon, e^{\overline{L^\sigma\R}}))\cong U\big(H_0^\sigma\boxtimes \mathfrak{M}_{g}(\upsilon), L^2(T, \upsilon)\cdot H_0^\sigma\big)
\]
of the unitary sending $\psi\in L^2(T,\upsilon, e^{\overline{L^\sigma\R}})$ to
\begin{align*}
\Big(t\mapsto & W^\sigma\big(-p(t)\cdot \hat{G})\big(\psi(t)\big)\Big) \in L^2(T,\upsilon, e^{\overline{L^\sigma\R}}),
\end{align*}
where $\hat{G}(\theta) = \int_0^\theta g(z)\mathrm{d}z-\frac{1}{2}\int_0^{1}g(z)\mathrm{d}z$.
\end{definition}

We can show that the family $[-,-]_g$ is a natural family of unitary isomorphisms of $\sigma$-twisted representations of $\Heis$.

\begin{proposition}\label{prop: TwistedExtendTensoratorsByContinuity}
Let $T$ be a locally compact Hausdorff space with a Borel measure $\upsilon$. Let $I_0\in \Jcal_{\mathrm{p}}$ be an interval with $I_0\subset S^1_+$ and $g\in L_{I_0}\R$ be a loop of charge 1. Let $p\in C_\sqcup(T)$ be a function. Then, 
\begin{enumerate}
\item $[H_0^\sigma,(p, \upsilon)]_g\in U\big(H_0^\sigma\boxtimes \mathfrak{M}_{g}(\upsilon), L^2(T, \upsilon)\cdot H_0^\sigma\big)$ intertwines the twisted actions of $\Heis$ on $H_0^\sigma\boxtimes \mathfrak{M}_{g}(p,\upsilon)$ and $L^2(T, \upsilon)\cdot H_0^\sigma$;
\item The family $[H_0^\sigma,-]_g$ is natural with respect to morphisms in $\Rep(C_0(\R))$.
\end{enumerate}
\end{proposition}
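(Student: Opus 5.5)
The argument will mirror the proof of Proposition \ref{prop: ExtendTensoratorsByContinuity}, with the untwisted tensorator replaced by the twisted one. It rests on three ingredients. First, Lemma \ref{lemm: TwistedTensoratorInSSSimplified} shows that on the dense subset $C_\sqcup(T)_0$ the unitary $[H_0^\sigma,(p,\upsilon)]_g$ agrees with ${}^{\text{ss}}[H_0^\sigma,-]_g(p,\upsilon)$. The latter is, by construction, a morphism of $\sigma$-twisted representations and is natural. Second, Lemma \ref{lemm: dense} gives the density of $C_\sqcup(T)_0$ in $C_\sqcup(T)$ for the pseudo-metric $\mathrm{ess}\sup|p_1-p_2|$. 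Third, every map involved depends strongly continuously on $p$.

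\textbf{Part (1).} Since the actions are normal, it suffices to check the intertwining relation on generators $W(f)$ with $f\in L_I\R$. Three maps of $p$ need to be continuous.
\begin{enumerate}
\item[(a)] The map $p\mapsto \bigl(t\mapsto W^\sigma(-p(t)\hat G)\bigr)$ into $U(L^2(T,\upsilon,e^{\overline{L^\sigma\R}}))$ is strongly continuous. This holds because $x\mapsto W^\sigma(-x\hat G)$ is a strongly continuous one-parameter family of unitaries, up to the phase $e^{-i\omega}$, which vanishes on multiples of a single loop. Dominated convergence then upgrades pointwise convergence to strong convergence on $L^2$. Composing with the homeomorphism $(\mathfrak U^\sigma)^{-1}$ preserves continuity.
\item[(b)] The action of $W(f)$ on the target $L^2(T,\upsilon)\cdot H_0^\sigma$ does not depend on $p$ at all, so there is nothing to check.
\item[(c)] The action of $W(f)$ on $H_0^\sigma\boxtimes\mathfrak M_g(p,\upsilon)$ depends continuously on $p$. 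This is proved exactly as the second half of Lemma \ref{lemma: HeisActionsDependContinuously}. Conjugate by a path continuation $\gamma^\bullet$ into $H_0^\sigma(J)\boxtimes\mathfrak M_g(p,\upsilon)(I)$, where $W(f)$ acts on the second factor by multiplication by $e^{ip(t)\int gf}$ tensored with $W(f)$. This is strongly continuous in $p$ by the estimate $|e^{ip(t)c}-e^{ip'(t)c}|\le|p-p'|\,|c|$. Conjugation by $\gamma^\bullet$ is a homeomorphism.
\end{enumerate}
The intertwining identity holds on the dense set $C_\sqcup(T)_0$. Both sides are strongly continuous in $p$, so it passes to the limit.

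\textbf{Part (2).} Let $F$ be a morphism from $(p_1,\upsilon_1)$ to $(p_2,\upsilon_2)$. Discretize with $\mathfrak X_\varepsilon$ and set $p_{i,\varepsilon}=\mathfrak X_\varepsilon\circ p_i$. By the bounded Borel functional calculus argument used in Proposition \ref{prop: ExtendTensoratorsByContinuity}, $F$ also intertwines the actions pulled back along $p_{1,\varepsilon}$ and $p_{2,\varepsilon}$, so $F$ is a morphism in $\Rep^{\text{ss}}(C_0(\R))$. Naturality of ${}^{\text{ss}}[H_0^\sigma,-]_g$ then gives the naturality square for $p_{i,\varepsilon}$. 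Since $p_{i,\varepsilon}\to p_i$ uniformly, the square passes to the limit as $\varepsilon\to0$ by the continuity established in part (1)(a).

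\textbf{Main obstacle.} The delicate step is (c): writing the twisted action on the Connes fusion $H_0^\sigma\boxtimes\mathfrak M_g(p,\upsilon)$ via path continuation, and checking that moving $I$ away from the hemisphere containing $I_0$ is compatible with the twisted gluing maps $\delta^\sigma$. I would handle the case $I\in\Jcal_\mathrm p$ first. For a general interval $I$, I would reduce to this case by covering $I$ with two intervals in $\Jcal_\mathrm p$ and using that $W(f)$ factors as a product of Weyl operators supported in the two pieces, up to a scalar.
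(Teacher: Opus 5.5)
Your proposal follows the paper's proof essentially step for step: reduction to Weyl generators $W(f)$ by normality, agreement with ${}^{\text{ss}}[H_0^\sigma,-]_g$ on the dense set $C_\sqcup(T)_0$ by Lemmas \ref{lemm: TwistedTensoratorInSSSimplified} and \ref{lemm: dense}, strong continuity in $p$ of the tensorator and (as in the second half of Lemma \ref{lemma: HeisActionsDependContinuously}) of the action on $H_0^\sigma\boxtimes\mathfrak M_g(p,\upsilon)$, and the $\mathfrak X_\varepsilon$-discretization for naturality. Drop the reduction in your last paragraph: it cannot work for $I\ni\mathrm p$, since no interval of $\Jcal_{\mathrm p}$ contains $\mathrm p$; it is also unnecessary, because the path-continuation description of the action on the fusion, and hence your argument in (c), applies to every $I\in\Jcal$, which is exactly how the paper argues.
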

\begin{proof}
This proof is analogous to that of Proposition \ref{prop: ExtendTensoratorsByContinuity}. We write $\pi^\sigma$ for the action of $\Heis$ on $H_0^\sigma$, and $\pi_{g}^{\sigma\boxtimes p}$ for the action of $\Heis$ on $H_0^\sigma\boxtimes\mathfrak{M}_g(p, \upsilon)$. Given $I\in \Jcal$ and $a\in \Heis(I)$, we need to show that
\[
\big(\id_{L^2(T,\upsilon)}\otimes \pi^\sigma(a)\big)\circ [H_0^\sigma,(p, \upsilon)]_g = [H_0^\sigma,(p, \upsilon)]_g\circ \pi_{g, I}^{\sigma\boxtimes p}(a).
\]
Since the actions of $\Heis(I)$ on $H_0^\sigma$ and $H_0^\sigma\boxtimes\mathfrak{M}_g(p, \upsilon)$ are normal, it is enough to show the equality for all $a \in \langle W(f)\rangle_{f\in L_I\R}$. Let $a = W(f)$ for some $f\in L_I\R$. Then, the equation holds if $p\in C_\sqcup(T)_0$, since then $[H_0^\sigma,(p, \upsilon)]_g = \,^{\text{ss}}[H^\sigma_0, (p,\upsilon)]_g$ by Lemma \ref{lemm: TwistedTensoratorInSSSimplified}, and the latter is by construction, a morphism in $\Rep^\sigma(\Heis)$. Recall that $C_\sqcup(T)_0$ is dense in $C_\sqcup(T)$ by Lemma \ref{lemm: dense} and note that the map
\[
\begin{array}{ccccc}
C_\sqcup(T) &\to &U(L^2(T, \upsilon, e^{\overline{L^\sigma\R}})) &\xrightarrow{(\mathfrak{U}^\sigma)^{-1}}&U(H_0^\sigma\boxtimes\mathfrak{M}_{g}(\upsilon), L^2(\upsilon)\cdot H_0^\sigma) \\
p&\mapsto & \Big(t\mapsto W^\sigma\big(-p(t)\cdot \hat{G}\big)\Big)&\mapsto & [H_0^\sigma,(p, \upsilon)]_g
\end{array}
\]
is strongly continuous, as it is the composition of strongly continuous maps. By completely analogous arguments to the second statement in Lemma \ref{lemma: HeisActionsDependContinuously}, the map
\[
\begin{array}{ccc}
C_\sqcup(T) &\to &U(H_0^\sigma\boxtimes \mathfrak{M}_{g}(\upsilon))  \\
p&\mapsto &\pi_{g, I}^{\sigma\boxtimes p}(W(f)) 
\end{array}
\]
is strongly continuous, and hence the first claim follows. The second part follows from the same arguments as the second part of Proposition \ref{prop: ExtendTensoratorsByContinuity}.
\end{proof}

We can now prove the main result of this section, Proposition \ref{prop: TensoratorForMSigma}. Let us recall the statement.

\begin{proposition*}
The equivalence of $\mathrm{W}^*$-categories $\mathfrak{M}\oplus\mathfrak{M}^\sigma: \Rep(C_0(\R))\oplus\Hilb\to \Rep^{\mathbb{Z}/2}(\Heis)$ admits unitary natural isomorphisms filling the diagram
\[\begin{tikzcd}
&& {\Rep(\Heis)} && \\
{\Rep^\sigma(\Heis)} && {\Rep(C_0(\R))} && {\Rep^\sigma(\Heis)} \\
&& \Hilb
\arrow[""{name=0, anchor=center, inner sep=0}, "{H^\sigma_0\boxtimes -}"', from=1-3, to=2-1]
\arrow[""{name=1, anchor=center, inner sep=0}, "{-\boxtimes H^\sigma_0}", from=1-3, to=2-5]
\arrow["{\mathfrak{M}}"', from=2-3, to=1-3]
\arrow["{\text{Forget}}", from=2-3, to=3-3]
\arrow[""{name=2, anchor=center, inner sep=0}, "{\mathfrak{M}^\sigma}", from=3-3, to=2-1]
\arrow[""{name=3, anchor=center, inner sep=0}, "{\mathfrak{M}^\sigma}"', from=3-3, to=2-5]
\arrow["{[H^\sigma_0,-]}"'{pos=0.3}, shift left=3, between={0.2}{0.8}, Rightarrow, from=0, to=2]
\arrow["{[-,H^\sigma_0]}"{pos=0.3}, shift right=3, between={0.2}{0.8}, Rightarrow, from=1, to=3]
\end{tikzcd}\]
\end{proposition*}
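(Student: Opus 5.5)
The plan is to assemble $[H^\sigma_0,-]$ from the $g$-dependent isomorphism already constructed, and then obtain $[-,H^\sigma_0]$ from it via the $\Z/2$-crossed braiding.

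\textbf{The isomorphism $[H^\sigma_0,-]$.} Fix an interval $I_0\in\Jcal_{\mathrm{p}}$ with $I_0\subset S^1_+$ and a loop $g\in L_{I_0}\R$ of charge $1$. Lemma \ref{Lemm: MgsFormClique} gives the canonical unitary natural isomorphism $\kappa_g:\mathfrak{M}\cong\mathfrak{M}_g$. On an object $\int^\oplus_{t\in T}\delta_{p(t)}\mathrm{d}\upsilon(t)$ I would set
\[
[H^\sigma_0,(p,\upsilon)]:=[H^\sigma_0,(p,\upsilon)]_g\circ\big(\id_{H^\sigma_0}\boxtimes\kappa_g(p,\upsilon)\big).
\]
This is a unitary $H^\sigma_0\boxtimes\mathfrak{M}(p,\upsilon)\to L^2(T,\upsilon)\cdot H^\sigma_0$, and the target is exactly $\mathfrak{M}^\sigma(\text{Forget}(p,\upsilon))$. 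By Proposition \ref{prop: TwistedExtendTensoratorsByContinuity}, $[H^\sigma_0,(p,\upsilon)]_g$ is an intertwiner of $\sigma$-twisted representations. It is also natural in morphisms of $\Rep(C_0(\R))$. The map $\id\boxtimes\kappa_g$ is an intertwiner and is natural, since Connes fusion is functorial. So the composite defines the required 2-cell.

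There is one check to make here. A morphism $F$ in $\Rep(C_0(\R))$ is sent by $\mathfrak{M}^\sigma\circ\text{Forget}$ to $F\otimes\id$ on $L^2(\upsilon)\otimes e^{\overline{L^\sigma\R}}$. This is the same operator as the one given by naturality in Proposition \ref{prop: TwistedExtendTensoratorsByContinuity}(2), so the two notions of naturality agree.

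\textbf{The isomorphism $[-,H^\sigma_0]$.} For $K\in\Rep(\Heis)$, the crossed braiding $\beta_{K,H^\sigma_0}:K\boxtimes H^\sigma_0\to T_1(H^\sigma_0)\boxtimes K=H^\sigma_0\boxtimes K$ is a unitary isomorphism of $\sigma$-twisted representations, natural in $K$ (Theorem \ref{Thm: GcrossedBalancedRepGA}). Since $K$ has trivial degree, $T_1$ is the identity. I would therefore define
\[
[(p,\upsilon),H^\sigma_0]:=[H^\sigma_0,(p,\upsilon)]\circ\beta_{\mathfrak{M}(p,\upsilon),H^\sigma_0}.
\]
Its naturality follows from the naturality of $\beta$ together with that of $[H^\sigma_0,-]$.

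\textbf{Expected difficulty.} Nothing here should be a real obstacle, because the analytic content (density of $C_\sqcup(T)_0$ and continuous extension) was already done in Proposition \ref{prop: TwistedExtendTensoratorsByContinuity}. The only delicate point is making sure the $T_1$ identification is strict, or else inserting the unit coherence $\eta$ of the action. I would also double-check that the underlying Hilbert spaces of the targets are literally $L^2(\upsilon)\otimes e^{\overline{L^\sigma\R}}$ with the $\mathfrak{M}^\sigma$ action.
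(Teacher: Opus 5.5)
Your proposal is correct and follows the paper's proof essentially step for step. The paper fixes $g\in L_{I_0}\R$ of charge $1$ with $I_0\subset S^1_+$ and defines $[H_0^\sigma,(p,\upsilon)]:=[H_0^\sigma,(p,\upsilon)]_g\circ(\id\boxtimes\kappa_g(p,\upsilon))$, then sets $[(p,\upsilon),H_0^\sigma]:=[H_0^\sigma,(p,\upsilon)]\circ\mathbb{B}_{\mathfrak{M}(p,\upsilon),H_0^\sigma}$ via the $\Z/2$-crossed braiding. Your concern about $T_1$ is harmless, since in the conventions used here $T_1$ is strictly the identity functor on $\Rep^{\Z/2}(\Heis)$.
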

\begin{proof}[Proof of Proposition \ref{prop: TensoratorForMSigma}]
Let $I_0\in \Jcal_{\mathrm{p}}$ be an interval such that $I_0\subset S_+^1$ and let $g\in L_{I_0}\R$ be a loop of charge 1. Given $\int^\oplus_{t\in T}\delta_{p(t)}\mathrm{d}\upsilon(t)\in \Rep(C_0(\R))$, we define
\[
[H_0^\sigma,-](p,\upsilon):=[H_0^\sigma,(p,\upsilon)]:= [H_0^\sigma,(p,\upsilon)]_g\circ (\id\boxtimes \kappa_{g}(p, \upsilon)),
\]
where $\kappa$ was introduced in Lemma \ref{Lemm: MgsFormClique}. By Lemma \ref{Lemm: MgsFormClique} and Proposition \ref{prop: TwistedExtendTensoratorsByContinuity}, this provides a well-defined unitary isomorphism $H_0^\sigma\boxtimes\mathfrak{M}(p, \upsilon)\to L^2(T, \upsilon)\cdot H^\sigma_0$ natural in $\int^\oplus_{t\in T}\delta_{p(t)}\mathrm{d}\upsilon(t)$. For $[-,H^\sigma_0]$, we use the $\Z/2$-crossed braiding $\mathbb{B}$ on $\Rep^{\Z/2}(\Heis)$, to define
\[
[(p,\upsilon),H_0^\sigma]: \mathfrak{M}(p,\upsilon)\boxtimes H_0^\sigma\xrightarrow{\mathbb{B}_{\mathfrak{M}(p,\upsilon), H_0^\sigma}} H_0^\sigma\boxtimes \mathfrak{M}(p,\upsilon)\xrightarrow{[H_0^\sigma,(p,\upsilon)]}L^2(T, \upsilon)\cdot H_0^\sigma.
\]
\end{proof}

\subsection{The representation category $\Rep^{\Z/2}(\Heis)$ as a Tambara-Yamagami category}
\label{sec: RepZ/2AsTY}
We can combine the results of the previous sections to conclude the following theorem.

\begin{theorem}\label{Thm: RepZ2HeisIsTYR}
The category $\Rep^{\mathbb{Z}/2}(\Heis)$ of twisted and untwisted representations of the Heisenberg conformal net under the action of $\mathbb{Z}/{2}$ is equivalent to a Tambara-Yamagami $\mathrm{W}^*$-tensor category for $\mathbb{R}$ in the sense of Definition \ref{def: WTY}. 
\end{theorem}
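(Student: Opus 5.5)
The strategy is to transport the $\mathrm{W}^*$-tensor structure of $\Rep^{\Z/2}(\Heis)$ along the equivalence of $\mathrm{W}^*$-categories
\[
\mathfrak{M}\oplus\mathfrak{M}^\sigma:\Rep(C_0(\R))\oplus\Hilb\cdot\tau\xrightarrow{\cong}\Rep(\Heis)\oplus\Rep^\sigma(\Heis)=\Rep^{\Z/2}(\Heis),
\]
which is an equivalence by Theorem \ref{thm: RepHeisIsRepR} and Theorem \ref{thm: RepSigmaHeisIsHilb}. Concretely, define a tensor product on $\Rep(C_0(\R))\oplus\Hilb\cdot\tau$ by choosing an inverse equivalence $\mathfrak{N}$ and setting $X\otimes Y:=\mathfrak{N}\big((\mathfrak{M}\oplus\mathfrak{M}^\sigma)(X)\boxtimes(\mathfrak{M}\oplus\mathfrak{M}^\sigma)(Y)\big)$. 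The associators and unitors are the transported ones. With this structure the category is tautologically a $\mathrm{W}^*$-tensor category equivalent to $\Rep^{\Z/2}(\Heis)$, and its tensor functor is normal in each variable because Connes fusion is. It then remains to verify the two conditions of Definition \ref{def: WTY}.

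For condition (1), the required natural isomorphisms come directly from the tensorators already built. The isomorphism $\upsilon\otimes\nu\cong\upsilon\times\nu$ is $\mathfrak{N}$ applied to the unitary natural isomorphism $[-,-]$ of Proposition \ref{prop: TensoratorForM}. The isomorphisms $\upsilon\otimes\tau\cong L^2(\upsilon)\cdot\tau\cong\tau\otimes\upsilon$ come from $[-,H_0^\sigma]$ and $[H_0^\sigma,-]$ of Proposition \ref{prop: TensoratorForMSigma}, using that $\mathfrak{M}^\sigma(\mathrm{Forget}(\upsilon))=L^2(\upsilon)\cdot H_0^\sigma$. Naturality in $\upsilon,\nu$ was proved there, and Definition \ref{def: WTY} does not require compatibility with associators, so nothing further is needed for this part.

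For condition (2), first observe that $H_0^\sigma\boxtimes H_0^\sigma$ lies in the $\sigma^2=1$ graded piece $\Rep(\Heis)$, since the tensor product respects the $\Z/2$-grading (Definition \ref{def: G-X-braidedWTensorCat}(3), via the $\Z/2$-crossed structure of Theorem \ref{Thm: GcrossedBalancedRepGA}). The object $\tau$ lives in $\Rep^\sigma(\Heis)$, and there are no nonzero morphisms between different graded components, so $\Hom(\tau\otimes\tau,\tau)=0$. Second, $\tau\otimes\tau\neq0$ because Connes fusion of nonzero representations is nonzero: its underlying space contains the completion of $H_0^\sigma(S^1_-)\otimes H_0^\sigma(S^1_+)$ with respect to a form that is nonzero on, for example, $\xi\otimes\eta$ with $Z(\xi)$ and $Z(\eta)$ isometries. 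Alternatively, one can argue via rigidity or the unitor: tensoring with a nonzero object is faithful.

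The main obstacle is this nonvanishing of $\tau\otimes\tau$, since the paper has not yet computed $H_0^\sigma\boxtimes H_0^\sigma$. I would argue it concretely. Pick $\xi=U\Omega$ and $\eta=V\Omega$, where $U\in\Hom_{\A(S^1_+)}(H_0,H_0^\sigma)$ and $V\in\Hom_{\A(S^1_-)}(H_0,H_0^\sigma)$ are unitaries. These exist because the local algebras are type $\mathrm{III}$ factors acting normally (for twisted representations one uses the restriction to intervals avoiding the twist point). Then
\[
\langle\xi\otimes\eta,\xi\otimes\eta\rangle=\langle V^*VU^*U\Omega,\Omega\rangle=1\neq0,
\]
so $\tau\otimes\tau\neq0$.
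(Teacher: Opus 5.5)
Your proposal is correct and is essentially the paper's proof: you transport the structure along $\mathfrak{M}\oplus\mathfrak{M}^\sigma$, obtain condition (1) of Definition \ref{def: WTY} from the tensorators of Propositions \ref{prop: TensoratorForM} and \ref{prop: TensoratorForMSigma}, and get $\Hom(\tau\otimes\tau,\tau)=0$ from the $\Z/2$-grading. The only difference is that the paper cites \cite[Sec. 5]{fredenhagen} for $H_0^\sigma\boxtimes H_0^\sigma\neq 0$, whereas your computation $\langle V^*VU^*U\Omega,\Omega\rangle=1$ is a valid self-contained replacement (the unitaries exist because each type $\mathrm{III}$ factor $\Heis(I)$ acts normally on $H_0^\sigma$, and the twist only affects compatibilities between intervals, so there is no need to avoid $\mathrm{p}$, which lies in the closure of $S^1_\pm$ anyway); drop the aside about rigidity, which is unavailable here since $\Hom(\delta_0,\tau\otimes\tau)=\Hom(\delta_0,L^2(\R))=0$.
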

\begin{proof}
By Theorems \ref{thm: RepHeisIsRepR} and \ref{thm: RepSigmaHeisIsHilb}, we obtain an equivalence of $\mathrm{W}^*$-categories
\[  \mathfrak{M}\oplus\mathfrak{M}^\sigma:\Rep(C_0(\R))\oplus\Hilb\cdot\tau\xrightarrow{\cong}\Rep(\Heis)\oplus\Rep^\sigma(\Heis).
\]
Endowing $\Rep(C_0(\R))$ with the tensor structure of $\Hilb\,\R$, by Propositions \ref{prop: TensoratorForM} and \ref{prop: TensoratorForMSigma}, we have unitary natural isomorphisms
\[\begin{tikzcd}
{\Rep(C_0(\R))\times \Rep(C_0(\R))} && {\Rep(\Heis)\times \Rep(\Heis)} \\
{\Rep(C_0(\R))} && {\Rep(\Heis)}
\arrow["{\mathfrak{M}\times \mathfrak{M}}", from=1-1, to=1-3]
\arrow["\otimes"', from=1-1, to=2-1]
\arrow["{[-,-]}"{description}, Rightarrow, from=1-3, to=2-1]
\arrow["\boxtimes", from=1-3, to=2-3]
\arrow["{\mathfrak{M}}"', from=2-1, to=2-3]
\end{tikzcd}\]
\[\begin{tikzcd}
	&& {\Rep(\Heis)} && \\
	{\Rep^\sigma(\Heis)} && {\Rep(C_0(\R))} && {\Rep^\sigma(\Heis)} \\
	&& \Hilb.
	\arrow[""{name=0, anchor=center, inner sep=0}, "{\mathfrak{M}^\sigma(\tau)\boxtimes -}"', from=1-3, to=2-1]
	\arrow[""{name=1, anchor=center, inner sep=0}, "{-\boxtimes \mathfrak{M}^\sigma(\tau)}", from=1-3, to=2-5]
	\arrow["{\mathfrak{M}}", from=2-3, to=1-3]
	\arrow["{\text{Forget}}"', from=2-3, to=3-3]
	\arrow[""{name=2, anchor=center, inner sep=0}, "{\mathfrak{M}^\sigma}", from=3-3, to=2-1]
	\arrow[""{name=3, anchor=center, inner sep=0}, "{\mathfrak{M}^\sigma}"', from=3-3, to=2-5]
	\arrow["{[H_0^\sigma,-]}"', shift left=4, between={0.2}{0.8}, Rightarrow, from=0, to=2]
	\arrow["{[-,H_0^\sigma]}", shift right=2, between={0.2}{0.8}, Rightarrow, from=1, to=3]
\end{tikzcd}\]
Furthermore, by \cite[Thm. 3.32]{GcrossedbraidedRep}, we know that $\Rep(\Heis)\oplus\Rep^\sigma(\Heis)$ is a $\mathbb{Z}/2$-graded tensor category, meaning that $\Hom_{\Rep(\Heis)\oplus\Rep^\sigma(\Heis)}(H_0^\sigma\boxtimes H_0^\sigma, H_0^\sigma) = 0$, and $H_0^\sigma\boxtimes H^\sigma_0\neq 0$ \cite[Sec. 5]{fredenhagen}. Hence, the claim follows.
\end{proof}

\begin{corollary}
    There exist a continuous symmetric nondegenerate bicharacter $\chi: \R\times \R\to U(1)$ and a sign $\xi\in\{\pm1\}$ such that
    \[
    \Rep(\Heis)\oplus\Rep^{\sigma}(\Heis)\cong \TYcal(\R, \chi, \xi)
    \]
    as $\mathrm{W}^*$-tensor categories.
\end{corollary}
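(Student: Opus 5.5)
This is a direct consequence of Theorem \ref{Thm: RepZ2HeisIsTYR} combined with the classification Theorem \ref{thm: TYClassificationThm}.

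By Theorem \ref{Thm: RepZ2HeisIsTYR}, $\Rep(\Heis)\oplus\Rep^\sigma(\Heis)$ is equivalent as a $\mathrm{W}^*$-tensor category to a Tambara-Yamagami $\mathrm{W}^*$-tensor category $\mathcal{C}$ for $\R$ in the sense of Definition \ref{def: WTY}. Concretely, one transports the Connes fusion product, associators and unitors of $\Rep^{\Z/2}(\Heis)$ along the equivalence $\mathfrak{M}\oplus\mathfrak{M}^\sigma$. This endows $\Rep(C_0(\R))\oplus\Hilb\cdot\tau$ with a $\mathrm{W}^*$-tensor structure for which $\mathfrak{M}\oplus\mathfrak{M}^\sigma$ is tautologically a $\mathrm{W}^*$-tensor equivalence.

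The tensorators of Propositions \ref{prop: TensoratorForM} and \ref{prop: TensoratorForMSigma} supply the natural isomorphisms $\upsilon\otimes\nu\cong\upsilon\times\nu$ and $\upsilon\otimes\tau\cong L^2(\upsilon)\cdot\tau\cong\tau\otimes\upsilon$. The $\Z/2$-grading gives $\Hom(\tau\otimes\tau,\tau)=0$, and \cite{fredenhagen} gives $\tau\otimes\tau\neq 0$. These are exactly the two conditions of Definition \ref{def: WTY}.

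Next, apply Theorem \ref{thm: TYClassificationThm} with $G=\R$, which is locally compact abelian. The map $(\chi,\xi)\mapsto\TYcal(\R,\chi,\xi)$ is a bijection onto equivalence classes of Tambara-Yamagami $\mathrm{W}^*$-tensor categories for $\R$. In particular it is surjective, so there is a continuous symmetric nondegenerate bicharacter $\chi$ and a sign $\xi$ with $\mathcal{C}\cong\TYcal(\R,\chi,\xi)$ as $\mathrm{W}^*$-tensor categories. Composing the two equivalences yields the corollary.

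The only point needing care is that the transported structure really has underlying $\mathrm{W}^*$-category $\Rep(C_0(\R))\oplus\Hilb\cdot\tau$ and that condition (1) of Definition \ref{def: WTY} holds naturally in all objects. Both are already guaranteed by the naturality statements in Propositions \ref{prop: ExtendTensoratorsByContinuity} and \ref{prop: TwistedExtendTensoratorsByContinuity}. No compatibility of these isomorphisms with associators is required by the definition, since the classification theorem absorbs the associator data into $(\chi,\xi)$. There is therefore no genuine obstacle; determining which $\chi$ and $\xi$ actually occur is a separate computation.
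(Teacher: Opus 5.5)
Your proposal is correct and matches the paper's argument: the corollary follows from Theorem \ref{Thm: RepZ2HeisIsTYR} together with the surjectivity of the classification map in Theorem \ref{thm: TYClassificationThm}. Your extra unpacking (transport of structure along $\mathfrak{M}\oplus\mathfrak{M}^\sigma$, the tensorators, the grading argument, and the nonvanishing of $\tau\otimes\tau$ from \cite{fredenhagen}) just restates the paper's own proof of Theorem \ref{Thm: RepZ2HeisIsTYR}.
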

\begin{proof}
    This result is the combination of Theorem \ref{Thm: RepZ2HeisIsTYR} and the classification of Tambara-Yamagami $\mathrm{W}^*$-tensor categories \cite[Thm. 4.17]{AM2025}, see also Theorem \ref{thm: TYClassificationThm}.
\end{proof}

\section{Getting the signs right}\label{sec: GettingSignsRight}

In Theorem \ref{Thm: RepZ2HeisIsTYR}, we have identified $\Rep^{\Z/2}(\Heis):=\Rep(\Heis)\oplus\Rep^\sigma(\Heis)$ as a Tambara-Yamagami $\mathrm{W}^*$-tensor category for $\R$ in the sense of Definition \ref{def: WTY}. By \cite[Thm. 4.17]{AM2025}, we know that these are classified, up to equivalence of $\mathrm{W}^*$-tensor categories, by a continuous symmetric nondegenerate bicharacter $\chi: \R\times \R\to U(1)$ and a sign $\xi\in\{\pm1\}$, where the bicharacter is taken up to the action of $\Aut(\R)$ (see Theorem \ref{thm: TYClassificationThm}). There exist, up to this action, two continuous symmetric nondegenerate bicharacters on $\R$, namely
\[
\chi_+(x,y) = e^{i xy}\hspace{2cm}\chi_-(x,y) = e^{-i xy}.
\]
In the current section, we prove that $$\Rep^{\Z/2}(\Heis)\cong \TYcal(\R, \chi_-, +1)$$ as $\mathrm{W}^*$-tensor categories, see Theorem \ref{thm: MainThmCrossed}. In addition, we will compute the structure of a $\Z/2$-crossed braided $\mathrm{W}^*$-tensor category. To do so, we use \cite[Thm. 3.32]{GcrossedbraidedRep}, which states that $\Rep^{\Z/2}(\Heis)$ is actually a $\Z/2$-crossed balanced $\mathrm{W}^*$-tensor category. We will classify all possible such structures on Tambara-Yamagami $\mathrm{W}^*$-tensor categories for $\R$, see Proposition \ref{prop: ClassifyTwists}. Using this classification, we will show that it is enough to compute the square of the crossed balance on $\tau$ in order to determine the $\Z/2$-crossed braided tensor structure on a Tambara-Yamagami $\mathrm{W}^*$-tensor category for $\R$. The square of the crossed balance on the unique irreducible twisted representation of $\Heis$ is computable using Vertex Operator Algebra techniques (see Proposition \ref{prop: DoubleTwistComputation}), and hence this will allow us to obtain the complete $\Z/2$-crossed braided tensor structure on $\Rep^{\Z/2}(\Heis)$. Unfortunately, we only obtain the crossed balance on $\tau$ up to a sign, which does not determine the whole $\Z/2$-crossed balanced tensor structure up to equivalence, see the discussion leading to Conjecture \ref{Conjecture: WhichCrossedBalance}.

\subsection{$\Z/2$-crossed balances on Tambara-Yamagami $\mathrm{W}^*$-tensor categories for $\R$}

We first classify all $\Z/2$-crossed balanced structures on Tambara-Yamagami $\mathrm{W}^*$-tensor categories for $\R$. Fix $a\in \R\setminus\{0\}$ and set
\[
\begin{array}{cccc}
    \chi: & \R\times\R &\to &U(1)\\
     & (x,y)&\mapsto& e^{iaxy}.
\end{array}
\]
Let $\xi\in\{\pm 1\}$ be a sign. Let $\Sigma: \Rep(C_0(\R))\to \Rep(C_0(\R))$ denote the functor $$\Sigma\Big(\int^\oplus_{t\in T}\delta_{p(t)}\mathrm{d}\upsilon(t)\Big)=\int^\oplus_{t\in T}\delta_{-p(t)}\mathrm{d}\upsilon(t),$$ 
for all $\int^\oplus_{t\in T}\delta_{p(t)}\mathrm{d}\upsilon(t)\in\Rep(C_0(\R))$, extended trivially to morphisms. We also denote $\int^\oplus_{t\in T}\delta_{-p(t)}\mathrm{d}\upsilon(t)$ by $(-p, \upsilon)$. We are only interested in actions of $\Z/2$ on $\TYcal(\R,\chi, \xi)$ where the action of $\sigma\in \Z/2$ on $\Rep(C_0(\R))$ is given by $\Sigma$. Such a choice of action is justified by the following lemma. Recall that we write $T_\sigma: \Rep^{\Z/2}(\Heis)\to \Rep^{\Z/2}(\Heis)$ for the action of $\sigma\in\Z/2$.

\begin{lemma}\label{lemm: ActionTransportedToC0R} There is a unitary natural isomorphism
\[\begin{tikzcd}
	{\Rep(C_0(\R))} & {\Rep(C_0(\R))} \\
	{\Rep(\Heis)} & {\Rep(\Heis)}.
	\arrow["\Sigma", from=1-1, to=1-2]
	\arrow["{\mathfrak{M}}"', from=1-1, to=2-1]
	\arrow[Rightarrow, from=1-2, to=2-1]
	\arrow["{\mathfrak{M}}", from=1-2, to=2-2]
	\arrow["{T_\sigma}"', from=2-1, to=2-2]
\end{tikzcd}\]
\end{lemma}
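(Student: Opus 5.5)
We construct the natural isomorphism componentwise and explicitly, using the unitary $V_\sigma$ on $H_0=e^{\overline{L\R/\R}}$ from the example defining the $\Z/2$-action, $V_\sigma(e^h)=e^{-h}$. Recall that $T_\sigma$ sends an honest representation $(K,\pi^K)$ to $(K,\pi^K\circ\sigma^{-1})$. So $W(f)$ acts on $T_\sigma\mathfrak{M}(p,\upsilon)$ through the action of $W(-f)$ on $\mathfrak{M}(p,\upsilon)$.

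First compute this action from Theorem \ref{thm: RepHeisIsRepR}. For $f\in L_I\R$ we get
\[
W(f)\cdot(\psi\otimes e^h)=e^{-\frac12||f||^2+\langle f,h\rangle}\big(t\mapsto e^{-ip(t)\int f}\psi(t)\big)\otimes e^{-f+h}.
\]
On $\mathfrak{M}(-p,\upsilon)=\mathfrak{M}\Sigma(p,\upsilon)$, the element $W(f)$ acts by the multiplication operator $e^{-ip(t)\int f}$ tensored with $W(f)$ on the Fock factor.

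Next compare the two using $\id_{L^2(T,\upsilon)}\otimes V_\sigma$. The identity $V_\sigma W(f)V_\sigma^*=W(-f)$ gives $V_\sigma^* W(-f) V_\sigma = W(f)$. Hence the unitary
\[
\eta_{(p,\upsilon)}:=\id_{L^2(T,\upsilon)}\otimes V_\sigma:\ \mathfrak{M}\Sigma(p,\upsilon)\to T_\sigma\mathfrak{M}(p,\upsilon)
\]
intertwines the actions of all $W(f)$ with $f\in L_I\R$, for every $I\in\Jcal$. The scalar factor $e^{-ip(t)\int f}$ is the same on both sides and commutes with $V_\sigma$, which acts only on the Fock factor.

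It remains to pass from the Weyl operators to the whole von Neumann algebras. Both actions are normal, since all representations are normal by separability, and the $*$-algebra $\langle W(f)\rangle_{f\in L_I\R}$ is ultraweakly dense in $\Heis(I)$. Therefore the intertwining relation extends to all of $\Heis(I)$, and $\eta_{(p,\upsilon)}$ is a morphism in $\Rep(\Heis)$.

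Naturality is immediate. Morphisms in $\Rep(C_0(\R))$ are sent by $\mathfrak{M}$ to $F\otimes\id$. The functors $\Sigma$ and $T_\sigma$ act trivially on morphisms, so $T_\sigma$ also sends $F\otimes\id$ to itself. This operator commutes with $\id\otimes V_\sigma$. Since every object of $\Rep(C_0(\R))$ is isomorphic to one of the form $\int^\oplus\delta_{p(t)}\mathrm{d}\upsilon(t)$, the components $\eta_{(p,\upsilon)}$ determine a unitary natural isomorphism filling the square, after transporting along such isomorphisms or working on this equivalent full subcategory.

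The only point needing care is a sign convention: whether $T_\sigma$ uses $\sigma$ or $\sigma^{-1}$. This does not affect the argument, because $\sigma^2=1$, so the same unitary works either way.
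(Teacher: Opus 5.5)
Your proposal is correct and is essentially the paper's proof. The paper uses exactly the unitary $\psi\otimes e^h\mapsto\psi\otimes e^{-h}$, i.e.\ $\id\otimes V_\sigma\colon\mathfrak{M}(-p,\upsilon)\to T_\sigma(\mathfrak{M}(p,\upsilon))$, states that it intertwines the actions by chasing definitions, and calls naturality clear. You spell out what it leaves implicit: the Weyl-operator computation via $V_\sigma W(f)V_\sigma^*=W(-f)$, and the extension from the $W(f)$ to $\Heis(I)$ by normality and ultraweak density.
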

\begin{proof}
Let $\int^\oplus_{t\in T}\delta_{p(t)}\mathrm{d}\upsilon(t)\in \Rep(C_0(\R))$. We define the necessary natural transformation on $\int^\oplus_{t\in T}\delta_{p(t)}\mathrm{d}\upsilon(t)$ to be the unitary $\mathfrak{M}(-p,\upsilon)\cong T_\sigma( \mathfrak{M}(p,\upsilon))$
\[
\begin{array}{ccc}
L^2(\upsilon)\otimes e^{\overline{L\R/\R}}&\to &L^2(\upsilon)\otimes e^{\overline{L\R/\R}}\\
\psi\otimes e^h&\mapsto &\psi\otimes e^{-h}.
\end{array} 
\]
It is easy to see, chasing the definitions, that this is indeed a unitary equivalence of representations of $\Heis$. Naturality is clear.
\end{proof}

The classification of $\Z/2$-crossed balanced structures on $\TYcal(\R, \chi,\xi)$ will follow \cite{galindo}. Let us write $\lambda$ for the Lebesgue measure on $\R$, that is, its Haar measure, so that $\int^\oplus_{x\in \R}\delta_{x}\mathrm{d}\lambda(x)$ denotes the representation $L^2(\R)\in \Rep(C_0(\R))$. The following result is analogous to \cite[Lemm. 4.8]{galindo}. 

\begin{proposition}\label{prop: twoactions}
There exist, up to equivalence, two $\mathbb{Z}/2$-actions on $\TYcal(\R, \chi, \xi)$ such that $\sigma\in \Z/2$ acts as $\Sigma$ on $\Rep(C_0(\R))$. For both of them, the non-trivial tensor functor data is
\[
\sigma(\tau\otimes\tau) = \sigma\Big(\int^\oplus_{x\in \R}\delta_{x}\mathrm{d}\lambda(x)\Big) = \int^\oplus_{x\in \R}\delta_{-x}\mathrm{d}\lambda(x) \xrightarrow{f(x)\mapsto f(-x)} \int^\oplus_{x\in \R}\delta_{x}\mathrm{d}\lambda(x) = \tau\otimes\tau = \sigma(\tau)\otimes\sigma(\tau).
\]
For the first one, the data $\eta: \sigma\circ\sigma\xrightarrow{\cong} \text{Id}$ is trivial, and for the second one, it is given by
\[
\eta_\upsilon = \id_{\int^\oplus_{t\in T}\delta_{p(t)}\mathrm{d}\upsilon(t)}\hspace{2cm}\eta_\tau = -\id_\tau
\]
for all objects $\int^\oplus_{t\in T}\delta_{p(t)}\mathrm{d}\upsilon(t)\in \Rep(C_0(\R))$.
\end{proposition}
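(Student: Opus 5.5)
Following the finite case \cite[Lemm.~4.8]{galindo}, I would parametrize the candidate actions by the data of a tensor autoequivalence $T_\sigma$ of $\TYcal(\R,\chi,\xi)$ that restricts to $\Sigma$ on $\Rep(C_0(\R))$, together with a monoidal isomorphism $\eta:T_\sigma\circ T_\sigma\cong\mathrm{Id}$ satisfying the associativity coherence. By the $\mathrm{W}^*$-version of the description of morphisms between Tambara--Yamagami categories, $T_\sigma$ must send $\tau\mapsto\tau$, since it preserves the grading and $\Hilb\cdot\tau$ has a unique simple object. Its tensor structure $s_{X,Y}:T_\sigma(X)\otimes T_\sigma(Y)\to T_\sigma(X\otimes Y)$ is then determined by unitary functions. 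On $(\upsilon,\nu)$ it is multiplication by a measurable $\phi(p(t),n(s))$. On $(\upsilon,\tau)$ and $(\tau,\upsilon)$ it is multiplication by $\alpha(p(t))$ and $\beta(p(t))$. On $(\tau,\tau)$ it is a unitary on $L^2(\R)$ intertwining $\Sigma(\mu)$ and $\mu$, hence of the form $f(x)\mapsto \gamma(x)f(-x)$.

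The first step is to write out the hexagon (monoidality) constraints for $s$ against each associator of Definition~\ref{def: TYcal(G,chi,xi)}, exactly mirroring \cite{TY}. On $\Hilb\,\R$ the constraint says $\phi$ is a $2$-cocycle. The $(\upsilon,\tau,\nu)$ constraint gives $\alpha(x)\beta(y)\chi(-x,-y)=\chi(x,y)$ up to $\phi$. The $(\upsilon,\tau,\tau)$ and $(\tau,\tau,\upsilon)$ constraints relate $\gamma$ to $\alpha,\beta,\phi$. The $(\tau,\upsilon,\tau)$ constraint forces $\chi(-x,y)$ to be compatible with $\gamma(x+y)/\gamma(y)$. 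Finally, the $(\tau,\tau,\tau)$ constraint involves $J_\chi$ and $\mathcal F$; there the reflection $f\mapsto f(-x)$ commutes with $J_\chi$ and with $\xi\mathcal F$, which is what makes the constant choice $\gamma\equiv c$ consistent.

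Next I would quotient by monoidal natural isomorphisms $T_\sigma\cong T'_\sigma$. Such an isomorphism is given by a measurable unitary function $u$ on $\R$ together with a scalar $u_\tau$, and it changes $(\phi,\alpha,\beta,\gamma)$ by the corresponding coboundaries. Using that $H^2_{\mathrm{meas}}(\R,U(1))$ of symmetric type vanishes and that characters of $\R$ can be absorbed, I expect to gauge $\phi=\alpha=\beta=1$ and $\gamma$ to a constant $c$. The $(\tau,\tau,\tau)$ constraint then forces $c^2=1$. The residual scalar gauge $u_\tau$ rescales $\gamma$ by $u_\tau^{2}$, so it can also remove the sign, leaving $\gamma=1$, which is the stated tensor data.

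Finally, I would classify $\eta$. Since $T_\sigma^2=\mathrm{Id}$ strictly with trivial tensor data, $\eta$ is a monoidal automorphism of the identity functor. Such an automorphism is a character $\epsilon$ of the grading-compatible kind: a measurable character of $\R$ on the $\Rep(C_0(\R))$ part together with $\eta_\tau$, subject to $\eta_\tau^2=\epsilon(x)$ for all $x$ from the $(\tau,\tau)$ component, forcing $\epsilon\equiv1$ and $\eta_\tau=\pm1$. The coherence $T_\sigma(\eta)=\eta_{T_\sigma}$ holds for both choices. I would then check the two choices are inequivalent, since rescaling by a natural isomorphism $T_\sigma\cong T_\sigma$ changes $\eta_\tau$ by $u_\tau\cdot u_\tau=u_\tau^2$, which here must be $1$ given the normalization.

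The main obstacle is the measurable gauge-fixing step, especially the $(\tau,\tau,\tau)$ hexagon with Fourier transforms, where one must argue that $\gamma$ is a.e.\ constant. I would handle it by testing against Gaussians or characters.
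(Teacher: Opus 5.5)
Your proposal is correct in outline, but it takes a different route from the paper.

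\textbf{Comparison with the paper.} The paper never solves the monoidality hexagons. It computes $\Aut_\otimes(\mathrm{Id})\cong\Z/2$, using $\zeta_\upsilon(t)\zeta_\tau=\zeta_\tau$ and $\zeta_\tau^2=\zeta_\lambda$. It then views actions as 2-group homomorphisms $\underline{\Z/2}\to\underline{\Aut_\otimes}(\TYcal(\R,\chi,\xi))$ and quotes the result that those lifting the fixed class of $\Sigma$ form a torsor over $H^2(\Z/2;\Z/2)\cong\Z/2$. Exhibiting the two explicit, inequivalent actions finishes the proof.

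Your last step is exactly that cocycle/coboundary computation of $H^2$, done by hand: $\eta$ must lie in $\Aut_\otimes(\mathrm{Id})$ because $T_\sigma^2=\mathrm{Id}$ with trivial tensor data, the coherence $T_\sigma(\eta)=\eta_{T_\sigma}$ is automatic, and re-gauging changes $\eta_\tau$ by $u_\tau^2=1$.

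Your gauge-fixing step proves something the paper leaves implicit. Every tensor structure on an extension of $\Sigma$ is monoidally isomorphic to the reflection one. So ``$\sigma$ acts as $\Sigma$'' really singles out one class of tensor autoequivalences, and the clause ``for both of them, the non-trivial tensor functor data is\dots'' is forced rather than assumed. The paper's argument is shorter and more conceptual; yours is self-contained.

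\textbf{Corrections to your bookkeeping.} When you write the hexagons out, they sort out differently from your forecast, and one of your mechanisms would not work as stated. Write $\alpha(\upsilon),\beta(\upsilon)$ for your components on $(\upsilon,\tau)$ and $(\tau,\upsilon)$, and use object-indexed families in $L^\infty(\upsilon\times\nu)$, as in the proof of Proposition~\ref{prop: braidingsTY}, rather than a single Borel function of $(p(t),n(s))$.

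\emph{The cocycle $\phi$.} The $(\upsilon,\nu,\tau)$ hexagon gives $\phi(\upsilon,\nu)(t,s)\,\alpha(\upsilon\times\nu)(t,s)=\alpha(\upsilon)(t)\,\alpha(\nu)(s)$. So $\phi$ is already the coboundary of $\alpha$, and the gauge $u_\upsilon=\alpha(\upsilon)$ kills $\phi$ and $\alpha$ at once. No input on measurable $H^2(\R;U(1))$ is needed.

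\emph{The ratio $\beta/\alpha$.} A natural isomorphism $u$ rescales your $\alpha$ and $\beta$ by the same $u_\upsilon^{-1}$. Hence the multiplicative family $\beta/\alpha$ is gauge invariant and cannot be ``absorbed''. It is the $(\tau,\upsilon,\tau)$ hexagon that forces $\beta=\alpha$. The $(\upsilon,\tau,\nu)$ hexagon is vacuous, since $\chi(-x,-y)=\chi(x,y)$ and the same factors appear on both sides.

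\emph{The function $\gamma$.} The $(\upsilon,\tau,\tau)$ hexagon reads $\gamma(p(t)+x)\,\alpha(\upsilon)(t)=\phi(\upsilon,\lambda)(t,x)\,\gamma(x)$. After gauging, taking $\upsilon=\lambda$ shows $\gamma$ is a.e. constant, with no Fourier analysis. The $(\tau,\tau,\tau)$ hexagon is then automatic, because the reflection commutes with $\xi\mathcal{F}J_\chi$. It imposes no condition $c^2=1$, and $u_\tau=c^{1/2}$ removes any constant $c\in U(1)$.

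\emph{Computing $\Aut_\otimes(\mathrm{Id})$.} Use the $(\upsilon,\tau)$ component $\eta_\upsilon(t)\eta_\tau=\eta_\tau$, as the paper does, to get $\eta_\upsilon=1$ on all objects. This covers atomic objects, which the $(\tau,\tau)$ component does not see.
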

\begin{proof}
Let $\underline{\Aut_\otimes}(\TYcal(\R, \chi, \xi))$ be the category of $\mathrm{W}^*$-tensor automorphisms of $\mathcal{TY}(\R, \chi, \xi)$ and unitary $\mathrm{W}^*$-tensor natural transformations. Then, $\underline{\Aut_\otimes}(\TYcal(\R, \chi, \xi))$ is a coherent 2-group in the sense of \cite[Def. 7]{2gps}. The monoidal category $\underline{\Z/2}$ whose objects are $\{1, \sigma\}$ and whose morphisms are identities is also a coherent 2-group by equipping its set of objects with the group structure of $\Z/2$. In addition, the group of automorphisms of the unit $\text{Id}_{\mathcal{TY}(\R, \chi, \xi)}\in \underline{\Aut_\otimes}(\TYcal(\R, \chi, \xi))$ is equivalent to $\Z/2.$ Indeed, let $\zeta:\text{Id}_{\mathcal{TY}(\R, \chi, \xi)}\to \text{Id}_{\mathcal{TY}(\R, \chi, \xi)}$ be a unitary $\mathrm{W}^*$-tensor natural transformation. The data of $\zeta$ is the data of functions $\zeta_\upsilon\in L^\infty(T, \upsilon, U(1))$ for every $\int^\oplus_{t\in T}\delta_{p(t)}\mathrm{d}\upsilon(t)\in \Rep(C_0(\R))$ and $\zeta_\tau\in U(1)$ satisfying naturality and
\[
\zeta_{\upsilon\times\nu}(t,s) = \zeta_{\upsilon}(t)\zeta_\nu (s)\hspace{1cm} \zeta_\upsilon(t)\zeta_\tau = \zeta_\tau\hspace{1cm}\zeta_\tau^2 = \zeta_\lambda
\]
for every $\int^\oplus_{t\in T}\delta_{p(t)}\mathrm{d}\upsilon(t), \int^\oplus_{s\in S}\delta_{n(s)}\mathrm{d}\nu(s)\in \Rep(C_0(\R))$. Hence, $\zeta_\upsilon = 1$ and $\zeta_\tau = \pm1$.

By \cite[Thm. 43]{2gps} or \cite[Prop. 5.4]{MR2763944}, the set of $\mathrm{W}^*$-tensor $\Z/2$-actions on $\TYcal(\R, \chi, \xi)$ up to isomorphism such that $\sigma$ acts as $\Sigma$ is isomorphic to the underlying set~of
\[
H^2(\Z/2; \Z/2)\cong \Z/2.
\]
Since both actions described in the statement of the lemma are indeed inequivalent $\mathrm{W}^*$-tensor $\Z/2$-actions on $\TYcal(\R, \chi, \xi)$, the claim follows.
\end{proof}

We can now characterize all $\mathbb{Z}/2$-crossed braidings on $\mathcal{TY}(\R, \chi, \xi)$. The following result is analogous to \cite[Thm. 4.9]{galindo} with the extra remark in \cite[Rk. 3.4]{galindo2024computinggcrossedextensionsorbifolds}.

\begin{proposition}\label{prop: braidingsTY}
There exists, up to equivalence, a unique $\Z/2$-crossed braiding on $\TYcal(\R, \chi, \xi)$ such that $\sigma\in \Z/2$ acts on $\Rep(C_0(\R))$ by $\Sigma$.  Let $w\in L^\infty(\lambda)$ be the function $w(x) = e^{-iax^2/2}$~and $$\epsilon:= +\sqrt{\xi}\cdot  e^{-i\cdot \text{sgn}(a)\cdot \frac{\pi}{8}}\in U(1)$$
for $\text{sgn}(a):=a/|a|$ and $+\sqrt{1} = 1$ and $+\sqrt{-1} = i$. The crossed braiding is given by the action of $\Z/2$ with $\eta_\tau = 1$ in Proposition \ref{prop: twoactions}~and
\[
\begin{array}{cccc}
\beta_{\upsilon, \nu} :&L^2(\upsilon\times\nu)&\to &L^2(\upsilon\times\nu)\nonumber\\\nonumber
&f(t,s)&\mapsto &\chi(p(t), n(s))f(t,s) \vspace{.5cm}\\
\beta_{\upsilon, \tau} = \beta_{\tau, \label{Eq: braiding}\upsilon}:&L^2(\upsilon)\cdot \tau&\to& L^2(\upsilon)\cdot\tau\\\nonumber
&f(t)&\mapsto& w(p(t))f(t)\vspace{.5cm}\\\nonumber
\beta_{\tau, \tau}: &L^2(\lambda) &\to &L^2(\lambda)\\\nonumber
& f(x)&\mapsto & \epsilon\, w(x)^{-1}f(x) 
\end{array}
\]
for objects $\int^{\oplus}_{t\in T}\delta_{p(t)}\mathrm{d}\upsilon(t),\,\int^{\oplus}_{s\in S}\delta_{n(s)}\mathrm{d}\nu(s)\in\Rep(C_0(\R)).$
\end{proposition}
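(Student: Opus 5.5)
We follow the strategy of \cite[Thm.~4.9]{galindo}, working with objects of the form $\int^\oplus_{t\in T}\delta_{p(t)}\mathrm{d}\upsilon(t)$ and reducing every coherence to an identity between measurable functions or unitaries on $L^2$-spaces. The unknown crossed braiding components are: on $\Rep(C_0(\R))\times\Rep(C_0(\R))$, a unitary $\beta_{\upsilon,\nu}$ on $L^2(\upsilon\times\nu)$; mixed components $\beta_{\upsilon,\tau}$ and $\beta_{\tau,\upsilon}$, which are unitaries of $L^2(\upsilon)\cdot\tau$; and $\beta_{\tau,\tau}$, a unitary of $L^2(\lambda)$, where $T_\sigma(\tau)=\tau$.

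\emph{Reduction to multiplication operators.} By naturality with respect to $L^\infty$-multiplications on $L^2(\upsilon)$, which are endomorphisms in $\Rep(C_0(\R))$, each component must commute with the corresponding multiplication algebra. Hence each component is multiplication by a measurable $U(1)$-valued function. Naturality along morphisms between different $(p,\upsilon)$ shows these functions factor through the labels. So
\[
\beta_{\upsilon,\nu}=b(p(t),n(s)),\quad \beta_{\upsilon,\tau}=c(p(t)),\quad \beta_{\tau,\upsilon}=d(p(t)),\quad \beta_{\tau,\tau}=e(x),
\]
for measurable functions $b,c,d,e$. Two points need care. First, a unitary of $L^2(\lambda)$ commuting with multiplication by $C_0(\R)$ is multiplication by an $L^\infty$ function, so $e$ is well defined. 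Second, measurable functions determined only almost everywhere must be upgraded to genuine functions; we would do this using that the hexagon identities are multiplicative, so that measurable solutions of them are continuous by the standard Banach/Steinhaus argument for measurable homomorphisms.

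\emph{Hexagons without $\tau$ in the middle.} The hexagons \eqref{eq: Xbraiding1}--\eqref{eq: Xbraiding2} with all entries in $\Rep(C_0(\R))$ force $b$ to be a continuous bicharacter. The hexagons of type $(\upsilon,\nu,\tau)$ and $(\tau,\upsilon,\nu)$ involve the associators $\alpha_{\upsilon,\tau,\nu}=\chi$ from Definition \ref{def: TYcal(G,chi,xi)}. They give
\[
c(x+y)=c(x)c(y)\chi(x,y)^{-1}
\]
and the analogous relations linking $b$, $c$ and $d$. These force $b=\chi$, and $c=d=w\cdot(\text{character})$ with $w(x)=e^{-iax^2/2}$. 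Using the $\Sigma$-twisted version of these relations, where $T_\sigma$ sends $x\mapsto -x$, the residual character must be trivial. Hence $c=d=w$.

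\emph{Hexagons with $\tau$.} The hexagons of type $(\tau,\tau,\upsilon)$ and $(\upsilon,\tau,\tau)$ use the shift associators $\alpha_{\upsilon,\tau,\tau}$ and $\alpha_{\tau,\tau,\upsilon}$, together with the tensor data of $T_\sigma$ from Proposition \ref{prop: twoactions}. They give a functional equation relating $e(x+y)$, $e(x)$ and $w(y)$. Its solution is $e=\epsilon\, w^{-1}$ for a constant $\epsilon\in U(1)$.

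\emph{The main obstacle: the $(\tau,\tau,\tau)$ hexagons.} These involve $\alpha_{\tau,\tau,\tau}=\xi\,\mathcal{F}\circ J_\chi$, and reduce to an operator identity on $L^2(\R)$ of the form
\[
\epsilon^2\, M_{w^{-1}}\,\mathcal{F}\, M_{w^{-1}}=\xi\,\mathcal{F}\, M_{w}\,\mathcal{F}\quad(\text{up to }J_\chi,\ \text{reflection}).
\]
To handle this we would compute the Fourier transform of the Gaussian chirp $e^{\pm iax^2/2}$ as a distribution. The Fresnel integral produces the factor $e^{-i\,\mathrm{sgn}(a)\pi/4}$, so $\epsilon^2=\xi e^{-i\,\mathrm{sgn}(a)\pi/4}$. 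The delicate part is keeping track of the constants $c_\chi$ and the orientation conventions for $\mathcal{F}$.

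The same identity evaluated with $\eta_\tau=-1$ picks up an extra sign that cannot be absorbed, so that action admits no crossed braiding. This is the point of \cite[Rk.~3.4]{galindo2024computinggcrossedextensionsorbifolds}.

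\emph{Uniqueness.} Two solutions differ only by the square root $\pm\epsilon$. This ambiguity is removed by the equivalence of crossed braided categories given by the identity functor with $\Phi_\sigma(\tau)=-1$, which rescales $\beta_{\tau,\tau}$ by $-1$. The canonical representative is then $\epsilon=+\sqrt{\xi}\,e^{-i\,\mathrm{sgn}(a)\pi/8}$. Finally, we check the remaining compatibility of $\beta$ with $T_\sigma$, which holds directly because $w$ is even.
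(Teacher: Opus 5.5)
Most of your outline agrees with the paper's proof:
\begin{itemize}
\item multiplication operators by naturality;
\item $b=\chi$ from a hexagon with one $\tau$;
\item a quadratic refinement of $\chi^{-1}$ pinned down by evenness;
\item the Fresnel integral in the $(\tau,\tau,\tau)$ hexagons, both for $\epsilon^2$ and to exclude $\eta_\tau=-1$;
\item the identity functor with $\Phi_\sigma(\tau)=-1$ for uniqueness.
\end{itemize}
Getting $e=\epsilon w^{-1}$ from the $(\tau,\tau,\upsilon)$ hexagons before fixing the constant is a harmless reordering; the paper reads both off the kernel comparison in the $(\tau,\tau,\tau)$ hexagon.

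One clarification. The hexagons with a single $\tau$ only give $b=\chi$, $c(x+y)=c(x)c(y)\chi(x,y)^{-1}$ and $d(x+y)=d(x)d(y)\chi(x,y)^{-1}$; they do not make $c,d$ even. Evenness comes from the compatibility of $\beta$ with $T_\sigma$, i.e.\ condition (2) in the definition of a $G$-crossed braiding, which gives $\beta_{\Sigma\upsilon,\tau}=\beta_{\upsilon,\tau}$. Alternatively, as in the paper, it comes from the $(\tau,\upsilon,\tau)$ hexagons. So that compatibility is a constraint you use, not only something to check at the end.

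The genuine gap is the reduction ``naturality along morphisms between different $(p,\upsilon)$ shows these functions factor through the labels''. Naturality only gives, for each object $\int^\oplus_{t\in T}\delta_{p(t)}\mathrm{d}\upsilon(t)$, a function $c_\upsilon\in L^\infty(T,\upsilon,U(1))$ compatible along intertwiners. Such families are unitaries of $C_0(\R)^{**}$, not Borel functions on $\R$. For example, take the unitary acting by $+1$ on the atomic part and by $-1$ on the diffuse part of the spectral measure of every representation. It is natural, yet it is $c\circ p$ for no Borel $c$: the objects $\delta_x$ force $c\equiv 1$, while $\lambda$ forces $c=-1$ a.e. So your measurable-homomorphism argument only determines $c_\lambda$ and $d_\lambda$ on the Lebesgue class. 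It says nothing about objects whose measure is singular to $\lambda$, such as Dirac masses or Cantor-type measures.

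The missing step, which is how the paper proceeds, is to propagate from $\lambda$ to every object using a hexagon in which the second entry is $\lambda$. Write $D_1(X,Y,Z)$, $D_2(X,Y,Z)$ for the two hexagons, and $d_{\upsilon\times\lambda}$ for the function attached to $\upsilon\otimes\lambda=\int^\oplus\delta_{p(t)+x}\,\mathrm{d}(\upsilon\times\lambda)(t,x)$.
\begin{itemize}
\item $D_1(\tau,\upsilon,\lambda)$ gives $d_{\upsilon\times\lambda}(t,x)=d_\upsilon(t)\,d_\lambda(x)\,\chi(p(t),x)^{-1}$.
\item Naturality along the translation unitary $f(t,x)\mapsto f(t,x-p(t))$ onto $L^2(\upsilon)\cdot\lambda$ gives $d_{\upsilon\times\lambda}(t,x)=d_\lambda(x+p(t))$.
\end{itemize}
Hence $d_\upsilon(t)=\chi(p(t),x)\,w(x+p(t))/w(x)=w(p(t))$ for $\upsilon$-a.e.\ $t$ and every object. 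Likewise $D_2(\upsilon,\lambda,\tau)$ gives $c_\upsilon=w\circ p$. No such step is needed for $b$, since $D_1(\upsilon,\tau,\nu)$ yields $b=\chi(p(t),n(s))$ objectwise. Nor is it needed for $e$, which only lives on $L^2(\lambda)$.
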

\begin{proof}
By Proposition \ref{prop: twoactions} there are two actions of $\Z/2$ on $\TYcal(\R, \chi, \xi)$ where $\sigma$ acts by $\Sigma$ on $\Rep(C_0(\R))$, and they differ by $\eta_\tau = \pm \id_\tau$. It is easy to see that the data in the statement of the Proposition indeed defines a $\Z/2$-crossed braiding on $\TYcal(\R, \chi, \xi).$ Let $\beta$ denote an arbitrary $\Z/2$-crossed braiding on $\TYcal(\R, \chi, \xi)$. 

Fix objects $\int^\oplus_{t\in T}\delta_{p(t)}\mathrm{d}\upsilon(t),\, \int^\oplus_{s\in S}\delta_{n(s)}\mathrm{d}\nu(s)\in \Rep(C_0(\R))$. By the same arguments as in \cite[Lemm. 4.10 and 4.11]{AM2025}, the fact that $\beta$ is a natural family of morphisms of $C_0(\R)$-representations implies that $\beta_{\upsilon,\nu}$, $\beta_{\tau, \tau}$ are given by multiplication by functions $b(\upsilon, \nu)\in L^\infty(T\times S, \upsilon\times\nu, U(1))$ and $b\in L^\infty(\R,\lambda, U(1))$ respectively; and $\beta_{\upsilon, \tau}$ and $\beta_{\tau, \upsilon}$ are given by multiplication by functions $b_l(\upsilon),b_r(\upsilon)\in L^\infty(T,\upsilon, U(1))$ respectively. 

Let us denote by $D_1(X,Y,Z)$ and $D_2(X,Y,Z)$ the diagrams \eqref{eq: Xbraiding1} and \eqref{eq: Xbraiding2}. By $D_1(\upsilon, \tau, \nu)$, we find that
\[
b(\upsilon, \nu)(t, s) = \chi(p(t), n(s)).
\]
Also, $D_1(\tau, \tau, \upsilon)$ and $D_2(\upsilon, \tau, \tau)$ read
\[
\chi(p(t), x) = \frac{b(x)}{b(x-p(t))b_r(\upsilon)(t)}
\hspace{1cm}
\chi(p(t), x) = \frac{b(x)}{b(x-p(t))b_l(\upsilon)(t)} 
\]
respectively. Hence, $b_l(\upsilon) = b_r(\upsilon)$. Similarly, $D_1(\tau, \upsilon, \tau)$ and $D_2(\tau, \upsilon, \tau)$ read
\[
\chi(p(t), x) = \frac{b(x+p(t)) b_r(\lambda)(t)}{b(x)}
\hspace{1cm}
\chi(p(t), x) = \frac{b(x+p(t)) b_l(-\lambda)(t)}{b(x)},
\]
where $b_r(\lambda)\in L^\infty(\R, \lambda, U(1))$ denotes the function giving the crossed-braiding between $\tau$ and $L^2(\R) = \int^{\oplus}_{x\in \R}\delta_x\mathrm{d}\lambda(x)$, and $b_l(-\lambda)\in L^\infty(\R, \lambda, U(1))$ denotes the function giving the crossed-braiding between $ \int^{\oplus}_{x\in \R}\delta_{-x}\mathrm{d}\lambda(x)$ and $\tau$. The functions $b_l(\lambda)$ and $b_r(-\lambda)$ are defined analogously. We obtain
\[
b_r(\lambda) = b_l(\lambda) = b_r(-\lambda) = b_l(-\lambda).
\]
Next, equation $D_1(\tau, \upsilon, \nu)$ reads 
\begin{equation}\label{D1tauupsilonnu}
\chi(p(t), n(s)) = \frac{b_r(\upsilon)(t)b_r(\nu)(s)}{b_r(\upsilon\times\nu)(t,s)},
\end{equation}
where $b_r(\upsilon\times\nu)\in L^\infty(T\times S, \upsilon\times\nu, U(1))$ is the function giving the crossed braiding between $\tau$ and $\int^\oplus_{(t, s)\in T\times S}\delta_{p(t) + n(s)}\mathrm{d}(\upsilon\times \nu)(t, s)$. Setting $\int^\oplus_{t\in T}\delta_{p(t)}\mathrm{d}\upsilon(t) = \int^\oplus_{s\in S}\delta_{n(s)}\mathrm{d}\nu(s) = \int^\oplus_{x\in \R}\delta_{x}\mathrm{d}\lambda(x)$ in Equation \eqref{D1tauupsilonnu}, and using that $b_r(\lambda\times\lambda)(x,y) = b_r(\lambda)(x+y)$ by naturality, we obtain
\[
\chi(x,y) = \frac{b_r(\lambda)(x)b_r(\lambda)(y)}{b_r(\lambda)(x+y)}.
\]
Let $\chi^{1/2}:\R\times\R\to U(1)$ denote the map $\chi^{1/2}(x,y):=e^{iaxy/2}$. Let $B(x):=b_r(\lambda)(x)\chi^{1/2}(x,x)\in L^\infty(\R, \lambda, U(1))$. Then, $B(x)B(y) = B(x+y)$ as functions in $L^\infty(\R\times \R, \lambda\times\lambda, U(1))$. Therefore, by \cite[Cor. 5.3]{ramsay}, $B$ can be represented by a genuine measurable homomorphism, and by \cite{sasvari} it is continuous. Hence, $b_r(\lambda)$ is continuous. Therefore, $b_r(\lambda)$ is a continuous quadratic refinement of $e^{-iax^2}$ such that $b_r(\lambda)(x) = b_r(-\lambda)(x) = b_r(\lambda)(-x)$ and it is thus equal to
\[
b_r(\lambda)(x) = \chi^{1/2}(x,x)^{-1} = e^{-iax^2/2} = w(x).
\]
To compute $b_r(\upsilon)$ we note that, by naturality, $b_r(\upsilon\times\lambda)(t, x) = b_r(\lambda)(x+p(t))$ and therefore
\[
b_r(\upsilon)(t) =\chi(p(t), x)\frac{b_r(\lambda)(x+p(t))}{b_r(\lambda)(x)}  = w(p(t)).
\]

It is only left to characterize $b \in L^\infty(\R,\lambda, U(1))$. By $D_1(\tau, \tau, \tau)$, it holds that 
\begin{equation}\label{eq: D1(t,t,t)}
\mathcal{F}\Big(w(-x)\mathcal{F}(f)(-x)\Big)(x) = \xi\, b(x)\,\mathcal{F}\big(b(x)f(x)\big)(x)
\end{equation}
for every $f\in L^2(\R,\lambda)$, where $\mathcal{F}$ denotes the unitary Fourier transform induced by the bicharacter $\chi(x,y) = e^{iaxy}$. Denoting by $M_\omega, M_b\in B(L^2(\R))$ the operators given by pointwise multiplication by $\omega$ and $b$ respectively, Equation \eqref{eq: D1(t,t,t)} reads
\[
\mathcal{F}\circ M_\omega\circ \mathcal{F}^{-1} = \xi\cdot M_b\circ \mathcal{F}\circ M_b,
\]
using that $\omega(-x)= \omega(x)$ and $\mathcal{F}(f)(-x) = \mathcal{F}^{-1}(f)(x)$ for all $f\in L^2(\R, \lambda).$ Computing the kernels on both sides and equating them, together with the Fresnel integral $\int_{-\infty}^{\infty}e^{-iax^2/2}dx = \sqrt{\frac{2\pi}{|a|}}e^{-i\cdot\text{sgn}(a)\cdot \frac{\pi}{4}}$, it is straightforward to see that the only two solutions to Equation \eqref{eq: D1(t,t,t)} are
\[
b(x):=\pm\sqrt{\xi}\cdot  e^{-i\cdot\text{sgn}(a)\cdot\frac{\pi}{8} }\cdot w(x)^{-1} = \pm \sqrt{\xi}\cdot  e^{-i\cdot\text{sgn}(a)\cdot\frac{\pi}{8} }\cdot\, e^{iax^2/2}.
\]
Finally, $D_2(\tau,\tau,\tau)$~reads
\begin{equation}\label{eq: D2(t,t,t)}
\eta_\tau\cdot \xi\cdot \mathcal{F}\big(w(-x)\mathcal{F}(f)(-x)\big) = b(x)\mathcal{F}\big(b(-x)f(x)\big)
\end{equation}
for all $f\in L^2(\R, \lambda)$, where we use that $\mathcal{F}^{-1}(f)(x) = \mathcal{F}(f)(-x)$. Since $b(x) = b(-x)$, Equation \eqref{eq: D2(t,t,t)} holds if and only if $\eta_\tau = 1$, and the only $\Z/2$-action that admits a $\Z/2$-crossed braiding is the one with $\eta_\tau = 1$.

At this point, we have shown that any $\Z/2$-crossed braiding on $\TYcal(\R, \chi, \xi)$ is equivalent to the braiding given by the statement of the Proposition for either $\epsilon$ or $-\epsilon$. We claim that both choices lead to equivalent $\Z/2$-crossed braided $\mathrm{W}^*$-tensor categories. Let $\TYcal(\R, \chi, \xi)_+$ and $\TYcal(\R, \chi, \xi)_-$ be the $\Z/2$-crossed braided $\mathrm{W}^*$-tensor categories obtained by using $\epsilon$ and $-\epsilon$ respectively. Consider the identity $\mathrm{W}^*$-tensor functor $\TYcal(\R, \chi, \xi)_+ \to \TYcal(\R, \chi, \xi)_-$. This can be upgraded to an equivalence of $\Z/2$-crossed braided $\mathrm{W}^*$-tensor categories by defining the compatibility with the $\Z/2$-action to be
\[
\begin{array}{ccc}
\sigma\Big(\int^\oplus_{t\in T}\delta_{p(t)}\mathrm{d}\upsilon(t)\Big)& \to & \sigma\Big(\int^\oplus_{t\in T}\delta_{p(t)}\mathrm{d}\upsilon(t)\Big) \\
f&\mapsto & f\vspace{5mm}\\
\sigma(\tau) &\xrightarrow{-1\cdot \id_{\sigma(\tau)}}&\sigma(\tau)
\end{array}
\]
for any object $\int^\oplus_{t\in T}\delta_{p(t)}\mathrm{d}\upsilon(t)\in\Rep(C_0(\R))$. The diagram \eqref{eq: Gcrossedfunctor} for $X = Y = \tau$ becomes the equality
\[
\epsilon\, w(x)^{-1} = (-1)(-\epsilon)\, w(x)^{-1}, 
\]
and the claim follows.
\end{proof}

We finally classify the $\Z/2$-crossed balances admitted by the unique (up to equivalence) $\mathbb{Z}/2$-crossed braided structure on $\mathcal{TY}(\R, \chi, \xi)$, following \cite[Prop. 4.12]{galindo}.

\begin{proposition}\label{prop: ClassifyTwists}
There exist, up to equivalence, two $\Z/2$-crossed balances on the unique (up to equivalence) $\Z/2$-crossed braided structure on $\mathcal{TY}(\R, \chi, \xi)$. These are classified by a sign $\zeta\in \{\pm1\}$ and are given by
\[
\begin{array}{cccc}
\theta_{\upsilon} :&L^2(\upsilon)&\to &L^2(\upsilon)\\
&f(t)&\mapsto &w(p(t))^{-2}f(t) \vspace{.5cm}\\
\theta_{\tau}:&\tau&\xrightarrow{\zeta/\epsilon\cdot\id_\tau} &\tau
\end{array}
\]
for every object $\int^\oplus_{t\in T}\delta_{p(t)}\mathrm{d}\upsilon(t)\in\Rep(C_0(\R))$ and $w(x) = e^{-iax^2/2}.$
\end{proposition}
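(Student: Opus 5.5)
We follow the strategy of \cite[Prop. 4.12]{galindo}, exploiting the fact that, with the crossed braiding of Proposition \ref{prop: braidingsTY} fixed, the balance axiom \eqref{eq: Xtwist} is a system of functional equations for multiplication operators. As a first step we argue, as in \cite[Lemm. 4.10 and 4.11]{AM2025}, that naturality with respect to morphisms of $C_0(\R)$-representations forces $\theta_\upsilon$ to be multiplication by a function $\theta(\upsilon)\in L^\infty(T,\upsilon,U(1))$ which is compatible with pullbacks. Since $\tau$ is simple, $\theta_\tau=c\cdot\id_\tau$ for some $c\in U(1)$. By naturality again, $\theta(\upsilon)(t)=\vartheta(p(t))$, where $\vartheta:=\theta(\lambda)\in L^\infty(\R,\lambda,U(1))$ is a single function.

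Next we write out \eqref{eq: Xtwist} in the untwisted sector. For $X=\upsilon$ and $Y=\nu$ (so $g=h=1$) the crossed braiding $\beta_{\nu,\upsilon}\circ\beta_{\upsilon,\nu}$ is multiplication by $\chi(p(t),n(s))^2$. The balance equation therefore becomes
\[
\vartheta(x+y)=\chi(x,y)^2\,\vartheta(x)\vartheta(y)
\]
almost everywhere. Setting $B(x):=\vartheta(x)\chi(x,x)$, this turns into $B(x+y)=B(x)B(y)$ a.e. We then invoke \cite[Cor. 5.3]{ramsay} and \cite{sasvari}, exactly as in the proof of Proposition \ref{prop: braidingsTY}, to conclude that $B$ is a continuous character, $B(x)=e^{icx}$ for some $c\in\R$. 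This gives $\vartheta(x)=e^{icx}e^{-iax^2}=e^{icx}w(x)^{2}$. The sign of the quadratic term must be checked against \eqref{eq: Xtwist}, since the statement predicts $w^{-2}$; we expect the composite of the two braidings, with $\theta$ on the target side, to invert it.

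To pin down the linear term and $c$, we now use the mixed cases. Take $X=\upsilon$, $Y=\tau$, with $\beta_{\upsilon,\tau}=\beta_{\tau,\upsilon}=w(p(t))$. Because the group element is $gh=\sigma$, the balance lands in $T_\sigma$, so $\vartheta$ gets evaluated at $-p(t)$ after the $\Sigma$-twist. The equation should read
\[
c\cdot(\text{factor})=w(p(t))^2\,\vartheta(\pm p(t))\,c.
\]
It forces $\vartheta(\pm x)=w(x)^{-2}$, which kills the character $e^{icx}$ and gives $\theta_\upsilon=w(p)^{-2}$. The compatibility $T_\sigma(\theta_X)=\theta_{T_\sigma X}$ also forces $\vartheta$ to be even, which independently kills $e^{icx}$.

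The main obstacle is the case $X=Y=\tau$, where $g=h=\sigma$ and $gh=1$. Here the left-hand side is $\theta_{\tau\otimes\tau}$, i.e. multiplication by $w(x)^{-2}$ on $L^2(\lambda)$. The right-hand side is $\beta_{\tau,\tau}$ applied twice, which gives $\epsilon^2w(x)^{-2}$, composed with $\theta_{T_\sigma\tau}\otimes\theta_{\tau}=c^2$ and the tensor-structure unitary $f(x)\mapsto f(-x)$ for $T_\sigma$; the evenness of $w$ absorbs that unitary. Tracking where each $T_\sigma$ and $\eta$ enters (with $\eta_\tau=1$), we expect to obtain $c^2\epsilon^2=1$, hence $c=\zeta/\epsilon$ with $\zeta\in\{\pm1\}$. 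Both choices then satisfy the remaining axioms by direct substitution.

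Finally, we must show that the two choices are inequivalent, and not related the way the two values of $\epsilon$ were in Proposition \ref{prop: braidingsTY}. A crossed balanced equivalence restricting to the identity on the underlying crossed braided category could only modify $\Phi_\sigma(\tau)$ by a scalar $\pm1$ compatible with \eqref{eq: Gcrossedfunctor}. Such a rescaling leaves $\theta_\tau$ unchanged, since the sign appears on both sides of $\Phi_\sigma(X)\circ F(\theta_X)=\theta_{F(X)}$. Hence $\zeta$ is an invariant, and the classification has exactly two classes.
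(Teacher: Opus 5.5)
Your classification step is the paper's argument. The case $X=\upsilon$, $Y=\tau$ of \eqref{eq: Xtwist} gives $c=\vartheta(-p(t))\,c\,w(p(t))^{2}$, hence $\theta_\upsilon=w(p)^{-2}$. The case $X=Y=\tau$ gives $c^2\epsilon^2=1$.

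Your untwisted-sector step is superfluous, since the mixed case already determines $\vartheta$ with no need for \cite{ramsay} or \cite{sasvari}. As written it also contains an algebra slip, not a genuine sign puzzle. Because $\chi(x+y,x+y)=\chi(x,x)\chi(y,y)\chi(x,y)^2$, the multiplicative function attached to $\vartheta(x+y)=\chi(x,y)^2\vartheta(x)\vartheta(y)$ is $B(x)=\vartheta(x)\chi(x,x)^{-1}$, not $\vartheta(x)\chi(x,x)$. This yields $\vartheta(x)=e^{icx}w(x)^{-2}$, which already has the correct quadratic factor. A further small point: in the $(\tau,\tau)$ case the structure maps entering \eqref{eq: Xtwist} come from $\eta_{\sigma,\sigma}$ (trivial) and $s_1$, not from $s_\sigma(\tau,\tau)$. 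This does not change $c^2\epsilon^2=1$.

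Your last paragraph argues that $\zeta=\pm1$ give inequivalent structures; the paper's proof does not address this point. As written, your argument rests on a false claim. In $\Phi_\sigma(\tau)\circ F(\theta_\tau)=\theta'_{F(\tau)}$ the scalar $\Phi_\sigma(\tau)$ appears on one side only. So if $\Phi_\sigma(\tau)=-1$ were admissible, it would carry $\zeta$ to $-\zeta$ and the two balances would be equivalent.

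What actually rules this out is \eqref{eq: Gcrossedfunctor} at $X=Y=\tau$. Since $T_\sigma(\tau)=\tau$, you have $\Psi_{T_\sigma(\tau),\tau}=\Psi_{\tau,\tau}$. Conjugating $F(\beta_{\tau,\tau})$ by $\Psi_{\tau,\tau}$ gives multiplication by $\epsilon\,w^{-1}$, possibly composed with the automorphism $x\mapsto\pm x$ of $(\R,\chi)$ underlying $F$; this changes nothing because $w$ is even. The diagram then reduces to $\beta_{\tau,\tau}=\Phi_\sigma(\tau)\,\beta_{\tau,\tau}$, which forces $\Phi_\sigma(\tau)=1$ and hence $\zeta'=\zeta$.

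Equivalently, $\Phi_\sigma(\tau)$ multiplies both $\theta_\tau$ and $\beta_{\tau,\tau}$, and $\Phi_\sigma(\tau)^2=1$ by compatibility with $\eta$. So the invariant is the product $\theta_\tau\beta_{\tau,\tau}=\zeta\,w^{-1}$, not $\theta_\tau$ alone. This is consistent with Proposition \ref{prop: braidingsTY}, where $\Phi_\sigma(\tau)=-1$ trades $\epsilon$ for $-\epsilon$ and sends $\theta_\tau=\zeta/\epsilon$ to $\zeta/(-\epsilon)$.

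Finally, you must allow arbitrary crossed balanced equivalences, not only those whose underlying tensor functor is the identity. The argument above covers those as well, including nontrivial tensor structures $\Psi$ and the autoequivalence induced by $x\mapsto -x$.
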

\begin{proof}
It is straightforward to see that the formulas in the statement of the Proposition provide a $\Z/2$-crossed balance on $\TYcal(\R, \chi, \xi)$. Let $\theta$ be a $\Z/2$-crossed balance on $\TYcal(\R, \chi, \xi)$. Then, the crossed balance $\theta_\tau$ on $\tau$ is given by $\theta_\tau = \tilde{\zeta}\cdot \id_\tau$ for some $\tilde{\zeta}\in U(1)$, as $\tau$ is simple. Also, given $\int^\oplus_{t\in T}\delta_{p(t)}\mathrm{d}\upsilon(t)\in \Rep(C_0(\R))$, the balance $\theta_\upsilon$ is given by multiplication by a function $u(\upsilon)\in L^\infty(T, \upsilon, U(1))$. Following this notation, Diagram \eqref{eq: Xtwist} for $X = \upsilon$ and $Y = \tau$ implies that
\[
u(\upsilon)(t) = w(-p(t))^{-2} = w(p(t))^{-2},
\]
and for $X = Y = \tau$ it gives
\[
u(\lambda)(t) = \tilde{\zeta}^2\epsilon^2w(t)^{-2}.
\]
Since $u(\lambda)(t) = w(t)^{-2}$, we find that
\[
\tilde{\zeta} = \pm\epsilon^{-1},
\]
as needed.
\end{proof}

This concludes the classification of $\Z/2$-crossed balanced structures on Tambara-Yamagami $\mathrm{W}^*$-tensor categories for $\R$. 

\subsection{Identifying the $\Z/2$-crossed braided structure} The goal of this section is to determine a continuous symmetric nondegenerate bicharacter $\chi:\R\times\R\to U(1)$ and a sign $\xi\in\{\pm 1\}$ such that
\[
\Rep^{\Z/2}(\Heis)\cong \TYcal(\R,\chi,\xi)
\]
as $\Z/2$-crossed braided $\mathrm{W}^*$-tensor categories. Using the characterization of crossed balanced structures on $\TYcal(\R, \chi,\xi)$ from the previous section, we will argue that, in order to determine $\chi$ and $\xi$, it is only necessary to compute the composition
\begin{equation}\label{eq: CompositionSquareTwistTau}
H^\sigma_0\xrightarrow{\theta_{H^\sigma_0}}T_\sigma (H^\sigma_0)\xrightarrow{T_\sigma(\theta_{H^\sigma_0})} T_\sigma\circ T_\sigma (H^\sigma_0) \cong T_{1}(H^\sigma_0) = H^\sigma_0
\end{equation}
in $\End_{\Rep^\sigma(\Heis)}(H_0^\sigma)$. Since $H_0^\sigma$ is simple, this composition is of the form $z\cdot\id_{H^\sigma_0}$ for some $z\in U(1)$. We will use that the action of $\Z/2$ on the $\mathrm{W}^*$-category $\Rep^{\Z/2}(\Heis)$ satisfies that $T_\sigma\circ T_\sigma = T_1 = \text{Id}_{\Rep^{\Z/2}}(\Heis)$, as defined in \cite{GcrossedbraidedRep}, and that $T_\sigma$ acts trivially on~morphisms.

Let $G$ be a discrete group acting on a conformal net $\A$. Recall from Section \ref{sec: CrossedBalancedRepGA} that we define the crossed balance on an object $H\in \Rep^{G}(\A)$ to be given by the action $e^{-2\pi i L_0}$ of $\widetilde{\exp}(-2\pi i L_0)\in \widetilde{\Mob}$ on $H$, where $L_0$ is the generator of rotations in $\Mob$. The computation of the composition in Equation \eqref{eq: CompositionSquareTwistTau} will come from VOA techniques. We refer the reader to \cite{MR1651389, MR2023933, MR2082709} for standard textbooks on VOA theory and to \cite{MR3119224, MR3796433} for their application in the formalization of 2D Conformal Field Theory. In \cite{henriques2025conformalnetassociatedunitary}, the authors construct, associated to any conformal net $\A$, a unitary VOA $V_\A$. In addition, to every representation $(K,\pi)$ of $\A$ on which $L_0$ acts with discrete spectrum and finite-dimensional eigenspaces, they associate a $V_\A$-module $M_K$. On any such $M_K$, one can construct two actions of $\widetilde{\Mob}$, one coming from the fact that $M_K$ is a $V_\A$-module (using VOA techniques) and one coming from the action of $\widetilde{\Mob}$ on $K$. Then, \cite[Cor. 9.6]{henriques2025conformalnetassociatedunitary} states, in particular, that both these actions agree.

Since the result in \cite[Cor. 9.6]{henriques2025conformalnetassociatedunitary} is only stated for untwisted representations, we will apply it to the VOA associated to the conformal net $\Heis^{\Z/2}$, the $\Z/2$-fixed points of the Heisenberg conformal net, as opposed to to twisted representations of the VOA associated to~$\Heis$.

\begin{proposition}\label{prop: DoubleTwistComputation}
In the $\Z/2$-crossed balanced $\mathrm{W}^*$-tensor category $\Rep^{\Z/2}(\Heis)$, the composition
\[
H_0^\sigma\xrightarrow{\theta_{H_0^\sigma}}T_\sigma (H_0^\sigma)\xrightarrow{T_\sigma(\theta_{H_0^\sigma})} T_\sigma\circ T_\sigma (H_0^\sigma) = T_{1}(H_0^\sigma) = H_0^\sigma,
\]
is equal to $e^{-\frac{\pi i}{4}}\cdot \id_{H_0^\sigma}$.
\end{proposition}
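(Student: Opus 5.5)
The composition in the statement is the action of $\widetilde{\exp}(-2\pi i L_0)$ followed by $T_\sigma$ of that action. Since $T_\sigma$ does not change the underlying Hilbert space or morphisms, it equals $e^{-4\pi i L_0}$ acting on $H_0^\sigma$, where $e^{itL_0}$ denotes the honest action of the rotation subgroup of $\widetilde{\Mob}$ on the $\sigma$-twisted representation from \cite[Sec. 3.5]{GcrossedbraidedRep}. Rotation by $4\pi$ lifts to a loop in $\Mob$ traversed twice, and $T_\sigma^2=\mathrm{Id}$, so $e^{-4\pi i L_0}$ is an honest endomorphism of $H_0^\sigma$. By simplicity it is a scalar $z$, and we must identify $z$ with the spectrum of $L_0$ modulo $\tfrac12\Z$. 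Concretely, I would show that the spectrum of $L_0$ on $H_0^\sigma$ is contained in $h_\sigma+\tfrac12\Z_{\ge0}$ with $h_\sigma=\tfrac1{16}$, so that $z=e^{-4\pi i/16}=e^{-\pi i/4}$.

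To get at the spectrum, restrict $H_0^\sigma$ to the fixed-point net $\Heis^{\Z/2}$, as in Proposition \ref{prop: Rsigmafactors}. Here $V_\sigma$ is replaced by the canonical involution on $e^{\overline{L^\sigma\R}}$ given by $e^h\mapsto e^{-h}$, and this restriction decomposes into two honest representations $(H_0^\sigma)^\pm$ of $\Heis^{\Z/2}$, the even and odd Fock subspaces. The $\widetilde{\Mob}$-action on $H_0^\sigma$ restricts to the $\widetilde{\Mob}$-action on these honest representations; this holds by construction in \cite{GcrossedbraidedRep}, \cite{fixedpoints}, since both come from the same $\widetilde{\Diff^+}(S^1)$-covariance inherited from $\Heis^{\Z/2}$. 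Each $(H_0^\sigma)^\pm$ has $L_0$ with discrete spectrum and finite-dimensional eigenspaces, as I argue next, so \cite[Cor.~9.6]{henriques2025conformalnetassociatedunitary} applies. It identifies the conformal-net $\widetilde{\Mob}$-action with the VOA one on the associated module $M_{(H_0^\sigma)^\pm}$ over $V_{\Heis^{\Z/2}}$, which is the $\Z/2$-orbifold $V_{\Heis}^+$ of the rank-one Heisenberg VOA.

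On the VOA side, I would identify these modules with the two twisted-sector irreducible $V_\Heis^+$-modules $M(1)(\theta)^\pm$. These are the even and odd parts of the twisted Fock space on half-integer modes. By Dong–Nagatomo's classification they have lowest weights $\tfrac1{16}$ and $\tfrac9{16}$. The identification can be made directly: the twisted Weyl operators $W^\sigma$ on $e^{\overline{L^\sigma\R}}$ are generated by half-integer oscillator modes (the Fourier modes $k\in\Z_{\ge0}+\tfrac12$ in \eqref{eq: normTwisted}), so the unique-up-to-scalar lowest-energy vector has weight equal to the Virasoro zero-mode normal-ordering shift $\tfrac1{16}$, with excitations in $\tfrac12\Z$. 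Hence $L_0\in\tfrac1{16}+\tfrac12\Z$ on all of $H_0^\sigma$, which gives $e^{-4\pi iL_0}=e^{-\pi i/4}$.

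The main obstacle is pinning down the value of $L_0$ on the vacuum of the twisted sector: the additive constant in the Sugawara/Virasoro action, rather than just the spacing. I would pin it down through \cite[Cor.~9.6]{henriques2025conformalnetassociatedunitary}, which fixes the normalization of the $\widetilde{\Mob}$-action, combined with the known lowest weight $\tfrac1{16}$ of $M(1)(\theta)^+$. A direct cross-check is to compute $\langle \Omega^\sigma, L_0\Omega^\sigma\rangle$ via the Virasoro field $\tfrac12{:}J^2{:}$ using the twisted two-point function $\langle J(z)J(w)\rangle$. The $\tfrac1{16}$ appears there as $\tfrac12\sum_{k\in\Z_{\ge0}+1/2}k$ regularised by zeta, compared with $\tfrac12\sum_{k\ge1}k$ in the untwisted case.
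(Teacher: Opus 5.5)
Your argument is correct and follows essentially the paper's route. You restrict $H_0^\sigma$ to $\Heis^{\Z/2}$, split it into the two eigenspaces of the involution implementing $\sigma$, and transfer the $L_0$-spectrum to $M(1)^+$-modules via \cite[Cor.~9.6]{henriques2025conformalnetassociatedunitary}, where the Dong--Nagatomo lowest weights $\tfrac1{16}$ and $\tfrac9{16}$, congruent modulo $\tfrac12$, fix the answer. The difference is only packaging: the paper uses the balanced equivalence $\mathfrak{R}\colon(\Rep^{\Z/2}(\Heis))^{\Z/2}\cong\Rep(\Heis^{\Z/2})$ to get $\phi\circ\theta_{H_0^\sigma}=\varpi e^{-\pi i/8}\,\id$ with an undetermined sign $\varpi$ and then squares by a diagram chase, whereas you read $e^{-4\pi i L_0}$ off directly from the $L_0$-spectrum modulo $\tfrac12$ (and you also sketch the discreteness hypothesis of Cor.~9.6 and the module identification, which the paper takes as well known).
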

\begin{proof}
By \cite[Thm. 4.19]{fixedpoints}, there is an equivalence of balanced $\mathrm{W}^*$-tensor categories
\[
\mathfrak{R}: (\Rep^{\Z/2}(\Heis))^{\Z/2}\cong \Rep(\Heis^{\Z/2}),
\]
see also Section \ref{sec: CrossedBalancedRepGA}. Since the action $T_\sigma$ of $\sigma$ on $\Rep^{\Z/2}(\Heis)$ is degree-preserving, and $H_0^\sigma = \mathfrak{M}^\sigma(\tau)$ is the unique irreducible object in degree $\sigma$ up to isomorphism, there exists an isomorphism $\phi: T_\sigma(H_0^\sigma)\to H_0^\sigma$. Furthermore, we can pick $\phi$ so that the composition $H_0^\sigma = T_{\sigma}\circ T_\sigma(H_0^\sigma) \xrightarrow{T_\sigma(\phi)} T_\sigma(H_0^\sigma)\xrightarrow{\phi}H_0^\sigma$  is equal to $\id_{H_0^\sigma}$. The object $H_0^\sigma$ then produces two non-isomorphic simple objects of $(\Rep^{\Z/2}(\Heis))^{\Z/2}$, namely $H_+^\sigma:=(H^\sigma_0, \{\id_{H^\sigma_0}, \phi\})$ and $H_-^\sigma:=(H^\sigma_0, \{\id_{H^\sigma_0}, -\phi\})$. Denoting by $\theta_{H_0^\sigma}: H_0^\sigma\to T_\sigma(H_0^\sigma)$ the $\Z/2$-crossed balance on $H_0^\sigma$, the balances of $H^\sigma_+$ and $H^\sigma_-$ are
\[
\theta^{\Z/2}_{H^\sigma_+} = \phi\circ \theta_{H_{0}^\sigma} \hspace{2cm} \theta^{\Z/2}_{H^\sigma_-} = -\phi\circ \theta_{H_{0}^\sigma}. 
\]
Under the balanced equivalence $\mathfrak{R}: (\Rep^{\Z/2}(\Heis))^{\Z/2}\cong \Rep(\Heis^{\Z/2})$ described in Section \ref{sec: CrossedBalancedRepGA}, $H^\sigma_+$ and $H^\sigma_-$ produce two honest irreducible $\Heis^{\Z/2}$-representations $\mathfrak{R}(H^\sigma_+)$ and $\mathfrak{R}(H^\sigma_-)$, and their balances satisfy
\[
\theta_{\mathfrak{R}(H^\sigma_+)} = \mathfrak{R}(\theta^{\Z/2}_{H^\sigma_+}) = \mathfrak{R}(\phi\circ \theta_{H^\sigma_0}) \hspace{2cm} \theta_{\mathfrak{R}(H^\sigma_-)} =\mathfrak{R}(\theta^{\Z/2}_{H^\sigma_-}) = -\mathfrak{R}(\phi\circ \theta_{H^\sigma_0}). 
\]
The balances $\theta_{\mathfrak{R}(H^\sigma_\pm)}$ are given, by definition, by the action $e^{-2\pi i L_0}$ of $\widetilde{\exp}(-2\pi i L_0)\in\widetilde{\Mob}$ on $\mathfrak{R}(H^\sigma_\pm)$. By \cite[Cor. 9.6]{henriques2025conformalnetassociatedunitary}, the spectrum of $L_0$ on these representations is equal to the spectrum of $L_0$ on the induced modules of the VOA associated to the conformal net $\Heis^{\Z/2}$. It is well-known (see, for example, \cite[Sec. 3.4]{MR1694542}) that the lowest weight of these representations are spectra are $1/16$ for one of the representations $\mathfrak{R}(H^\sigma_\pm)$ and $9/16$ for the other. Hence, there exists a sign $\varpi\in \{\pm1\}$ such that $\theta_{\mathfrak{R}(H^\sigma_+)} = \mathfrak{R}(\theta^{\Z/2}_{H^\sigma_+}) = \varpi\cdot e^{-\frac{\pi i }{8}}\cdot \id$ and $\theta_{\mathfrak{R}(H^\sigma_-)} =\mathfrak{R}(\theta^{\Z/2}_{H^\sigma_-}) = - \varpi\cdot e^{-\frac{\pi i }{8}}\cdot\id$, where we abuse the notation by identifying the identity morphism of objects in different categories. Hence, it holds that 
\begin{equation}\label{eq: IntermediateStepCharacterizeTtiwstSquared}\phi\circ \theta_{H_0^\sigma} = \varpi\cdot e^{-\frac{\pi i}{8}}\cdot \id_{H_0^\sigma}.\end{equation}
Consider now the following diagram
\[\begin{tikzcd}
{H_0^\sigma} & {T_\sigma(H_0^\sigma)} && {T_\sigma\circ T_\sigma(H_0^\sigma)} & { T_1(H_0^\sigma)} & {H_0^\sigma} \\
&&& {T_\sigma(H_0^\sigma)} \\
& {H_0^\sigma}
\arrow["{\theta_{H_0^\sigma}}", from=1-1, to=1-2]
\arrow["{\varpi\cdot e^{-\frac{\pi i}{8}}\cdot \id}"', from=1-1, to=3-2]
\arrow["{T_\sigma(\theta_{H_0^\sigma})}", from=1-2, to=1-4]
\arrow["\phi", from=1-2, to=3-2]
\arrow["{=}", from=1-4, to=1-5]
\arrow["{T_\sigma(\phi)}", from=1-4, to=2-4]
\arrow["{=}", from=1-5, to=1-6]
\arrow["\phi"{pos=0.3}, from=2-4, to=1-6]
\arrow["{\varpi\cdot e^{-\frac{\pi i}{8}}\cdot \id}"', curve={height=24pt}, from=3-2, to=1-6]
\arrow["{\theta_{H_0^\sigma}}", from=3-2, to=2-4]
\end{tikzcd}\]
The left-most and bottom-right triangles commute by Equation \eqref{eq: IntermediateStepCharacterizeTtiwstSquared}. The top-right diagram commutes by the choice of $\phi$. The middle square commutes because $T_\sigma$ is the identity on morphisms and $\theta$ is a natural family of isomorphisms. Hence, we find that
\[
T_\sigma(\theta_{H_{0}^\sigma})\circ \theta_{H^\sigma_0} = e^{-\frac{\pi i}{4}}\cdot \id_{H_0^\sigma},
\]
as needed.
\end{proof}

We can now state and prove the main theorem of this section. Recall that we write $\chi_-$ for the bicharacter on $\R$ given by $\chi_-(x,y)  = e^{-ixy}$. Let us also remind the definition of the $\mathrm{W}^*$-category $\TYcal(\R, \chi_-, +1)$ from Definition \ref{def: TYcal(G,chi,xi)}. It is the $\mathrm{W}^*$-tensor category on the $\mathrm{W}^*$-category
\(
\Rep(C_0(\R))\oplus \Hilb\cdot\tau
\)
with, for any objects $\int^\oplus_{t\in T}\delta_{p(t)}\mathrm{d}\upsilon(t)$, $\int^\oplus_{s\in S}\delta_{n(s)}\mathrm{d}\nu(s)$, and $\int^\oplus_{r\in R}\delta_{z(r)}\mathrm{d}\eta(r)$ of $\Rep(C_0(\R))$, $$\upsilon\otimes \nu = \upsilon\times\nu\hspace{2cm}\upsilon\otimes\tau = \tau\otimes\upsilon = L^2(\upsilon)\cdot \tau\hspace{2cm}\tau\otimes \tau = L^2(\R,\lambda),$$ and associators given by
\[
\begin{array}{cccc}
\alpha_{\upsilon, \nu, \eta}:& \upsilon\times\nu\times\eta&\xrightarrow{\id}& \upsilon\times\nu\times \eta\vspace{.3cm}\\
\alpha_{\tau, \upsilon,\nu} = \alpha_{\upsilon, \nu, \tau} :& L^2(\upsilon\times \nu)\cdot \tau& \xrightarrow{\id} &L^2(\upsilon\times\nu )\cdot \tau\vspace{.3cm}\\
\alpha_{\upsilon,\tau,\nu}  :& L^2(\upsilon\times \nu)\cdot \tau& \to &L^2(\upsilon\times\nu )\cdot \tau\\ 
&f(t,s)&\mapsto& e^{- i p(t) n(s)}f(t,s)\vspace{.3cm}\\
\alpha_{\upsilon,\tau,\tau}:&L^2( \upsilon)\cdot \lambda& \to &\upsilon \times\lambda\\
&f(t, x)&\mapsto& f\big(t, p(t)+x\big)\vspace{.3cm}\\
\alpha_{\tau,\tau, \upsilon}:& \lambda\times\upsilon& \to &L^2(\upsilon)\cdot \lambda\\
&f(x,t)&\mapsto& f\big(t,-p(t)+x\big)\vspace{.3cm}\\
\alpha_{\tau,\upsilon,\tau}:& L^2(\upsilon)\cdot \lambda &\to & L^2(\upsilon)\cdot \lambda\\
&f(t,x)&\mapsto &e^{-i p(t)x} f(t,x)
\end{array}
\]
and 
\[
\begin{array}{cccc}
\alpha_{\tau,\tau,\tau} :& L^2(\R,\lambda)\cdot \tau&\to&L^2(\R,\lambda)\cdot \tau\\
& f&\mapsto &\Big(x\mapsto  \frac{1}{\sqrt{2\pi}}\int_{y\in \R} e^{i   xy}f(y)\text{d}\lambda\Big).
\end{array}
\]

\begin{theorem}\label{thm: MainThmCrossed}
The $\Z/2$-crossed braided $\mathrm{W}^*$-tensor category $\Rep^{\mathbb{Z}/2}(\Heis)$ of twisted and untwisted representations of the Heisenberg conformal net under the action of $\mathbb{Z}/{2}$ is equivalent to the following $\mathbb{Z}/2$-crossed braided $\mathrm{W}^*$-tensor category. The underlying $\mathrm{W}^*$-tensor category~is
\[
\TYcal(\R, \chi_-, +1).
\]
The action of $\Z/2 = \{1,\sigma\}$ is given by $\sigma\big(\int^{\oplus}_{t\in T}\delta_{p(t)}\mathrm{d}\upsilon(t)\big) = \int^{\oplus}_{t\in T}\delta_{-p(t)}\mathrm{d}\upsilon(t)$ and $\sigma(\tau) = \tau$, with the only non-trivial structure data $\sigma(\tau\otimes \tau) = \sigma\big(\int^{\oplus}_{x\in \R}\delta_{x}\mathrm{d}\lambda(x)\big) = \int^{\oplus}_{x\in \R}\delta_{-x}\mathrm{d}\lambda(x)\xrightarrow{f(x)\mapsto f(-x)}\int^{\oplus}_{x\in \R}\delta_{x}\mathrm{d}\lambda(x) = \tau\otimes\tau = \sigma(\tau)\otimes\sigma(\tau)$. The $\Z/2$-crossed braiding is given by 
\[
\begin{array}{cccc}
\beta_{\upsilon, \nu} :&L^2(\upsilon\times\nu)&\to &L^2(\upsilon\times\nu)\\
&f(t,s)&\mapsto & e^{-ip(t)n(s)}f(t,s) \vspace{.5cm}\\
\beta_{\upsilon, \tau} = \beta_{\tau, \upsilon}:&L^2(\upsilon)\cdot \tau&\to& L^2(\upsilon)\cdot\tau\\
&f(t)&\mapsto& e^{ip(t)^2/2}f(t)\vspace{.5cm}\\
\beta_{\tau, \tau}: &L^2(\lambda) &\to &L^2(\lambda)\\
& f(x)&\mapsto & e^{\pi i/8 } \,e^{-ix^2/2}f(x),
\end{array}
\]     
for objects $\int^{\oplus}_{t\in T}\delta_{p(t)}\mathrm{d}\upsilon(t)$ and $\int^{\oplus}_{s\in S}\delta_{n(s)}\mathrm{d}\nu(s)$ of $\Rep(C_0(\R))$.
\end{theorem}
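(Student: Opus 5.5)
The plan is to transport the $\Z/2$-crossed balanced structure of $\Rep^{\Z/2}(\Heis)$ along the $\mathrm{W}^*$-tensor equivalence of the Corollary following Theorem \ref{Thm: RepZ2HeisIsTYR}. By that Corollary there is a $\mathrm{W}^*$-tensor equivalence $\Rep^{\Z/2}(\Heis)\cong\TYcal(\R,\chi,\xi)$ with $\chi(x,y)=e^{iaxy}$ for some $a\neq0$. Precomposing with the automorphism $x\mapsto |a|^{-1/2}x$ of $\R$ does not change the sign of $a$, so we may assume $a=\pm1$.

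Transporting the $\Z/2$-action $T$, the crossed braiding $\mathbb{B}$ and the crossed balance $\theta$ of Theorem \ref{Thm: GcrossedBalancedRepGA} gives a $\Z/2$-crossed balanced structure on $\TYcal(\R,\chi,\xi)$. By construction this makes the equivalence a $\Z/2$-crossed balanced equivalence. The underlying equivalence on the untwisted part is, up to natural isomorphism, the functor $\mathfrak{M}$ of Theorem \ref{thm: RepHeisIsRepR}. By Lemma \ref{lemm: ActionTransportedToC0R}, $\sigma$ then acts on $\Rep(C_0(\R))$ by $\Sigma$, possibly after replacing the transported action by an isomorphic one. This places us in the setting of Propositions \ref{prop: twoactions}, \ref{prop: braidingsTY} and \ref{prop: ClassifyTwists}.

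By those propositions, up to equivalence the action has $\eta_\tau=1$, and the crossed braiding is the one in Proposition \ref{prop: braidingsTY}, with $w(x)=e^{-iax^2/2}$ and $\epsilon=\sqrt{\xi}\,e^{-i\,\mathrm{sgn}(a)\pi/8}$. The crossed balance is $\theta_\tau=(\zeta/\epsilon)\id_\tau$ for some sign $\zeta$. The composite $T_\sigma(\theta_\tau)\circ\theta_\tau$, followed by $\eta_\tau$, is an invariant of $\Z/2$-crossed balanced equivalences. Because $T_\sigma$ acts trivially on the scalar $\theta_\tau$ and $\eta_\tau=1$, this composite equals
\[
\zeta^2\epsilon^{-2}=\xi^{-1}e^{i\,\mathrm{sgn}(a)\pi/4}.
\]
Proposition \ref{prop: DoubleTwistComputation} says that it equals $e^{-\pi i/4}$. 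If $\mathrm{sgn}(a)=+1$ we would need $\xi=e^{i\pi/2}$, which is impossible, so $\mathrm{sgn}(a)=-1$ and $\xi=+1$. Hence $\chi=\chi_-$ and $\xi=+1$. Substituting gives $\epsilon=e^{i\pi/8}$ and $w(x)=e^{ix^2/2}$, which yields exactly the formulas $\beta_{\upsilon,\nu}=e^{-ip(t)n(s)}$, $\beta_{\upsilon,\tau}=\beta_{\tau,\upsilon}=e^{ip(t)^2/2}$ and $\beta_{\tau,\tau}=e^{\pi i/8}e^{-ix^2/2}$ in the statement. The associators are those of Definition \ref{def: TYcal(G,chi,xi)} with $\chi=\chi_-$ and $\xi=1$, and $J_{\chi_-}$ composed with $\mathcal{F}$ is the displayed kernel $\frac{1}{\sqrt{2\pi}}e^{ixy}$.

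The step I expect to be the main obstacle is the transport of the action. The Corollary only provides a $\mathrm{W}^*$-tensor equivalence, while Proposition \ref{prop: twoactions} classifies actions with $\sigma|_{\Rep(C_0(\R))}=\Sigma$ as tensor actions. So I must check that the transported $T_\sigma$, together with its tensor structure $s_\sigma$, is isomorphic as a tensor autoequivalence to one that is literally $\Sigma$ on the untwisted part and fixes $\tau$. I would argue as follows. Any tensor autoequivalence of $\TYcal(\R,\chi,\xi)$ preserves the grading, hence fixes $\tau$ up to isomorphism. Its restriction to $\Rep(C_0(\R))$ is determined on simple objects by Lemma \ref{lemm: ActionTransportedToC0R} ($\delta_x\mapsto\delta_{-x}$). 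Continuity and naturality, as in \cite[Lemm. 4.10]{AM2025}, should then let me conjugate it to $\Sigma$ on direct integrals as well. After that, the $H^2(\Z/2;\Z/2)$ argument in Proposition \ref{prop: twoactions} handles the remaining freedom.
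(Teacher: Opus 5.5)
Your proposal is correct and follows essentially the same route as the paper. You transport the crossed balanced structure along the tensor equivalence and reach the normal form via Lemma \ref{lemm: ActionTransportedToC0R} and Propositions \ref{prop: twoactions}, \ref{prop: braidingsTY}, \ref{prop: ClassifyTwists}; you then match the invariant $T_\sigma(\theta_\tau)\circ\theta_\tau=\zeta^2\epsilon^{-2}=\xi\,e^{i\,\mathrm{sgn}(a)\pi/4}$ against $e^{-\pi i/4}$ from Proposition \ref{prop: DoubleTwistComputation}, which the paper does via an explicit diagram chase with $\Phi_1=\id$. The obstacle you flag is handled in the paper by Lemma \ref{lemm: ActionTransportedToC0R} alone: it gives a unitary natural isomorphism $\mathfrak{M}\circ\Sigma\cong T_\sigma\circ\mathfrak{M}$ on all of $\Rep(C_0(\R))$, not just on simple objects, and $\Sigma$ commutes with the rescaling automorphism, so no extra continuity argument is needed.
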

\begin{proof}
Let $\chi_\pm:\R\times\R\to U(1)$ be the continuous symmetric nondegenerate bicharacters on $\R$ given by
\[
\chi_{\pm}(x,y) = e^{\pm i xy}.
\]
Every continuous symmetric nondegenerate bicharacter on $\R$ is equivalent to either $\chi_+$ or $\chi_-$ under the action of $\Aut(\R)$, and $\chi_+$ and $\chi_-$ are inequivalent under this action. Therefore, by Theorem \ref{Thm: RepZ2HeisIsTYR} and the classification of Tambara-Yamagami $\mathrm{W}^*$-tensor categories \cite[Thm. 4.17]{AM2025} (see also Theorem \ref{thm: TYClassificationThm}), there exists exactly one pair $(\chi,\xi)\in \{\chi_{\pm}\}\times \{\pm1\}$ for which the $\mathrm{W}^*$-equivalence (possibly precomposed by the equivalence $\Rep(C_0(\R))\to \Rep(C_0(\R))$ induced by an automorphism of $\R$)
\[
\mathfrak{M}\oplus\mathfrak{M^\sigma}: \Rep(C_0(\R))\oplus \Hilb\cdot \tau\to \Rep^{\Z/2}(\Heis)
\]
admits a tensor structure $s: \mathfrak{M}\oplus\mathfrak{M}^\sigma(-\otimes - )\cong \mathfrak{M}\oplus\mathfrak{M}^\sigma(-)\boxtimes \mathfrak{M}\oplus\mathfrak{M}^\sigma(-)$. The statement of the current theorem is precisely that this pair is $(\chi_-, +1)$. 

Let us denote by $(\chi_0,\xi_0)$ the pair $(\chi,\xi)$ for which $\mathfrak{M}\oplus\mathfrak{M^\sigma}$ admits a tensor structure. By \cite[Thm. 3.31]{GcrossedbraidedRep}, the category $\Rep^{\Z/2}(\Heis)$ admits the structure of a $\Z/2$-crossed balanced $\mathrm{W}^*$-tensor category. In addition, the functor $\mathfrak{M}\oplus\mathfrak{M^\sigma}$ clearly preserves the grading. We can then transport this $\Z/2$-crossed balanced structure along the equivalence $\mathfrak{M}\oplus\mathfrak{M^\sigma}$. By Lemma \ref{lemm: ActionTransportedToC0R}, we can assume that the action of $\sigma$ on any $\Rep(C_0(\R))$ is given by $\Sigma$. Therefore by Propositions \ref{prop: braidingsTY} and \ref{prop: ClassifyTwists}, there exists $\zeta\in\{\pm1\}$ and an equivalence of $\Z/2$-crossed balanced $\mathrm{W}^*$-tensor categories
\begin{equation}\label{eq: EquivalenceZ2ZBalanced}
\Rep^{\Z/2}(\Heis)\cong \TYcal(\R,\chi_0,\xi_0),
\end{equation}
where the crossed braiding on the right-hand side is given by Proposition \ref{prop: braidingsTY} and the crossed balance is given by Proposition \ref{prop: ClassifyTwists}, using $\zeta$. The underlying tensor functor of this equivalence is $(\mathfrak{M}\oplus\mathfrak{M^\sigma}, s)$, and we denote by $\Phi$ the structure data witnessing compatibility with the $\Z/2$-action. We can further assume that $\Phi_1 = \id$, where 1 denotes the unit of $\{1,\sigma\} = \Z/2$. We write $\theta_\tau$ for the $\Z/2$-crossed balance on the object $\tau\in \TYcal(\R, \chi_0, \xi_0)$ defined by Proposition \ref{prop: ClassifyTwists} and the chosen $\zeta$. 

Consider the following diagram
\[\begin{tikzcd}[column sep=large]
{\mathfrak{M}^\sigma(\tau)} && {\mathfrak{M}^\sigma(\tau)} && {\mathfrak{M}^\sigma(\tau)} & \\
&& {T_\sigma(\mathfrak{M}^\sigma(\tau))} && {T_\sigma(\mathfrak{M}^\sigma(\tau))} \\
&&&& {T_\sigma\circ T_\sigma(\mathfrak{M}^\sigma(\tau))} & {T_1(\mathfrak{M}^\sigma(\tau))}.
\arrow["{\mathfrak{M}^\sigma(\theta_\tau)}", from=1-1, to=1-3]
\arrow["{\theta_{\mathfrak{M}^\sigma(\tau)}}"', from=1-1, to=2-3]
\arrow["{\mathfrak{M}^\sigma(\theta_\tau)}", from=1-3, to=1-5]
\arrow["{\theta_{\mathfrak{M}^\sigma(\tau)}}"', from=1-3, to=2-5]
\arrow["{\Phi_\sigma(\tau)}", from=2-3, to=1-3]
\arrow["{T_\sigma(\theta_{\mathfrak{M}^\sigma(\tau)})}"', from=2-3, to=3-5]
\arrow["{\Phi_\sigma(\tau)}", from=2-5, to=1-5]
\arrow["{T_\sigma(\Phi_\sigma(\tau))}"', from=3-5, to=2-5]
\arrow["{=}"', from=3-5, to=3-6]
\arrow["{\Phi_1(\tau)}"', from=3-6, to=1-5]
\end{tikzcd}\]
The two top triangles commute by the fact that the equivalence \eqref{eq: EquivalenceZ2ZBalanced} is compatible with the crossed balances. The central quadrilateral commutes because $T_\sigma$ acts trivially on morphisms and $\Phi$ is a natural family of isomorphisms. Finally, the right-most diagram commutes by the fact that the equivalence \eqref{eq: EquivalenceZ2ZBalanced} is compatible with the $\Z/2$-action. In addition, note that $T_1(\mathfrak{M^\sigma}(\tau)) = \mathfrak{M}^\sigma(\tau)$ and we have taken $\Phi_1(\tau) = \id_{\mathfrak{M}^\sigma(\tau)}$. Hence, it follows that $\mathfrak{M}^\sigma(\theta_\tau^2)$ equals the composition
\[\begin{tikzcd}
{\mathfrak{M}^\sigma(\tau)} && {T_\sigma(\mathfrak{M}^\sigma(\tau))} && {T_\sigma\circ T_\sigma(\mathfrak{M}^\sigma(\tau))} & {\mathfrak{M}^\sigma(\tau)}.
\arrow["{\theta_{\mathfrak{M}^\sigma(\tau)}}", from=1-1, to=1-3]
\arrow["{T_\sigma(\theta_{\mathfrak{M}^\sigma(\tau)})}", from=1-3, to=1-5]
\arrow["{=}", from=1-5, to=1-6]
\end{tikzcd}\]
By Proposition \ref{prop: DoubleTwistComputation}, this composition is exactly $e^{-\frac{\pi i}{4}}\cdot \id_{\mathfrak{M}^\sigma(\tau)}$. Let us now compute $\theta_\tau^2$, which is independent of $\zeta\in\{\pm1\}$, using the explicit formula in Proposition \ref{prop: ClassifyTwists}. Writing $\chi(x,y) = e^{aixy}$ for $a\in\{\pm1\}$, we have 
\[
\theta_\tau^2 = \xi\cdot e^{i\cdot \text{sgn}(a)\cdot \frac{\pi}{4}}\cdot \id_{\tau}.
\]
Therefore, we require $\xi\cdot e^{\text{sgn}(a)\cdot \frac{\pi i}{4}} = e^{-\frac{\pi i}{4}}$, hence $\text{sgn}(a) = -1$ and $\xi = +1$. 
\end{proof}

As an immediate corollary of Theorem \ref{thm: MainThmCrossed}, and using Proposition \ref{prop: ClassifyTwists}, we obtain the following result.

\begin{theorem}\label{thm: CharacterizationRepHeis}
The balanced $\mathrm{W}^*$-tensor category $\Rep(\Heis)$ of representations of the Heisenberg conformal net is equivalent to $\Hilb\,\R$ with braiding given by
\[
\begin{array}{cccc}
\beta_{\upsilon, \nu}: & L^2(\upsilon\times\nu)&\to &L^2(\upsilon\times\nu) \\
& f(t,s)&\mapsto & e^{-ip(t)n(s)}f(t,s) 
\end{array}
\]
and balance
\[
\begin{array}{cccc}
\theta_\upsilon :& L^2(\upsilon)&\to &L^2(\upsilon)  \\
& f(t)& \mapsto & e^{-ip(t)^2}f(t)
\end{array}
\]
for objects $\int^{\oplus}_{t\in T}\delta_{p(t)}\mathrm{d}\upsilon(t)$ and $\int^{\oplus}_{s\in S}\delta_{n(s)}\mathrm{d}\nu(s)$ of $\Rep(C_0(\R))$.
\end{theorem}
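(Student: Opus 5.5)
The plan is to restrict the equivalence of Theorem \ref{thm: MainThmCrossed} to the trivially graded piece and then read off the induced braiding and balance. The $\Z/2$-crossed braided equivalence
\[
(\mathfrak{M}\oplus\mathfrak{M}^\sigma, s, \Phi):\TYcal(\R,\chi_-,+1)\xrightarrow{\cong}\Rep^{\Z/2}(\Heis)
\]
preserves the grading. Its degree-$1$ part $\mathfrak{M}$, together with the restriction of $s$, is therefore a $\mathrm{W}^*$-tensor equivalence $\Hilb\,\R\to\Rep(\Heis)$. This uses that the restriction of $\TYcal(\R,\chi_-,+1)$ to $\Rep(C_0(\R))$ is $\Hilb\,\R$ with trivial associator, by Definition \ref{def: TYcal(G,chi,xi)}.

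For the braiding, on the trivially graded part the action $T_1$ is the identity and $\Phi_1=\id$. The compatibility diagram \eqref{eq: Gcrossedfunctor} with $g=1$ then says exactly that $(\mathfrak{M},s)$ intertwines $\beta_{\upsilon,\nu}$ with Guido–Longo's braiding $\mathbb{B}$ restricted to $\Rep(\Heis)$. The braiding $\mathbb{B}$ is the honest braiding of Section \ref{Sec: ConfNets}, since the crossed braiding on $\Rep^{\Z/2}(\Heis)$ restricts to it on degree $1$. Hence $\Rep(\Heis)$ is braided equivalent to $\Hilb\,\R$ with $\beta_{\upsilon,\nu}=e^{-ip(t)n(s)}$, which is the formula of Theorem \ref{thm: MainThmCrossed}.

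For the balance, the proof of Theorem \ref{thm: MainThmCrossed} shows that $\mathfrak{M}\oplus\mathfrak{M}^\sigma$ is in fact an equivalence of $\Z/2$-crossed balanced categories. Here the crossed balance on the $\TYcal$ side is one of the two structures of Proposition \ref{prop: ClassifyTwists}, with $a=-1$. In degree $1$ the crossed balance of $\Rep^{\Z/2}(\Heis)$ is $e^{-2\pi iL_0}$, which is the balance of $\Rep(\Heis)$ from \cite{balanceRepA}, and the compatibility condition $\Phi_1\circ\mathfrak{M}(\theta)=\theta_{\mathfrak{M}(-)}$ becomes ordinary balance compatibility. By Proposition \ref{prop: ClassifyTwists}, the transported balance on $\upsilon$ is multiplication by
\[
w(p(t))^{-2}=e^{iap(t)^2}=e^{-ip(t)^2},
\]
independently of the undetermined sign $\zeta$, because $\zeta$ only affects $\theta_\tau$.

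The one point I would check carefully is that the object-by-object classification of crossed balances in Proposition \ref{prop: ClassifyTwists} really applies to the transported balance. That classification forces $\theta_\upsilon$ through the diagram \eqref{eq: Xtwist} with $X=\upsilon$, $Y=\tau$, so it uses the twisted sector. That is harmless here, because the equivalence already lives on the whole $\Z/2$-crossed category.

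Everything else is immediate restriction. As a sanity check, I would verify that $\theta_\upsilon=e^{-ip^2}$ is consistent with the balance axiom for the braiding $e^{-ipn}$, via $e^{-i(p+n)^2}=e^{-ip^2}e^{-in^2}e^{-2ipn}$, and that for the irreducible $\delta_x$ it matches $e^{-2\pi i h}$ with conformal weight $h=x^2/2\pi$ in these normalizations.
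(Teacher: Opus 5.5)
Your proposal is correct and is essentially the paper's argument. The paper derives the statement as an immediate corollary of Theorem~\ref{thm: MainThmCrossed}, whose proof in fact produces an equivalence of $\Z/2$-crossed balanced categories, by restricting to the trivially graded part and reading off $\theta_\upsilon=w(p(t))^{-2}=e^{-ip(t)^2}$ from Proposition~\ref{prop: ClassifyTwists}, which is indeed independent of the undetermined sign $\zeta$.

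One cosmetic adjustment: that equivalence is $\mathfrak{M}\oplus\mathfrak{M}^\sigma$ only up to precomposition with an automorphism of $\R$, since the argument fixes only $\text{sgn}(a)$ and $\xi$. So $\delta_x$ in the statement need not be the charge-$x$ representation $\mathfrak{M}(\delta_x)$; with the paper's Weyl normalization, $\mathfrak{M}(\delta_x)$ should have lowest $L_0$-eigenvalue $x^2/4\pi$. Your sanity check $h=x^2/2\pi$ therefore holds only in the theorem's labelling, and this does not affect the proof.
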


We know by \cite[Thm. 3.32]{GcrossedbraidedRep} that the $\mathrm{W}^*$-tensor category $\Rep^{\Z/2}(\Heis)$ is not only $\Z/2$-crossed braided but also $\Z/2$-crossed balanced. However, in Theorem \ref{thm: MainThmCrossed} we are not able to determine the $\Z/2$-crossed balance on $\Rep^{\Z/2}(\Heis)$. The reason is that, with the techniques we use, we are only able to compute the square of the balance on the non-invertible simple object, which by Proposition \ref{prop: ClassifyTwists} is not enough to distinguish between the two possible $\Z/2$-crossed balances on $\Rep^{\Z/2}(\Heis).$

We conjecture which of the two possible $\Z/2$-crossed balanced structures on $\TYcal(\R, \chi_-, +1)$ corresponds to $\Rep^{\Z/2}(\Heis)$. Let $\mathcal{C}$ be a unitary fusion category with a unitary braiding $\beta$. We write $X^\vee$ for the dual of an object $X\in\mathcal{C}$ and $\text{ev}_X$ and $\text{coev}_X$ for its evaluation and coevaluation maps. Given a {ribbon structure} $\theta$ on $\mathcal{C}$ (that is, a balance compatible with the rigid structure of $\mathcal{C}$), one can define the \emph{quantum dimension} of an object $X\in \mathcal{C}$ as the number $\dim_{\theta}(X) = \text{ev}_X\circ \beta_{X, X^\vee}\circ (\theta_X\otimes\id_{X^\vee})\circ \text{coev}_X$. The ribbon structure $\theta$ is called \emph{unitary} if the dimension of every object is a positive number, see \cite[Sec. 3.2]{MR3239112}. Then, there exists a unique unitary ribbon structure on every unitary braided fusion category $\mathcal{C}$, see \cite[Thm. 3.5]{MR3239112}. Analogous results hold in the context of a discrete group $G$ acting on $\mathcal{C}$, where one obtains the concept of a $G$-crossed unitary ribbon structure on $\mathcal{C}$, see \cite[Sec. 3.5]{Galindo_2026}. In particular, if $K$ is a finite group, $\chi:K\times K\to U(1)$ is a symmetric nondegenerate bicharacter and $\xi\in\{\pm1\}$ is a sign, the unique unitary $\Z/2$-crossed ribbon structure on the $\Z/2$-crossed braided unitary fusion category $\TYcal(K, \chi, \xi)$ is the one with $\zeta = +1$ in the notation of Proposition \ref{prop: ClassifyTwists}, see \cite[Prop. 4.6]{Galindo_2026}.

If $\A$ is a rational conformal net, then $\Rep(\A)$ is a unitary modular tensor category \cite{klm01}, hence the ribbon structure is unitary. If $G$ is a finite group acting on $\A$, the same holds for the $G$-crossed ribbon category $\Rep^G(\A)$ of $G$-twisted representations of $\A$, since $(\Rep^G(\A))^G\cong \Rep(\A^G)$ is a unitary ribbon fusion category by \cite{klm01} and \cite{fixedpoints}. 

We expect that, if $\mathcal{C}$ is a braided $\mathrm{W}^*$-tensor category which is furthermore involutive (see \cite{MR2861112} for the definition), there exists an analogous notion of a {unitary balance} on $\mathcal{C}$, and that the canonical balance on $\Rep(\A)$ from \cite{balanceRepA}, with its canonical involutive $\mathrm{W}^*$-tensor structure from \cite[App. A]{GcrossedbraidedRep}, is unitary even if $\A$ is non-rational. We also expect there to be an analogous concept in the $G$-crossed setting, so that $\Rep^G(\A)$ is a unitary balanced involutive $\mathrm{W}^*$-tensor category if $G$ is a discrete group acting on a (possibly non-rational) conformal net $\A$, when using the $G$-crossed balance from \cite{GcrossedbraidedRep} and the involutive structure from \cite[App. A]{GcrossedbraidedRep}. Both of these concepts should restrict to the usual notions of unitary ribbon and unitary crossed ribbon structures in the case of unitary fusion categories. Generalizing the case of finite Tambara-Yamagami fusion categories, we expect the unique unitary balance on a Tambara-Yamagami $\mathrm{W}^*$-tensor category $\TYcal(G, \chi, \xi)$ to be the one given by $\zeta = +1$ in the notation of Proposition \ref{prop: ClassifyTwists}. Assembling these expectations, we conjecture the following.

\begin{conjecture}\label{Conjecture: WhichCrossedBalance}
    The $\Z/2$-crossed balanced $\mathrm{W}^*$-tensor category $\Rep^{\Z/2}(\Heis)$ of $\Z/2$-twisted representations of the Heisenberg conformal net is equivalent to the $\Z/2$-crossed braided $\mathrm{W}^*$-tensor category described in Theorem \ref{thm: MainThmCrossed} with the $\Z/2$-crossed balance
\[
\begin{array}{cccc}
\theta_{\upsilon} :&L^2(\upsilon)&\to &L^2(\upsilon)\\
&f(t)&\mapsto &e^{-ip(t)^2}f(t) \vspace{.5cm}\\
\theta_{\tau}:&\tau&\xrightarrow{e^{-\pi i/8}\cdot\id_\tau} &\tau,
\end{array}
\]
for $\int^{\oplus}_{t\in T}\delta_{p(t)}\mathrm{d}\upsilon(t)\in \Rep(C_0(\R))$.
\end{conjecture}

\section{The braided tensor category of representations of $\Heis^{\Z/2}$}

We finish with the most important result of this paper, which is the explicit characterization of the braided $\mathrm{W}^*$-tensor category $\Rep(\Heis^{\Z/2})$ of representations of the $\Z/2$-fixed points of the Heisenberg conformal net. 

\begin{theorem}\label{thm: RepFixedPointsMainTheorem}
The braided $\mathrm{W}^*$-tensor category $\Rep(\Heis^{\Z/2})$ of representations of the $\Z/2$-fixed points of the Heisenberg conformal net is equivalent to the $\Z/2$-equivariantization
\[
\TYcal(\R, \chi_-, +1)^{\Z/2}
\]
of the following $\mathbb{Z}/2$-crossed braided $\mathrm{W}^*$-tensor category. The underlying $\mathrm{W}^*$-tensor category~is
\(
\TYcal(\R, \chi_-, +1).
\)
The action of $\Z/2 = \{1,\sigma\}$ is given by $\sigma\big(\int^{\oplus}_{t\in T}\delta_{p(t)}\mathrm{d}\upsilon(t)\big) = \int^{\oplus}_{t\in T}\delta_{-p(t)}\mathrm{d}\upsilon(t)$ and $\sigma(\tau) = \tau$, with the only non-trivial structure data $\sigma(\tau\otimes \tau) = \sigma\big(\int^{\oplus}_{x\in \R}\delta_{x}\mathrm{d}\lambda(x)\big) = \int^{\oplus}_{x\in \R}\delta_{-x}\mathrm{d}\lambda(x)\xrightarrow{f(x)\mapsto f(-x)}\int^{\oplus}_{x\in \R}\delta_{x}\mathrm{d}\lambda(x) = \tau\otimes\tau = \sigma(\tau)\otimes\sigma(\tau)$. The $\Z/2$-crossed braiding is given by 
\[
\begin{array}{cccc}
\beta_{\upsilon, \nu} :&L^2(\upsilon\times\nu)&\to &L^2(\upsilon\times\nu)\\
&f(t,s)&\mapsto & e^{-ip(t)n(s)}f(t,s) \vspace{.5cm}\\
\beta_{\upsilon, \tau} = \beta_{\tau, \upsilon}:&L^2(\upsilon)\cdot \tau&\to& L^2(\upsilon)\cdot\tau\\
&f(t)&\mapsto& e^{ip(t)^2/2}f(t)\vspace{.5cm}\\
\beta_{\tau, \tau}: &L^2(\lambda) &\to &L^2(\lambda)\\
& f(x)&\mapsto & e^{\pi i/8 } \,e^{-ix^2/2}f(x),
\end{array}
\]     
for objects $\int^{\oplus}_{t\in T}\delta_{p(t)}\mathrm{d}\upsilon(t)$ and $\int^{\oplus}_{s\in S}\delta_{n(s)}\mathrm{d}\nu(s)$ of $\Rep(C_0(\R))$.
\end{theorem}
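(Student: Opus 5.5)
The proof will be a composition of two equivalences already established, together with the fact that a $G$-crossed braided equivalence induces a braided equivalence of equivariantizations. First, by \cite[Thm. 4.19]{fixedpoints} (recalled at the end of Section \ref{sec: CrossedBalancedRepGA}), since $\Z/2$ is finite and acts faithfully on $\Heis$, the functor $\mathfrak{R}:(\Rep^{\Z/2}(\Heis))^{\Z/2}\to\Rep(\Heis^{\Z/2})$ is an equivalence of balanced, hence in particular braided, $\mathrm{W}^*$-tensor categories. Faithfulness of the action is immediate, since $\sigma$ acts on the generators by $W(f)\mapsto W(-f)\neq W(f)$ for $f\neq 0$.

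Second, Theorem \ref{thm: MainThmCrossed} gives an equivalence of $\Z/2$-crossed braided $\mathrm{W}^*$-tensor categories $(\mathfrak{M}\oplus\mathfrak{M}^\sigma,s,\Phi):\TYcal(\R,\chi_-,+1)\to\Rep^{\Z/2}(\Heis)$, where the source carries the action and crossed braiding displayed in the statement. As recalled after the definition of equivariantization, such a functor induces a braided $\mathrm{W}^*$-tensor functor
\[
\TYcal(\R,\chi_-,+1)^{\Z/2}\to(\Rep^{\Z/2}(\Heis))^{\Z/2},
\]
which sends $(X,u)$ to $\big(F(X),\{F(u_g)\circ\Phi_g(X)^{-1}\}_g\big)$ and acts as $F$ on morphisms. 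I will check that it is an equivalence. Full faithfulness follows because $F$ is fully faithful, and the equivariance condition on morphisms is transported by the natural isomorphisms $\Phi_g$. For essential surjectivity, take an equivariant object $(Y,v)$ and choose $X$ with a unitary $F(X)\cong Y$. The family $u_g$ is then defined by transporting $v_g$ through $\Phi_g$ and this isomorphism; the cocycle condition for $u$ follows from that for $v$ together with the compatibility of $\Phi$ with the tensorators $\eta_{g,h}$ of the two actions. The braiding on both equivariantizations is built from $\beta$ and the $u_g$. Preservation of braidings therefore reduces to diagram \eqref{eq: Gcrossedfunctor}, combined with the definition of the transported equivariant structure.

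Composing the two equivalences yields the claimed braided equivalence $\Rep(\Heis^{\Z/2})\cong\TYcal(\R,\chi_-,+1)^{\Z/2}$. The main point that needs care is essential surjectivity with coherent equivariant data, since this is where the structure maps $\Phi$ and $\eta$ must be matched. It is, however, a standard 2-categorical verification, identical to the finite-group case, because nothing in it uses finiteness or semisimplicity of the categories involved.

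Finally, I will note explicitly that the balance is not claimed in the statement. The $\Z/2$-crossed balance of Theorem \ref{thm: MainThmCrossed} is only known up to the sign $\zeta$ of Proposition \ref{prop: ClassifyTwists}, so the theorem asserts only the braided equivalence. This is precisely what the argument above provides.
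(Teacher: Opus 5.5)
Your proposal is correct and follows the paper's route exactly: the paper's proof is a one-line combination of \cite[Thm. 4.19]{fixedpoints} with Theorem \ref{thm: MainThmCrossed}, through the fact that a $\Z/2$-crossed braided equivalence induces a braided equivalence of equivariantizations. Your extra checks fill in details the paper leaves implicit. These are that $\sigma$ acts faithfully on $\Heis$, and that the induced functor $(X,u)\mapsto\big(F(X),\{F(u_g)\circ\Phi_g(X)^{-1}\}_g\big)$ is fully faithful and essentially surjective.
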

\begin{proof}
This result is a direct application of \cite[Thm. 4.19]{fixedpoints} and Theorem \ref{thm: MainThmCrossed}.
\end{proof}

Using \cite[Thm. 4.19]{fixedpoints}, Conjecture \ref{Conjecture: WhichCrossedBalance} is equivalent to the following conjecture.

\begin{conjecture}
     The balanced $\mathrm{W}^*$-tensor category $\Rep(\Heis^{\Z/2})$ of representations of the $\Z/2$-fixed points of the Heisenberg conformal net is equivalent to the $\Z/2$-equivariantization of the $\Z/2$-crossed braided $\mathrm{W}^*$-tensor category described in Theorem \ref{thm: MainThmCrossed} with the $\Z/2$-crossed balance
\[
\begin{array}{cccc}
\theta_{\upsilon} :&L^2(\upsilon)&\to &L^2(\upsilon)\\
&f(t)&\mapsto &e^{-ip(t)^2}f(t) \vspace{.5cm}\\
\theta_{\tau}:&\tau&\xrightarrow{e^{-\pi i/8}\cdot\id_\tau} &\tau,
\end{array}
\]
for $\int^{\oplus}_{t\in T}\delta_{p(t)}\mathrm{d}\upsilon(t)\in \Rep(C_0(\R))$.
\end{conjecture}
\bibliographystyle{halpha-abbrv}
\bibliography{RepHeisBiblio}
\end{document}

%% file: Commands.tex
\newcommand{\Z}{\mathbb{Z}}
\newcommand{\R}{\mathbb{R}}
\newcommand{\C}{\mathbb{C}}

\newcommand{\Diff}{\text{Diff}}

\newcommand{\Mob}{\mathbf{M\ddot{o}b}}
\newcommand{\A}{\mathcal{A}}
\newcommand{\Rot}{\text{Rot}}

\newcommand{\id}{\text{id}}
\newcommand{\INT}{\text{INT}}

\newcommand{\Hom}{\text{Hom}}
\newcommand{\Aut}{\text{Aut}}
\newcommand{\Rep}{\text{Rep}}
\newcommand{\colim}{\text{colim}}
\newcommand{\pe}{{\text{pe}}}
\newcommand{\Hilb}{\text{Hilb}}
\newcommand{\MorC}{\text{C}^*\text{Alg}}
\newcommand{\Tcal}{\mathcal{T}}
\newcommand{\TYcal}{\mathcal{TY}}
\newcommand{\Ccal}{\mathcal{C}}
\newcommand{\End}{\text{End}}
\newcommand{\Heis}{\text{Heis}}
\newcommand{\Jcal}{\mathcal{J}}

\newcommand{\Conf}{\text{Conf}}

\newcommand{\Vect}{\text{Vect}}
\newcommand{\Loc}{\text{Loc}}

\newcommand{\Dcal}{\mathcal{D}}

\newcommand{\fixedxrightarrow}[2][3cm]{%
  \xrightarrow{\makebox[#1]{\ensuremath{#2}}}%
}
\newcommand{\1}{\mathbf{1}}

%% file: Preamble.tex
\usepackage{amssymb}
\usepackage{amsfonts}
\usepackage{amsmath}
\usepackage{quiver}
\usepackage{amsthm}
\usepackage{comment}
\usepackage[english]{babel}
\usepackage{nicefrac}
\usepackage[a4paper]{geometry}
\usepackage{amssymb}
\usepackage{mathtools}
\usepackage[hidelinks]{hyperref}
\usepackage{mathrsfs}

\usepackage[dvipsnames]{xcolor}
\usetikzlibrary{arrows}

\geometry{top=2.40cm, bottom=2.40cm, left=2.40cm, right=3cm}

\theoremstyle{definition}
\newtheorem{definition}{Definition}[section]
\newtheorem{proposition}[definition]{Proposition}
\newtheorem{lemma}[definition]{Lemma}
\newtheorem{theorem}[definition]{Theorem}
\newtheorem{corollary}[definition]{Corollary}
\newtheorem{example}[definition]{Example}
\newtheorem{remark}[definition]{Remark}
\newtheorem{conjecture}[definition]{Conjecture}
\newtheorem*{theorem*}{Theorem}
\newtheorem*{definition*}{Definition}
\newtheorem*{example*}{Example}
\newtheorem*{corollary*}{Corollary}
\newtheorem*{proposition*}{Proposition}
\newtheorem{maintheorem}{Theorem}

\newtheorem{mainconjecture}{Conjecture}

\newtheorem*{rep@theorem}{\rep@title}
\newcommand{\newreptheorem}[2]{%
\newenvironment{rep#1}[1]{%
 \def\rep@title{#2 \ref{##1}}%
 \begin{rep@theorem}}%
 {\end{rep@theorem}}}
\makeatother

\newreptheorem{theorem}{Theorem}

\definecolor{myviolet}{HTML}{7D3C98}
\definecolor{myorange}{HTML}{F39C12}
\definecolor{myblue}{HTML}{2E86C1}
\definecolor{mygreen}{HTML}{1E8449}
\definecolor{myred}{HTML}{C0392B}



\usetikzlibrary{decorations.markings}